\documentclass[11pt]{article}

\usepackage{amssymb,amsmath,bm}
\usepackage{amsthm}
\usepackage{empheq}
\usepackage{textcomp}
\usepackage{enumerate}      
\usepackage{graphicx}        
\usepackage{caption}

\usepackage{mathrsfs}

\usepackage{url}

\renewcommand{\setminus}{{\smallsetminus}}

\newcommand{\cp}[1]{\vcenter{\hbox{#1}}}

\usepackage{amssymb,amsmath,bm}
\usepackage{amsthm}
\usepackage{hyperref}
\usepackage{mathrsfs}
\usepackage{textcomp}
\usepackage{enumerate}
\usepackage{graphicx}
\usepackage{tikz}
\usepackage{verbatim}



\newtheorem{theorem}{Theorem}[section]
\newtheorem{lemma}[theorem]{Lemma}
\newtheorem{proposition}[theorem]{Proposition}
\newtheorem{definition}[theorem]{Definition}

\newtheorem{conjecture}[theorem]{Conjecture}

\theoremstyle{remark}
\newtheorem{remark}[theorem]{Remark}

\theoremstyle{remark}

\numberwithin{equation}{section}

\newcommand{\lt}{\left}
\newcommand{\rt}{\right}

\def\Vol{\operatorname{Vol}}
\def \CC{\mathbb C}
\def \Re{\operatorname{Re}}
\def \arg{\operatorname{arg}}

\def\SS{{\mathbb S}}
\def\CC{{\mathbb C}}
\def\RR{{\mathbb R}}
\def\ZZ{{\mathbb Z}}
\def\NN{{\mathbb N}}

\def\Vol{\operatorname{Vol}}
\def\CS{\operatorname{CS}}
\def \Re{\operatorname{Re}}
\def \im{\operatorname{Im}}
\def \arg{\operatorname{arg}}
\def \Li{\operatorname{Li_2}}
\def \im{\operatorname{Im}}

\def \det{\operatorname{det}}
\def \Hess{\operatorname{Hess}}

\textwidth 16cm \textheight 24.5cm \voffset -2cm \hoffset -2.0cm
\textwidth 16cm \textheight 25.0cm \voffset -2cm \hoffset -2.0cm
\textwidth 16cm \textheight 23cm \voffset -2cm \hoffset -2.0cm

\setlength{\tabcolsep}{1pt}

\addtolength{\columnsep}{2mm}

\title{On the asymptotic expansion for the relative Reshetikhin-Turaev invariants of  fundamental shadow link pairs}
\author{Tushar Pandey and Ka Ho Wong}
\date{}

\begin{document}

\maketitle

\begin{abstract}
We study the asymptotic expansion conjecture of the relative Reshetikhin-Turaev invariants proposed in \cite{WY4} for all pairs $(M,L)$ satisfying the property that $M\setminus L$ is homeomorphic to some fundamental shadow link complement. The hyperbolic cone structure of such $(M,L)$ can be described by using the logarithmic holonomies of the meridians of some fundamental shadow link. We show that when the logarithmic holonomies are sufficiently small and all cone angles are less than $\pi$, the asymptotic expansion conjecture of $(M,L)$ is true. Especially, we verify the  asymptotic expansion conjecture of the relative Reshetikhin-Turaev invariants  for all pairs $(M,L)$ satisfying the property that $M\setminus L$ is homeomorphic to some fundamental shadow link complement, with cone angles sufficiently small. Furthermore, we show that if $M$ is obtained by doing rational surgery on a fundamental shadow link complement with sufficiently large surgery coefficients, then the cone angles can be pushed to any value less than $\pi$. \end{abstract}

\tableofcontents

\section{Introduction}

In \cite{WY1}, T. Yang and the second author proposed the volume conjecture for the relative Reshetikhin-Turaev invariants of a pair $(M,L)$, where $M$ is a closed oriented 3-manifold and $L$ is a framed link inside $M$. The conjecture suggests that the asymptotics of the invariants capture the hyperbolic volume and the Chern-Simons invariants of the cone manifold $M$ with the singular locus $L$ and cone angles $\theta$ determined by the sequence of colorings of the framed link.

\begin{conjecture} \label{conj}(\cite[Conjecture 1.1]{WY1})
Let M be a closed oriented 3-manifold and let L be a framed hyperbolic link in M with n components. For an odd integer, $r\geq 3$, let $\mathrm{\bf m}$ = $(\mathrm{ m}_1, \dots, \mathrm{ m}_n)$ and let $RT_r(M,L,\mathrm{\bf m})$ be the $r$-th relative Reshetikhin-Turaev invariant of M with L colored by $\mathrm{\bf m}$ and evaluated at the root of unity $q = e^{\frac{2\pi \sqrt{-1}}{r}}$. For a sequence $\mathrm{\bf m}^{(r)} = (\mathrm{ m}_1^{(r)}, \dots, \mathrm{ m}_n^{(r)}) $, let
\begin{align*}
    \theta_k = \left \lvert 2\pi - \lim_{r \rightarrow \infty} \frac{4\pi m_k^{(r)}}{r} \right\rvert
\end{align*}
and let $\boldsymbol\theta = (\theta_1, .., \theta_n)$. If $M_{L_{\boldsymbol\theta}}$ is a hyperbolic cone manifold consisting of M and a hyperbolic cone metric on M with singular locus L and cone angles $\boldsymbol\theta$, then 
\begin{align*}
    \lim_{r \rightarrow \infty} \frac{4\pi}{r} \log{RT_r(M, L,\mathrm{\bf m}^{(r)})}
    = \Vol(M_{L_{\boldsymbol\theta}}) + \sqrt{-1}\CS(M_{L_{\boldsymbol\theta}})
\end{align*}
where r varies over all positive odd integers.
\end{conjecture}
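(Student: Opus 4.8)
The plan is to establish Conjecture~\ref{conj} in the stated special case by reducing it to a saddle-point analysis of an explicit finite state sum. Write $N = M \setminus L$, assume $N$ is homeomorphic to a fundamental shadow link complement, and let $M$ be obtained from $N$ by rational Dehn filling with coefficients $p_j/q_j$ (with $q_j$ odd) on a subcollection of the cusps. A fundamental shadow link $L_0$ in $\#^{c+1}(S^1\times S^2)$ has a planar shadow assembled from standard building blocks whose complement is a union of ideal octahedra, so $RT_r(\#^{c+1}(S^1\times S^2), L_0, \mathbf{m})$ equals, up to explicit $\eta$- and framing-prefactors, a sum over admissible colorings of the internal edges of a product of quantum $6j$-symbols, one per block, with the colors on $L_0$ fixed to $\mathbf{m}$. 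The rational Dehn filling is implemented via Turaev's surgery formula for the $SO(3)$ TQFT: each filled component contributes an additional sum over a color, weighted by twist coefficients and $q$-integer ratios and carrying a Gaussian phase built from $p_j/q_j$, the oddness of $q_j$ being what makes the relevant congruences and quadratic Gauss sums behave. This produces a formula
\begin{equation*}
RT_r(M,L,\mathbf{m}^{(r)}) \;=\; \kappa_r \sum_{\mathbf{i}\in D_r} w_r(\mathbf{i}) \prod_{t} \big| 6j \big|_t\!\left(\mathbf{i},\mathbf{m}^{(r)}\right),
\end{equation*}
with $D_r$ a bounded lattice of admissible colors, $\kappa_r$ and $w_r$ explicit, and $\big|6j\big|_t$ normalized quantum $6j$-symbols.

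\textbf{From the sum to an integral, and the potential function.} Using the known asymptotic expansion of the quantum $6j$-symbol in terms of the Faddeev quantum dilogarithm, each $\big|6j\big|_t$ is written as $\exp\!\big(\tfrac{r}{4\pi}V_t\big)$, with $V_t$ a combination of classical dilogarithms of the rescaled colors plus controlled errors, while Poisson summation converts the Dehn-filling Gauss sums into integrals with quadratic phase. Assembling these gives
\begin{equation*}
RT_r(M,L,\mathbf{m}^{(r)}) \;\sim\; (\text{prefactor})\int_{\Omega} e^{\frac{r}{4\pi}\Phi(\mathbf{z})}\,d\mathbf{z},
\end{equation*}
for a holomorphic potential $\Phi$ whose critical-point equations are exactly the hyperbolic gluing equations of the octahedral decomposition of $N$, supplemented by the cone-angle conditions with angles $\theta_k$ at the cusps along $L$ and by $p_j\mu_j + q_j\lambda_j = 2\pi\sqrt{-1}$ at the filled cusps. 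This is the standard dictionary between the $6j$/Neumann--Zagier potential and the shapes of ideal hyperbolic pieces.

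\textbf{The geometric critical point and the saddle-point method.} For $\theta$ sufficiently small, hyperbolic Dehn surgery and cone-deformation theory provide a genuine hyperbolic cone structure on $M_{L_\theta}$ near the complete structure on $N$; its shape parameters solve the critical-point equations and determine a point $\mathbf{z}_0\in\Omega$. Classical dilogarithm identities expressing $\Phi$ through volumes of ideal hyperbolic polyhedra, together with the Dehn-filling terms, give $\Phi(\mathbf{z}_0) = \Vol(M_{L_\theta}) + \sqrt{-1}\,\CS(M_{L_\theta})$ once the normalization of Conjecture~\ref{conj} is matched. Smoothness of the deformation space at $\mathbf{z}_0$ forces $\Hess\,\Phi(\mathbf{z}_0)$ to be nondegenerate, so the saddle-point method applies and yields the leading term $e^{\frac{r}{4\pi}(\Vol + \sqrt{-1}\,\CS)}$; the factor $(\det\Hess)^{-1/2}$ is precisely where the twisted Reidemeister torsion would appear at the next order, but only the leading order is claimed here.

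\textbf{The main obstacle.} The crux is the \emph{domination estimate}: one must show $\Re\,\Phi(\mathbf{z}_0)$ is strictly larger than $\Re\,\Phi$ at every competing contribution, namely other critical points of $\Phi$ on $\Omega$ and the part of $\partial\Omega$ coming from colorings at which the $6j$-symbols degenerate (near-violation of the admissibility/triangle inequalities). As in all known instances of this type of volume conjecture, this requires a convexity/monotonicity analysis of $\Re\,\Phi$ to push the maximum into the interior and isolate it at $\mathbf{z}_0$, explicit estimates near the degeneration locus, and genuine use of the small-cone-angle hypothesis to keep $\mathbf{z}_0$ uniformly interior and away from spurious branches. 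The new difficulty relative to the link-only (surgery-free) case is controlling the coupling between the Poisson-summation variables introduced by Dehn filling and the $6j$-variables — in particular, ruling out a competing critical point created by the added quadratic phase — and I expect this, rather than the geometric identification above, to be the hardest part of the argument.
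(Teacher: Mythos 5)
Your outline follows the paper's overall architecture — state sum of quantum $6j$-symbols, quantum-dilogarithm asymptotics, Poisson summation, identification of the geometric critical point with the hyperbolic cone structure via the Neumann--Zagier potential, and a saddle-point estimate — so the skeleton is the right one. But the step you defer as ``the main obstacle'' is exactly where the paper's new work lives, and your implementation of the rational filling points you toward the wrong mechanism for it. The paper does not feed $p_i/q_i$ through the TQFT twist coefficient and quadratic Gauss sums; instead it expands each slope as a negative continued fraction $p_i/q_i = a^i_{\zeta_i} - 1/(a^i_{\zeta_i-1} - \cdots)$ and replaces the rationally filled component of $L_{\text{FSL}}$ by a chain of $\zeta_i$ integer-framed unknots. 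After Poisson summation this introduces $\zeta_i-1$ auxiliary phase variables $x^i_1,\dots,x^i_{\zeta_i-1}$ per filled cusp, all appearing quadratically, which are eliminated by iterated one-dimensional Gaussian integrations. The content of the domination estimate then becomes a combinatorial bound on the critical points $x^i_l(x^i)$ of these intermediate Gaussians: for which Fourier modes $(\mathbf A_I,\mathbf A_{\zeta_I},\mathbf B)$ do all $x^i_l$ stay inside $(-\pi,\pi)$, so that the iterated Gaussians contribute a full factor of $r^{-1/2}$ rather than collapsing to $O(1/r)$?

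This is precisely where the hypothesis that $q_i$ is odd enters, and not through Gauss-sum reciprocity. The condition for a Fourier mode to survive the linear-term cancellation reads $2A_0^i = \pm q_i A^i_{\zeta_i}$, where $A_0^i$ is a signed alternating combination of the modes $A^i_l$ weighted by continued-fraction denominators; oddness of $q_i$ forces either $A_0^i=0$ or $|A_0^i|\geq|q_i|$, and in both cases the combinatorial bound transferred from \cite{WY2} shows some $x^i_l$ escapes $(-\pi,\pi)$. Hence only the $(0,\dots,0)$-th Fourier coefficient survives at leading order. On top of this you still need the strict concavity of $\operatorname{Im}G^{\boldsymbol E_I}$ near $(\pi,\dots,\pi,\tfrac{7\pi}{4},\dots,\tfrac{7\pi}{4})$ and the global bound $V\leq v_8$ of \cite{BDKY} to push the maximum strictly into the interior and strictly below the geometric critical value on the boundary of the integration domain, plus a separate estimate for the error term outside the bump function's support. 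None of this machinery appears in your outline, and without the continued-fraction reduction it is not clear a direct Gauss-sum implementation of rational surgery would yield the same separation of Fourier modes; you should either carry out the continued-fraction reduction as in the paper or show concretely how your alternative controls the competing modes.
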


Notice that when $\mathrm{\bf m}^{(r)} = 0$, the relative Reshetikhin-Turaev invariants of the pair $(M,L)$ is equal to the Resehtikhin-Turaev invariants for the closed oriented 3-manifold $M$ with $\boldsymbol\theta=(2\pi,\dots,2\pi)$. In this case, $M_{L_{\boldsymbol\theta}}$ is the 3-manifold $M$ with the complete hyperbolic metric and Conjecture \ref{conj} recovers the Chen-Yang's volume conjecture for closed, oriented hyperbolic 3-manifolds proposed in \cite{CY}. As a result, Conjecture \ref{conj} not only relates the asymptotics of quantum invariants of pairs with hyperbolic geometry of the underlying cone manifold, but also provides a possible approach to solve the Chen-Yang's volume conjecture. 

In \cite{WY1}, T. Yang and the second author study Conjecture \ref{conj} for all pairs $(M,L)$ obtained by doing a change-of-pair operation from the pair $(M_c, L_{\text{FSL}})$, where $M_c = \#^{c+1} (\SS^2\times \SS^1)$ for some $c\in \NN$ and $L_{\text{FSL}}$ is a fundamental shadow link inside $M_c$. 
 Here we recall from \cite[Proposition 1.3 and 1.4]{WY1} that the change-of-pair operation is a topological move that changes a pair $(M,L)$ to another pair $(M^*,L^*)$ without changing the complement, i.e. $M\setminus L \simeq M^* \setminus L^*$. In particular, $M^*$ can be obtained by doing \emph{integral} Dehn fillings on the boundary components of $M\setminus L$. Moreover, if two pairs $(M,L)$ and $(M^*,L^*)$ share the same complement, i.e. $M\setminus L \simeq M^*\setminus L^*$, then they are related by a sequence of change-of-pair operations. In this case, $M^*$ can be obtained by doing \emph{rational} Dehn fillings on  the boundary components of $M\setminus L$. 
In \cite{WY1}, Conjecture \ref{conj} has been proved for all pairs $(M,L)$ obtained by doing a change-of-pair operation from the pair $(M_c, L_{\text{FSL}})$, where $M_c = \#^{c+1} (\SS^2\times \SS^1)$ for some $c\in \NN$ and $L_{\text{FSL}}$ is a fundamental shadow link inside $M_c$, with sufficiently small cone angles. 
Especially, since every closed, oriented 3-manifolds can be obtained by doing Dehn-filling on the boundary of some fundamental shadow link complement \cite{CT}, if the cone angle can be pushed from sufficiently close to 0 all the way to $2\pi$, then one can prove the Chen-Yang's volume conjecture. Besides, to show that it is possible to push the cone angle, in \cite{WY2}, T. Yang and the second author proved Conjecture \ref{conj} for all pairs $(M,L)$ with $M\setminus L$ homeomorphic to the figure eight knot complement in $\SS^3$, for all cone angle from $0$ to $2\pi$, except finitely many cases corresponding to the exceptional surgery of the figure eight knot. 
Furthermore, in \cite{WY4}, T. Yang and the second author refined Conjectue \ref{conj} by studying the asymptotic expansion formula of the relative Reshetikhin-Turaev invariants. Let $M$ be a closed oriented $3$-manifold and let $L$ be a framed hyperbolic link in $M$ with $n$ components. Let $\{\mathrm{\bf m}^{(r)}\}=\{(\mathrm{ m}^{(r)}_1,\dots,\mathrm{ m}^{(r)}_{n})\}$ be a sequence of colorings of the components of $L$ by the elements of $\{0,\dots,r-2\}$ such that for each $k\in\{1,\dots,n\},$ either $\mathrm{m}_k^{(r)}> \frac{r}{2}$ for all $r$ sufficiently large or $\mathrm{m}_k^{(r)}< \frac{r}{2}$ for all $r$ sufficiently large. In the former case we let $\mu_k=1$ and in the latter case we let $\mu_k=-1,$ and we let 
$$\theta^{(r)}_k=\mu_k\bigg(\frac{4\pi \mathrm{ m}^{(r)}_k}{r}-2\pi\bigg).$$
Let $\boldsymbol\theta^{(r)}=(\theta^{(r)}_1,\dots,\theta^{(r)}_{n}).$ Suppose for all $r$ sufficiently large, a hyperbolic cone metric on $M$ with singular locus $L$ and cone angles $\boldsymbol\theta^{(r)}$ exists. We denote $M$ with such a hyperbolic cone metric by $M^{(r)},$  let $\mathrm{Vol}(M^{(r)})$ and $\mathrm{CS}(M^{(r)})$ respectively be the volume and the Chern-Simons invariant of $M^{(r)},$ and let $\mathrm H^{(r)}(\gamma_1),\dots, \mathrm H^{(r)}(\gamma_{n})$ be the logarithmic holonomies in $M^{(r)}$ of the parallel copies $(\gamma_1,\dots,\gamma_{n})$ of the core curves of $L$ given by the framing. Let $\rho_{M^{(r)}}:\pi_1(M\setminus L)\to \mathrm{PSL}(2;\mathbb C)$ be the holonomy representation of  the restriction of $M^{(r)}$ to $M\setminus L,$ and let $\mathbb{T}_{(M\setminus L, \mathbf\Upsilon)}([\rho_{M^{(r)}}])$ be the Reidemeister torsion of $M\setminus L$  twisted by the adjoint action of  $\rho_{M^{(r)}}$ with respect to the system of meridians $\mathbf \Upsilon$ of a tubular neighborhood of the core curves of $L$ (see Section \ref{TRT} for more details).

\begin{conjecture}\label{CRRT}(\cite[Conjecture 1.1]{WY4}) Suppose $\{\theta^{(r)}\}$ converges as $r$ tends to infinity.
Then as $r$ varies over all positive odd integers and at $q=e^{\frac{2\pi \sqrt{-1}}{r}},$ the relative Reshetikhin-Turaev invariants 
$$ \mathrm{RT}_r(M,L,\mathbf m^{(r)})=C\frac{e^{\frac{1}{2}\sum_{k=1}^{n}\mu_k\mathrm H^{(r)}(\gamma_k)}}{\sqrt{\pm\mathbb{T}_{(M\setminus L, \mathrm{\mathbf\Upsilon})}([\rho_{M^{(r)}}])}}e^{\frac{r}{4\pi}\big(\mathrm{Vol}(M^{(r)})+\sqrt{-1}\mathrm{CS}(M^{(r)})\big)}\bigg(1+O\Big(\frac{1}{r}\Big)\bigg),$$
where $C$ is a quantity of norm $1$ independent of the geometric structure on $M.$
\end{conjecture}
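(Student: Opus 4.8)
The plan is to verify the stated expansion for every pair $(M,L)$ with $M\setminus L$ homeomorphic to a fundamental shadow link complement and $M$ obtained by odd-denominator rational Dehn filling on some cusps, in the regime of small cone angles. The first step is to produce a workable closed formula for $\mathrm{RT}_r(M,L,\mathbf m^{(r)})$. Starting from the fundamental shadow link $L_{\mathrm{FSL}}\subset M_c$, the relative Reshetikhin--Turaev invariant of the unfilled pair is, by the Costantino--Thurston description of fundamental shadow links together with the TQFT construction, an explicit state sum: a sum over admissible colorings of the edges of the defining polyhedron of a product of quantum $6j$-symbols, one for each truncated-tetrahedron building block, with the components of $L$ colored by $\mathbf m^{(r)}$. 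Applying the Reshetikhin--Turaev rational surgery formula to the cusps that get filled adds, for each such cusp, one more summation variable weighted by entries of the $S$- and $T$-matrices determined by the slope $p_i/q_i$. This presents $\mathrm{RT}_r(M,L,\mathbf m^{(r)})$ as a sum over a bounded lattice region of (products of $6j$-symbols)$\times$(surgery Gauss factors).

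Next I would pass from quantum factorials to the dilogarithm. Using the standard uniform asymptotic expansion of $\log (q)_n$ in terms of $\operatorname{Li}_2$, each summand is rewritten as $e^{\frac{r}{4\pi}\Phi_r(\mathbf t)}$, where $\mathbf t$ is the vector of rescaled summation (and coloring) variables and $\Phi_r\to\Phi$ a holomorphic potential function on a suitable polytope, with error $O(\log r/r)$ uniformly on compact subsets of the interior. I would then convert the finite sum into an oscillatory integral by Poisson summation. This is where the odd-denominator hypothesis is used: the quadratic Gauss sums coming from the surgery formula, which a priori spread over a lattice coset, collapse to a single clean Gaussian in the continuous variable, so that the resulting integrand is a smooth function times $e^{\frac{r}{4\pi}\Phi_r}$ of the expected shape, to which the saddle point method applies.

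The third step is the saddle point analysis. I would show that the critical point equations $\nabla\Phi=0$, after eliminating the surgery variables, are equivalent to the system of hyperbolic gluing equations for $M\setminus L$ together with the Dehn filling equations $p_i \mathrm H(\mu_i)+q_i \mathrm H(\lambda_i)=\pm 2\pi\sqrt{-1}$ and the cone-angle conditions prescribed by $\theta^{(r)}$; for cone angles sufficiently close to $0$ these have a unique solution near the complete structure, giving a non-degenerate critical point $\mathbf t_0$ corresponding to the holonomy $\rho_{M^{(r)}}$. One must then verify that $\mathbf t_0$ lies in the region reached by the contour deformation and that it dominates all other critical points, which is a genuine but by now fairly standard convexity/positivity estimate on $\Re\Phi$. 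Evaluating $\Phi$ at $\mathbf t_0$ recovers $\mathrm{Vol}(M^{(r)})+\sqrt{-1}\,\mathrm{CS}(M^{(r)})$ up to a $\frac{2\pi^2}{r}\mathbb Z$-ambiguity absorbed into $C$, and the part of the summand linear in the coloring variables contributes the factor $e^{\frac12\sum_k\mu_k \mathrm H^{(r)}(\gamma_k)}$, the signs $\mu_k$ recording on which side of $r/2$ the colors $\mathrm m_k^{(r)}$ sit.

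The final and hardest step is to identify the Gaussian prefactor. The saddle point formula produces $\big(\det\Hess\Phi(\mathbf t_0)\big)^{-1/2}$ times elementary factors, and the claim amounts to $\det\Hess\Phi(\mathbf t_0)=\pm(\text{explicit elementary factor})\cdot\mathbb T_{(M\setminus L,\mathbf\Upsilon)}([\rho_{M^{(r)}}])$. I expect this to be the main obstacle, for two reasons. First, one must compute the Hessian block by block over the truncated tetrahedra and then incorporate the extra rows and columns coming from the surgery quadratic form, and show that the resulting determinant is exactly the adjoint Reidemeister torsion twisted with respect to the meridian system $\mathbf\Upsilon$ --- not some other natural basis of the first homology of the boundary tori; this requires matching the Neumann--Zagier symplectic data encoded in $\Phi$ with the gluing-variety/character-variety description of the torsion in the style of Porti, Dubois and Dimofte--Garoufalidis, and carefully tracking how rational (rather than meridional) filling changes both the manifold and the torsion normalization. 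Second, a long chain of normalization constants --- quantum-integer prefactors, Gauss sum phases, framing corrections --- must be carried consistently so that everything not of norm one is accounted for on the geometric side, with the leftover unit phase becoming the constant $C$. Keeping all error estimates uniform over the family of small cone angles, so that the $1+O(1/r)$ is genuine, is the accompanying technical burden.
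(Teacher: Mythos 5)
Your overall architecture — a $6j$-symbol state sum, quantum dilogarithm asymptotics producing $e^{\frac{r}{4\pi}\Phi_r}$, Poisson summation to pass to an integral, saddle point analysis at a critical point whose equations coincide with the Dehn filling/cone-angle equations, and a Hessian-to-torsion identification via the Neumann--Zagier data — is the same skeleton the paper uses. But two of your key mechanism claims do not match the paper and would not survive as written.

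First, you misidentify where and how the odd-denominator hypothesis enters. You say it makes the Gauss sums from the rational surgery formula collapse to a single clean Gaussian. The paper does not invoke the $S$/$T$-matrix rational surgery formula at all: it replaces each $(p_i,q_i)$-filling by a chain of $\zeta_i$ integer-framed unknots via the continued fraction expansion of $p_i/q_i$ (Figure \ref{surg}, Proposition \ref{RTformula}), which produces only quadratic cross-terms in consecutive variables — valid for any $q_i$. After Poisson summation, those $\zeta_i-1$ chain variables are integrated out one by one using the Gaussian Lemma \ref{lemma52}, reducing the Fourier coefficient to an $(|I|+c)$-dimensional integral (Proposition \ref{leadingFC}). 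The odd-$q_i$ assumption enters only much later, in Lemma \ref{lemma57}: the condition (S1) forces $2A_0^i=-q_iA_{\zeta_i}^i$, and when $q_i$ is odd this forces $A_{\zeta_i}^i$ even, hence $|A_0^i|\geq|q_i|$, which by Lemma \ref{lemma56} kills condition (S2) and leaves $(0,\dots,0)$ as the only candidate Fourier coefficient that could contribute at leading order. If you build your argument around the alternative Gauss-sum-collapse mechanism, you would have to re-derive the analogue of this exclusion of competing Fourier modes; nothing in your proposal does so.

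Second, you claim the final expansion has error $1+O(1/r)$, matching the conjecture statement, and frame this as merely a matter of tracking estimates. The paper only establishes $1+O(1/\sqrt r)$ (Theorem \ref{mainthm}), and the obstruction is structural: each application of Lemma \ref{lemma52}(1) to integrate out a chain variable contributes a multiplicative $1+O(1/\sqrt r)$, and the paper explicitly remarks that improving to $O(1/r)$ remains open. Your proposal neither acknowledges this degradation nor supplies a mechanism to avoid it, so the stated error order is unsubstantiated. As a smaller point, the Hessian-to-torsion step you flag as the hardest is handled in the paper by importing the Gram matrix identity $\mathbb T_{(M,\mathbf u)}=\pm 2^{3c}\prod_s\sqrt{\det\mathbb G_s}$ of Theorem \ref{Rtorthm} together with the Neumann--Zagier comparison of Proposition \ref{NeuZ}; Porti's change-of-curves and surgery formulas convert from the meridian system $\mathbf u$ to $\mathbf\Upsilon$ and produce the $\det(\partial \mathrm H(\Upsilon_i)/\partial \mathrm H(u_j))$ factor that absorbs the surgery quadratic form — this is consistent with what you anticipate, but the specific identity is what makes the block-by-block Hessian computation close.
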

In \cite{WY4}, Conjecture \ref{CRRT} has been proved for all pairs $(M,L)$ obtained by doing a change-of-pair operation from the pair $(M_c, L_{\text{FSL}})$ with sufficiently small cone angles. 
Similar to the relationship between Conjecture \ref{conj} and the Chen-Yang's volume conjecture, Conjecture \ref{CRRT} also provides a new approach to understand the asymptotic expansion of the Reshetikhin-Turaev invariants for closed, oriented 3-manifolds discussed in \cite{GRY} and \cite{OT}. 

In this paper, we combine the ideas in \cite{WY}, \cite{WY1}, \cite{WY2} and \cite{WY4} to study the asymptotic expansion conjecture for the relative Reshetikhin-Turaev invariants of any pair $(M,L)$ with $M\setminus L$ homeomorphic to some $M_c\setminus L_{\text{FSL}}$. Let $\mathrm{H}(u_1),\dots,\mathrm{H}(u_n)$ be the logarithmic holonomies of the meridians of $L_{\text{FSL}}\subset M_c$. Near the complete structure, for any pair $(M,L)$ with $M\setminus L$ homeomorphic to $M_c\setminus L_{\text{FSL}}$, the hyperbolic cone structure of $(M,L)$ can be described by using the parameters $\mathrm{H}(u_1),\dots,\mathrm{H}(u_n)$. 
\begin{theorem}\label{mainthm0}
Given a fundamental shadow link $L_{\text{FSL}} \subset M_c$ with $n$ components. There exists $\delta>0$ (dependent of $L_{\text{FSL}}$) such that if $(M,L)$ is a pair with $M\setminus L$ homeomorphic to $M_c \setminus L_{\text{FSL}}$ and with a hyperbolic cone structure satisfying $|\mathrm{H}(u_k)|<\delta$ and $\theta_k\in [0,\pi)$ for all $k=1,\dots,n$, then Conjecture \ref{CRRT} is true for $(M,L)$.
\end{theorem}

As a special case of Theorem \ref{mainthm0}, 
\begin{theorem}\label{mainthm}
Given a fundamental shadow link $L_{\text{FSL}} \subset M_c$, if $(M,L)$ is a pair with $M\setminus L$ homeomorphic to $M_c \setminus L_{\text{FSL}}$, 
then there exists $\epsilon>0$ such that Conjecture \ref{CRRT} is true for $(M,L)$ for any cone angles $\boldsymbol\theta \in [0,\epsilon)^n$.
\end{theorem}
Note that $M$ in Theorem \ref{mainthm} covers all 3-manifolds $M$ obtained by doing surgery on some fundamental shadow link complement. 
It is expected that in Theorem \ref{mainthm}. when $M$ is a closed, oriented hyperbolic 3-manifold, the cone angles can be pushed to $2\pi$ so that the Chen-Yang's volume conjecture for the Reshetikhin-Turaev invariants of $M$ is true. In this paper, we restrict our attention to the case where $M$ is a hyperbolic 3-manifolds obtained by doing rational surgery on some fundamental shadow link complement with sufficiently large surgery coefficients and all the cone angles are less than $\pi$.

\begin{theorem}\label{mainthmlarge}
Given a fundamental shadow link $L_{\text{FSL}}\subset M_c$ with $n$ components, there exists a constant $C>0$ (depending of $L_{\text{FSL}}$) such that if
\begin{itemize}
\item $M\setminus L$ is homeomorphic to $M_c \setminus L_{\text{FSL}}$; and
\item $M$ is obtained by doing a $\{(p_k,q_k)\}_{k=1}^n$ surgery on the boundaries of $M_c\setminus L_{\text{FSL}}$ with 
$$|p_k| + |q_k| > C$$
for all $k=1,\dots,n$,
\end{itemize}
then Conjecture \ref{CRRT} is true for $(M,L)$ for any cone angles $\boldsymbol\theta\in[0,\pi)^n$.
\end{theorem}

Since doing rational surgery on fundamental shadow link complements provides more possible way to obtain the same closed, oriented 3-manifold, we obtain more possible testing examples to solve the Chen-Yang's volume conjecture. The following result follows immediately from Theorem \ref{mainthm} and \ref{mainthmlarge}.

\begin{theorem}\label{Cor}
Conjecture \ref{conj} is true for all the pair $(M,L)$ with $M\setminus L$ homeomorphic to some fundamental shadow link complement, with small cone angles.
\end{theorem}

\begin{theorem}\label{Cor2}
Conjecture \ref{conj} is true for all the pair $(M,L)$ described in Theorem \ref{mainthmlarge}, with all cone angles less than $\pi$.
\end{theorem}

\subsection*{Plan of this paper}
In Section 2, we give a brief review for the preliminary knowledge required for the proof of Theorem \ref{mainthm}. The materials in this section can be found in \cite{WY, WY1, WY3, WY4}. In Section 3, we compute the relative Reshetikhin-Turaev invariants of $(M,L)$ and express it as a sum of the evaluation of certain holomorphic function at some integral points (Proposition \ref{computation}). An important step is to use a Gauss sum formula (Lemma \ref{sm}) to simplify the relative Reshetikhin-Turaev invariants. In Section 4, we apply the Poisson summation formula to write the invariants as the sum of the Fourier coefficients together with some error terms (Proposition \ref{formulaFC} and \ref{Fperror}). We also gives a simplified expression for the leading Fourier coefficients (Proposition \ref{lfcexpress}). 
In Section 5, we apply the saddle point approximation (Proposition \ref{saddle}) to study the asymptotic expansions of those Fourier coefficients. To do that, in Proposition \ref{prop52}, we show that certain critical values of the function in this Fourier coefficients give the complex volume of the cone manifold. The key observation is that the critical point equations of the function involved coincide with the cone angle equations of the cone manifold $M$ with singular locus $L$. Moreover, in Proposition \ref{lfc}, we verify that under certain technical assumptions, all the conditions required for applying the saddle point method are satisfied. In Proposition \ref{Rtorsion}, we show that the twisted Reideimester torsion appears in the asymptotics of the leading Fourier coefficient. The main idea is to apply the relationship between the torsion and the Gram matrix function studied in \cite{WY3} and \cite{WY4}. In Proposition \ref{0FCasym}, we obtain the asymptotic expansions for the leading Fourier coefficients, which capture the complex volume and the twisted Reidemeister torsion of the manifold with the cone structure determined by the sequence of colorings. 
Finally, in Proposition \ref{fail0}, \ref{fail1}, \ref{fail2} and \ref{errorest}, we show under certain technical assumption, the sum of all the other Fourier coefficients and the error term in Proposition \ref{Fperror} are negligible. In Lemma \ref{assat0}, \ref{assat1} and \ref{assat2}, we show respectively that in the contexts of Theorem \ref{mainthm0}, \ref{mainthm} and \ref{mainthmlarge}, all the technical assumptions mentioned above are satisfied. This completes the proof of the main theorems.

\subsection*{Acknowledgement} The authors would like to thank their supervisor Tian Yang for his guidance.

\section{Preliminaries}
The materials in this section are from \cite{WY, WY1, WY3, WY4}. We include the materials here for the reader's convinence.

\subsection{Relative Reshetikhin-Turaev invariants}\label{RRT}

In this article we will follow the skein theoretical approach of the relative Reshetikhin-Turaev invariants\,\cite{BHMV, Li} and focus on the $SO(3)$-theory and the values at the root of unity  $q=e^{\frac{2\pi\sqrt{-1}}{r}}$ for odd integers $r\geqslant 3.$

A framed link in an oriented $3$-manifold $M$ is a smooth embedding $L$ of a disjoint union of finitely many thickened circles $\mathrm S^1\times [0,\epsilon],$ for some $\epsilon>0,$ into $M.$ The Kauffman bracket skein module $\mathrm K_r(M)$ of $M$ is the $\mathbb C$-module generated by the isotopic classes of framed links in $M$  modulo the follow two relations: 

\begin{enumerate}[(1)]
\item  \emph{Kauffman Bracket Skein Relation:} \ $\cp{\includegraphics[width=1cm]{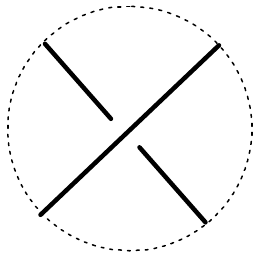}}\ =\ e^{\frac{\pi\sqrt{-1}}{r}}\ \cp{\includegraphics[width=1cm]{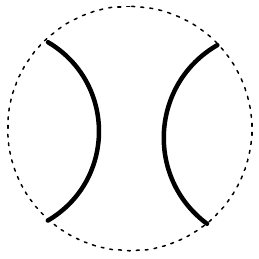}}\  +\ e^{-\frac{\pi\sqrt{-1}}{r}}\ \cp{\includegraphics[width=1cm]{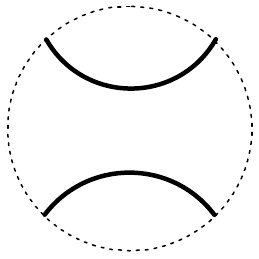}}.$ 

\item \emph{Framing Relation:} \ $L \cup \cp{\includegraphics[width=0.8cm]{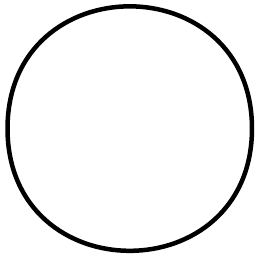}}=(-e^{\frac{2\pi\sqrt{-1}}{r}}-e^{-\frac{2\pi\sqrt{-1}}{r}})\ L.$ 
\end{enumerate}

There is a canonical isomorphism 
$$\langle\ \rangle:\mathrm K_r(\mathrm S^3)\to\mathbb C$$
defined by sending the empty link to $1.$ The image $\langle L\rangle$ of the framed link $L$ is called the Kauffman bracket of $L.$

Let $\mathrm K_r(A\times [0,1])$ be the Kauffman bracket skein module of the product of an annulus $A$ with a closed interval. For any link diagram $D$ in $\mathbb R^2$ with $k$ ordered components and $b_1, \dots, b_k\in \mathrm K_r(A\times [0,1]),$ let 
$$\langle b_1,\dots, b_k\rangle_D$$
be the complex number obtained by cabling $b_1,\dots, b_k$ along the components of $D$ considered as a element of $K_r(\mathrm S^3)$ then taking the Kauffman bracket $\langle\ \rangle.$

On $\mathrm K_r(A\times [0,1])$ there is a commutative multiplication induced by the juxtaposition of annuli, making it a $\mathbb C$-algebra; and as a $\mathbb C$-algebra $\mathrm K_r(A\times [0,1])  \cong \mathbb C[z],$ where $z$ is the core curve of $A.$ For an integer $n\geqslant 0,$ let $e_n(z)$ be the $n$-th Chebyshev polynomial defined recursively by
$e_0(z)=1,$ $e_1(z)=z$ and $e_n(z)=ze_{n-1}(z)-e_{n-2}(z).$ Let $\mathrm{I}_r=\{0,2,\dots,r-3\}$ be the set of even integers in between $0$ and $r-2.$ Then the Kirby coloring $\Omega_r\in\mathrm K_r(A\times [0,1])$ is defined by 
$$\Omega_r=\mu_r\sum_{n\in \mathrm{I}_r}[n+1]e_{n},$$
where
 $$\mu_r=\frac{2\sin\frac{2\pi}{r}}{\sqrt r}$$ 
 and $[n]$ is the quantum integer defined by
$$[n]=\frac{e^{\frac{2n\pi\sqrt{-1}}{r}}-e^{-\frac{2n\pi\sqrt{-1}}{r}}}{e^{\frac{2\pi\sqrt{-1}}{r}}-e^{-\frac{2\pi\sqrt{-1}}{r}}}.$$

Let $M$ be a closed oriented $3$-manifold and let $L$ be a framed link in $M$ with $n$ components. Suppose $M$ is obtained from $S^3$ by doing a surgery along a framed link $L',$ $D(L')$ is a standard diagram of $L'$ (ie, the blackboard framing of $D(L')$ coincides with the framing of $L'$). Then $L$ adds extra components to $D(L')$ forming a linking diagram $D(L\cup L')$ with $D(L)$ and $D(L')$ linking in possibly a complicated way. Let
$U_+$ be the diagram of the unknot with framing $1,$ $\sigma(L')$ be the signature of the linking matrix of $L'$ and $\mathbf m=(m_1,\dots,m_n)$ be a multi-elements of $I_r.$ Then the $r$-th \emph{relative Reshetikhin-Turaev invariant of $M$ with $L$ colored by $\mathbf m$} is defined as
\begin{equation}\label{RT}
\mathrm{RT}_r(M,L,\mathbf m)=\mu_r \langle e_{m_1},\dots,e_{m_n}, \Omega_r, \dots, \Omega_r\rangle_{D(L\cup L')}\langle \Omega_r\rangle _{U_+}^{-\sigma(L')}.
\end{equation}

Note that if $L=\emptyset$ or $m_1=\dots=m_n=0,$ then $\mathrm{RT}_r(M,L,\mathbf m)=\mathrm{RT}_r(M),$ the $r$-th Reshetikhin-Turaev invariant of $M;$ and if $M=S^3,$ then $\mathrm{RT}_r(M,L,\mathbf m)=\mu_r\mathrm J_{\mathbf m, L}(q^2),$ the value of the $\mathbf m$-th unnormalized colored Jones polynomial of $L$ at $t=q^2.$

\subsection{Hyperbolic cone manifolds}\label{CCS}

According to \cite{CHK}, a $3$-dimensional \emph{hyperbolic cone-manifold} is a $3$-manifold $M,$ which can be triangulated so that the link of each simplex is piecewise linear homeomorphic to a standard sphere and $M$ is equipped with a complete path metric such that the restriction of the metric to each simplex is isometric to a hyperbolic geodesic simplex. The \emph{singular locus} $L$ of a cone-manifold $M$ consists of the points with no neighborhood isometric to a ball in a Riemannian manifold. It follows that
\begin{enumerate}[(1)]
\item $L$ is a link in $M$ such that each component is a closed geodesic.

\item At each point of $L$ there is a \emph{cone angle} $\theta$ which is the sum of dihedral angles of $3$-simplices containing the point.

\item The restriction of the metric on $M\setminus L$ is a smooth hyperbolic metric, but is incomplete if $L\neq \emptyset.$
\end{enumerate}

Hodgson-Kerckhoff\,\cite{HK} proved that hyperbolic cone metrics on $M$ with singular locus $L$ are locally parametrized by the cone angles provided all the cone angles are less than or equal to $2\pi,$ and Kojima\,\cite{Ko} proved that hyperbolic cone manifolds $(M,L)$ are globally rigid provided all the cone angles are less than or equal to $\pi.$ It is expected to be  globally rigid if all the cone angles are less than or equal to $2\pi.$

Given a $3$-manifold $N$ with boundary a union of tori $T_1,\dots,T_n,$ a choice of generators $(u_i,v_i)$ for each $\pi_1(T_i)$ and pairs of relatively prime integers $(p_i, q_i),$ one can do the $(\frac{p_1}{q_1},\dots,\frac{p_n}{q_n})$-Dehn filling on $N$ by attaching a solid torus to each $T_i$ so that $p_iu_i+q_iv_i$ bounds a disk.  If $\mathrm H(u_i)$ and  $\mathrm H(v_i)$ are respectively the logarithmic holonomy for $u_i$ and $v_i,$ then a solution to  
\begin{equation}\label{DF}
p_i\mathrm H(u_i)+q_i\mathrm H(v_i)=\sqrt{-1}\theta_i
\end{equation}
 near the complete structure gives a cone-manifold structure on the resulting manifold $M$ with the cone angle $\theta_i$ along the core curve $L_i$ of the solid torus attached to $T_i;$ it is a smooth structure if $\theta_1=\dots=\theta_n= 2\pi.$

In this setting, the Chern-Simons invariant for a hyperbolic cone manifold $(M,L)$ can be defined by using the Neumann-Zagier potential function\,\cite{NZ}. To do this, we need a framing on each component, namely, a choice of a curve $\gamma_i$ on $T_i$ that is isotopic to the core curve $L_i$ of the solid torus attached to $T_i.$ We choose the orientation of $\gamma_i$ so that $(p_iu_i+q_iv_i)\cdot \gamma_i=1.$ Then we consider the following function 
$$\frac{\Phi(\mathrm{H}(u_1),\dots,\mathrm{H}(u_n))}{\sqrt{-1}}-\sum_{i=1}^n\frac{\mathrm H(u_i)\mathrm H(v_i)}{4\sqrt{-1}}+\sum_{i=1}^n\frac{\theta_i\mathrm H(\gamma_i)}{4},$$
where $\Phi$ is the Neumann-Zagier potential function (see \cite{NZ}) defined on the deformation space of hyperbolic structures on $M\setminus L$ parametrized by the holonomy of the meridians $\{\mathrm H(u_i)\},$ characterized by 
\begin{equation}\label{char}
\left \{\begin{array}{l}
\frac{\partial \Phi(\mathrm{H}(u_1),\dots,\mathrm{H}(u_n))}{\partial \mathrm{H}(u_i)}=\frac{\mathrm H(v_i)}{2},\\
\\
\Phi(0,\dots,0)=\sqrt{-1}\bigg(\mathrm{Vol}(M\setminus L)+\sqrt{-1}\mathrm{CS}(M\setminus L)\bigg)\quad \mod\ \pi^2\mathbb Z,
\end{array}\right.
\end{equation} 
where $M\setminus L$ is with the complete hyperbolic metric. Another important feature of $\Phi$ is that it is even in each of its variables $\mathrm H(u_i).$

Following the argument in \cite[Sections 4 \& 5]{NZ}, one can prove that if  the cone angles of components of $L$ are $\theta_1,\dots,\theta_n,$ then
\begin{equation}\label{VOL}
\mathrm{Vol}(M_{L_{\boldsymbol\theta}})=\mathrm{Re}\bigg(\frac{\Phi(\mathrm{H}(u_1),\dots,\mathrm{H}(u_n))}{\sqrt{-1}}-\sum_{i=1}^n\frac{\mathrm H(u_i)\mathrm H(v_i)}{4\sqrt{-1}}+\sum_{i=1}^n\frac{\theta_i\mathrm H(\gamma_i)}{4}\bigg).
\end{equation}
Indeed, in this case, one can replace the $2\pi$ in Equations (33) (34) and (35) of \cite{NZ} by $\theta_i,$ and as a consequence can replace the $\frac{\pi}{2}$ in Equations (45), (46) and (48) by $\frac{\theta_i}{4},$ proving the result.

In \cite{Y}, Yoshida proved that when $\theta_1=\dots=\theta_n=2\pi,$
$$\mathrm{Vol}(M)+\sqrt{-1}\mathrm{CS}(M)=\frac{\Phi(\mathrm{H}(u_1),\dots,\mathrm{H}(u_n))}{\sqrt{-1}}-\sum_{i=1}^n\frac{\mathrm H(u_i)\mathrm H(v_i)}{4\sqrt{-1}}+\sum_{i=1}^n\frac{\theta_i\mathrm H(\gamma_i)}{4}\quad\mod\sqrt{-1}\pi^2\mathbb Z.$$

Therefore, we can make the following 

\begin{definition}\label{CS} The \emph{Chern-Simons invariant} of a hyperbolic cone manifold $M_{L_{\boldsymbol\theta}}$ with a choice of the framing $(\gamma_1,\dots,\gamma_n)$ 
is defined as
$$\mathrm{CS}(M_{L_{\boldsymbol\theta}})=\mathrm{Im}\bigg(\frac{\Phi(\mathrm{H}(u_1),\dots,\mathrm{H}(u_n))}{\sqrt{-1}}-\sum_{i=1}^n\frac{\mathrm H(u_i)\mathrm H(v_i)}{4\sqrt{-1}}+\sum_{i=1}^n\frac{\theta_i\mathrm H(\gamma_i)}{4}\bigg)\quad\mod \pi^2\mathbb Z.$$
\end{definition}

Then together with (\ref{VOL}), we have
\begin{equation}\label{VCS}
\mathrm{Vol}(M_{L_{\boldsymbol\theta}})+\sqrt{-1}\mathrm{CS}(M_{L_{\boldsymbol\theta}})=\frac{\Phi(\mathrm{H}(u_1),\dots,\mathrm{H}(u_n))}{\sqrt{-1}}-\sum_{i=1}^n\frac{\mathrm H(u_i)\mathrm H(v_i)}{4\sqrt{-1}}+\sum_{i=1}^n\frac{\theta_i\mathrm H(\gamma_i)}{4}\quad\mod\sqrt{-1}\pi^2\mathbb Z.
\end{equation}


\subsection{Quantum $6j$-symbols} 

A triple $(m_1,m_2,m_3)$ of even integers in $\{0,2,\dots,r-3\}$ is \emph{$r$-admissible} if 
\begin{enumerate}[(1)]
\item  $m_i+m_j-m_k\geqslant 0$ for $\{i,j,k\}=\{1,2,3\},$
\item $m_1+m_2+m_3\leqslant 2(r-2).$
\end{enumerate}

Recall that for $n\in \ZZ_{\geq 0}$, the quantum factorial $[n]!$ is defined by $[0]!=1$ and 
$$ [n]! = \prod_{k=1}^n [k]$$
for $n\geq 0$. 
For an $r$-admissible triple $(m_1,m_2,m_3),$ define 
$$\Delta(m_1,m_2,m_3)=\sqrt{\frac{[\frac{m_1+m_2-m_3}{2}]![\frac{m_2+m_3-m_1}{2}]![\frac{m_3+m_1-m_2}{2}]!}{[\frac{m_1+m_2+m_3}{2}+1]!}}$$
with the convention that $\sqrt{x}=\sqrt{|x|}\sqrt{-1}$ when the real number $x$ is negative.

A  6-tuple $(m_1,\dots,m_6)$  is \emph{$r$-admissible} if the triples $(m_1,m_2,m_3),$ $(m_1,m_5,m_6),$ $(m_2,m_4,m_6)$ and $(m_3,m_4,m_5)$ are $r$-admissible

\begin{definition}
The \emph{quantum $6j$-symbol} of an $r$-admissible 6-tuple $(m_1,\dots,m_6)$ is 
\begin{multline*}
\bigg|\begin{matrix} m_1 & m_2 & m_3 \\ m_4 & m_5 & m_6 \end{matrix} \bigg|
= \sqrt{-1}^{-\sum_{i=1}^6m_i}\Delta(m_1,m_2,m_3)\Delta(m_1,m_5,m_6)\Delta(m_2,m_4,m_6)\Delta(m_3,m_4,m_5)\\
\sum_{k=\max \{T_1, T_2, T_3, T_4\}}^{\min\{ Q_1,Q_2,Q_3\}}\frac{(-1)^k[k+1]!}{[k-T_1]![k-T_2]![k-T_3]![k-T_4]![Q_1-k]![Q_2-k]![Q_3-k]!},
\end{multline*}
where $T_1=\frac{m_1+m_2+m_3}{2},$ $T_2=\frac{m_1+m_5+m_6}{2},$ $T_3=\frac{m_2+m_4+m_6}{2}$ and $T_4=\frac{m_3+m_4+m_5}{2},$ $Q_1=\frac{m_1+m_2+m_4+m_5}{2},$ $Q_2=\frac{m_1+m_3+m_4+m_6}{2}$ and $Q_3=\frac{m_2+m_3+m_5+m_6}{2}.$
\end{definition}

\begin{definition}\label{defhyperidealm} An $r$-admissible $6$-tuple $(m_1,\dots,m_6)$  is of the \emph{hyperideal type} if for $\{i,j,k\}=\{1,2,3\},$ $\{1,5,6\},$ $\{2,4,6\}$ and $\{3,4,5\},$
\begin{enumerate}[(1)]
\item $0\leqslant m_i+m_j-m_k<r-2,$ and
\item $r-2\leqslant m_i+m_j+m_k\leqslant 2(r-2).$
\end{enumerate}
\end{definition}

Here we recall a classical result of Costantino \cite{C1} which was originally stated at the root of unity $q=e^{\frac{\pi \sqrt{-1}}{r}}.$ At the root of unity $q=e^{\frac{2\pi\sqrt{-1}}{r}},$ see \cite[Appendix]{BDKY} for a detailed proof.

\begin{theorem}[\cite{C1}]\label{Vol}
Let $\{(m_1^{(r)},\dots,m_6^{(r)})\}$ be a sequence of $r$-admissible
$6$-tuples, and
let 
$$\theta_i=\Big|\pi-\lim_{r\rightarrow\infty}\frac{2\pi m_i^{(r)}}{r}\Big|.$$
If $\theta_1,\dots,\theta_6$ are the dihedral angles of a truncated hyperideal tetrahedron $\Delta,$ then
as $r$ varies over all the odd integers 
$$\lim_{r\to\infty}\frac{2\pi}{r}\log \bigg|\begin{array}{ccc}m_1^{(r)} & m_2^{(r)} & m_3^{(r)} \\m_4^{(r)} & m_5^{(r)} & m_6^{(r)} \\\end{array} \bigg|_{q=e^{\frac{2\pi \sqrt{-1}}{r}}}=\mathrm{Vol}(\Delta).$$
\end{theorem}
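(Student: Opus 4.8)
The plan is to follow Costantino's argument \cite{C1}, in the form adapted to the root of unity $q=e^{\frac{2\pi\sqrt{-1}}{r}}$ in \cite[Appendix]{BDKY}. Writing the quantum integer as $[n]=\frac{\sin(2\pi n/r)}{\sin(2\pi/r)}$, every quantum factorial occurring in the four factors $\Delta(\cdot,\cdot,\cdot)$ and in the $k$-sum of the $6j$-symbol becomes, up to an explicit power of $\sin(2\pi/r)$, a product of terms $\sin(2\pi j/r)$. The analytic input is the asymptotic expansion of such products (equivalently, of the quantum dilogarithm) at this root of unity: if $\{n^{(r)}\}$ satisfies $\frac{2\pi n^{(r)}}{r}\to\xi$, then $\log\bigl|[n^{(r)}]!\bigr|$ is, to leading order, $\frac{r}{2\pi}\int_0^{\xi}\log\bigl|2\sin t\bigr|\,dt$ plus an $r\log r$ term and lower-order corrections, the latter controlled uniformly away from the walls $\xi\in\{0,\pi,2\pi\}$; the accumulated signs of the (possibly negative) real factors, together with the convention $\sqrt{x}=\sqrt{|x|}\sqrt{-1}$ in the $\Delta$'s and the factor $(-1)^k$, supply a phase that one must likewise track. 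Substituting into the explicit formula and observing that the $r\log r$ contributions cancel between numerator and denominator, one finds that the $k$-th summand has the form $\exp\bigl(\frac{r}{2\pi}\mathcal V(k/r)+O(\log r)\bigr)$ for a holomorphic ``potential function'' $\mathcal V$, defined on a complex neighbourhood of the interval of admissible values of $k/r$ and built from dilogarithms of the exponentials of the limiting parameters $\lim_r \frac{2\pi m_i^{(r)}}{r}=\pi\mp\theta_i$.

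Since the $k$-sum has only $O(r)$ terms, the triangle inequality gives at once
$$\limsup_{r\to\infty}\frac{2\pi}{r}\log\,\Bigl|\begin{matrix} m_1^{(r)} & m_2^{(r)} & m_3^{(r)}\\ m_4^{(r)} & m_5^{(r)} & m_6^{(r)}\end{matrix}\Bigr|\ \le\ \max_{t}\ \Re\,\mathcal V(t),$$
the maximum being over the real admissibility interval. For the matching lower bound one runs a saddle-point (Laplace-type) analysis of the sum: one locates the critical point $z_0$ of $\mathcal V$, checks the non-degeneracy $\mathcal V''(z_0)\neq 0$, and checks that the summation range (the real interval) can be pushed through $z_0$ without crossing a singularity of $\mathcal V$, so that the whole sum equals $\exp\bigl(\frac{r}{2\pi}\Re\,\mathcal V(z_0)+O(\log r)\bigr)$ — only a polynomial-in-$r$, not an exponential, factor being lost. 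It remains to identify $\Re\,\mathcal V(z_0)$ with $\Vol(\Delta)$ and to see that it equals $\max_t \Re\,\mathcal V(t)$: the critical point equation $\mathcal V'(z_0)=0$, combined with the relations tying the parameters to the dihedral angles $\theta_i$, is precisely the system describing the truncated hyperideal tetrahedron with those angles, and $\Re\,\mathcal V(z_0)$ then agrees with the dilogarithmic volume formula for that tetrahedron (this is the computation carried out in \cite{C1}); alternatively one reduces to the known volume formula through the interpretation of the $6j$-symbol as a Turaev--Viro state-sum weight. Combining the two bounds yields the claimed limit.

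The main obstacle is the lower bound for the sum — ruling out exponential cancellation among the $O(r)$ oscillating summands. The phases coming from $(-1)^k$, from the signs of the real quantum factorials, and from the $\sqrt{-1}$-conventions must all be accounted for, and one must show that $|\mathrm{summand}|$ peaks at a point which is a critical point of the \emph{full complex} potential $\mathcal V$, not merely of $\Re\,\mathcal V$, so that the sum is genuinely comparable to a Gaussian integral centred at $z_0$ and no exponential loss occurs; this is where the hypothesis that $(\theta_1,\dots,\theta_6)$ is the angle system of an actual truncated hyperideal tetrahedron enters, guaranteeing a suitably placed solution of the critical point equations. A secondary technicality is the uniformity of the quantum-factorial asymptotics near the walls $\xi\in\{0,\pi,2\pi\}$, where the integrand $\log|2\sin t|$ has an integrable singularity: sequences of admissible $6$-tuples whose normalized colours drift toward a wall — equivalently, some $\theta_i\to 0$ or $\theta_i\to\pi$ — need separate treatment. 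Both points are handled in \cite{C1}, and at $q=e^{\frac{2\pi\sqrt{-1}}{r}}$ in \cite[Appendix]{BDKY}.
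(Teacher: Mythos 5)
The paper does not prove this theorem; it recalls it from Costantino \cite{C1}, noting only that a detailed argument at the root of unity $q=e^{2\pi\sqrt{-1}/r}$ appears in the appendix of \cite{BDKY}. There is therefore no ``paper's own proof'' to compare against. That said, your sketch is a fair description of the route those references take: write quantum factorials as products of sines, extract the $r\log r$ and main terms via the asymptotics of the Faddeev-type quantum dilogarithm (the function $\varphi_r$ used throughout Section~2.6 of this paper), so that each $k$-summand takes the form $\exp\bigl(\tfrac{r}{4\pi\sqrt{-1}}U_r(\cdot,\tfrac{2\pi k}{r})\bigr)$ for a potential $U_r$ built from dilogarithms; bound the sum above by $O(r)$ times the largest summand; and produce a matching lower bound at the critical point, which the geometric interpretation identifies with $\Vol(\Delta)$.

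The one place where your sketch glosses over the mechanism that actually makes the argument close is the non-cancellation step. You correctly flag it as the main obstacle and correctly say one must show the modulus-maximiser is a critical point of the \emph{full complex} potential, but you leave unexplained why this should be true and why no contour deformation is needed. The structural reason, visible in this paper's own formula (\ref{Ureal}), is that for \emph{real} colors of hyperideal type the limit potential decomposes as $U(\alpha,\xi)=2\pi^2+2\sqrt{-1}\,V(\alpha,\xi)$ with $V$ real-valued: the real part of $U$ is the constant $2\pi^2$, so the summands have essentially constant phase in $k$, and the real maximiser of the strictly concave $V(\alpha,\cdot)$ is automatically a critical point of the holomorphic $U$ on the real axis. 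Without the observation $\Re U\equiv 2\pi^2$, ``peaks at a complex critical point'' is an assertion rather than a consequence, and the lower bound would genuinely be in doubt. Your secondary concern about uniformity near the admissibility walls is also real and is likewise handled in \cite{C1} and \cite{BDKY}; so the proposal is correct in outline, but it defers to the references precisely at the step that carries the mathematical weight.
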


Closely related, a triple $(\alpha_1,\alpha_2,\alpha_3)\in [0,2\pi]^3$ is \emph{admissible} if 
\begin{enumerate}[(1)]
\item $\alpha_i+\alpha_j-\alpha_k\geqslant 0$ for $\{i,j,k\}=\{1,2,3\},$
\item $\alpha_i+\alpha_j+\alpha_k\leqslant 4\pi.$
\end{enumerate}
A $6$-tuple $(\alpha_1,\dots,\alpha_6)\in [0,2\pi]^6$ is \emph{admissible} if the triples $\{1,2,3\},$ $\{1,5,6\},$ $\{2,4,6\}$ and $\{3,4,5\}$ are admissible.

\begin{definition}\label{defhyperideal} A $6$-tuple $(\alpha_1,\dots,\alpha_6)\in [0,2\pi]^6$ is of the \emph{hyperideal type} if for $\{i,j,k\}=\{1,2,3\},$ $\{1,5,6\},$ $\{2,4,6\}$ and $\{3,4,5\},$
\begin{enumerate}[(1)]
\item $0\leqslant \alpha_i+\alpha_j-\alpha_k\leqslant 2\pi,$ and
\item $2\pi\leqslant \alpha_i+\alpha_j+\alpha_k\leqslant 4\pi.$
\end{enumerate}
\end{definition}


\subsection{Fundamental shadow links}\label{fsl}

In this section we recall the construction and basic properties of the fundamental shadow links. The building blocks for the fundamental shadow links are truncated tetrahedra as in the left of Figure \ref{bb}. If we take $c$ building blocks $\Delta_1,\dots, \Delta_c$ and glue them together along the triangles of truncation, we obtain a (possibly non-orientable) handlebody of genus $c+1$ with a link in its boundary consisting of the edges of the building blocks, such as in the right of Figure \ref{bb}. By taking the orientable double (the orientable double
covering with the boundary quotient out by the deck involution) of this handlebody, we obtain a link $L_{\text{FSL}}$ inside $M_c=\#^{c+1}(S^2\times S^1).$ We call a link obtained this way a \emph{fundamental shadow link}, and its complement in $M_c$ a \emph{fundamental shadow link complement}.

\begin{figure}[htbp]
\centering
\includegraphics[scale=0.25]{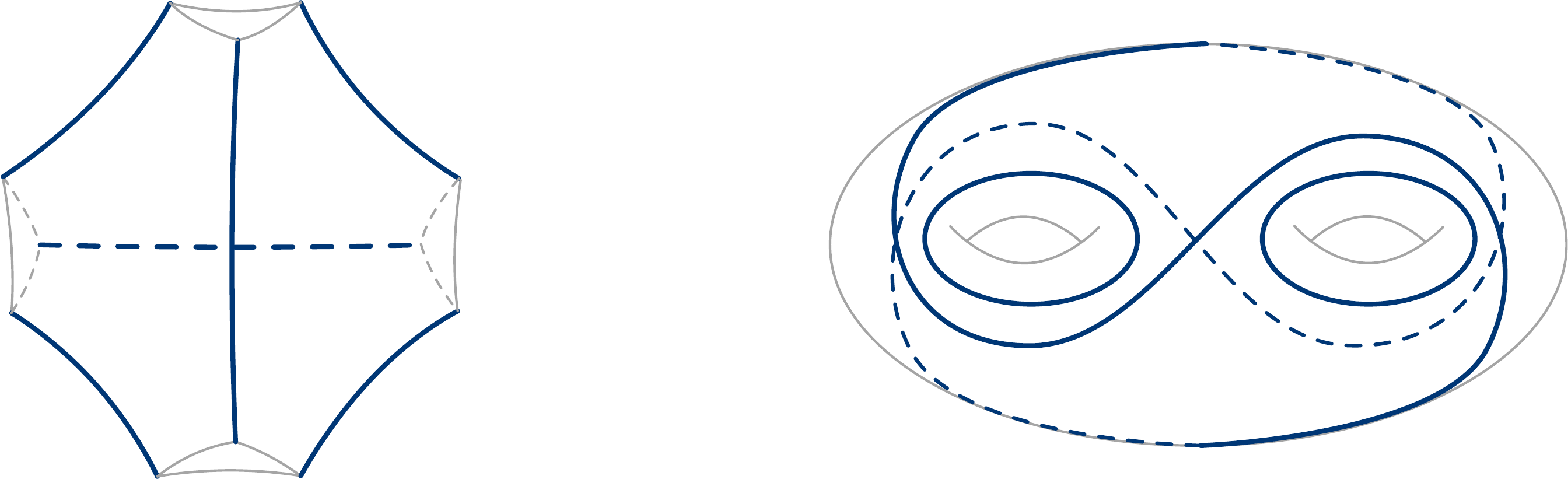}
\caption{The handlebody on the right is obtained from the truncated tetrahedron on the left by identifying the triangles on the top and the bottom by a horizontal reflection and the triangles on the left and the right by a vertical reflection.}
\label{bb}
\end{figure}

The fundamental importance of the family of the fundamental shadow links is the following.

\begin{theorem}[\cite{CT}]\label{CT} Any compact oriented $3$-manifold with toroidal or empty boundary can be obtained from a suitable fundamental shadow link complement by doing an integral Dehn-filling to some of the boundary components.
\end{theorem}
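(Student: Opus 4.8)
The plan is to follow Costantino--Thurston and argue through Turaev's theory of shadows, the framework in which fundamental shadow links are the natural objects. Recall that a \emph{shadow} of a compact oriented $3$--manifold $N$ is a simple $2$--polyhedron $X$, decorated with half--integer ``gleams'' on its $2$--strata, such that some compact oriented $4$--manifold $W$ with $\partial W=N$ collapses onto $X$, the gleams recording the framing/self--intersection data. Every compact oriented $3$--manifold with toroidal or empty boundary admits such a shadow: present $N$ by drilling and/or surgering a link in $S^3$, build the corresponding $4$--manifold out of handles of index at most $2$, and apply the standard recipe turning its $2$--spine into a shadow. Using Turaev's shadow moves together with the Matveev--Piergallini calculus, $X$ can be taken to be a \emph{special} polyhedron: its singular locus is a graph all of whose vertices are of the standard ``butterfly'' type, each having a regular neighborhood homeomorphic to the cone on the $1$--skeleton of a tetrahedron. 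Thus $X$ is assembled from $v$ copies of this basic block $B$, glued in pairs along pieces of their boundaries, where $v$ is the number of true vertices.

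The next step is to recognize $(M_c,L_{\text{FSL}})$ in this picture. Taking $c$ truncated tetrahedra and remembering only the combinatorics of the gluings along the triangles of truncation produces exactly the closed special polyhedron $P_c$ obtained from $c$ copies of $B$ by pairing up all their ``triangular'' boundary faces; with the gleams coming from the framings, $P_c$ is a shadow of the pair $(M_c,L_{\text{FSL}})$, where $L_{\text{FSL}}$ is recorded by the loci along which one opens up the spine. Dually, this is the well--known decomposition of $M_c\setminus L_{\text{FSL}}$ into $2c$ ideal octahedra. The heart of the argument is then a universality statement: after finitely many lune/$(0\leftrightarrow 2)$ moves on $X$ (which do not change $N$ but may raise the vertex count to some $c$), the resulting special spine embeds as a sub--polyhedron of $P_c$ whose complement is exactly the collection of boundary triangles and annuli carrying $L_{\text{FSL}}$. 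Passing back to $3$--manifolds, thickening $P_c$ yields $M_c\setminus L_{\text{FSL}}$, thickening $X$ yields $N$ with a collection of open solid tori removed, and the inclusion $X\subset P_c$ becomes precisely a presentation of $N$ as a Dehn filling of $M_c\setminus L_{\text{FSL}}$ along some of the cusps coming from $L_{\text{FSL}}$, the filling slopes being read off from the gleams of the capped--off regions.

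Two bookkeeping points finish the argument. First, the closed case reduces to the bounded case: given closed $N$, drill out any knot $K\subset N$, so that $N\setminus K$ has toroidal boundary and hence is an integral Dehn filling of some $M_c\setminus L_{\text{FSL}}$; refilling the cusp of $K$ with a framing chosen to make its slope integral then exhibits $N$ itself as a Dehn filling of that same $M_c\setminus L_{\text{FSL}}$, now filling one further cusp. Second, the handlebody produced by gluing the truncated tetrahedra may be non--orientable, which is why one passes to its orientable double; this at worst changes the number $c$ of building blocks and not the conclusion.

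The step I expect to be the main obstacle is the universality statement, together with the control of the gleams: one must show that an arbitrary special spine can be stabilized so as to sit inside some $P_c$, and --- the genuinely delicate point --- that the gleams on the regions being filled can always be arranged to be integers, so that the Dehn filling in the statement is \emph{integral}. This integrality is exactly why the building blocks are truncated tetrahedra (equivalently, why $M_c\setminus L_{\text{FSL}}$ is glued from ideal octahedra): the relevant canonical framings are even, and this is what forces the surgery coefficients to be integers. The remaining ingredients --- existence of shadows, the reduction of the closed case, and the octahedral decomposition --- are standard or routine.
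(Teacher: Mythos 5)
The paper simply cites \cite{CT} for this theorem and gives no proof of its own, so the only meaningful comparison is with Costantino--Thurston's actual argument, which does indeed go through Turaev's shadow theory as you describe. Your outline captures the correct framework (shadows with gleams; $(M_c,L_{\text{FSL}})$ as the piece carved out by the true vertices; gleam integrality as the crucial point), but the load-bearing step is stated backwards. It is not that the special spine $X$ of $N$ embeds inside some fixed $P_c$; rather, a regular neighborhood $P'$ of the singular locus $S(X)$ is a sub-polyhedron of $X$, this $P'$ is assembled from $c$ butterfly blocks and (with its induced gleams) is a shadow of the FSL exterior, and $X$ is recovered from $P'$ by attaching the remaining $2$-regions along $\partial P'$. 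Thickening to dimension four, those attached regions become $2$-handles, and on the boundary they become the Dehn fillings of $M_c\setminus L_{\text{FSL}}$. The complementary regions $X\setminus P'$ are the pieces that get filled, not the other way around; as written, ``$X$ embeds in $P_c$ with complement the $L_{\text{FSL}}$-annuli'' does not parse for a closed $X$ and is not what CT prove.

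Two further points. The reduction of the closed case to the toroidal-boundary case by drilling a knot $K$ is both unnecessary and has a gap: once $N\setminus K$ is presented as an integral filling of some $M_c\setminus L_{\text{FSL}}$, the peripheral basis on the cusp coming from $K$ is already fixed by the FSL structure (as in equations (\ref{m}) and (\ref{l})), so one cannot ``choose a framing to make the refilling slope integral.'' In CT the closed case is handled directly: a closed shadow has $2$-disk regions, and each one is an integral filling once the gleams are adjusted. Second, the claim that ``the relevant canonical framings are even'' is not quite the mechanism; what CT prove is that, after stabilizing by shadow moves, one may arrange all the gleams on the regions to be filled to lie in $\mathbb Z$ (rather than $\tfrac12\mathbb Z$), and that integer gleam is exactly an integral surgery slope in the peripheral basis induced by the octahedral decomposition. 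You have correctly identified this integrality as the genuine content of the theorem, but the justification offered is a heuristic rather than the actual parity/move argument in \cite{CT}.
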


A hyperbolic cone metric on $M_c$ with singular locus $L_{\text{FSL}}$ and with sufficiently small cone angles $\theta_1,\dots,\theta_n$ can be constructed as follows. For each $s\in \{1,\dots, c\},$ let $e_{s_1},\dots,e_{s_6}$ be the edges of the building block $\Delta_s,$ and $\theta_{s_j}$ be the cone angle of the component of $L$ containing $e_{s_j}.$ If $\theta_i$'s are sufficiently small, then $\big\{\frac{\theta_{s_1}}{2},\dots,\frac{\theta_{s_6}}{2}\big\}$ form the set of dihedral angles of a truncated hyperideal tetrahedron, by abuse of notation still denoted by $\Delta_s.$ Then the hyperbolic cone manifold $M_c$ with singular locus $L_{\text{FSL}}$ and cone angles $\theta_1,\dots,\theta_n$ is obtained by glueing $\Delta_s$'s together along isometries of the triangles of truncation, and taking the double. In this metric, the logarithmic holonomy of the meridian $u_i$ of the tubular neighborhood $N(L_i)$ of $L_i$ satisfies 
\begin{equation}\label{m}
\mathrm{H}(u_i)=\sqrt{-1}\theta_i.
\end{equation}
A preferred longitude $v_i$ on the boundary of $N(L_i)$ can be chosen as follows. Recall that a fundamental shadow link is obtained from the double of a set of truncated tetrahedra (along the hexagonal faces) glued together by orientation preserving homeomorphisms between the trice-punctured spheres coming from the double of the triangles of truncation, and recall also that 
 the mapping class group of trice-punctured sphere is generated by mutations, which could be represented by the four $3$-braids in Figure \ref{mutation}. For each mutation, we assign an integer $\pm 1$ to each component of the braid as in Figure \ref{mutation}; and for a composition of a sequence of mutations, we assign the sum of the  $\pm 1$ assigned by the mutations to each component of the $3$-braid.

 \begin{figure}[htbp]
\centering
\includegraphics[scale=0.5]{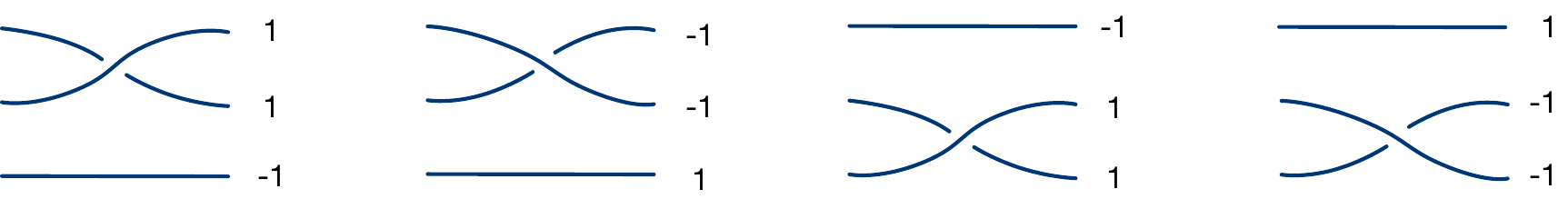}
\caption{ }
\label{mutation}
\end{figure}

  In this way, each  orientation preserving homeomorphisms between the trice-punctured spheres assigns three integers to three of the components of $L_{\text{FSL}},$ one for each. For each $i\in \{1,\dots, n\},$ let $\iota_i$ be the sum of all the integers on $L_i$ assigned by the homeomorphisms between the trice-punctured spheres. Then we can choose a preferred longitude $v_i$ such that $u_i\cdot v_i=1$ and the logarithmic holomony satisfies
\begin{equation}\label{l}
\mathrm{H}(v_i)=-l_i+\frac{\iota_i\sqrt{-1}\theta_i}{2},
\end{equation}
where $l_i$ is the length of the closed geodesic $L_i.$ In this way, a framing on $L_i$ gives an integer $p_i$ in the way that the parallel copy of $L_i$ on $N(L_i)$ is isotopic to the curve representing $p_iu_i+v_i.$

\begin{proposition}[\cite{C1,C2}]\label{FSL} If $L_{\text{FSL}}=L_1\cup\dots\cup L_n\subset M_c$ is a framed fundamental shadow link with framing $p_i$ on $L_i,$ and $\mathbf m=(m_1,\dots,m_n)$ is a coloring of its components with even integers in $\{0,2,\dots,r-3\},$ then
$$\mathrm{RT_r}(M_c,L_{\text{FSL}},\mathbf m)=\Bigg(\frac{2\sin\frac{2\pi}{r}}{\sqrt{r}}\Bigg)^{-c}\prod_{i=1}^n(-1)^{\frac{\iota_im_i}{2}}q^{(p_i+\frac{\iota_i}{2})\frac{m_i(m_i+2)}{2}}\prod_{s=1}^c\bigg|\begin{matrix}
        m_{s_1} & m_{s_2} & m_{s_3} \\
        m_{s_4} & m_{s_5} & m_{s_6} 
      \end{matrix} \bigg|,$$
where $m_{s_1},\dots,m_{s_6}$ are the colors of the edges of the building block $\Delta_s$ inherited  from the color $\mathbf m$ on $L_{\text{FSL}}.$    
\end{proposition}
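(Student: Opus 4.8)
\emph{Proof idea.} The pair $(M_c,L_{\text{FSL}})$ is assembled explicitly out of the $c$ truncated tetrahedra $\Delta_1,\dots,\Delta_c$, so the natural route --- and the one underlying the cited references --- is Turaev's shadow state sum for the relative Reshetikhin-Turaev invariant. The first step is to write down an explicit shadow $X_c$ of $(M_c,L_{\text{FSL}})$: take a copy of the tetrahedral shadow block (the simple polyhedron that is the cone on the $1$-skeleton of a tetrahedron, with a single true vertex) for each $\Delta_s$, glue the $c$ blocks following the same pattern by which the $\Delta_s$ are glued along their triangles of truncation, carry $L_{\text{FSL}}$ on the regions of $X_c$ meeting the link, and store the framing $p_i$ together with the mutation integers $\iota_i$ of \eqref{l} in the gleams. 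Equivalently one may present $M_c$ by $0$-framed surgery on the $(c+1)$-component unlink $L'$, so that $\sigma(L')=0$ and the correction factor $\langle\Omega_r\rangle_{U_+}^{-\sigma(L')}$ in \eqref{RT} equals $1$, and draw $D(L_{\text{FSL}}\cup L')$ directly from the building blocks.

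The second step is to evaluate the shadow state sum of $X_c$ with $L_{\text{FSL}}$ colored by $\mathbf m$. In general this is a sum over admissible colorings of the regions extending $\mathbf m$, in which each true vertex contributes a quantum $6j$-symbol, each region contributes a quantum dimension raised to its Euler characteristic and a twist factor raised to its gleam, and each edge contributes an (inverse) theta symbol. The essential feature of the fundamental shadow link is that the assembly of the $c$ tetrahedral blocks leaves no freedom in the coloring: once $L_{\text{FSL}}$ carries $\mathbf m$, the color of every region of $X_c$ is determined, so the state sum collapses to a single term. In that term the true vertex coming from $\Delta_s$ contributes exactly $\left|\begin{matrix} m_{s_1} & m_{s_2} & m_{s_3} \\ m_{s_4} & m_{s_5} & m_{s_6}\end{matrix}\right|$, the $6j$-symbol of the colors on the six long edges of $\Delta_s$; this is the defining property of a tetrahedral vertex, and it is where the $\Delta(\cdot,\cdot,\cdot)$-factors built into the definition of the $6j$-symbol absorb the theta symbols of the incident edges.

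The third step is to identify the remaining scalar. The link-carrying regions are annuli, of Euler characteristic $0$, so they contribute no dimension factor --- only the twist factor, which with gleam $p_i+\tfrac{\iota_i}{2}$ is $\big((-1)^{m_i}q^{\frac{m_i(m_i+2)}{2}}\big)^{p_i+\frac{\iota_i}{2}}$; since $m_i$ is even this equals $(-1)^{\frac{\iota_i m_i}{2}}q^{(p_i+\frac{\iota_i}{2})\frac{m_i(m_i+2)}{2}}$, the $p_i$ full twists of the framing giving the sign-free $q$-power and the $\iota_i$ half-twists from the mutations of Figure~\ref{mutation} giving the rest. The remaining pieces of Turaev's normalization (the dimension and theta factors of any non-link regions and edges, together with the overall dimension/signature prefactor) telescope to the scalar $\big(\tfrac{2\sin(2\pi/r)}{\sqrt{r}}\big)^{-c}=\mu_r^{-c}$; the cleanest check of this value is the specialization $\mathbf m=\mathbf{0}$, where $L_{\text{FSL}}$ drops out, every $6j$-symbol is $1$, and the formula must return $\mathrm{RT}_r(M_c)=\mathrm{RT}_r(\#^{c+1}(\SS^2\times\SS^1))=\mathrm{RT}_r(\SS^2\times\SS^1)^{\,c+1}\,\mathrm{RT}_r(\SS^3)^{-c}=\mu_r^{-c}$.

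The conceptual skeleton is short, and the work is in the bookkeeping it hides. I expect the main obstacles to be: checking that the shadow of the fundamental shadow link really has the asserted shape (in particular that no region forces a genuine summation over internal colors); verifying that the mutation integers $\iota_i$ are precisely the gleam contributions and reconciling the half-integer $\tfrac{\iota_i}{2}$ with the integrality of the gleams of $X_c$ (the annulus wraps the doubling locus, so a full twist of $X_c$ is seen as a half-twist by the link); and matching Turaev's state-sum normalization with the skein-theoretic normalization of \eqref{RT} after carrying the $\mu_r$-factors through the identification $M_c=\#^{c+1}(\SS^2\times\SS^1)$. A parallel, purely skein-theoretic proof avoids shadow theory: put $D(L_{\text{FSL}}\cup L')$ in a standard position built from the blocks, reduce each block to a tetrahedral net (a $6j$-symbol) by the fusion and Biedenharn-Elliott identities of Kauffman-bracket skein theory, and delete the surgery components by the handle-slide and Kirby-color identities (which is what produces the factor $\mu_r^{-c}$), the framings producing the $q$-powers; the hard core there is the same.
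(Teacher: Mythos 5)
The paper does not prove this proposition; it is stated as a citation of Costantino's work [C1,\,C2] and used as a black box (for instance inside the proof of Proposition~\ref{RTformula}). That said, your sketch follows the same route as those references: Costantino's argument is precisely a shadow state--sum computation on the natural shadow of $(M_c,L_{\text{FSL}})$ built from the $c$ tetrahedral blocks, and your three structural observations --- the coloring is forced once $\mathbf m$ is fixed so the sum collapses to one term, the tetrahedral vertices contribute the quantum $6j$-symbols, and the annular link-regions carry the gleam/twist data encoding $p_i$ and the mutation numbers $\iota_i$ --- are exactly the ingredients. Your $\mathbf m=\mathbf 0$ sanity check of the $\mu_r^{-c}$ prefactor is also correct: each $6j$-symbol degenerates to $1$ and $\mathrm{RT}_r(\#^{c+1}(\SS^2\times\SS^1))=\mu_r^{-c}$.

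Two caveats worth making explicit. First, the half-integer gleam bookkeeping you flag is a real issue: the cleanest way to resolve it is to observe that the statement already carries the consistency constraint $\prod_i (-1)^{\iota_i m_i/2}$ being well defined exactly because the $m_i$ are even, matching the fact that the shadow of a double has half-integer gleams only on regions meeting the doubling locus. Second, and more substantively, Costantino's [C1,\,C2] are written at the $\mathrm{SU}(2)$ root $q=e^{\pi\sqrt{-1}/r}$, whereas the proposition here is asserted at the $\mathrm{SO}(3)$ root $q=e^{2\pi\sqrt{-1}/r}$ with $r$ odd; the paper flags precisely this discrepancy for Theorem~\ref{Vol} (pointing to the appendix of [BDKY]) but is silent about it for Proposition~\ref{FSL}. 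Any self-contained proof along your lines would need to carry out the state-sum (or the alternative skein-theoretic reduction you mention) in the $\mathrm{SO}(3)$ normalization, or give an explicit translation of Costantino's formula between the two roots of unity. Your proposal does not yet do this, but it does correctly identify what needs to be done.
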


Next, we talk about the volume and the Chern-Simons invariant of $M_c \setminus L_{\text{FSL}}$ at the complete hyperbolic structure. In the complete hyperbolic metric, since $M_c \setminus L_{\text{FSL}}$ is the union of $2c$ regular ideal octahedra, we have 
\begin{equation} \label{VolFSL}
\mathrm{Vol}(M_c \setminus L_{\text{FSL}}) = 2c v_8.
\end{equation} 

For the Chern-Simons invariant, in the case that the truncated tetrahedra $\Delta_1,\dots, \Delta_c$ are glued together along the triangles of truncation via orientation reversing maps, $M_c\setminus L_{\text{FSL}}$ is the ordinary double of the orientable handlebody, which admits an orientation reversing self-homeomorphism. Hence by \cite[Corollary 2.5]{MO}, \begin{equation*}
\mathrm {CS}(M_c \setminus L_{\text{FSL}}) = 0      \quad\quad\text{mod }\pi^2\mathbb Z
\end{equation*}
at the complete hyperbolic structure. In the general case, a fundamental shadow link complement $M_c\setminus L_{\text{FSL}}$ can be obtained from one from the previous case by doing a sequence of mutations along the thrice-punctured spheres coming from the double of the triangles of truncation. Therefore, by \cite[Theorem 2.4]{MR} that a mutation along an incompressible trice-punctured sphere in a hyperbolic three manifold changes the Chern-Simons invariant by $\frac{\pi^2}{2},$ we have
\begin{equation}\label{CSFSL}
\mathrm {CS}(M_c \setminus L_{\text{FSL}}) = \Big(\sum_{i=1}^n \frac{\iota_i}{2}\Big) \pi^2  \quad\quad\text{mod }\pi^2\mathbb Z.
\end{equation}

Together with Theorem \ref{Vol} and the construction of the hyperbolic cone structure, we see that Conjecture \ref{conj} is true for $(M_c,L_{\text{FSL}}).$ This was first proved by Costantino in \cite{C1} at the root of unity $q=e^{\frac{\pi \sqrt{-1}}{r}}.$

\subsection{Twisted Reidemeister torsion}\label{TRT}

Let $\mathrm C_*$ be a finite chain complex 
$$0\to \mathrm C_d\xrightarrow{\partial}\mathrm C_{d-1}\xrightarrow{\partial}\cdots\xrightarrow{\partial}\mathrm C_1\xrightarrow{\partial} \mathrm C_0\to 0$$
of $\mathbb C$-vector spaces, and for each $\mathrm C_k$ choose a basis $\mathbf c_k.$ Let $\mathrm H_*$ be the homology of $\mathrm C_*,$ and for each $\mathrm H_k$ choose a basis $\mathbf h_k$ and a lift $\widetilde{\mathbf h}_k\subset \mathrm C_k$ of $\mathbf h_k.$ We also choose a basis $\mathbf b_k$ for each image $\partial (\mathrm C_{k+1})$ and a lift $\widetilde{\mathbf b}_k\subset \mathrm C_{k+1}$ of $\mathbf b_k.$ Then $\mathbf b_k\sqcup \widetilde{\mathbf b}_{k-1}\sqcup \widetilde{\mathbf h}_k$ form a basis of $\mathrm C_k.$ Let $[\mathbf b_k\sqcup \widetilde{\mathbf b}_{k-1}\sqcup \widetilde{\mathbf h}_k;\mathbf c_k]$ be the determinant of the transition matrix from the standard basis $\mathbf c_k$ to the new basis $\mathbf b_k\sqcup \widetilde{\mathbf b}_{k-1}\sqcup \widetilde{\mathbf h}_k.$
 Then the Reidemeister torsion of the chain complex $\mathrm C_*$ with the chosen bases $\mathbf c_*$ and $\mathbf h_*$ is defined by 
\begin{equation*}
\mathrm{Tor}(\mathrm C_*, \{\mathbf c_k\}, \{\mathbf h_k\})=\pm\prod_{k=0}^d[\mathbf b_k\sqcup \widetilde{\mathbf b}_{k-1}\sqcup \widetilde{\mathbf h}_k;\mathrm c_k]^{(-1)^{k+1}}.
\end{equation*}
It is easy to check that $\mathrm{Tor}(\mathrm C_*, \{\mathbf c_k\}, \{\mathbf h_k\})$ depends only on the choice of $\{\mathbf c_k\}$ and $\{\mathbf h_k\},$ and does not depend on the choices of $\{\mathbf b_k\}$ and the lifts $ \{\widetilde{\mathbf b}_k\}$ and $\{\widetilde{\mathbf h}_k\}.$

We recall the twisted Reidemeister torsion of a CW-complex following the conventions in \cite{P2}. Let $K$ be a finite CW-complex and let $\rho:\pi_1(M)\to\mathrm{SL}(N;\mathbb C)$ be a representation of its fundamental group. Consider the twisted chain complex 
$$\mathrm C_*(K;\rho)= \mathbb C^N\otimes_\rho \mathrm C_*(\widetilde K;\mathbb Z)$$
where $\mathrm C_*(\widetilde K;\mathbb Z)$ is the simplicial complex of the universal covering of $K$ and $\otimes_\rho$ means the tensor product over $\mathbb Z$ modulo the relation
$$\mathbf v\otimes( \gamma\cdot\mathbf c)=\Big(\rho(\gamma)^T\cdot\mathbf v\Big)\otimes \mathbf c,$$
where $T$ is the transpose, $\mathbf v\in\mathbb C^N,$ $\gamma\in\pi_1(K)$ and $\mathbf c\in\mathrm C_*(\widetilde K;\mathbb Z).$ The boundary operator on $\mathrm C_*(K;\rho)$ is defined by
$$\partial(\mathbf v\otimes \mathbf c)=\mathbf v\otimes \partial(\mathbf c)$$
for $\mathbf v\in\mathbb C^N$ and $\mathbf c\in\mathrm C_*(\widetilde K;\mathbb Z).$ Let $\{\mathbf e_1,\dots,\mathbf e_N\}$ be the standard basis of $\mathbb C^N,$ and let $\{c_1^k,\dots,c_{d^k}^k\}$ denote the set of $k$-cells of $K.$ Then we call
$$\mathbf c_k=\big\{ \mathbf e_i\otimes c_j^k\ \big|\ i\in\{1,\dots,N\}, j\in\{1,\dots,d^k\}\big\}$$
the standard basis of $\mathrm C_k(K;\rho).$ Let $\mathrm H_*(K;\rho)$ be the homology of the chain complex $\mathrm C_*(K;\rho)$ and let $\mathbf h_k$ be a basis of $\mathrm H_k(K;\rho).$ Then the Reidemeister torsion of $K$ twisted by $\rho$ with basis $\{\mathbf h_k\}$ is 
$$\mathrm{Tor}(K, \{\mathbf h_k\}; \rho)=\mathrm{Tor}(\mathrm C_*(K;\rho),\{\mathbf c_k\}, \{\mathbf h_k\}).$$

By \cite{P}, $\mathrm{Tor}(K, \{\mathbf h_k\}; \rho)$ depends only on the conjugacy class of $\rho.$ By for e.g. \cite{T2}, the Reidemeister torsion is invariant under elementary expansions and elementary collapses of CW-complexes, and by \cite{M}  it is invariant under subdivisions, hence defines an invariant of PL-manifolds and of topological manifolds of dimension less than or equal to $3.$

We list some results by Porti\,\cite{P} for the Reidemeister torsions of hyperbolic $3$-manifolds twisted by the adjoint representation $\mathrm {Ad}_\rho=\mathrm {Ad}\circ\rho$ of the holonomy  $\rho$ of the hyperbolic structure. Here $\mathrm {Ad}$ is the adjoint acton of $\mathrm {PSL}(2;\mathbb C)$ on its Lie algebra $\mathbf{sl}(2;\mathbb C)\cong \mathbb C^3.$

For a closed oriented hyperbolic $3$-manifold  $M$ with the holonomy representation $\rho,$ by the Weil local rigidity theorem and the Mostow rigidity theorem,
$\mathrm H_k(M;\mathrm{Ad}_\rho)=0$ for all $k.$ Then the twisted Reidemeister torsion 
$$\mathrm{Tor}(M;\mathrm{Ad}_\rho)\in\mathbb C^*/\{\pm 1\}$$
 is defined without making any additional choice.

For a compact, orientable  $3$-manifold  $M$ with boundary consisting of $n$ disjoint tori $T_1 \dots,  T_n$ whose interior admits a complete hyperbolic structure with  finite volume, let $\mathrm X(M)$ be the $\mathrm{SL}(2; \CC)$-character variety of $M,$ let $\mathrm X_0(M)\subset\mathrm X(M)$ be the distinguished component containing the character of a chosen lifting of the holomony representation of the complete hyperbolic structure of $M,$ and let $\mathrm X^{\text{irr}}(M)\subset\mathrm X(M)$ be consisting of the irreducible characters. 
 
\begin{theorem}(\cite[Section 3.3.3]{P})\label{HM} For a generic character $[\rho]\in\mathrm X_0(M)\cap\mathrm X^{\text{irr}}(M)$  we have:
\begin{enumerate}[(1)]
\item For $k\neq 1,2,$ $\mathrm H_k(M;\mathrm{Ad}\rho)=0.$
\item  For $i\in\{1,\dots,n\},$ let $\mathbf I_i\in \mathbb C^3$ be up to scalar the unique invariant vector of $\mathrm Ad_\rho(\pi_1(T_i)).$ Then
$$\mathrm H_1(M;\mathrm{Ad}\rho)\cong\bigoplus_{i=1}^n\mathrm H_1(T_i;\mathrm{Ad}\rho)\cong \mathbb C^n,$$ 
and for each $\alpha=([\alpha_1],\dots,[\alpha_n])\in \mathrm H_1(\partial M;\mathbb Z)\cong 
\bigoplus_{i=1}^n\mathrm H_1(T_i;\mathbb Z)$ has a basis 
$$\mathbf h^1_{(M,\alpha)}=\{\mathbf I_1\otimes [\alpha_1],\dots, \mathbf I_n\otimes [\alpha_n]\}.$$
\item Let $([T_1],\dots,[T_n])\in \bigoplus_{i=1}^n\mathrm H_2(T_i;\mathbb Z)$ be the fundamental classes of $T_1,\dots, T_n.$ Then 
 $$\mathrm H_2(M;\mathrm{Ad}\rho)\cong\bigoplus_{i=1}^n\mathrm H_2(T_i;\mathrm{Ad}\rho)\cong \mathbb C^n,$$ 
and has a basis 
$$\mathbf h^2_M=\{\mathbf I_1\otimes [T_1],\dots, \mathbf I_n\otimes [T_n]\}.$$
\end{enumerate}
\end{theorem}

\begin{remark}[\cite{P}] Important examples of the generic characters in Theorem \ref{HM} include the characters of the lifting in $\mathrm{SL}(2;\mathbb C)$ of the holonomy of the complete hyperbolic structure on the interior of $M,$
 the restriction of the holonomy of the closed $3$-manifold $M_\mu$ obtained from $M$ by doing the hyperbolic Dehn surgery along the system of simple closed curves $\mu$ on $\partial M,$
 and by \cite{HK} the holonomy of a hyperbolic structure on the interior of $M$ whose completion is a conical manifold with cone angles less than $2\pi.$
\end{remark}

For $\alpha\in\mathrm H_1(M;\mathbb Z),$ define $\mathbb T_{(M,\alpha)}$ on $\mathrm X_0(M)$ by
$$\mathbb T_{(M,\alpha)}([\rho])=\mathrm{Tor}(M, \{\mathbf h^1_{(M,\alpha)},\mathbf h^2_M\};\mathrm{Ad}_\rho)$$
for the generic $[\rho]\in \mathrm X_0(M)\cap\mathrm X^{\text{irr}}(M)$ in Theorem \ref{HM}, and equals $0$ otherwise.

\begin{theorem}(\cite[Theorem 4.1]{P})\label{funT}
Let $M$ be a compact, orientable  $3$-manifold with boundary consisting of $n$ disjoint tori $T_1 \dots,  T_n$ whose interior admits a complete hyperbolic structure with  finite volume. Let  $\mathbb C(\mathrm X_0(M))$ be the ring of rational functions over $\mathrm X_0(M).$ Then there is up to sign a unique function
\begin{equation*}
\begin{split}
\mathrm H_1(\partial M;\mathbb Z)&\to \mathbb C(\mathrm X_0(M))\\
\alpha\quad\quad &\mapsto \quad\mathbb T_{(M,\alpha)}
\end{split}
\end{equation*}
which is a $\mathbb Z$-multilinear homomorphism with respect to the direct sum $\mathrm H_1(\partial M;\mathbb Z)\cong 
\bigoplus_{i=1}^n\mathrm H_1(T_i;\mathbb Z)$ satisfying the following properties:
\begin{enumerate}[(i)]
\item For all $\alpha \in \mathrm H_1(\partial M;\mathbb Z),$ the domain of definition of $\mathbb T_{(M,\alpha)}$ contains an open subset $\mathrm X_0(M)\cap\mathrm X^{\text{irr}}(M).$

\item \emph{(Change of curves formula).} Let $\mu=\{\mu_1,\dots,\mu_n\}$ and $\gamma=\{\gamma_1,\dots,\gamma_n\}$ be two systems of simple closed curves on $\partial M.$ If $\mathrm H(\mu_1),\dots, \mathrm H(\mu_n)$ and $\mathrm H(\gamma_1),\dots,\mathrm H(\gamma_n)$ are respectively the logarithmic holonomies of the curves in $\mu$ and $\gamma,$ then we have the equality of rational functions
\begin{equation*}\label{coc}
\mathbb T_{(M,\mu)}
=\pm\det\bigg( \frac{\partial \mathrm H(\mu_i)}{\partial \mathrm H(\gamma_j)}\bigg)_{ij}\mathbb T_{(M,\gamma)}.
\end{equation*}
\item \emph{(Surgery formula).} Let $[\rho_\mu]\in \mathrm X_0(M)$ be the character induced by the holonomy of the closed $3$-manifold $M_\mu$ obtained from $M$ by doing the hyperbolic Dehn surgery along the system of simple closed curves $\mu$ on $\partial M.$ If $\mathrm H(\gamma_1),\dots,\mathrm H(\gamma_n)$ are the logarithmic holonomies of the core curves $\gamma_1,\dots,\gamma_n$ of the solid tori added. Then
\begin{equation*}\label{sf}
\mathrm{Tor}(M_\mu;\mathrm{Ad}_{\rho_\mu})=\pm\mathbb T_{(M,\mu)}([\rho_\mu])\prod_{i=1}^n\frac{1}{4\sinh^2\frac{\mathrm H(\gamma_i)}{2}}.
\end{equation*}
\end{enumerate}
\end{theorem}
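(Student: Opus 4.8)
The plan is to follow Porti's construction in \cite{P}: build the function from the adjoint-twisted Reidemeister torsion with the homology bases supplied by Theorem \ref{HM}, and then verify the three properties in turn.

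\textbf{Definition, rationality, and property (i).} For a generic character $[\rho]\in\mathrm X_0(M)\cap\mathrm X^{\mathrm{irr}}(M)$ as in Theorem \ref{HM} and $\alpha\in\mathrm H_1(\partial M;\mathbb Z)$, I would set
$$\mathbb T_{(M,\alpha)}([\rho])=\mathrm{Tor}\big(M,\{\mathbf h^1_{(M,\alpha)},\mathbf h^2_M\};\mathrm{Ad}_\rho\big),$$
and $\mathbb T_{(M,\alpha)}([\rho])=0$ at the non-generic characters. Fixing a finite CW structure on $M$, the twisted boundary maps of $\mathrm C_*(M;\mathrm{Ad}_\rho)$ have entries polynomial in the matrix entries of $\mathrm{Ad}_\rho$ on a finite generating set, and the cycles representing $\mathbf I_i\otimes[\alpha_i]$ and $\mathbf I_i\otimes[T_i]$ have coefficients rational in those entries, since the invariant vectors $\mathbf I_i$ solve linear systems with polynomial coefficients. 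As the torsion is a signed product of the corresponding transition determinants, it is rational in these entries; by the conjugation-invariance of twisted torsion \cite{P} it descends to an element of $\mathbb C(\mathrm X_0(M))$, well defined up to the sign coming from the transition determinants and the scaling ambiguity of the $\mathbf I_i$. At the generic characters it is finite and non-zero, so its domain contains a dense open subset of $\mathrm X_0(M)\cap\mathrm X^{\mathrm{irr}}(M)$, which is (i).

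\textbf{Multilinearity in $\alpha$ and uniqueness.} The tool is the change-of-homology-basis law: replacing a basis $\mathbf h_k$ of $\mathrm H_k$ by one with transition matrix $A_k$ multiplies $\mathrm{Tor}$ by $\prod_k(\det A_k)^{(-1)^{k+1}}$. Only $\mathbf h^1_{(M,\alpha)}=\{\mathbf I_1\otimes[\alpha_1],\dots,\mathbf I_n\otimes[\alpha_n]\}$ depends on $\alpha$, so against a fixed reference basis of $\mathrm H_1(M;\mathrm{Ad}_\rho)$ we get $\mathbb T_{(M,\alpha)}=\pm(\det A_1(\alpha))\,\mathbb T_{(M,\mathrm{ref})}$, where $A_1(\alpha)$ is the transition matrix to $\mathbf h^1_{(M,\alpha)}$. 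By Theorem \ref{HM}, $[\alpha_i]\mapsto\mathbf I_i\otimes[\alpha_i]$ is $\mathbb Z$-linear $\mathrm H_1(T_i;\mathbb Z)\to\mathrm H_1(M;\mathrm{Ad}_\rho)$, so the $i$-th column of $A_1(\alpha)$ is $\mathbb Z$-linear in $[\alpha_i]$; hence $\det A_1(\alpha)$, and therefore $\mathbb T_{(M,\alpha)}$, is $\mathbb Z$-multilinear in $\alpha$. For uniqueness up to sign: the function is assembled from the canonical bases of Theorem \ref{HM}; moreover any multilinear family satisfying (iii) is forced at the complete structure and the nearby hyperbolic Dehn-filling characters, which are dense in $\mathrm X_0(M)$, hence is determined as a rational function.

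\textbf{Change of curves formula.} For two systems $\mu,\gamma$ of curves, the previous step gives $\mathbb T_{(M,\mu)}/\mathbb T_{(M,\gamma)}=\det B$, where $B$ is the transition matrix in $\mathrm H_1(M;\mathrm{Ad}_\rho)$ from $\mathbf h^1_{(M,\gamma)}$ to $\mathbf h^1_{(M,\mu)}$, i.e. $\mathbf I_i\otimes[\mu_i]=\sum_j B_{ji}\,\mathbf I_j\otimes[\gamma_j]$. The heart of the matter is to identify $B_{ji}=\partial\mathrm H(\mu_i)/\partial\mathrm H(\gamma_j)$, so that $\det B=\det(\partial\mathrm H(\mu_i)/\partial\mathrm H(\gamma_j))_{ij}$, using that $(\mathrm H(\gamma_1),\dots,\mathrm H(\gamma_n))$ is a local coordinate system on $\mathrm X_0(M)$ for a generic choice of curves. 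For this I would invoke the Weil identification $\mathrm H^1(T_i;\mathrm{Ad}_\rho)\cong T_{[\rho]}\mathrm X(T_i)$, under which the differential $d\mathrm H(\alpha_i)$ is given by cup product with the integral class of $\alpha_i$, together with the Poincar\'e duality $\mathrm H_1(T_i;\mathrm{Ad}_\rho)\cong\mathrm H^1(T_i;\mathrm{Ad}_\rho)$ (via the Killing form) carrying $\mathbf I_i\otimes[\alpha_i]$ to a universal multiple of the form dual to $d\mathrm H(\alpha_i)$; the universal constant cancels upon passing to the ratio matrix $B$. I expect this step — linking the purely topological torsion/homology data to the holonomy functions on the character variety — to be the main obstacle.

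\textbf{Surgery formula.} Write $M_\mu=M\cup_\partial(V_1\sqcup\dots\sqcup V_n)$, with $V_i$ the solid torus filling $T_i$ so that $\mu_i$ bounds a meridian disk and $\gamma_i$ is the core. Applying the multiplicativity of Reidemeister torsion to the Mayer-Vietoris short exact sequence of twisted chain complexes of $\big(\bigsqcup_iT_i,\ M\sqcup\bigsqcup_iV_i,\ M_\mu\big)$ with compatible homology bases gives
$$\mathrm{Tor}(M_\mu;\mathrm{Ad}_{\rho_\mu})=\pm\,\mathrm{Tor}(M)\cdot\prod_i\mathrm{Tor}(V_i)\cdot\mathrm{Tor}\big(\textstyle\bigsqcup_iT_i\big)^{-1}\cdot\mathrm{Tor}(\mathcal H_*)^{-1},$$
where $\mathcal H_*$ is the long exact homology sequence regarded as an acyclic complex. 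A direct computation with the two-term complex of $V_i\simeq\mathrm S^1$, on which $\mathrm{Ad}_{\rho_\mu}(\gamma_i)$ has eigenvalues $1,e^{\pm\mathrm H(\gamma_i)}$, yields $\mathrm{Tor}(V_i;\{\mathbf I_i\},\{\mathbf I_i\otimes\gamma_i\})=\pm\big(4\sinh^2\tfrac{\mathrm H(\gamma_i)}{2}\big)^{-1}$, since $(e^{\mathrm H(\gamma_i)}-1)(e^{-\mathrm H(\gamma_i)}-1)=-4\sinh^2\tfrac{\mathrm H(\gamma_i)}{2}$. A parallel (routine but lengthy) bookkeeping gives $\mathrm{Tor}(\bigsqcup_iT_i)=\pm1$ with the natural bases, and tracing the explicitly known connecting maps shows $\mathrm{Tor}(\mathcal H_*)=\pm1$ while the homology bases induced on $M$ are exactly $\{\mathbf h^1_{(M,\mu)},\mathbf h^2_M\}$, so $\mathrm{Tor}(M)=\mathbb T_{(M,\mu)}([\rho_\mu])$. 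Collecting the factors gives (iii), completing the proof.
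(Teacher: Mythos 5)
This statement is not proved in the paper: it is cited verbatim as \cite[Theorem 4.1]{P} (Porti), so there is no internal proof for your proposal to be measured against. What you have written is a plausible reconstruction of Porti's own argument, and its overall architecture — defining $\mathbb T_{(M,\alpha)}$ via the torsion with the bases of Theorem \ref{HM}, using base-change multiplicativity of $\mathrm{Tor}$ for the $\mathbb Z$-multilinearity and the change-of-curves formula, and Mayer–Vietoris plus the solid-torus computation $(e^{\mathrm H(\gamma_i)}-1)(e^{-\mathrm H(\gamma_i)}-1)=-4\sinh^2\tfrac{\mathrm H(\gamma_i)}{2}$ for the surgery formula — is the right scaffold.

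The one place where the sketch is materially incomplete is exactly the step you flag. The homological transition matrix $B$ between $\mathbf h^1_{(M,\gamma)}$ and $\mathbf h^1_{(M,\mu)}$ is in fact diagonal (since each $\mathbf I_i\otimes[\mu_i]$ and $\mathbf I_i\otimes[\gamma_i]$ land in the same one-dimensional line inside $\mathrm H_1(M;\mathrm{Ad}_\rho)$ coming from $T_i$), while the Jacobian $\bigl(\partial\mathrm H(\mu_i)/\partial\mathrm H(\gamma_j)\bigr)_{ij}$ is generically not diagonal: each $\mathrm H(\mu_i)$ depends on all the coordinates $\mathrm H(\gamma_j)$. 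So the equality of determinants is not a matrix identity but a genuine identity of rational functions on $\mathrm X_0(M)$, and getting it requires the Poincaré-duality/symplectic input (identifying $\mathbf I_i\otimes[\alpha_i]$ with a multiple of the class Poincaré dual to $d\mathrm H(\alpha_i)$, and knowing that the boundary restriction $\mathrm H_1(M;\mathrm{Ad}_\rho)\to\mathrm H_1(\partial M;\mathrm{Ad}_\rho)$ lands in a Lagrangian) rather than the bare observation that both sides are multilinear in $\alpha$. As written the "ratio matrix $B$" assertion conflates the two, so a careful reader could not check it; that gap is the real mathematical content of Porti's proof of (ii), and it also undergirds the uniqueness claim. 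The rest — rationality, property (i), and property (iii) — is essentially correct as a sketch.
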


Next, we list some results for the computation of twisted Reidemeister torsions from \cite{WY3}. We first recall that if $\mathrm M_{4\times 4}(\mathbb C)$ is the space of $4\times 4$ matrices with complex entries, then the \emph{Gram matrix function} 
$$\mathbb G:\mathbb C^6\to \mathrm M_{4\times 4}(\mathbb C)$$
 is defined by 
\begin{equation}\label{gram}
\begin{split}
\mathbb{G}(\mathbf z)=\left[
\begin{array}{cccc}
1 & -\cosh z_{1} & -\cosh z_{2} &-\cosh z_{6}\\
-\cosh z_{1}& 1 &-\cosh z_{3} & -\cosh z_{5}\\
-\cosh z_{2} & -\cosh z_{3} & 1 & -\cosh z_{4} \\
-\cosh z_{6} & -\cosh z_{5} & -\cosh z_{4}  & 1 \\
 \end{array}\right]
 \end{split}
\end{equation}
for $\mathbf z=(z_{1}, z_{2}, z_{3}, z_{4}, z_{5}, z_{6})\in\mathbb C^6.$ The value of $\mathbb G$ at different $\mathbf u$ recover the Gram matrices of deeply truncated tetrahedra of all the types. See \cite[Section 2.1]{BY} for more details.

\begin{theorem}(\cite[Theorem 1.1]{WY3})\label{Rtorthm}  Let $M=\#^{c+1}(S^2\times S^1)\setminus L_{\text{FSL}}$ be the complement of a fundamental shadow link $L_{\text{FSL}}$ with $n$ components $L_1,\dots,L_n,$ which is the orientable double of the union of truncated tetrahedra $\Delta_1,\dots, \Delta_c$ along pairs of the triangles of truncation, and let $\mathrm X_0(M)$ be the distinguished component of the $\mathrm{SL}(2;\mathbb C)$ character variety of $M$ containing a lifting of the holonomy representation of the complete hyperbolic structure.  
\begin{enumerate}[(1)] 
\item  Let $\mathbf u=(u_1,\dots, u_n)$ be the system of the meridians of a tubular neighborhood of the components of $L_{\text{FSL}}.$ For a generic irreducible character $[\rho]$ in $\mathrm X_0(M),$ let $\mathrm H(u_1),
\dots,\mathrm H(u_n)$ be the logarithmic holonomies of $\mathbf u.$ For each $s\in\{1,\dots,c\},$ let $L_{s_1},\dots,L_{s_6}$ be the components of $L_{\text{FSL}}$ intersecting $\Delta_s,$ and let $\mathbb G_s$ be the value of the Gram matrix function at $\Big(\frac{\mathrm H(u_{s_1})}{2},\dots,\frac{\mathrm H(u_{s_6})}{2}\Big).$ Then
 $$\mathbb T_{(M,\mathbf u)}([\rho])=\pm2^{3c}\prod_{s=1}^c \sqrt{\det\mathbb G_s}.$$

\item In addition to the assumptions and notations of (1), let $\mathbf \Upsilon=(\Upsilon_1,\dots,\Upsilon_n)$ be a system of simple closed curves on $\partial M,$ and let $(\mathrm H(\Upsilon_1),\dots, \mathrm H(\Upsilon_n))$ be their logarithmic holonomies which are functions of $(\mathrm H(u_1),
\dots,\mathrm H(u_n)).$ Then
 $$ \mathbb T_{(M,\mathbf \Upsilon)}[(\rho)]=\pm2^{3c}\det\bigg(\frac{\partial \mathrm H(\Upsilon_i)}{\partial \mathrm H(u_j)}\bigg|_{[\rho]}\bigg)_{ij}\prod_{s=1}^c \sqrt{\det\mathbb G_s}.$$

\item Suppose $M_{\mathbf \Upsilon}$ is the closed $3$-manifold obtained from $M$ by doing the hyperbolic Dehn surgery along a system of simple closed curves $\mathbf \Upsilon=(\Upsilon_1,\dots,\Upsilon_n)$ on $\partial M$ and $\rho_{\mathbf \Upsilon}$ is the restriction of the holonomy representation of $M_{\mathbf \Upsilon}$ to $M.$ Let $(\mathrm H(\Upsilon_1),\dots, \mathrm H(\Upsilon_n))$ be the logarithmic holonomies of $\mathbf \Upsilon$ which are functions of the logarithmic holonomies of the meridians $\mathbf u.$ Let $(\gamma_1,\dots,\gamma_n)$ be a system of simple closed curves on $\partial M$ that are isotopic to the core curves of the solid tori filled in and let $\mathrm H(\gamma_1),\dots,\mathrm H(\gamma_n)$ be their logarithmic holonomies in $[\rho_\mu].$ Let  $\mathrm H(u_1),
\dots,\mathrm H(u_n)$ be the logarithmic holonomies of the meridians $\mathbf u$ in $[\rho_\mu]$ and for each $s\in\{1,\dots,c\},$ let $L_{s_1},\dots,L_{s_6}$ be the components of $L_{\text{FSL}}$ intersection $\Delta_s$ and let $\mathbb G_s$ be the value of the Gram matrix function at $\Big(\frac{\mathrm H(u_{s_1})}{2},\dots,\frac{\mathrm H(u_{s_6})}{2}\Big).$
Then 
 $$ \mathrm{Tor}(M_{\mathbf \Upsilon};\mathrm{Ad}_{\rho_{\mathbf \Upsilon}})=\pm2^{3c-2n}\det\bigg(\frac{\partial \mathrm H(\Upsilon_i)}{\partial \mathrm H(u_j)}\bigg|_{[\rho_\mu]}\bigg)_{ij}\prod_{s=1}^c \sqrt{\det\mathbb G_s}\prod_{i=1}^n\frac{1}{\sinh^2\frac{\mathrm H(\gamma_i)}{2}}.$$
\end{enumerate} 
\end{theorem}

\subsection{Dilogarithm and quantum dilogarithm functions}

Let $\log:\mathbb C\setminus (-\infty, 0]\to\mathbb C$ be the standard logarithm function defined by
$$\log z=\log|z|+\sqrt{-1}\arg z$$
with $-\pi<\arg z<\pi.$
 
The dilogarithm function $\mathrm{Li}_2: \mathbb C\setminus (1,\infty)\to\mathbb C$ is defined by
$$\mathrm{Li}_2(z)=-\int_0^z\frac{\log (1-u)}{u}du$$
where the integral is along any path in $\mathbb C\setminus (1,\infty)$ connecting $0$ and $z,$ which is holomorphic in $\mathbb C\setminus [1,\infty)$ and continuous in $\mathbb C\setminus (1,\infty).$

The dilogarithm function satisfies the follow properties (see eg. Zagier\,\cite{Z}).
\begin{enumerate}[(1)]
\item \begin{equation}\label{Li2}
\mathrm{Li}_2\Big(\frac{1}{z}\Big)=-\mathrm{Li}_2(z)-\frac{\pi^2}{6}-\frac{1}{2}\big(\log(-z)\big)^2.
\end{equation} 
\item In the unit disk $\big\{z\in\mathbb C\,\big|\,|z|<1\big\},$ 
\begin{equation}\label{Li1}
\mathrm{Li}_2(z)=\sum_{n=1}^\infty\frac{z^n}{n^2}.
\end{equation}
\item On the unit circle $\big\{ z=e^{2\sqrt{-1}\theta}\,\big|\,0 \leqslant \theta\leqslant\pi\big\},$ 
\begin{equation}\label{dilogLob}
\mathrm{Li}_2\Big(e^{2\sqrt{-1}\theta}\Big)=\frac{\pi^2}{6}+\theta(\theta-\pi)+2\sqrt{-1}\Lambda(\theta).
\end{equation}
\end{enumerate}
Here  $\Lambda:\mathbb R\to\mathbb R$  is the Lobachevsky function defined by
\begin{align}\label{QDtoL}
\Lambda(\theta)=-\int_0^\theta\log|2\sin t|dt,
\end{align}
which is an odd function of period $\pi.$ See eg. Thurston's notes\,\cite[Chapter 7]{T}.


The following variant of Faddeev's quantum dilogarithm functions\,\cite{F, FKV} will play a key role in the proof of the main result. 
Let $r\geqslant 3$ be an odd integer. Then the following contour integral
\begin{equation}
\varphi_r(z)=\frac{4\pi\sqrt{-1}}{r}\int_{\Omega}\frac{e^{(2z-\pi)x}}{4x \sinh (\pi x)\sinh (\frac{2\pi x}{r})}\ dx
\end{equation}
defines a holomorphic function on the domain $$\Big\{z\in \mathbb C \ \Big|\ -\frac{\pi}{r}<\mathrm{Re}z <\pi+\frac{\pi}{r}\Big\},$$  
  where the contour is
$$\Omega=\big(-\infty, -\epsilon\big]\cup \big\{z\in \mathbb C\ \big||z|=\epsilon, \mathrm{Im}z>0\big\}\cup \big[\epsilon,\infty\big),$$
for some $\epsilon\in(0,1).$
Note that the integrand has poles at $n\sqrt{-1},$ $n\in\mathbb Z,$ and the choice of  $\Omega$ is to avoid the pole at $0.$
\\

The function $\varphi_r(z)$ satisfies the following fundamental properties, whose proof can be found in \cite[Section 2.3]{WY}. 
\begin{lemma}
\begin{enumerate}[(1)]
\item For $z\in\mathbb C$ with  $0<\mathrm{Re}z<\pi,$
\begin{equation}\label{fund}
1-e^{2 \sqrt{-1}z}=e^{\frac{r}{4\pi\sqrt{-1}}\Big(\varphi_r\big(z-\frac{\pi}{r}\big)-\varphi_r\big(z+\frac{\pi}{r}\big)\Big)}.
 \end{equation}
 
 \item For $z\in\mathbb C$ with  $-\frac{\pi}{r}<\mathrm{Re}z<\frac{\pi}{r},$
 \begin{equation}\label{f2}
1+e^{r\sqrt{-1}z}=e^{\frac{r}{4\pi\sqrt{-1}}\Big(\varphi_r(z)-\varphi_r\big(z+\pi\big)\Big)}.
\end{equation}
\end{enumerate}
\end{lemma}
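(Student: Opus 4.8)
\emph{Proof proposal.} The plan is to reduce both identities to a single contour‑integral evaluation and then obtain the two cases by elementary substitutions. For (1), substitute $z\mapsto z\mp\frac{\pi}{r}$ into the definition of $\varphi_r$ and combine the two integrals. Since $e^{(2(z\mp\pi/r)-\pi)x}=e^{(2z-\pi)x}e^{\mp 2\pi x/r}$, the numerator of the difference becomes $e^{(2z-\pi)x}\bigl(e^{-2\pi x/r}-e^{2\pi x/r}\bigr)=-2e^{(2z-\pi)x}\sinh\frac{2\pi x}{r}$, which cancels the factor $\sinh\frac{2\pi x}{r}$ in the denominator. One is left with
$$\varphi_r\Bigl(z-\tfrac{\pi}{r}\Bigr)-\varphi_r\Bigl(z+\tfrac{\pi}{r}\Bigr)=-\frac{2\pi\sqrt{-1}}{r}\int_{\Omega}\frac{e^{(2z-\pi)x}}{x\sinh(\pi x)}\,dx,$$
so $\frac{r}{4\pi\sqrt{-1}}$ times the left-hand side equals $-\tfrac12\int_{\Omega}\frac{e^{(2z-\pi)x}}{x\sinh(\pi x)}\,dx$. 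Hence (1) is equivalent to the identity
$$\int_{\Omega}\frac{e^{(2z-\pi)x}}{x\sinh(\pi x)}\,dx=-2\log\bigl(1-e^{2\sqrt{-1}z}\bigr),\qquad 0<\mathrm{Re}\,z<\pi,$$
which I will call $(\star)$.

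Next I would prove $(\star)$. The hypothesis $0<\mathrm{Re}\,z<\pi$ makes $|e^{(2z-\pi)x}|$ grow strictly slower than $|\sinh(\pi x)|$ as $\mathrm{Re}\,x\to\pm\infty$, so the integral converges absolutely and is holomorphic in $z$ on the open strip $\{0<\mathrm{Re}\,z<\pi\}$; on the same strip $1-e^{2\sqrt{-1}z}$ avoids $(-\infty,0]$, so the right-hand side is holomorphic there as well. By analytic continuation it therefore suffices to verify $(\star)$ on the region $\mathrm{Im}\,z>0$. Differentiating under the integral sign reduces matters to evaluating $\int_{\Omega}\frac{e^{(2z-\pi)x}}{\sinh(\pi x)}\,dx$. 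Since $\Omega$ runs just above $x=0$, one shifts it upward past the simple poles at $x=n\sqrt{-1}$, $n\ge 1$, where the residue of $\frac{e^{(2z-\pi)x}}{\sinh(\pi x)}$ is $\frac{(-1)^n}{\pi}e^{(2z-\pi)n\sqrt{-1}}=\frac{1}{\pi}e^{2\sqrt{-1}nz}$; for $\mathrm{Im}\,z>0$ the integral over the shifted contour tends to $0$ as its height tends to infinity (again using $\int e^{(2\mathrm{Re}\,z-\pi)t}/\cosh(\pi t)\,dt<\infty$ when $0<\mathrm{Re}\,z<\pi$), giving $\int_{\Omega}\frac{e^{(2z-\pi)x}}{\sinh(\pi x)}\,dx=2\sqrt{-1}\sum_{n\ge 1}e^{2\sqrt{-1}nz}=\frac{2\sqrt{-1}e^{2\sqrt{-1}z}}{1-e^{2\sqrt{-1}z}}$. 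Hence $\frac{d}{dz}$ of both sides of $(\star)$ equals $\frac{4\sqrt{-1}e^{2\sqrt{-1}z}}{1-e^{2\sqrt{-1}z}}$, so the two sides differ by a constant; letting $\mathrm{Im}\,z\to+\infty$ makes both sides tend to $0$ (the left side by dominated convergence on the fixed semicircular part of $\Omega$ and the Riemann–Lebesgue lemma on the two rays), so the constant is $0$ and $(\star)$ holds.

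Finally I would deduce (2) from $(\star)$ by the same algebraic cancellation: combining $\varphi_r(z)-\varphi_r(z+\pi)$ produces the numerator $e^{2zx}(e^{-\pi x}-e^{\pi x})=-2e^{2zx}\sinh(\pi x)$, which cancels $\sinh(\pi x)$ and yields $\frac{r}{4\pi\sqrt{-1}}\bigl(\varphi_r(z)-\varphi_r(z+\pi)\bigr)=-\tfrac12\int_{\Omega}\frac{e^{2zx}}{x\sinh(2\pi x/r)}\,dx$. The substitution $x=\frac{r}{2}w$ turns this into $-\tfrac12\int_{\Omega}\frac{e^{rzw}}{w\sinh(\pi w)}\,dw$ (the contour is of the same shape, only its small radius is rescaled, which does not change the value). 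Applying $(\star)$ with $z$ replaced by $z_0:=\frac{rz+\pi}{2}$ — which lies in the strip $0<\mathrm{Re}\,z_0<\pi$ exactly when $-\frac{\pi}{r}<\mathrm{Re}\,z<\frac{\pi}{r}$ — and using $e^{2\sqrt{-1}z_0}=e^{r\sqrt{-1}z}e^{\sqrt{-1}\pi}=-e^{r\sqrt{-1}z}$, one gets $\int_{\Omega}\frac{e^{rzw}}{w\sinh(\pi w)}\,dw=-2\log(1+e^{r\sqrt{-1}z})$, whence $\frac{r}{4\pi\sqrt{-1}}\bigl(\varphi_r(z)-\varphi_r(z+\pi)\bigr)=\log(1+e^{r\sqrt{-1}z})$. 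Exponentiating gives (2).

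The only genuinely analytic step is the residue/contour-shift evaluation of $\int_{\Omega}\frac{e^{(2z-\pi)x}}{\sinh(\pi x)}\,dx$ together with the estimate showing that the shifted-contour integral vanishes in the limit; this is exactly where the hypothesis $0<\mathrm{Re}\,z<\pi$ (and the auxiliary restriction $\mathrm{Im}\,z>0$, removed afterwards by analytic continuation) is used. The cancellation of $\sinh$ factors, the substitution $x=\frac r2 w$, and the determination of the integration constant are all routine.
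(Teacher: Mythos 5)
Your argument is correct and follows the standard route used in the references the paper points to ([WY, Section 2.3] and works of Ohtsuki and Chen--Yang): telescope the definition of $\varphi_r$ so that the numerator produces a $\sinh$ factor cancelling one in the denominator, then evaluate the resulting integral $\int_{\Omega}\frac{e^{(2z-\pi)x}}{x\sinh(\pi x)}\,dx$ by shifting the contour upward and summing residues at $x=n\sqrt{-1}$, and finally obtain (2) from (1) via the rescaling $x=\tfrac{r}{2}w$ and the shift $z\mapsto \tfrac{rz+\pi}{2}$. The only cosmetic difference is that you differentiate before taking residues and then fix the integration constant with a Riemann--Lebesgue argument, whereas one can more directly evaluate the original integral by residues (the extra $1/x$ factor supplies the $1/n$ needed to match the logarithm series $-2\log(1-e^{2\sqrt{-1}z})=2\sum_{n\geq 1}\tfrac{e^{2n\sqrt{-1}z}}{n}$, and even improves the decay on the shifted contour), avoiding the boundary-condition step entirely.
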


Using (\ref{fund}) and (\ref{f2}), for $z\in\mathbb C$ with $\pi+\frac{2(n-1)\pi}{r}< \mathrm{Re}z< \pi+\frac{2n\pi}{r},$ we can define $\varphi_r(z)$  inductively by the relation
\begin{equation}\label{extension}
\prod_{k=1}^n\Big(1-e^{2 \sqrt{-1} \big(z-\frac{(2k-1)\pi}{r}\big)}\Big)=e^{\frac{r}{4\pi\sqrt{-1}}\Big(\varphi_r\big(z-\frac{2n\pi}{r}\big)-\varphi_r(z)\Big)},
\end{equation}
extending $\varphi_r(z)$ to a meromorphic function on $\mathbb C.$  The poles of $\varphi_r(z)$ have the form $(a+1)\pi+\frac{b\pi}{r}$ or $-a\pi-\frac{b\pi}{r}$ for all nonnegative integer $a$ and positive odd integer $b.$

Let $q=e^{\frac{2\pi\sqrt{-1}}{r}},$
and let $$(q)_n=\prod_{k=1}^n(1-q^{2k}).$$

\begin{lemma}\label{fact}
\begin{enumerate}[(1)]
\item For $0\leqslant n \leqslant r-2,$
\begin{equation}
(q)_n=e^{\frac{r}{4\pi\sqrt{-1}}\Big(\varphi_r\big(\frac{\pi}{r}\big)-\varphi_r\big(\frac{2\pi n}{r}+\frac{\pi}{r}\big)\Big)}.
\end{equation}
\item For $\frac{r-1}{2}\leqslant n \leqslant r-2,$
\begin{equation} \label{move}
(q)_n=2e^{\frac{r}{4\pi\sqrt{-1}}\Big(\varphi_r\big(\frac{\pi}{r}\big)-\varphi_r\big(\frac{2\pi n}{r}+\frac{\pi}{r}-\pi\big)\Big)}.
\end{equation}
\end{enumerate}
\end{lemma}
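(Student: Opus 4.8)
The plan is to deduce both identities from the fundamental functional equations \eqref{fund} and \eqref{extension} together with the normalization at $z=\frac{\pi}{r}$. First I would rewrite the Pochhammer-type product in a form amenable to the functional equation: for $1\leqslant k\leqslant n$ we have $q^{2k}=e^{2\sqrt{-1}\cdot\frac{2\pi k}{r}}=e^{2\sqrt{-1}(z-\frac{(2k-1)\pi}{r})}$ when $z=\frac{2\pi n}{r}+\frac{\pi}{r}$, since $\frac{2\pi k}{r}=\left(\frac{2\pi n}{r}+\frac{\pi}{r}\right)-\frac{(2k-1)\pi}{r}$ precisely when we also shift the index — more carefully, setting $z_0=\frac{2\pi n}{r}+\frac{\pi}{r}$ one checks $z_0-\frac{(2k-1)\pi}{r}=\frac{2\pi(n-k+1)}{r}$, so as $k$ runs over $1,\dots,n$ the quantity $e^{2\sqrt{-1}(z_0-\frac{(2k-1)\pi}{r})}$ runs over $q^{2n},q^{2(n-1)},\dots,q^{2}$, i.e. over exactly the same set of factors as $(q)_n=\prod_{k=1}^n(1-q^{2k})$. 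Hence $(q)_n=\prod_{k=1}^n\bigl(1-e^{2\sqrt{-1}(z_0-\frac{(2k-1)\pi}{r})}\bigr)$.

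For part (1), since $0\leqslant n\leqslant r-2$ we have $\frac{\pi}{r}\leqslant z_0=\frac{2\pi n}{r}+\frac{\pi}{r}\leqslant \pi-\frac{\pi}{r}<\pi$, so $z_0$ lies in the strip where $\varphi_r$ is defined by its original integral formula and where \eqref{extension} with $n$ replaced by $n$ applies directly (the case $n=0$ giving the trivial identity $(q)_0=1$). Applying \eqref{extension} at $z=z_0$ gives
$$
(q)_n=\prod_{k=1}^n\Bigl(1-e^{2\sqrt{-1}(z_0-\frac{(2k-1)\pi}{r})}\Bigr)=e^{\frac{r}{4\pi\sqrt{-1}}\bigl(\varphi_r(z_0-\frac{2n\pi}{r})-\varphi_r(z_0)\bigr)}=e^{\frac{r}{4\pi\sqrt{-1}}\bigl(\varphi_r(\frac{\pi}{r})-\varphi_r(\frac{2\pi n}{r}+\frac{\pi}{r})\bigr)},
$$
which is exactly the claimed formula. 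Here I am using that $\varphi_r$, extended via \eqref{extension}, agrees with the integral definition on the overlap, so no ambiguity arises.

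For part (2), the point is that when $\frac{r-1}{2}\leqslant n\leqslant r-2$ the shifted argument $\frac{2\pi n}{r}+\frac{\pi}{r}$ now exceeds $\pi$, so to express $(q)_n$ through values of $\varphi_r$ in (or near) the fundamental strip one must use the extension relation to "move" the argument down by $\pi$; equivalently one invokes \eqref{f2} to pick up the extra factor. Concretely I would write $z_0-\pi=\frac{2\pi n}{r}+\frac{\pi}{r}-\pi$ and, combining the computation of part (1) with the relation \eqref{f2} evaluated at the appropriate point (whose real part lies in $(-\frac{\pi}{r},\frac{\pi}{r})$ precisely because $n$ is in the stated range and $r$ is odd), obtain the factor $1+e^{r\sqrt{-1}(\cdot)}$; at the relevant argument this factor equals $2$ since $e^{r\sqrt{-1}\cdot 0}=1$ after the phases cancel. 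Carrying the bookkeeping of the $\varphi_r$-arguments through gives the extra multiplicative $2$ and the shifted argument $\frac{2\pi n}{r}+\frac{\pi}{r}-\pi$, yielding \eqref{move}. The main obstacle is purely bookkeeping: one must track the branch/extension of $\varphi_r$ carefully across $\mathrm{Re}\,z=\pi$ and verify that the real parts of all arguments fed into \eqref{fund}, \eqref{f2}, \eqref{extension} stay in the prescribed strips for the stated range of $n$ — using oddness of $r$ to avoid hitting the poles at $(a+1)\pi+\frac{b\pi}{r}$ — but there is no analytic difficulty once the arguments are lined up.
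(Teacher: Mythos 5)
Your plan for part (1) is correct: setting $z_0=\frac{2\pi n}{r}+\frac{\pi}{r}$ one has $(q)_n=\prod_{k=1}^n\bigl(1-e^{2\sqrt{-1}(z_0-\frac{(2k-1)\pi}{r})}\bigr)$, and the telescoping relation \eqref{extension} (obtained by iterating \eqref{fund}) gives the stated formula; one only needs to check that none of the intermediate points $z_0-\frac{2k\pi}{r}=\frac{(2(n-k)+1)\pi}{r}$ is a pole of $\varphi_r$, which holds since $2(n-k)+1\leqslant 2n+1\leqslant 2r-3$ while the first positive pole is at $\pi+\frac{\pi}{r}$ with arguments of the form $(a+1)\pi+\frac{b\pi}{r}$ requiring $(a+1)r+b\geqslant 2r+1$ for odd total.

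For part (2), however, there is a concrete error. You claim the argument $z_0-\pi=\frac{(2n+1-r)\pi}{r}$ fed into \eqref{f2} has real part in $(-\frac{\pi}{r},\frac{\pi}{r})$ throughout the range $\frac{r-1}{2}\leqslant n\leqslant r-2$. This is only true for $n=\frac{r-1}{2}$, where the real part is exactly $0$. Already for $n=\frac{r+1}{2}$ the real part is $\frac{2\pi}{r}$, and for $n=r-2$ it is $\pi-\frac{3\pi}{r}$, far outside the strip in the hypothesis of \eqref{f2}. So the lemma as stated cannot be invoked directly, and the "no analytic difficulty" assurance is not yet justified. Your observation that $e^{r\sqrt{-1}(z_0-\pi)}=e^{\sqrt{-1}\pi(2n+1-r)}=1$ (using that $r$ is odd) is the correct mechanism for the factor $2$, but you still need a reason why the identity $1+e^{r\sqrt{-1}z}=e^{\frac{r}{4\pi\sqrt{-1}}(\varphi_r(z)-\varphi_r(z+\pi))}$ persists at $z=z_0-\pi$ with $\mathrm{Re}\,z$ well outside $(-\frac{\pi}{r},\frac{\pi}{r})$.

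A clean way to close the gap that avoids extending \eqref{f2}: split the product as
$(q)_n=\Bigl(\prod_{k=1}^{(r-1)/2}(1-q^{2k})\Bigr)\Bigl(\prod_{k=(r+1)/2}^{n}(1-q^{2k})\Bigr)$.
Evaluate the first factor by part (1) at $n=\frac{r-1}{2}$, giving $e^{\frac{r}{4\pi\sqrt{-1}}(\varphi_r(\frac{\pi}{r})-\varphi_r(\pi))}$. For the second factor set $w_0=z_0-\pi=\frac{(2n+1-r)\pi}{r}$ and $N=n-\frac{r-1}{2}$; one checks $w_0-\frac{(2k-1)\pi}{r}=\frac{2(n+1-k)\pi}{r}-\pi$ has real part in $(0,\pi)$ for $k=1,\dots,N$ and that $e^{2\sqrt{-1}(w_0-\frac{(2k-1)\pi}{r})}=q^{2(n+1-k)}$, so by iterating \eqref{fund} one gets $\prod_{k=(r+1)/2}^n(1-q^{2k})=e^{\frac{r}{4\pi\sqrt{-1}}(\varphi_r(0)-\varphi_r(w_0))}$. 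Finally, applying \eqref{f2} at $z=0$ (where the hypothesis is genuinely satisfied) gives $e^{\frac{r}{4\pi\sqrt{-1}}(\varphi_r(0)-\varphi_r(\pi))}=1+e^{0}=2$, so multiplying the two factors and collecting $\varphi_r$-terms yields exactly $(q)_n=2\,e^{\frac{r}{4\pi\sqrt{-1}}(\varphi_r(\frac{\pi}{r})-\varphi_r(w_0))}$, which is \eqref{move}. The only use of \eqref{f2} is then at $z=0$, and the rest is iterated \eqref{fund}, so no branch-tracking of an extended \eqref{f2} is required.
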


We consider (\ref{move}) because there are poles in $(\pi,2\pi),$ and to avoid the poles we move the variables to $(0,\pi)$ by subtracting $\pi.$

For $n\in \ZZ_{\geq 0}$, let $\{0\}=1$, $\{n\} = q^{n}-q^{-n}$, $\{0\}! = 1$ and 
$$\{n\}! = \prod_{k=1}^n \{k\}.$$ 
Since 
$$\{n\}!=(-1)^nq^{-\frac{n(n+1)}{2}}(q)_n,$$
as a consequence of Lemma \ref{fact}, we have

\begin{lemma}\label{factorial}
\begin{enumerate}[(1)]
\item For $0\leqslant n \leqslant r-2,$
\begin{equation}
\{n\}!=e^{\frac{r}{4\pi\sqrt{-1}}\Big(-2\pi\big(\frac{2\pi n}{r}\big)+\big(\frac{2\pi}{r}\big)^2(n^2+n)+\varphi_r\big(\frac{\pi}{r}\big)-\varphi_r\big(\frac{2\pi n}{r}+\frac{\pi}{r}\big)\Big)}.
\end{equation}
\item For $\frac{r-1}{2}\leqslant n \leqslant r-2,$
\begin{equation} 
\{n\}!=2e^{\frac{r}{4\pi\sqrt{-1}}\Big(-2\pi\big(\frac{2\pi n}{r}\big)+\big(\frac{2\pi }{r}\big)^2(n^2+n)+\varphi_r\big(\frac{\pi}{r}\big)-\varphi_r\big(\frac{2\pi n}{r}+\frac{\pi}{r}-\pi\big)\Big)}.
\end{equation}
\end{enumerate}
\end{lemma}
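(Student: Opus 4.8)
The plan is to derive Lemma~\ref{factorial} directly from Lemma~\ref{fact} together with the stated identity $\{n\}!=(-1)^n q^{-\frac{n(n+1)}{2}}(q)_n$. The only actual work is to rewrite the two monomial prefactors $(-1)^n$ and $q^{-\frac{n(n+1)}{2}}$ in the normalized exponential form $e^{\frac{r}{4\pi\sqrt{-1}}(\,\cdot\,)}$ that is used throughout, so that they merge cleanly with the expression for $(q)_n$ supplied by Lemma~\ref{fact}.

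First I would record, using $q=e^{\frac{2\pi\sqrt{-1}}{r}}$ and the elementary identity $\frac{1}{\sqrt{-1}}=-\sqrt{-1}$, that
\begin{align*}
q^{-\frac{n(n+1)}{2}} &= e^{-\frac{\pi\sqrt{-1}\,n(n+1)}{r}} = \exp\!\left(\frac{r}{4\pi\sqrt{-1}}\Big(\tfrac{2\pi}{r}\Big)^2(n^2+n)\right),\\
(-1)^n &= e^{\sqrt{-1}\,\pi n} = \exp\!\left(\frac{r}{4\pi\sqrt{-1}}\Big({-}2\pi\cdot\tfrac{2\pi n}{r}\Big)\right),
\end{align*}
both being exact equalities, so that no ambiguity modulo $2\pi\sqrt{-1}$ is introduced. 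These are precisely the two extra summands $\big(\tfrac{2\pi}{r}\big)^2(n^2+n)$ and $-2\pi\big(\tfrac{2\pi n}{r}\big)$ that appear inside the exponential in the statement.

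Then, for $0\le n\le r-2$, substituting the formula for $(q)_n$ from Lemma~\ref{fact}(1) into $\{n\}!=(-1)^n q^{-\frac{n(n+1)}{2}}(q)_n$ and collecting exponents gives part~(1) verbatim. For $\frac{r-1}{2}\le n\le r-2$ one instead uses Lemma~\ref{fact}(2): the only differences are the overall factor $2$ and the shift by $-\pi$ in the argument of the second quantum dilogarithm, which produces part~(2). The only point requiring any attention is the sign bookkeeping in the two conversions displayed above (in particular the use of $1/\sqrt{-1}=-\sqrt{-1}$); there is no genuine obstacle here, and the lemma is in effect an immediate corollary of Lemma~\ref{fact}.
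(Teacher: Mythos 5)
Your proposal is correct and is essentially the argument the paper intends: the paper states Lemma \ref{factorial} as an immediate consequence of the identity $\{n\}!=(-1)^n q^{-\frac{n(n+1)}{2}}(q)_n$ together with Lemma \ref{fact}, and you have simply written out the (correct) arithmetic converting the prefactors $(-1)^n$ and $q^{-n(n+1)/2}$ into the normalized form $e^{\frac{r}{4\pi\sqrt{-1}}(\,\cdot\,)}$.
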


The function $\varphi_r(z)$ and the dilogarithm function are closely related as follows.

\begin{lemma}\label{converge}  \begin{enumerate}[(1)]
\item For every $z$ with $0<\mathrm{Re}z<\pi,$ 
\begin{equation}
\varphi_r(z)=\mathrm{Li}_2(e^{2\sqrt{-1}z})+\frac{2\pi^2e^{2\sqrt{-1}z}}{3(1-e^{2\sqrt{-1}z})}\frac{1}{r^2}+O\Big(\frac{1}{r^4}\Big).
\end{equation}
\item For every $z$ with $0<\mathrm{Re}z<\pi,$ 
\begin{equation}
\varphi_r'(z)=-2\sqrt{-1}\log(1-e^{2\sqrt{-1}z})+O\Big(\frac{1}{r^2}\Big).
\end{equation}
\item \cite[Formula (8)(9)]{O2}
$$\varphi_r\Big(\frac{\pi}{r}\Big)=\mathrm{Li}_2(1)+\frac{2\pi\sqrt{-1}}{r}\log\Big(\frac{r}{2}\Big)-\frac{\pi^2}{r}+O\Big(\frac{1}{r^2}\Big).$$
\end{enumerate}\end{lemma}

\subsection{Continued fractions}\label{CF}

We recall some notations related to the continued fraction of  rational numbers, which will be used in the computation of the Reshetikhin-Turaev invariants (Proposition \ref{computation}).
For a pair of relatively prime integers $(p,q),$ let $$\frac{p}{q}=a_k-\frac{1}{a_{k-1}-\frac{1}{\cdots-\frac{1}{a_1}}}$$ be a  continued fraction.
For each $l\in\{1,\dots,k\},$ consider the matrix
\begin{align} \begin{bmatrix}\label{defABCD}
A_l & B_l \\
C_l & D_l \end{bmatrix}= T^{a_l}S\cdots T^{a_1}S,
\end{align}
where $$S= \begin{bmatrix}
0 & -1 \\
1 & 0 \end{bmatrix} \quad\text{and}\quad T= \begin{bmatrix}
1 & 1 \\
0 & 1 \end{bmatrix},$$  
and as a convention let
\begin{equation}
 \begin{bmatrix}
A_0  \\
C_0  \end{bmatrix}= \begin{bmatrix}
1    \\
0  \end{bmatrix}.
\end{equation}

\begin{lemma}\cite[Proposition 2.5]{J}\label{cf} 
\begin{enumerate}[(1)]

\item For $l\in\{1,\dots,k\},$ $A_l=a_lA_{l-1}-C_{l-1}$ and $C_l=A_{l-1}.$ 
\item For $l\in\{1,\dots,k\},$ $B_l=a_lB_{l-1}-D_{l-1}$ and $D_l=B_{l-1}.$


\item We have
$$\frac{A_k}{C_k}=\frac{p}{q}.$$

\item For $l\in\{1,\dots,k\},$
$$\frac{B_l}{A_l} = - \Big( \frac{1}{A_1}+\frac{1}{A_2A_1}+\dots + \frac{1}{A_{l} A_{l-1}}\Big).$$
\end{enumerate}
\end{lemma}

We observe that $A_k$ and $C_k$ are relatively prime because $A_kD_k-B_kC_k=\det(T^{a_k}S\cdots T^{a_1}S)=1.$ By Lemma \ref{cf} (3), $ \begin{bmatrix}
A_k\\
C_k \end{bmatrix}= \pm\begin{bmatrix}
p\\
q\end{bmatrix}. $
Since a $(p,q)$ Dehn-surgery and a $(-p,-q)$ Dehn-surgery provide the same $3$-manifold $M,$ we may without loss of generality assume that 
\begin{equation}\label{AkCk}
 \begin{bmatrix}
A_k \\
C_k\end{bmatrix}= \begin{bmatrix}
p\\
q\end{bmatrix}.
\end{equation}
 As a consequence, by Lemma \ref{cf} (1), we have
\begin{equation}\label{Ak-1=q}
\begin{bmatrix}
A_{k-1} \\
C_{k-1} \end{bmatrix}= \begin{bmatrix}
q\\
-p+a_kq\end{bmatrix}. 
\end{equation}
We also let 
\begin{equation}\label{p'}
 \begin{bmatrix}
p' \\
q'\end{bmatrix}= \begin{bmatrix}
D_k\\
-B_k\end{bmatrix}
\end{equation}
so that $pp'+qq'=1.$ In particular, by Lemma \ref{cf} (1), (2) and (4) we have
\begin{align}\label{sump'q'}
\frac{1}{A_1}+\frac{1}{A_2A_1}+\dots + \frac{1}{A_{k-1} A_{k-2}}
= -\frac{B_{k-1}}{A_{k-1}} = - \frac{D_k}{C_k} = - \frac{p'}{q}.
\end{align}

For $l\in\{1,\dots, k\},$ we also consider the quantity
\begin{equation}\label{K}
K_l=\frac{(-1)^{l+1}\sum_{j=1}^la_jC_j}{C_l}.
\end{equation}
The following Lemma \ref{arith'} and \ref{arith} from \cite{WY} are crucial in the computation of the relative Reshetikhin-Turaev invariants and the study of their asymptotics. 
\begin{lemma}\label{arith'}\cite[Lemma 3.2]{WY}
 $C_{k-1}K_{k-1}+C_{k-1}q$ is an even integer.
 \end{lemma}
  
\begin{lemma}\label{arith}\cite[Lemma 3.3]{WY}
\begin{enumerate}[(1)] 

\item Let
$$I:\{0,\dots, |q|-1\}\to\{0,\dots,2|q|-1\}$$ be  the map defined by 
$$I(s)=-C_{k-1}(2s+1+K_{k-1})\quad(\text{mod }2|q|).$$
Then $I$ is injective with image the set of integers in $\{0,\dots,2|q|-1\}$ with parity that of $1-q.$ 

In particular, there exist a unique $s^+\in\{0,\dots, |q|-1\}$ and a unique integer $m^+$ such that 
$$I(s^+)=1-q+2m^+q,$$
 and  a unique $s^-\in\{0,\dots, |q|-1\}$  and a unique  integer $m^-$ such that
$$I(s^-)=-1-q+2m^-q.$$
Moreover, 
\begin{equation}\label{+--}
s^+-s^-\equiv p' \quad(\text{mod }q).
\end{equation}

\item Let
$$J:\{0,\dots,|q|-1\}\to\mathbb Q$$
be  the map defined by
$$J(s)=\frac{2s+1}{q}+(-1)^k\sum_{i=1}^{k-1}\frac{(-1)^{i+1}K_i}{C_{i+1}}.$$
Then for the $s^+$ and $s^-$ in (1), 
$$J(s^+)\equiv\frac{p'}{q}\quad(\text{mod }\mathbb Z)$$
and
$$J(s^-)\equiv -\frac{p'}{q}\quad(\text{mod }\mathbb Z).$$

Moverover, 
$$J(s^+)\equiv  -J(s^-)\quad(\text{mod }2\mathbb Z).$$

\item Let 
$$K:\{0,\dots,|q|-1\}\to\mathbb Q$$
be  the map defined by 
$$K(s)=\frac{C_{k-1}(2s+1+K_{k-1})^2}{q}+\sum_{i=1}^{k-2}\frac{C_iK_i^2}{C_{i+1}}.$$
Then for the $s^+$ and $s^-$ in (1), 
$$K(s^+)\equiv -\frac{p'}{q}\quad(\text{mod }\mathbb Z)$$
and
$$K(s^-)\equiv -\frac{p'}{q}\quad(\text{mod }\mathbb Z).$$
\end{enumerate}
\end{lemma}
 
\subsection{Rational Dehn surgery}
Given a link $K=K_1\cup \dots \cup K_n \subset\SS^3$ with $n$ component, let $I\subset \{1,2,\dots,n\}$, $J = \{1,2,\dots,n\}\setminus I$ and 
\begin{align*}
     \frac{p_i}{q_i} = a_{i,\zeta_i} - \frac{1}{a_{i,\zeta_i-1} - \frac{1}{\dots -\frac{1}{a_{i,1}}}},
\end{align*}
where $a_{i,1},\dots, a_{i,\zeta_i}$ are integers for all $i$. For each $i\in I$, we choose a pair of meridian and longitude $\{(u_i,v_i)\}_{i\in I}$ of the fundamental group of the boundary of the tubular neighborhood of $K_i$. 
Recall from \cite[ p.273]{R} that doing $(p_i,q_i)$ surgery on the $K_i$ is the same as doing $(a_{i,\zeta_i},a_{i,\zeta_{i}-1},\dots,a_{i,1})$ surgery on the framed link $\tilde{K}_i$ obtained by adding a chain of framed simple loops around $K_i$ as shown in Figure \ref{surgexample}. Let $L_i$ be a simple loop with framing $a_{i,0}$ as shown in Figure \ref{surgexample2}. 
\begin{figure}[h]
\centering
\includegraphics[scale=0.2]{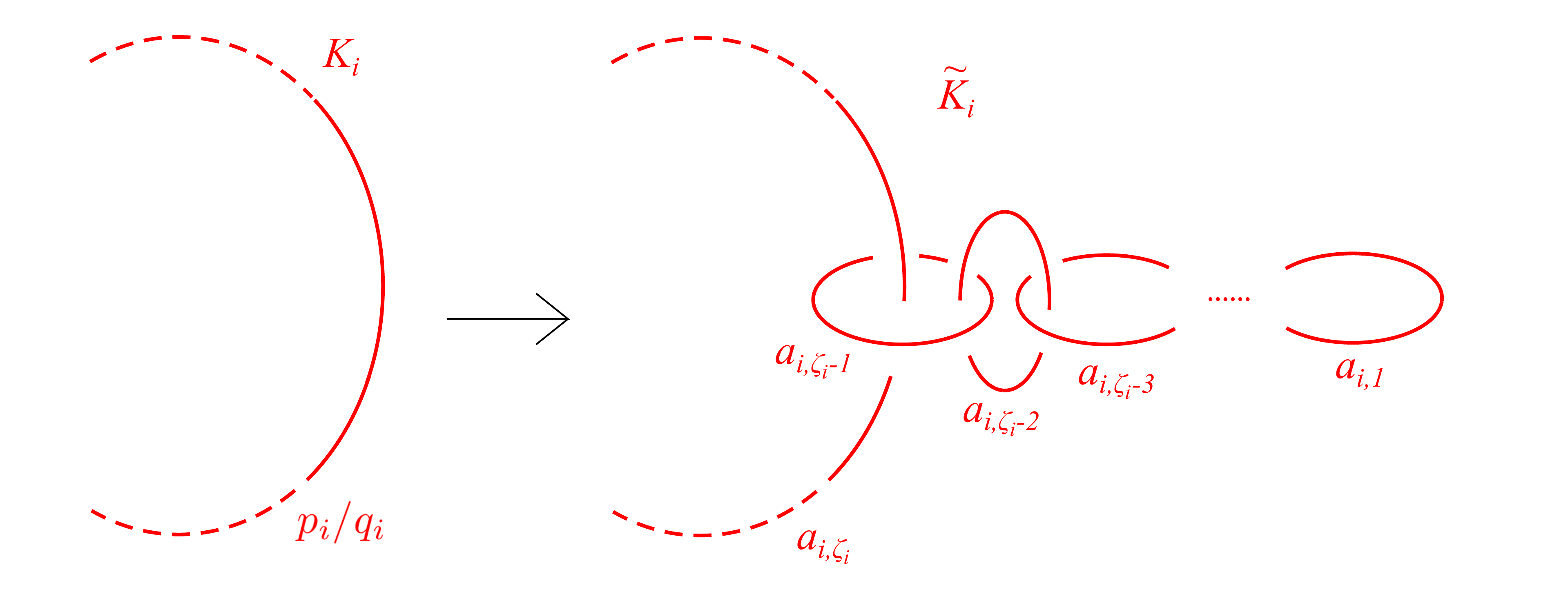}
\caption{doing $(p_i,q_i)$ surgery on $K_i$ is equivalent to doing $(a_{i,\zeta_i},a_{i,\zeta_{i-1}},\dots,a_{i,1})$ on $\tilde K_i$}\label{surgexample}
\end{figure}
\begin{figure}[h]
\centering
\includegraphics[scale=0.2]{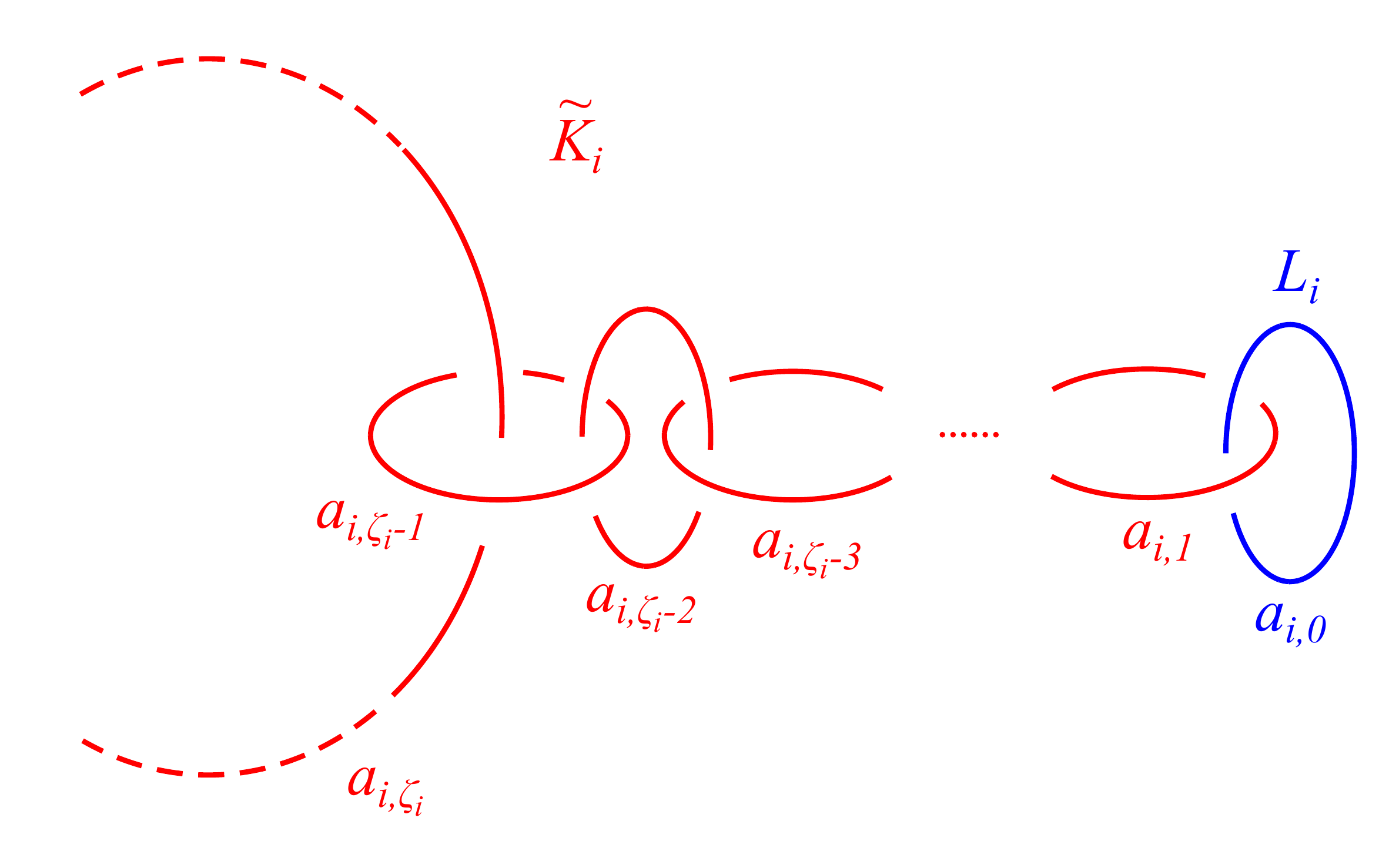}
\caption{Changing each $L_{\text{FSL},i}$ to $\tilde L_{\text{FSL},i}$}\label{surgexample2}
\end{figure}

Consider the continued fraction
$$
a_{i,\zeta_i} - \frac{1}{a_{i,\zeta_i-1} - \frac{1}{\dots -\frac{1}{a_{i,1}-\frac{1}{a_{i,0}}}}}.$$
Note that by (\ref{defABCD}), (\ref{p'}) and Lemma \ref{cf} (3), since
\begin{align*} 
\begin{bmatrix}
p_i & - q_i' \\
q_i &  p_i' 
\end{bmatrix}
= T^{a_{i,\zeta_i}}S\cdots T^{a_{i,1}}S,
\end{align*}
we have
\begin{align}
T^{a_{i,\zeta_i}}S\cdots T^{a_{i,1}}S T^{a_{i,0}}S
= \begin{bmatrix}
p_i & - q_i' \\
q_i &  p_i' 
\end{bmatrix}
\begin{bmatrix}
a_{i,0} & -1 \\
1 & 0 
\end{bmatrix}
=
\begin{bmatrix}
a_{i,0} p_i - q_i' & -p_i \\
a_{i,0} q_i + p_i' & q_i 
\end{bmatrix}.
\end{align}
This implies that the parallel copy $\gamma_i$ of $L_i$ given by the  framing $a_{i,0}$ is isotopic to curve $-q_i' u_i + p_i' v_i + a_{i,0}(p_i u_i + q_i v_i)$ on the boundary of the tubular neighborhood of $K_i$. Consider the hyperbolic cone structure on the closed oriented 3-manifold obtained by doing $(p_i,q_i)$ surgery on $\{K_i\}_{i\in I}$ and $(1,0)$ surgery on $\{K_j\}_{j \in J}$ with singular locus $\{L_i\}_{i\in I} \cup \{K_j\}_{j\in J}$ and cone angles $(\theta_1,\dots, \theta_n)$. Since 
$$ p_i\mathrm{H}(u_i) + q_i \mathrm{H}(v_i) = \theta_i \sqrt{-1}$$
for all $i\in I$, we have
\begin{align}\label{corecurveHol}
\mathrm{H}(\gamma_i) 
&= -q_i'\mathrm{H}(u_i) + p_i'\mathrm{H}(v_i) + a_{i,0}(p_i\mathrm{H}(u_i) + q_i\mathrm{H}(v_i) )\notag\\
&= -q_i'\mathrm{H}(u_i) + p_i'\mathrm{H}(v_i) + a_{i,0} \theta_i\sqrt{-1}.
\end{align}



\section{Computation of the relative Reshetikhin-Turaev invariants}\label{compRT}
Let $L_{\text{FSL}} = L_{\text{FSL},1} \cup \dots \cup L_{\text{FSL},n}$ be a fundamental shadow link in $M_c = \#^{c+1}(S^2 \times S^1)$ for some $c\in \NN$, and let $L' \subset S^3$ be the disjoint union of $c+1$ unknots with the 0-framings by doing surgery along which we get $M_c$. Let $M$ be a closed oriented $3$-manifold and $L=L_1\cup\dots\cup L_n$ be a framed link inside $M$ with $n$ components. Suppose $M\setminus L$ is homeomorphic to $M_c\setminus L_{\text{FSL}}$. Then, up to reordering if necessary, there exist a partition $\{I, J\}$ of $\{1, 2, .., n\}$ together with $p_i \in \ZZ$ and $q_i \in \ZZ\setminus\{0\}$ for each $i\in I$ such that 
\begin{enumerate}
\item $M$ is obtained by doing $(p_i/q_i)$ surgery along $L_{\text{FSL},i}$ and $(1,0)$ surgery along $L_{\text{FSL},j}$ in $M_c$,
\item the $i$-th component of $L$ in $M_c \setminus L_{\text{FSL}}$ is isotopic to a curve on the boundary of the tubular neighborhood of $L_{\text{FSL},i}$ that intersects the $(p_i, q_i)$-curve of the boundary at exactly one point, and
\item $L_j$ and $L_{\text{FSL},j}$ are isotopic in $M_c$ for all $j \in J$.
\end{enumerate}

For each $i\in I$, consider a continued fraction expansion
\begin{align*}
     \frac{p_i}{q_i} = a_{i,\zeta_i} - \frac{1}{a_{i,\zeta_i-1} - \frac{1}{\dots -\frac{1}{a_{i,1}}}},
\end{align*}
where $\zeta_i \in \NN$. We replace $L_{\text{FSL},i}$ by another framed link $\tilde{L}_{\text{FSL},i}$ of $\zeta_i$ many components with framings $a_{i,1},\dots, a_{i,\zeta_i}$ according to Figure~\ref{surg}. Let $\tilde{L}_{\text{FSL},I} = \bigcup_{i\in I} \tilde{L}_{\text{FSL},i}$. By eg \cite[ p.273]{R}, $M$ can also be obtained by doing surgery along the framed link $ \tilde{L}_{\text{FSL},I}  \cup L' \subset \SS^3$. 

\begin{figure}[h]
\centering
\includegraphics[scale=0.2]{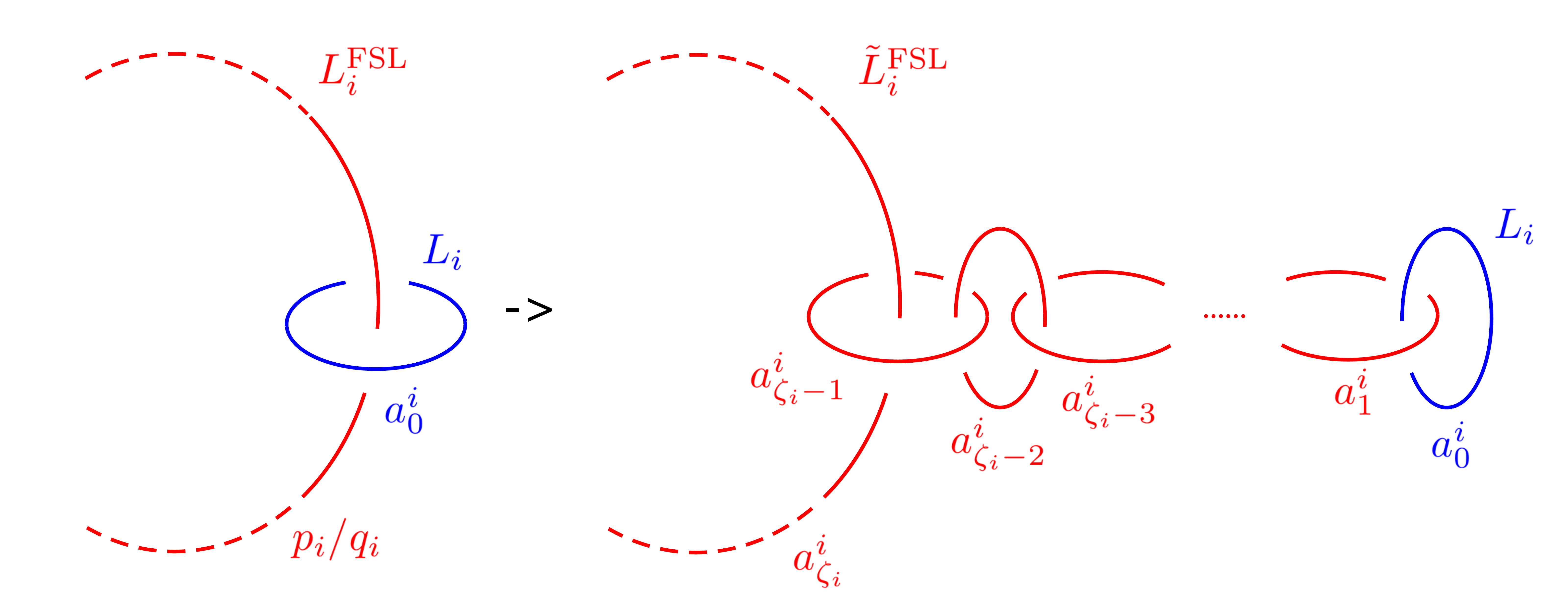}
\caption{Changing each $L_{\text{FSL},i}$ to $\tilde L_{\text{FSL},i}$}\label{surg}
\end{figure}

We let $\mathbf n_I = (n_i)_{i\in I} \in I_r^{|I|}$ and $\mathbf m_J=(m_j)_{j\in J}\in I_r^{|J|}$ be colors on the $I$ and $J$ components of $L$ respectively. Denote the framings of the $I$ and $J$ components by $a_{i,0}$ and $a_{j,0}$ respectively, where $i\in I$ and $j\in J$. First of all, we compute the $(\mathbf{n}_I, \mathbf{m}_J)$-th relative Reshetikhin-Turaev invariants of the pair $(M,L)$.

\begin{proposition}\label{RTformula}
\begin{align}\label{rt}
&RT_r(M,L,(\mathbf{n}_I, \mathbf{m}_J)) \notag\\ 
&= 
    \frac{\mu_r^{ \sum_{i \in I} {\zeta_i} - c}}{ \{1\}^{\sum_{i\in I} \zeta_i}} e^{-\sigma(\tilde{L}_{\text{FSL},I} \cup L')(-\frac{3}{r} - \frac{r+1}{4})\sqrt{-1}\pi} 
     \prod_{i\in I} q^{\frac{a_{i,0} n_i (n_i +2)}{2}}
     \prod_{j\in J}(-1)^{\frac{\iota_jm_j}{2}}q^{\big(a_{j,0}+\frac{\iota_j}{2}\big)\frac{m_j(m_j+2)}{2}}\notag \\
&  \qquad \times \sum_{ \mathbf m_I, \mathbf m_{\zeta_I}} \lt[ \lt(\prod_{i \in I} 
     \{(n_i + 1 )(m_{i,1} + 1)\}\{(m_{i,1} + 1)(m_{i,2} + 1)\}\dots\{(m_{i,\zeta_i-1} + 1)(m_{i,\zeta_i} + 1)\} \rt) \rt.\notag \\
&   \qquad\qquad\qquad\lt. \lt(  \prod_{i\in I} (-1)^{\frac{\iota_i m_{i,\zeta_i}}{2}}q^{\sum_{l=1}^{\zeta_i-1}\frac{a_{i,l} m_{i,l} (m_{i,l} + 2) }{2}
+ \lt(a_{i,\zeta_i} +\frac{\iota_i}{2}\rt) \frac{m_{i,\zeta_i} (m_{i,\zeta_i} + 2)}{2}
} \rt) 
    \prod_{s = 1}^{c}
    \begin{vmatrix}
    m_{s_1} & m_{s_2} & m_{s_3}\\
    m_{s_4} & m_{s_5} & m_{s_6}
    \end{vmatrix} \rt],
\end{align} 
where the sum is over multi-even integers $\mathbf m_I=(\mathbf m_i)_{i\in I} \in \{0,2,\dots,r-3\}^{\sum_{i\in I}\zeta_i - |I|}$ with each $(\mathbf m_i)=(m_{i,1},\dots, m_{i,\zeta_i -1}) \in \{0,2,\dots,r-3\}^{\zeta_i - 1}$ and multi-even integers $\mathbf m_{\zeta_I}=(m_{i,\zeta_i})_{i\in I}\in \{0,2,\dots,r-3\}^{|I|}$, and $m_{s_1} , \dots, m_{s_6}$ are the colors of the edges of the building block $\Delta_s$ inherited from the colors on $L_{\text{FSL}}$.
\end{proposition}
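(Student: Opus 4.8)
The plan is to expand the definition (\ref{RT}) of the relative Reshetikhin--Turaev invariant along the surgery presentation of $(M,L)$ constructed above, and then to evaluate the resulting coloured link diagram in $\SS^3$ piece by piece, using Costantino's quantum $6j$-symbol computation (the one behind Proposition \ref{FSL}) for the fundamental-shadow-link part and elementary skein identities for the chains coming from the continued-fraction expansions.

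\emph{Setting up.} First I would record what the link $L$ becomes under the Rolfsen moves realising the continued fractions $p_i/q_i$: the $j$-th component ($j\in J$) stays the edge-curve $L_j^{\text{FSL}}$ with framing $a_0^j$, while the $i$-th component ($i\in I$) is carried to a small meridian, with framing $a_0^i$, of the first component of the chain $\tilde L_i^{\text{FSL}}$; the $\zeta_i$-th component of $\tilde L_i^{\text{FSL}}$ is isotopic to $L_i^{\text{FSL}}$ (so its colour enters the building blocks $\Delta_s$), and $n_i,m_1^i,\dots,m_{\zeta_i}^i$ form a chain of $\zeta_i$ consecutive Hopf clasps. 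Substituting the diagram $D(L\cup\tilde L_I^{\text{FSL}}\cup L')$ into (\ref{RT}) --- with $e_{n_i},e_{m_j}$ on the components of $L$ and the Kirby colour $\Omega_r$ on every component of $\tilde L_I^{\text{FSL}}\cup L'$ --- writes $RT_r(M,L,(\mathbf n_I,\mathbf m_J))$ as $\mu_r$ times the Kauffman bracket of this cabled link, times $\langle\Omega_r\rangle_{U_+}^{-\sigma(\tilde L_I^{\text{FSL}}\cup L')}$; a standard Gauss-sum evaluation gives $\langle\Omega_r\rangle_{U_+}=e^{(-\frac3r-\frac{r+1}4)\sqrt{-1}\pi}$, which produces the scalar prefactor $e^{-\sigma(\tilde L_I^{\text{FSL}}\cup L')(-\frac3r-\frac{r+1}4)\sqrt{-1}\pi}$.

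\emph{Evaluating the diagram.} Next I would expand $\Omega_r=\mu_r\sum_m[m+1]e_m$ on each of the $\sum_{i\in I}\zeta_i$ components of $\tilde L_I^{\text{FSL}}$ --- this creates the sum over $\mathbf m_I$ and $\mathbf m_{\zeta_I}$, with a weight $\mu_r[m_l^i+1]$ per component --- while keeping $\Omega_r$ on $L'$. The resulting completely coloured diagram separates, upon cutting along the unknotted chain components $m_1^i,\dots,m_{\zeta_i-1}^i$, into the sub-diagram formed by $L'$, the truncated tetrahedra $\Delta_1,\dots,\Delta_c$ and the edge-curves of $L_{\text{FSL}}$ (coloured $m_j$ for $j\in J$ and $m_{\zeta_i}^i$ for $i\in I$), and, for each $i\in I$, the chain $n_i\!-\!m_1^i\!-\!\cdots\!-\!m_{\zeta_i}^i$ together with its meridian. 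The first sub-diagram is evaluated exactly as in the proof of Proposition \ref{FSL}: Costantino's fusion of the $L'$-Kirby colourings into the building blocks yields $\mu_r^{-c}$ times the same product of quantum $6j$-symbols $\prod_{s=1}^c|\cdots|$ appearing in (\ref{rt}), together with, for every edge-curve, its twist/longitude factor $(-1)^{\iota_j m_j/2}q^{(a_0^j+\iota_j/2)m_j(m_j+2)/2}$ for $j\in J$ and $(-1)^{\iota_i m_{\zeta_i}^i/2}q^{(a_{\zeta_i}^i+\iota_i/2)m_{\zeta_i}^i(m_{\zeta_i}^i+2)/2}$ for $i\in I$. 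Each chain is then evaluated by iterating the connect-sum identity $\langle A\#_cB\rangle=\langle A\rangle\langle B\rangle/\langle U_c\rangle$ and the fact that an $e_a$-coloured meridian of a simple strand $e_b$ acts by the scalar $\langle\mathrm{Hopf}(a,b)\rangle/\langle U_b\rangle$; using $\langle\mathrm{Hopf}(a,b)\rangle=\pm[(a+1)(b+1)]$, $\langle U_c\rangle=\pm[c+1]$ and $[x]=\{x\}/\{1\}$, the quantum dimensions $[m_l^i+1]$ of the internal Kirby colours telescope against the denominators, leaving the products $\{(n_i+1)(m_1^i+1)\}\prod_{l=1}^{\zeta_i-1}\{(m_l^i+1)(m_{l+1}^i+1)\}$, the auxiliary-unknot twists $q^{a_l^i m_l^i(m_l^i+2)/2}$ for $1\le l\le\zeta_i-1$, the meridian twist $q^{a_0^i n_i(n_i+2)/2}$, and exactly one factor $\{1\}^{-1}$ per clasp, i.e.\ $\{1\}^{-\sum_{i\in I}\zeta_i}$ overall, while the $\mu_r$'s combine into $\mu_r^{\sum_{i\in I}\zeta_i-c}$. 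Since all the colours $n_i,m_j,m_l^i$ are even, the only signs that survive are the $(-1)^{\iota_k m_k/2}$ from the longitude correction, and collecting all of this gives (\ref{rt}).

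\emph{Main obstacle.} The delicate step is the last one: carrying out Costantino's fusion in the presence of the extra chain components and, above all, keeping every normalising constant straight --- the powers of $\mu_r$ and of $\{1\}$, the twist anomaly attached to each framing, the value of the Gauss sum $\langle\Omega_r\rangle_{U_+}$, and the signature $\sigma(\tilde L_I^{\text{FSL}}\cup L')$ --- so that all the auxiliary factors telescope into exactly the stated form and not something off by a unit or a power of $\{1\}$.
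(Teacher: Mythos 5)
Your proposal is correct and follows essentially the same route as the paper's (very terse) proof: isolate the framing twists $q^{a\,m(m+2)/2}$ by changing all framings to zero, iterate the skein identity $\mathrm H(m,n)=(-1)^{m+n}\{(m+1)(n+1)\}/\{1\}$ along each Hopf-clasp chain coming from the continued-fraction expansion (which produces $\mu_r^{\zeta_i}/\{1\}^{\zeta_i}$ per chain and the $\{(m_{l}^i+1)(m_{l+1}^i+1)\}$ products), and then invoke Proposition \ref{FSL} for the remaining fundamental-shadow-link diagram to obtain the $6j$-symbol product. Your extra bookkeeping (Gauss sum for $\langle\Omega_r\rangle_{U_+}$, cancellation of the internal quantum dimensions $[m_l^i+1]$ against $\langle U_{m_l^i}\rangle$) is precisely what the paper's skein identity packages into $\mathrm H(m,n)$, so the two arguments are the same modulo presentation.
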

\begin{proof} The terms 
$$\prod_{j \in J} q^{\frac{a_{j,0} m_j (m_j +2)}{2}}, \quad \prod_{i\in I} q^{\frac{a_{i,0} n_i (n_i +2)}{2}}\quad \text{ and } \quad  q^{\frac{\sum_{l=1}^{\zeta_i}a_{i,l} m_{i,l} (m_{i,l} + 2) }{2}}$$ 
come from changing the framings of all the link components to zero. Besides, the term 
$$ \lt(\prod_{i \in I} 
     \{(n_i + 1 )(m_{i,1} + 1)\}\{(m_{i,1} + 1)(m_{i,2} + 1)\}\dots\{(m_{i,\zeta_i-1} + 1)(m_{i,\zeta_i} + 1)\} \rt) $$
comes from the skein computation
$$\Bigg\langle \cp{\includegraphics[width=1.6cm]{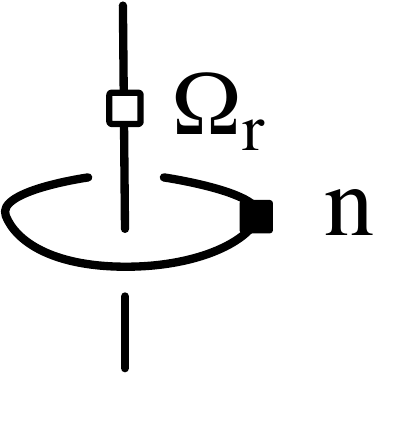}}\Bigg\rangle=\mu_r\sum_{m\in \mathrm{I}_r}\mathrm H(m,n)\Bigg\langle \cp{\includegraphics[width=0.7cm]{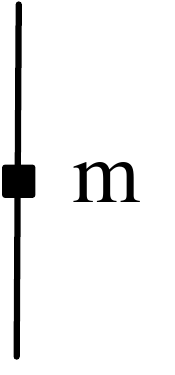}}\Bigg\rangle,$$
where 
$$ \mathrm{H}(m,n) = (-1)^{m+n} \frac{\{(m+1)(n+1)\}}{\{1\}} = \frac{\{(m+1)(n+1)\}}{\{1\}}  $$
for even integers $m,n$.
Together with Proposition \ref{FSL}, the result follows.
\end{proof}

Recall from \cite{WY1} that
\begin{proposition}\label{6jqd} (\cite[Proposition 3.1]{WY1}) The quantum $6j$-symbol at the root of unity $q=e^{\frac{2\pi\sqrt{-1}}{r}}$ can be computed as 
$$\bigg|
\begin{matrix}
        m_1 & m_2 & m_3 \\
        m_4 & m_5 & m_6 
      \end{matrix} \bigg|=\frac{\{1\}}{2}\sum_{k=\max\{T_1,T_2,T_3,T_4\}}^{\min\{Q_1,Q_2,Q_3,r-2\}}e^{\frac{r}{4\pi\sqrt{-1}}U_r\big(\frac{2\pi m_1}{r},\dots,\frac{2\pi m_6}{r},\frac{2\pi k}{r}\big)},$$
 where $U_r$ is defined as follows. If $(m_1,\dots,m_6)$ is of hyperideal type, then
\begin{equation}\label{term}
\begin{split}
U_r(\alpha_1,\dots,\alpha_6,\xi)=&\pi^2-\Big(\frac{2\pi}{r}\Big)^2+\frac{1}{2}\sum_{i=1}^4\sum_{j=1}^3(\eta_j-\tau_i)^2-\frac{1}{2}\sum_{i=1}^4\Big(\tau_i+\frac{2\pi}{r}-\pi\Big)^2\\
&+\Big(\xi+\frac{2\pi}{r}-\pi\Big)^2-\sum_{i=1}^4(\xi-\tau_i)^2-\sum_{j=1}^3(\eta_j-\xi)^2\\
&-2\varphi_r\Big(\frac{\pi}{r}\Big)-\frac{1}{2}\sum_{i=1}^4\sum_{j=1}^3\varphi_r\Big(\eta_j-\tau_i+\frac{\pi}{r}\Big)+\frac{1}{2}\sum_{i=1}^4\varphi_r\Big(\tau_i-\pi+\frac{3\pi}{r}\Big)\\
&-\varphi_r\Big(\xi-\pi+\frac{3\pi}{r}\Big)+\sum_{i=1}^4\varphi_r\Big(\xi-\tau_i+\frac{\pi}{r}\Big)+\sum_{j=1}^3\varphi_r\Big(\eta_j-\xi+\frac{\pi}{r}\Big),\\
\end{split}
\end{equation}
where $\tau_1=\frac{\alpha_1+\alpha_2+\alpha_3}{2},$ $\tau_2=\frac{\alpha_1+\alpha_5+\alpha_6}{2},$ $\tau_3=\frac{\alpha_2+\alpha_4+\alpha_6}{2}$ and $\tau_4=\frac{\alpha_3+\alpha_4+\alpha_5}{2},$ $\eta_1=\frac{\alpha_1+\alpha_2+\alpha_4+\alpha_5}{2},$ $\eta_2=\frac{\alpha_1+\alpha_3+\alpha_4+\alpha_6}{2}$ and $\eta_3=\frac{\alpha_2+\alpha_3+\alpha_5+\alpha_6}{2}.$
If $(m_1,\dots,m_6)$ is not of the hyperideal type, then $U_r$ will be changed according to Lemma \ref{factorial}.
\end{proposition}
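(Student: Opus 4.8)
The plan is to start from the state-sum definition of the quantum $6j$-symbol and feed every quantum factorial into Lemma \ref{factorial}, so that the whole expression collapses into a sum over $k$ of $\tfrac{\{1\}}{2}\,e^{\frac{r}{4\pi\sqrt{-1}}U_r}$. First I would rewrite each $[n]!$ occurring in the definition --- both inside the four factors $\Delta(m_1,m_2,m_3),\dots,\Delta(m_3,m_4,m_5)$ and inside the summand $\tfrac{(-1)^k[k+1]!}{\prod_{i=1}^4[k-T_i]!\prod_{j=1}^3[Q_j-k]!}$ --- in terms of the factorials $\{n\}!$ to which Lemma \ref{factorial} applies. Since $T_1+T_2+T_3+T_4=Q_1+Q_2+Q_3=\sum_{i=1}^6 m_i$, the powers of $\{1\}$ produced by this rewriting in a single summand collapse to one overall power; combined with the four half-integer powers of $\{1\}$ coming from the square roots in the $\Delta$'s, the powers of $2$ that Lemma \ref{factorial}(2) produces, and the phase $\sqrt{-1}^{-\sum m_i}$, this is what yields the prefactor $\tfrac{\{1\}}{2}$.

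Next I would apply Lemma \ref{factorial} to each $\{n\}!$: part (1) when $0\le n<\tfrac{r-1}{2}$ and part (2) --- which inserts a factor $2$ and shifts the argument of $\varphi_r$ by $-\pi$ --- when $\tfrac{r-1}{2}\le n\le r-2$. The hypothesis that $(m_1,\dots,m_6)$ is of hyperideal type is exactly the combinatorial statement that $k+1$ and each $T_i+1$ fall in the second range while each of $\tfrac{m_i+m_j-m_k}{2}$, $k-T_i$ and $Q_j-k$ falls in the first: this is what forces the arguments $\xi-\pi+\tfrac{3\pi}{r}$ and $\tau_i-\pi+\tfrac{3\pi}{r}$ into the shifted branch of $\varphi_r$ and the arguments $\eta_j-\tau_i+\tfrac{\pi}{r}$, $\xi-\tau_i+\tfrac{\pi}{r}$, $\eta_j-\xi+\tfrac{\pi}{r}$ into the unshifted one, where $\tau_i=\tfrac{2\pi T_i}{r}$, $\eta_j=\tfrac{2\pi Q_j}{r}$ and $\xi=\tfrac{2\pi k}{r}$. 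For a $6$-tuple of a different type the membership of some of these integers in the two ranges changes, and $U_r$ is modified accordingly, again through Lemma \ref{factorial}.

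It then remains to read off the exponent. For the $\varphi_r$-terms: each factorial contributes one copy of $\varphi_r(\tfrac{\pi}{r})$ and one copy of $-\varphi_r(\text{its argument})$, weighted $+1$ in a numerator, $-1$ in a denominator, and $\pm\tfrac12$ inside a $\Delta$; since the total weighted factorial count is $-2$, the $\varphi_r(\tfrac{\pi}{r})$-contributions sum to $-2\varphi_r(\tfrac{\pi}{r})$, and the argument matching from the previous step reproduces exactly the remaining $\varphi_r$-terms of $U_r$. For the polynomial part: Lemma \ref{factorial} attaches to $\{n\}!$ the quadratic $-2\pi\big(\tfrac{2\pi n}{r}\big)+\big(\tfrac{2\pi}{r}\big)^2(n^2+n)$, so substituting $n=k+1,\,T_i+1,\,k-T_i,\,Q_j-k$ and $\tfrac{m_i+m_j-m_k}{2}$, weighting as above and expanding, a direct algebraic simplification --- the analogue at $q=e^{2\pi\sqrt{-1}/r}$ of Costantino's computation in \cite{C1} --- identifies this sum with the $\varphi_r$-free part of $U_r$, including the constant $\pi^2-\big(\tfrac{2\pi}{r}\big)^2$ and the quadratic form $\tfrac12\sum_{i,j}(\eta_j-\tau_i)^2-\tfrac12\sum_i(\tau_i+\tfrac{2\pi}{r}-\pi)^2+(\xi+\tfrac{2\pi}{r}-\pi)^2-\sum_i(\xi-\tau_i)^2-\sum_j(\eta_j-\xi)^2$. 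Putting the two pieces together gives the claimed formula, with the range of $k$ being the one on which all the factorials above are defined.

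The hard part is bookkeeping, not analysis: one must carry all the $\tfrac{\pi}{r}$-shifts, the powers of $2$, and the powers of $\{1\}$ through the two dozen or so quantum factorials so that the constant $\pi^2-(\tfrac{2\pi}{r})^2$, the prefactor $\tfrac{\{1\}}{2}$ and the phase $\sqrt{-1}^{-\sum m_i}$ come out exactly right, and one must keep track of the branch of the meromorphic, pole-carrying function $\varphi_r$ so that the identity holds on the intended domain. Once Lemma \ref{factorial} is in hand there is no genuine analytic difficulty; the content is entirely in organizing the algebra, exactly as in \cite{C1} and \cite[Proposition 3.1]{WY1}.
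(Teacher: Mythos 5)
The paper does not actually prove this proposition; it is stated as a recall and attributed to \cite[Proposition 3.1]{WY1}, so there is no in-paper argument to compare against. Your reconstruction follows the route used in that reference: rewrite $[n]!=\{n\}!/\{1\}^n$ so that the $\{1\}$-powers telescope via $\sum_i T_i=\sum_j Q_j=\sum_i m_i$, substitute Lemma \ref{factorial} for each $\{n\}!$ with the correct branch (part (2), carrying the factor $2$ and the $-\pi$ shift, for the ``large'' factorials $\{k+1\}!$ and $\{T_i+1\}!$; part (1) for the rest), and collect. Your spot-checks are correct and are exactly what make the stated constants come out: the weighted factorial count $4-6=-2$ gives the $-2\varphi_r(\pi/r)$ term, the five large factorials give $2^{1-4/2}=\tfrac12$, and together with the surviving $\{1\}$ this yields the prefactor $\frac{\{1\}}{2}$.
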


We are going to apply the Gauss sum formula (Lemma \ref{sm} below) to write the relative Reshetikhin-Turaev invariants as a sum of the evaluation of certain holomorphic function (Proposition \ref{computation}). Recall that for each $i\in I$, we have
$$\frac{p_i}{q_i}=a_{i,\zeta_i}-\frac{1}{a_{i,\zeta_i-1}-\frac{1}{\cdots-\frac{1}{a_{i,1}}}}.$$
With respect to the continued fraction $[a_{i,1},\dots,a_{i,\zeta_i}]$, for each $i\in I$, let 
\begin{itemize}
\item $A_{i,l}, B_{i,l}, C_{i,l}, D_{i,l}$ be the integers defined in (\ref{defABCD}) for each $l = 1,\dots, \zeta_i$,
\item $p_i', q_i'$ and $K_{i,l}$ be the quantities defined in (\ref{p'}) and (\ref{K}) for each $l = 1,\dots, \zeta_i$,
\item $I_i(s_i), J_i(s_i)$ and $K_i(s_i)$ be the functions defined in Lemma \ref{arith}, where $s_i \in \{0,1,\dots,|q_i|-1\}$.
\end{itemize}
For any $n_{i}, m_{i,\zeta_i} \in \NN$, consider the sum
\begin{align}
&S_i(m_{i,\zeta_i},n_{i}) \notag\\
=& \sum_{m_{i,1},\dots,m_{i,\zeta_i-1}=0}^{r-1} (-1)^{\sum_{l=1}^{\zeta_i-1} a_{i,l} m_{i,l}} q^{\sum_{l=1}^{\zeta_i-1} \frac{a_{i,l} m_{i,l}^2}{2}}(q^{n_{i} m_{i,1}} - q^{-n_{i} m_{i,1}})q^{\sum_{l=1}^{\zeta_i-1} m_{i,l} m_{i,l+1}} \label{Sisum},
\end{align}
where $q=e^{\frac{2\pi \sqrt{-1}}{r}}$. This sum will appear later in Proposition \ref{computation}.

\begin{lemma}\label{sm} At $q=e^{\frac{2\pi \sqrt{-1}}{r}}$, we have
\begin{align*}
S_i(m_{i,\zeta_i},n_{i}) 
= \frac{(-1)^{\zeta_i+1}(\sqrt{-1}r)^{\frac{\zeta_i-1}{2}}}{\sqrt{q_i}} \sum_{s_i=0}^{|q_i|-1}\Bigg( e^{\frac{r}{4\pi\sqrt{-1}} Z_i^+\Big(s_i, \frac{2\pi m_{i,\zeta_i}}{r}, \frac{2\pi n_{i}}{r}\Big)} - e^{\frac{r}{4\pi\sqrt{-1}} Z_i^-\Big(s_i, \frac{2\pi m_{i,\zeta_i}}{r}, \frac{2\pi n_{i}}{r}\Big)} \Bigg),
\end{align*}
where
\begin{align*}
Z_i^\pm(s,\alpha,\beta)
=& \frac{C_{i,\zeta_i-1}}{q_i}(\alpha-\pi)^2 \mp \frac{2(\beta-\pi)(\alpha-\pi)}{q_i} - \frac{2\pi (I_i(s) \pm 1)}{q_i}(\alpha - \pi) \\
&+ K_i(s) \pi^2 - \frac{p_i'}{q_i} \beta^2 \mp 2\beta \pi J_i(s).
\end{align*}
\begin{proof}
First of all, we consider a closely related sum
\begin{align*}
\tilde{S}_i(m_{i,\zeta_i},n_{i}) 
= \sum_{m_{i,1},\dots,m_{i,\zeta_i-1}=0}^{r-1} & (-1)^{\sum_{l=1}^{\zeta_i-1} a_{i,l} m_{i,l}} q^{\sum_{l=1}^{\zeta_i-1} \frac{a_{i,l} m_{i,l}^2}{2}}(q^{n_{i} m_{i,1}} - q^{-n_{i} m_{i,1}}) \\
& \times\prod_{l=1}^{\zeta_i-1}\Big(q^{m_{i,l} m_{i,l+1}} - q^{-m_{i,l} m_{i,l+1}} \Big).
\end{align*}
By considering the transformation $m_{i,l} \mapsto r-m_{i,l}$, a direct computation shows that 
$$
\tilde{S}_i(m_{i,\zeta_i},n_{i})  = 2^{\zeta_i -1} S_i(m_{i,\zeta_i},n_{i}) .
$$
As a result, by Lemma 3.6 in \cite{WY} (note that our variable $q$ is the variable $t^{\frac{1}{2}}$ in \cite{WY}),
\begin{align*}
S_i(m_{i,\zeta_i},n_{i}) 
=&\frac{1}{2^{\zeta_i -1}}\tilde{S}_i(m_{i,\zeta_i},n_{i})  \\
=& \tau_i^+ \sum_{s_i=0}^{2|q_i|-1} e^{-\frac{\pi\sqrt{-1}}{r} \frac{C_{i,\zeta_i-1}}{q_i} \Big(m_{i,\zeta_i} + s_ir + \frac{rK_{i,\zeta_i-1}}{2} - \frac{(-1)^{\zeta_i} n_{i}}{C_{i,\zeta_i-1}} \Big)^2}\\
&\qquad- \tau_i^- \sum_{s_i=0}^{2|q_i|-1} e^{-\frac{\pi\sqrt{-1}}{r} \frac{C_{i,\zeta_i-1}}{q_i} \Big(m_{i,\zeta_i} + s_ir + \frac{rK_{i,\zeta_i-1}}{2} + \frac{(-1)^{\zeta_i} n_{i}}{C_{i,\zeta_i-1}} \Big)^2},
\end{align*}
where
\begin{align*}
&\tau_i^\pm 
= \frac{(\sqrt{-1}r)^{\frac{\zeta_i-1}{2}}}{2\sqrt{q_i}} \\
&\qquad \times e^{-\frac{\pi\sqrt{-1}}{r} n_{i}^2 \Big(\sum_{l=1}^{\zeta_i-2} \frac{1}{C_{i,l}C_{i,l+1}}\Big) - \frac{\pi\sqrt{-1}r}{4} \sum_{l=1}^{\zeta_i-2} \frac{C_{i,l}K_{i,l}^2}{C_{i,l+1}} \mp \pi\sqrt{-1} n_{i} \Big(\sum_{l=1}^{\zeta_i-2} (-1)^{l+1} \frac{K_{i,l}}{C_{i,l+1}}\Big)}.
\end{align*}
Moreover, since all of $C_{i,\zeta_i},$ $m_{i,\zeta_i},$ $s_i,$ $q_i, n_i$ and $r$ are integers, a direct computation shows that
$$\frac{e^{-\frac{\pi\sqrt{-1}}{r} \frac{C_{i,\zeta_i-1}}{q_i} \Big(m_{i,\zeta_i} + (s_i+q_i)r + \frac{rK_{i,\zeta_i-1}}{2} \pm \frac{(-1)^{\zeta_i} n_{i}}{C_{i,\zeta_i-1}} \Big)^2}}{e^{-\frac{\pi\sqrt{-1}}{r} \frac{C_{i,\zeta_i-1}}{q_i} \Big(m_{i,\zeta_i} + s_ir + \frac{rK_{i,\zeta_i-1}}{2} \pm \frac{(-1)^{\zeta_i} n_{i}}{C_{i,\zeta_i-1}} \Big)^2}}=e^{-\pi\sqrt{-1}r(K_{i,\zeta_i-1}C_{i,\zeta_i-1}+C_{i,\zeta_i-1}q_i)}=1,$$
where the last equality comes from Lemma \ref{arith'}  that  $K_{i,\zeta_i-1}C_{i,\zeta_i-1}+C_{i,\zeta_i-1}q_i$ is an even integer. Thus, for each $s_i\in\{0,\dots, |q_i|-1\},$ 
$$e^{-\frac{\pi\sqrt{-1}}{r} \frac{C_{i,\zeta_i-1}}{q_i} \Big(m_{i,\zeta_i} + (s_i+q_i)r + \frac{rK_{i,\zeta_i-1}}{2} \pm \frac{(-1)^{\zeta_i} n_{i}}{C_{i,\zeta_i-1}} \Big)^2}=e^{-\frac{\pi\sqrt{-1}}{r} \frac{C_{i,\zeta_i-1}}{q_i} \Big(m_{i,\zeta_i} + s_ir + \frac{rK_{i,\zeta_i-1}}{2} \pm \frac{(-1)^{\zeta_i} n_{i}}{C_{i,\xi_i-1}} \Big)^2}.$$

In particular, we can write
\begin{align*}
&S_i(m_{i,\zeta_i},n_{i}) 
= \tau_i
\sum_{s_i=0}^{|q_i|-1} e^{-\frac{\pi\sqrt{-1}}{r} \frac{C_{i,\zeta_i-1}}{q_i} \Big(m_{i,\zeta_i} + s_ir + \frac{rK_{i,\zeta_i-1}}{2}\Big)^2} \\
&\times\Bigg(e^{-\pi\sqrt{-1}n_{i}\Big( \frac{(-1)^{\zeta}}{rq_i} ( 2m_{i,\zeta_i} + 2s_ir) + \sum_{l=1}^{\zeta_i-1} \frac{(-1)^{l+1}K_{i,l}}{C_{i,l+1}}\Big)}
- e^{\pi\sqrt{-1}n_{i}\Big( \frac{(-1)^{\zeta}}{rq_i} ( 2m_{i,\zeta_i} + 2s_ir) + \sum_{l=1}^{\zeta_i-1} \frac{(-1)^{l+1}K_{i,l}}{C_{i,l+1}}\Big)} \Bigg),
\end{align*}
where
$$
\tau_i
= \frac{(\sqrt{-1}r)^{\frac{\zeta_i-1}{2}}}{\sqrt{q_i}}
e^{-\frac{\pi\sqrt{-1}}{r} n_{i}^2 \Big(\sum_{l=1}^{\zeta_i-1} \frac{1}{C_{i,l}C_{i,l+1}}\Big) - \frac{\pi\sqrt{-1}r}{4} \sum_{l=1}^{\zeta_i-2} \frac{C_{i,l}K_{i,l}^2}{C_{i,l+1}}}.
$$
By a direct computation, 
\begin{align*}
&e^{-\frac{\pi\sqrt{-1}}{r} \frac{C_{i,\zeta_i-1}}{q_i} \Big(m_{i,\zeta_i} + s_ir + \frac{rK_{i,\zeta_i-1}}{2}\Big)^2 - \frac{\pi\sqrt{-1}r}{4} \sum_{l=1}^{\zeta_i-2} \frac{C_{i,l}K_{i,l}^2}{C_{i,l+1}}}\\
=& 
e^{\frac{r}{4\pi\sqrt{-1}} \Big(\frac{C_{i,\zeta_i-1}}{q_i}\Big(\frac{2\pi m_{i,\zeta_i}}{r} -\pi \Big)^2 - \frac{2\pi I_i(s_i)}{q_i} \Big(\frac{2\pi m_{i,\zeta_i}}{r} -\pi \Big) + K_i(s_i) \pi^2 \Big)}.
\end{align*}
Besides, by Lemma \ref{cf} and (\ref{sump'q'}),
$$
e^{-\frac{\pi\sqrt{-1}}{r} n_{i}^2 \Big(\sum_{l=1}^{\zeta_i-1} \frac{1}{C_{i,l}C_{i,l+1}}\Big)}
= e^{\frac{r}{4\pi\sqrt{-1}} \Big(-\frac{p_i'}{q_i}\Big)\Big(\frac{2\pi n_{i}}{r}\Big)^2}.
$$
Moreover,
\begin{align*}
& e^{\mp\pi\sqrt{-1}n_{i}\Big( \frac{(-1)^{\zeta_i}}{rq_i} ( 2m_{i,\zeta_i} + 2s_ir) + \sum_{l=1}^{\zeta_i-1} \frac{(-1)^{l+1}K_{i,l}}{C_{i,l+1}}\Big)}\\
&= 
e^{\pm(-1)^{\zeta_i} \frac{r}{4\pi\sqrt{-1}} \Big(2 \Big(\frac{2\pi n_{i}}{r}\Big( \Big(\frac{2\pi m_{i,\zeta_i}}{r} - \pi \Big) \frac{1}{q_i} + \pi J_i(s_i)\Big)\Big) \Big)}\\
&=  e^{\pm (-1)^{\zeta_i} \frac{r}{4\pi\sqrt{-1}} \Big(\frac{2}{q_i} \Big(\frac{2\pi n_i}{r}-\pi\Big)\Big(\frac{2\pi m_{i,\zeta_i}}{r}-\pi\Big) + \frac{2\pi}{q_i}\Big(\frac{2\pi m_{i,\zeta_i}}{r} - \pi\Big) + 2\pi \Big(\frac{2\pi n_{i}}{r} \Big) J_i(s_i)\Big)}.
\end{align*}
The result follows from the above computation.
\end{proof}
\end{lemma}

\begin{proposition}\label{computation}
$$\mathrm{RT}_r(M,L,(\mathbf n_I,\mathbf m_J))= Z_r\sum_{\mathbf s_I, \mathbf m_{\zeta_I}, \mathbf k, \mathbf E_I} g_r^{\mathbf E_I}(\mathbf s_I, \mathbf m_{\zeta_I}, \mathbf k) ,$$
where 
\begin{enumerate}
\item $Z_r$ is given by
\begin{align*}
Z_r &= \frac{ (-1)^{\sum_{i\in I}(\zeta_i + 1 + \sum_{l=1}^{\zeta_i}a_{i,l})} (\sqrt{-1}r)^{\sum_{i\in I}\frac{\zeta_i -1}{2}}\mu_r^{\sum_{i\in I}\zeta_i - c}}{2^c\{ 1 \}^{\sum_{i \in I} \zeta_i - c}\sqrt{\prod_{i\in I} q_i}} \\
&\qquad  e^{\frac{\pi\sqrt{-1}}{r} \sum_{i\in I} \sum_{l=1}^{\zeta_i-1} a_{i,l} -\frac{r\pi \sqrt{-1}}{4} ( \sum_{i\in I} (a_{i,0} +a_{i,\zeta_i})  + \sum_{j\in J} a_{j,0}) +\sigma(\tilde{L}_{\text{FSL},I} \cup L')(\frac{3}{r} + \frac{r+1}{4})\sqrt{-1}\pi},
\end{align*}
\item $\mathbf s_I = (s_i)_{i \in I}$ where each $s_i$ runs over all integer in $\{0,\dots,|q_i|-1\}$,
\item $\mathbf m_{\zeta_I}=(\mathbf m_{i,\zeta_i})_{i\in I}$ runs over all multi-even integers in $\{0,2,\dots,r-3\}$ so that for each $s\in\{1,\dots,c\}$, the triples $(m_{s_1},m_{s_2},m_{s_3}),$  $(m_{s_1},m_{s_5},m_{s_6}),$  $(m_{s_2},m_{s_4},m_{s_6})$ and  $(m_{s_3},m_{s_4},m_{s_5})$ are $r$-admissible,
\item 
$\mathbf k=(k_1,\dots,k_c)$ runs over all multi-integers with each $k_s$ lying in between $\max\{T_{s_i}\}$ and $\min\{Q_{s_j},r-2\},$
\item $\mathbf E_I = (E_i)_{i\in I} \in \{-1, 1\}^{|I|}$ runs over all multi-sign, 
\item the function $g_r^{\mathbf E_I}({\mathbf s_I}, {\mathbf m_{\zeta_I}},{\mathbf k}) $ is defined by
\begin{align*}
g_r^{\mathbf E_I}({\mathbf s_I},{\mathbf m_{\zeta_I}},{\mathbf k}) 
&= \Bigg(\prod_{i \in I} E_i\Bigg) e^{\sqrt{-1} P_r^{\mathbf E_I}\lt(\mathbf s_I,\frac{2\pi \mathbf m_{\zeta_I}}{r} \rt) +  \frac{r}{4\pi \sqrt{-1}} W_r\lt(\mathbf s_I,\frac{2\pi \mathbf m_{\zeta_I}}{r}, \frac{2\pi \mathbf k}{r} \rt)},
\end{align*}
where $\frac{2\pi \mathbf k}{r} = (\frac{2\pi k_1}{r}, \dots , \frac{2\pi k_c}{r}),$ $\frac{2\pi \mathbf m_{\zeta_I}}{r} = \lt(\frac{2\pi m_{i,\zeta_i}}{r}\rt)_{i\in I}$, $\mathbf s_I=(s_i)_{i\in I}$,
\begin{align*}
P_r^{\mathbf E_I}(\mathbf s_I, \boldsymbol{\alpha}_{\zeta_I}) 
=& \sum_{i\in I}\Bigg(\frac{p_i'}{q_i}(\beta_i - \pi) + \frac{p_i}{q_i}(\alpha_{i,\zeta_i} - \pi) + \frac{E_i(\alpha_{i,\zeta_i}+\beta_i-2\pi)}{q_i}\Bigg)\\
&+ \pi\sum_{i\in I}\Bigg( \frac{I_i(s_i) + E_i}{q_i} + \frac{p_i'}{q_i} + E_i J_i(s_i)\Bigg)\\
&+ \sum_{i\in I}a_{i,0}\beta_i + \sum_{i\in I} \Big(\frac{\iota_i}{2}\Big)\alpha_{i,\zeta_i} + \sum_{j\in J}\Big(a_{j,0}+\frac{\iota_j}{2}\Big)\alpha_j
\end{align*}
and 
\begin{align*}
W_r(\mathbf s_I, \boldsymbol{\alpha}_{\zeta_I}, \boldsymbol \xi)  
&=  - \sum_{i \in I}  \Big(a_{i,0} + \frac{p_i'}{q_i}\Big)(\beta_i - \pi)^2 - \sum_{j \in J} \Big(a_{j,0}+\frac{\iota_j}{2}\Big)(\alpha_j - \pi)^2  \\
&\qquad -\sum_{i=1}^n \Big(\frac{p_i}{q_i}+\frac{\iota_i}{2}\Big)(\alpha_{i,\zeta_i}-\pi)^2 - \sum_{i\in I}\frac{2\pi(I_i(s_i) - E_i)}{q_i}(\alpha_{i,\zeta_i}-\pi)\\
&\qquad - \sum_{i\in I}\frac{2E_i (\alpha_{i,\zeta_i}-\pi)(\beta_i-\pi)}{q_i}
+ \sum_{i\in I}2\pi\beta_i\Big(-E_iJ_i(s_i) - \frac{p_i}{q_i} \Big)\\
&\qquad +\sum_{s=1}^c U_r(\alpha_{s_1},\dots,\alpha_{s_6},\xi_s)
+\sum_{i\in I}\pi^2\Bigg(K_i(s_i)+\frac{p_i'}{q_i}\Bigg)
+\Big(\sum_{i=1}^n\frac{\iota_i}{2}\Big)\pi^2 \\
&\qquad +\frac{4\pi^2}{r^2} h_I
\end{align*}
with $\beta_i = \frac{2\pi n_i}{r}$ for $i\in I$, $\alpha_j = \frac{2\pi m_j}{r}$ for $j\in J$ and $h_I = \sum_{i \in I} \frac{C_{i,\zeta_i-1} +2E_i - p_i' }{q_i}$.
\end{enumerate}
\end{proposition}

\begin{proof}

By Proposition \ref{RTformula}, we have
\begin{align*}
&RT_r(M,L,(\mathbf{n}_I, \mathbf{m}_J)) \notag\\ 
=& 
    \frac{\mu_r^{ \sum_{i \in I} {\zeta_i} - c}}{ \{1\}^{\sum_{i\in I} \zeta_i}} e^{-\sigma(\tilde{L}_{\text{FSL},I} \cup L')(-\frac{3}{r} - \frac{r+1}{4})\sqrt{-1}\pi} \prod_{i\in I} q^{\frac{a_{i,0} n_i (n_i +2)}{2}}
     \prod_{j\in J}(-1)^{\frac{\iota_jm_j}{2}}q^{\big(a_{j,0}+\frac{\iota_j}{2}\big)\frac{m_j(m_j+2)}{2}}\notag \\
&  \qquad \times 
\sum_{ \mathbf m_I } 
\sum_{ \boldsymbol \epsilon_I}
\sum_{ \mathbf{m_{\zeta_i}}}
\lt(\prod_{i\in I}
S_i^{(\epsilon_{i,1}, \dots, \epsilon_{i,\zeta_i - 1})}(m_{i,1},\dots, m_{i,\zeta_i}) \rt)
\lt(  \prod_{i\in I} (-1)^{\frac{\iota_i m_{i,\zeta_i}}{2}}q^{\lt(a_{i,\zeta_i}+\frac{\iota_i}{2}\rt) \frac{m_{i,\zeta_i} (m_{i,\zeta_i} + 2)}{2}
} \rt)  \\
&  \qquad \times \prod_{s = 1}^{c}
    \begin{vmatrix}
    m_{s_1} & m_{s_2} & m_{s_3}\\
    m_{s_4} & m_{s_5} & m_{s_6}
    \end{vmatrix}
,
\end{align*}
where 
\begin{itemize}
\item 
$\mathbf m_I=(\mathbf m_i)_{i\in I}$ with $\mathbf m_i=(m_{i,1},\dots, m_{i,\zeta_i -1})$ runs over all multi-even integers in $\{0,2,\dots,r-3\}$,
\item
$\mathbf m_{\zeta_I}  = (m_{i,\zeta_i})_{i\in I}$ runs over all multi-even integers in $\{0,2,\dots,r-3\}$ so that for each $s\in\{1,\dots,c\}$, the triples $(m_{s_1},m_{s_2},m_{s_3}),$  $(m_{s_1},m_{s_5},m_{s_6}),$  $(m_{s_2},m_{s_4},m_{s_6})$ and  $(m_{s_3},m_{s_4},m_{s_5})$ are $r$-admissible, 
\item 
${\boldsymbol \epsilon_I} = ({\boldsymbol \epsilon_i})_{i\in I}$ with each
${\boldsymbol \epsilon_i}=(\epsilon_{i,1}, \dots, \epsilon_{i,\zeta_i-1})\in \{0,1\}^{\zeta_i-1}$ and
\item $S_i^{(\epsilon_{i,1}, \dots, \epsilon_{i,\zeta_i -1})}(m_{i,1},\dots, m_{i,\zeta_i-1})$ is given by
\begin{align*}
&S_i^{(\epsilon_{i,1}, \dots, \epsilon_{i,\zeta_i -1 })}(m_{i,1},\dots, m_{i,\zeta_i-1}) \\
=&  \lt( q^{(n_i+1)(m_{i,1}+1)} - q^{-(n_i+1)(m_{i,1}+1)}  \rt)
(-1)^{\sum_{l=1}^{\zeta_i-1} \epsilon_{i,l}} (-1)^{\sum_{l=1}^{\zeta_i-1} a_{i,l} m_{i,l}} \\ &\qquad \times q^{\sum_{l=1}^{\zeta_i-1} \frac{a_{i,l} m_{i,l} (m_{i,l} + 2)}{2} + \sum_{l=1}^{\zeta_i - 1} (-1)^{\epsilon_{i,l}} (m_{i,l} + 1)(m_{i,l+1} + 1)
}.
\end{align*}
\end{itemize}
Note that in the formula of $S_i^{(\epsilon_l^i, \dots, \epsilon_{\zeta_i -1 }^i)}$, the term $(-1)^{\sum_{l=1}^{\zeta_i} a_{i,l} m_{i,l}}$ is equal to $1$ when $m_{i,l}$ is even. Nevertheless, we need this term for the next computation.

A direct computation shows that for each $i\in I$, we have 
\begin{align*}
S_i^{(1, \epsilon_{i,2}, \dots, \epsilon_{i,\zeta_i -1})}
(m_{i,1}, m_{i,2}, \dots, m_{i,\zeta_i-1})
&= S_i^{(0, \epsilon_{i,2}, \dots, \epsilon_{i,\zeta_i -1})}
(r-2-m_{i,1}, m_{i,2}, \dots , m_{i,\zeta_i-1}).
\end{align*}
Note that since $r$ is odd, $r-2-m_{i,1}$ runs through all odd integers from $0$ to $r-2$. More generally, we have 
\begin{align}\label{eventoodd}
&S_i^{(0, \dots, 0, 1, \epsilon_{i,l+1}, \dots, \epsilon_{i,\zeta_i -1})}
(m_{i,1}, \dots, m_{i,l-1}, m_{i,l}, m_{i,l+1}, \dots, m_{i,\zeta_i-1})\notag\\
&= S_i^{(0, \dots, 0, 0, \epsilon_{i,l+1}, \dots, \epsilon_{i,\zeta_i -1})}
(r-2-m_{i,1}, \dots, r-2- m_{i,l-1}, r-2-m_{i,l}, m_{i,l+1}, \dots, m_{i,\zeta_i-1}).
\end{align}
Originally, $\mathbf m_i = (m_{i,1},\dots,m_{i,\zeta_i-1})$ run through all multi-even integers in $\{0,2,\dots,r-3\}^{\zeta_i-1}$. By (\ref{eventoodd}), we can change the sum $\mathbf m_I$ to be over all integers in $\{0,1,\dots,r-2\}^{\sum_{i\in I}\zeta_i-|I|}$ and write
\begin{align*}
&RT_r(M,L,(\mathbf{n}_I, \mathbf{m}_J)) \notag\\ 
=&     \frac{\mu_r^{ \sum_{i \in I} {\zeta_i} - c}}{ \{1\}^{\sum_{i\in I} \zeta_i}} e^{-\sigma(\tilde{L}_{\text{FSL},I} \cup L')(-\frac{3}{r} - \frac{r+1}{4})\sqrt{-1}\pi} \prod_{i\in I} q^{\frac{a_{i,0} n_i (n_i +2)}{2}}
     \prod_{j\in J}(-1)^{\frac{\iota_jm_j}{2}}q^{\big(a_{j,0}+\frac{\iota_j}{2}\big)\frac{m_j(m_j+2)}{2}}\notag \\
&  \qquad  
\sum_{ \mathbf{m_{\zeta_i}}}
\lt( \prod_{i\in I} (-1)^{\frac{\iota_i m_{i,\zeta_i}}{2}}q^{\lt(a_{i,\zeta_i} +\frac{\iota_i}{2}\rt) \frac{m_{i,\zeta_i} (m_{i,\zeta_i} + 2)}{2}
} 
S_i^{(0,0,\dots,0)}(m_{i,1}, m_{i,2}, \dots, m_{i,\zeta_i-1})
\rt)\\
&\qquad \times \prod_{s = 1}^{c}
    \begin{vmatrix}
    m_{s_1} & m_{s_2} & m_{s_3}\\
    m_{s_4} & m_{s_5} & m_{s_6}
    \end{vmatrix},
    \end{align*}
where 
\begin{align*}
&S_i^{(0,0,\dots,0)}(m_{i,1}, m_{i,2}, \dots, m_{i,\zeta_i-1})\\
&= \sum_{ \mathbf m_I} 
\lt( q^{(n_i+1)(m_{i,1}+1)} - q^{-(n_i+1)(m_{i,1}+1)}  \rt)
(-1)^{\sum_{l=1}^{\zeta_i-1} a_{i,l} m_{i,l}} q^{\sum_{l=1}^{\zeta_i-1} \frac{a_{i,l} m_{i,l} (m_{i,l} + 2)}{2} + \sum_{l=1}^{\zeta_i - 1}  (m_{i,l} + 1)(m_{i,l+1} + 1)
}.
\end{align*}
Note that
\begin{align*}
S_i^{(0,0,\dots,0)}(m_{i,1}, m_{i,2}, \dots, m_{i,\zeta_i-1})
&= (-1)^{\sum_{l=1}^{\zeta_i-1} a_{i,l}} q^{-\sum_{l=1}^{\zeta_i-1} \frac{a_{i,l}}{2}} S_i(m_{i,\zeta_i}+1,n_{i}+1) ,
\end{align*}
where $S_i(n_{\zeta_i},m_{i,\zeta_i})$ is the sum introduced in (\ref{Sisum}). By Lemma \ref{sm}, we have
\begin{align*}
&S_i^{(0,0,\dots,0)}(m_{i,1}, m_{i,2}, \dots, m_{i,\zeta_i-1})
=(-1)^{\sum_{l=1}^{\zeta_i-1} a_{i,l}} q^{-\sum_{l=1}^{\zeta_i-1} \frac{a_{i,l}}{2}}  \frac{(-1)^{\zeta_i + 1}(\sqrt{-1}r)^{\frac{\zeta_i-1}{2}}}{\sqrt{q_i}} \\
&\qquad \qquad\sum_{s_i=0}^{|q_i|-1}\Bigg(e^{\frac{r}{4\pi\sqrt{-1}} Z_i^+ \Big(s_i, \frac{2\pi m_{i,\zeta_i}}{r} + \frac{2\pi}{r}, \frac{2\pi n_i}{r} + \frac{2\pi}{r}\Big)} - e^{\frac{r}{4\pi\sqrt{-1}} Z_i^- \Big(s_i, \frac{2\pi m_{i,\zeta_i}}{r} + \frac{2\pi}{r}, \frac{2\pi n_i}{r} + \frac{2\pi}{r}\Big)}\Bigg),
\end{align*}
where
\begin{align}\label{Zi}
Z_i^\pm(s,\alpha,\beta)
&= \frac{C_{i,\zeta_i-1}}{q_i}(\alpha-\pi)^2 \mp \frac{2(\beta-\pi)(\alpha-\pi)}{q_i} - \frac{2\pi (I_i(s) \pm 1)}{q_i}(\alpha - \pi) \\
&\qquad + K_i(s) \pi^2 - \frac{p_i'}{q_i} (\beta-\pi)^2 + 2\pi \beta\Big(-\frac{p_i'}{q_i} \mp J_i(s) \Big) + \frac{p_i'}{q_i}\pi^2 \notag.
\end{align}

By a direct computation,
\begin{align*}
&Z_i^\pm\Big(s,\alpha+\frac{2\pi}{r},\beta+\frac{2\pi}{r}\Big)\notag\\
=&Z_i^\pm(s,\alpha,\beta) + \frac{4\pi C_{i,\zeta_i-1}}{rq_i}(\alpha-\pi) + \frac{4\pi^2 C_{i,\zeta_i-1}}{q_i r^2} \mp \frac{4\pi}{rq_i}(\alpha+\beta-2\pi) \mp \frac{8\pi^2}{r^2 q_i} \\
&- \frac{4\pi^2(I_i(s)\pm 1)}{rq_i} - \frac{p_i'}{q_i}\Big(\frac{4\pi}{r} (\beta-\pi) + \frac{4\pi^2}{r^2}\Big) + \frac{4\pi^2}{r}\Big(-\frac{p_i'}{q_i} \mp J_i(s)\Big),
\end{align*}
which implies that
\begin{align}\label{Z2pior}
&e^{\frac{r}{4\pi\sqrt{-1}}Z_i^\pm\Big(s_i,\frac{2\pi m_{i,\zeta_i}}{r} +\frac{2\pi}{r},\frac{2\pi n_i}{r} +\frac{2\pi}{r}\Big)}\notag\\
=& e^{\sqrt{-1}\Big(-\frac{C_{i,\zeta_i-1}}{q_i}\Big(\frac{2\pi m_{i,\zeta_i}}{r} - \pi\Big) + \frac{pi'}{q_i}(\frac{2\pi n_i}{r} -\pi) \pm \frac{\big(\frac{2\pi m_{i,\zeta_i}}{r}+\frac{2\pi n_i}{r} -2\pi\big)}{q_i} + \pi \Big(\frac{I_i(s_i) \pm 1}{q_i} + \frac{p_i'}{q_i} \pm J_i(s_i)\Big) \Big)}\\
&\times e^{\frac{r}{4\pi\sqrt{-1}} \Big(Z_i^\pm(s_i,\frac{2\pi m_{i,\zeta_i}}{r},\frac{2\pi n_i}{r} ) + \frac{4\pi^2}{r^2}\Big(\frac{C_{i,\zeta_i-1} \mp 2 - p_i'}{q_i}\Big)\Big)} \notag.
\end{align}

Next, by a direct computation, for any even integer $n \in \NN$, we have
\begin{align}
q^{\frac{n(n+2)}{2}} 
&= \lt(e^{\frac{\pi\sqrt{-1}}{4}}\rt)^{-r} q^{\frac{1}{2}\lt(n-\frac{r}{2}\rt)^2+n} 
= \lt(e^{\frac{\pi\sqrt{-1}}{4}}\rt)^{-r} e^{\sqrt{-1} \lt(\frac{2\pi n}{r}\rt)} e^{\frac{r}{4\pi\sqrt{-1}}\lt(-\lt( \frac{2\pi n}{r} - \pi \rt)^2\rt)} \label{c1} 
\end{align}
By using Equation (\ref{c1}), we can write
\begin{align}
q^{\frac{a_{j,0} m_j(m_j+2)}{2}} 
&= \lt(e^{-\frac{r\pi\sqrt{-1}}{4}  a_{j,0}}\rt) \lt(e^{ a_{j,0} \sqrt{-1}\lt(\frac{2\pi m_j}{r}\rt) } \rt)
\lt(e^{\frac{r}{4\pi \sqrt{-1}}\lt( - a_{j,0} \lt(\frac{2\pi m_j}{r} - \pi\rt)^2\rt)} \rt),
\end{align}
\begin{align}
q^{\frac{a_{i,0} n_i(n_i+2)}{2}}
&= \lt(e^{-\frac{r\pi\sqrt{-1}}{4} a_{i,0}} \rt) \lt(e^{a_{i,0}\sqrt{-1} \lt(\frac{2\pi n_i}{r}\rt) } \rt)
\lt(e^{\frac{r}{4\pi \sqrt{-1}}\lt( - a_{i,0} \lt(\frac{2\pi n_i}{r} - \pi\rt)^2\rt)}  \rt) 
\end{align}
and
\begin{align}\label{pqterm}
&q^{\frac{a_{i,\zeta_i} m_{i,\zeta_i}(m_{i,\zeta_i}+2)}{2}}\notag\\
&= \lt(e^{-\frac{r\pi\sqrt{-1}}{4}  a_{i,\zeta_i}}\rt) \lt(e^{ a_{i,\zeta_i}\sqrt{-1}\lt(\frac{2\pi m_{i,\zeta_i}}{r}\rt) } \rt)
\lt(e^{\frac{r}{4\pi \sqrt{-1}}\lt( -a_{i,\zeta_i} \lt(\frac{2\pi m_{i,\zeta_i}}{r} - \pi\rt)^2\rt)} \rt) \notag\\
&= (-1)^{ a_{i,\zeta_i}} \lt(e^{-\frac{r\pi\sqrt{-1}}{4} a_{i,\zeta_i}}\rt) \lt(e^{ a_{i,\zeta_i}\sqrt{-1}\lt(\frac{2\pi m_{i,\zeta_i}}{r} - \pi\rt) } \rt)
\lt(e^{\frac{r}{4\pi \sqrt{-1}}\lt( - a_{i,\zeta_i} \lt(\frac{2\pi m_{i,\zeta_i}}{r} - \pi\rt)^2\rt)} \rt).
\end{align}
In particular, by Lemma \ref{cf}, the first term in (\ref{Zi}) and the last term of (\ref{pqterm}) can be grouped together to give
\begin{align*}
\frac{C_{i,\zeta_i-1}}{q_i}\Big(\frac{2\pi m_{i,\zeta_i}}{r} -\pi\Big)^2 - a_{i,\zeta_i}\Big(\frac{2\pi m_{i,\zeta_i}}{r} -\pi\Big)^2
= -\frac{p_i}{q_i}\Big(\frac{2\pi m_{i,\zeta_i}}{r} -\pi\Big)^2.
\end{align*}
Besides, for each $i\in I$, (\ref{Z2pior}) and the third term in (\ref{pqterm}) can be grouped together to give 
\begin{align*}
&e^{\frac{r}{4\pi\sqrt{-1}}Z_i^\pm\Big(s_i,\frac{2\pi m_{i,\zeta_i}}{r} +\frac{2\pi}{r},\frac{2\pi n_i}{r} +\frac{2\pi}{r}\Big)}e^{ a_{i,\zeta_i}\sqrt{-1}\lt(\frac{2\pi m_{i,\zeta_i}}{r} - \pi\rt) }\notag\\
=& e^{\sqrt{-1}\Big(\frac{p_i'}{q_i}\Big(\frac{2\pi m_{i,\zeta_i}}{r} - \pi\Big) + \frac{pi'}{q_i}(\frac{2\pi n_i}{r} -\pi) \pm \frac{\big(\frac{2\pi m_{i,\zeta_i}}{r}+\frac{2\pi n_i}{r} -2\pi\big)}{q_i} + \pi \Big(\frac{I_i(s_i) \pm 1}{q_i} + \frac{p_i'}{q_i} \pm J_i(s_i)\Big) \Big)}\\
&\times e^{\frac{r}{4\pi\sqrt{-1}} \Big(Z_i^\pm(s_i,\frac{2\pi m_{i,\zeta_i}}{r},\frac{2\pi n_i}{r} ) + \frac{4\pi^2}{r^2}\Big(\frac{C_{i,\zeta_i-1} \mp 2 - p_i'}{q_i}\Big)\Big)}.
\end{align*}
For any even integer $n\in \NN$, we have
\begin{align}
(-1)^{\frac{n}{2}}q^{\frac{n(n+2)}{4}} 
= \lt(e^{\frac{\pi\sqrt{-1}}{8}}\rt)^{-r} q^{\frac{1}{4}\lt(n-\frac{r}{2}\rt)^2+\frac{n}{2}} 
= e^{\frac{r}{4\pi \sqrt{-1}} \lt(\frac{\pi^2}{2}\rt)} e^{\frac{\sqrt{-1}}{2} \lt(\frac{2\pi n}{r}\rt)} e^{\frac{r}{4\pi\sqrt{-1}}\lt(- \frac{1}{2}\lt( \frac{2\pi n}{r} - \pi \rt)^2\rt)} . \label{c1.5} 
\end{align}
By using Equation (\ref{c1.5}), we can write
\begin{align}
&(-1)^{\frac{\iota_j m_j}{2}}q^{\frac{\iota_j m_j(m_j+2)}{4}} = \lt(e^{\frac{r}{4\pi \sqrt{-1}} \lt(\frac{\iota_j\pi^2}{2}\rt)}\rt) \lt( e^{\frac{\iota_j\sqrt{-1}}{2} \lt(\frac{2\pi m_j}{r}\rt)} \rt) \lt(e^{\frac{r}{4\pi\sqrt{-1}}\lt(-\frac{\iota_j}{2}\lt( \frac{2\pi m_j}{r} - \pi \rt)^2\rt)} \rt)
\end{align}
and
\begin{align}
& (-1)^{\frac{\iota_i m_{i,\zeta_i}}{2}}q^{\frac{\iota_i m_{i,\zeta_i}(m_{i,\zeta_i}+2)}{4}} 
= \lt(e^{\frac{r}{4\pi \sqrt{-1}} \lt(\frac{\iota_i\pi^2}{2}\rt)} \rt)\lt(e^{\frac{\iota_i\sqrt{-1}}{2} \lt(\frac{2\pi m_{i,\zeta_i}}{r}\rt)}\rt) \lt(e^{\frac{r}{4\pi\sqrt{-1}}\lt(- \frac{\iota_i}{2}\lt( \frac{2\pi m_{i,\zeta_i}}{r} - \pi \rt)^2\rt)}\rt).
\end{align}

By Proposition \ref{6jqd}, we have
\begin{align}
\prod_{s=1}^c
\begin{vmatrix}
        m_{s_1} & m_{s_2} & m_{s_3} \\
        m_{s_4} & m_{s_5} & m_{s_6} 
      \end{vmatrix} 
=\frac{\{1\}^c}{2^c}\sum_{\mathbf k}e^{\frac{r}{4\pi\sqrt{-1}} \sum_{s=1}^c U_r\big(\frac{2\pi m_{s_1}}{r},\dots,\frac{2\pi m_{s_6}}{r},\frac{2\pi k_s}{r}\big)},
\end{align}
where
$\mathbf k=(k_1,\dots,k_c)$ runs over all multi-integers with each $k_s$ lying in between $\max\{T_{s_i}\}$ and $\min\{Q_{s_j},r-2\}$. The result then follows from above computations.

\end{proof}

\section{Poisson summation formula}
To apply the Poisson Summation Formula to the summation in Proposition 
\ref{computation}, we consider the following regions and a bump function over them.

For a fixed $(\boldsymbol \alpha_j)_{j \in J} \in [0, 2\pi]^{|J|}$, we let $\boldsymbol{\alpha}_{\zeta_I} = (\alpha_{i,\zeta_i})_{i\in I} \in [0,2\pi]^{|I|}$, $\boldsymbol\xi = (\xi_1,\dots, \xi_c) \in \RR^c$,
$$D_A = \{ ( \boldsymbol{\alpha}_{\zeta_I}, \boldsymbol\xi) \in \mathbb{R}^{|I|  + c} \mid (\alpha_{s_1}, \dots , \alpha_{s_6}) \text{ is admissible, }
\max\{\tau_{s_i}\} \leq  \xi_s \leq \min \{ \eta_{s_j},2\pi \} \},$$
and
$$D_H = \{ (\boldsymbol{\alpha}_{\zeta_I}, \boldsymbol\xi) \in D_A \mid (\alpha_{s_1}, \dots , \alpha_{s_6})  \text{ is of hyperideal type}\}.$$
For sufficiently small $\delta>0$, we let
$$D_H^{\delta} = \{ (\boldsymbol{\alpha}_{\zeta_I}, \boldsymbol\xi) \in D_H \mid d((\boldsymbol{\alpha}_{\zeta_I}, \boldsymbol\xi), \partial D_H) > \delta \},$$
where $d$ is the Euclidean distance on $\mathbb{R}^{|I| + c}$. 
Moreover, we let $\psi: \mathbb{R}^{|I| + c} \rightarrow [0,1]$ be a $C^{\infty}$-smooth bump function satisfying
\begin{align*}
 \psi(\boldsymbol{\alpha}_{\zeta_I}, \boldsymbol\xi) 
 &= 1 \quad\text{ for } \quad (\boldsymbol{\alpha}_{\zeta_I}, \boldsymbol\xi) \in  \overline{D_H^{\delta}},\\
    \psi(\boldsymbol{\alpha}_{\zeta_I}, \boldsymbol\xi) 
    &= 0 \quad\text{ for } \quad   (\boldsymbol{\alpha}_{\zeta_I}, \boldsymbol\xi) \not \in D_H
\end{align*}
and $\psi \in (0,1)$ elsewhere.

Moreover, we let
\begin{equation*}
    f_r^{\mathbf E_I} (\mathbf s_I, {\mathbf m_{\zeta_I}},{\mathbf k}) = \psi\lt( \frac{2\pi \mathbf{m}_{\zeta_I}}{r}, \frac{2\pi \mathbf{k} }{r}\rt) g_r^{\mathbf E_I}(\mathbf s_I, \mathbf m_{\zeta_I}, \mathbf k).
\end{equation*}

To apply the Poisson summation formula, for each $i \in I$, we let $m_{i,\zeta_i} = 2 {\widehat m_{i,\zeta_i}}$ and ${\mathbf {\widehat m}_{\zeta_I}} = (\mathbf{\widehat m}_{i,\zeta_i})_{i\in I}$. Then from Proposition \ref{computation}, we have
\begin{align}
    RT_r (M,L,(\textbf{n}_I, \textbf{m}_J)) = Z_r  \sum_{(\mathbf {\widehat m}_{\zeta_I}, \mathbf k) \in \ZZ^{|I|+ c}} \lt(\sum_{\mathbf E_I,\mathbf s_I} f_r^{\boldsymbol {E}_I}(\mathbf s_I, 2 \widehat{\mathbf m}_{\zeta_I}, \mathbf k) \rt) + \text{error term}. \label{PSerr1}
\end{align}
Let
\begin{equation*}
    f_r (2 \widehat{\mathbf m}_{\zeta_I}, \mathbf k) = \sum_{\mathbf E_I, \mathbf s_I} f_r^{\mathbf E_I} (\mathbf s_I, 2 \widehat{\mathbf m}_{\zeta_I}, \mathbf k) .
\end{equation*}

Since $f_r$ is in the Schwartz space on $\mathbb{R}^{|I| + c}$, by the Poisson summation formula (see e.g. \cite[Theorem 3.1]{SS}),
\begin{equation*}
    \sum_{(\mathbf {\widehat m}_{\zeta_I},\mathbf k) \in \ZZ^{|I| + c}} f_r (2 \widehat{\mathbf m}_{\zeta_I}, \mathbf k) 
    = \sum_{(\mathbf A_{\zeta_I}, \mathbf B)\in \ZZ^{|I|+c}} \widehat{f_r}(\mathbf A_{\zeta_I}, \mathbf B),
\end{equation*}
where $\mathbf A_{\zeta_I} = (A_{i,\zeta_i})_{i\in I}\in \ZZ^{|I|}$, $\mathbf B = (B_1,\dots, B_c) \in \ZZ^c$ and $\widehat{f_r}(\mathbf A_{\zeta_I}, \mathbf B)$ is the $(\mathbf A_{\zeta_I}, \mathbf B)$-th Fourier coefficient of $f_r$ defined by 
\begin{align*}
 \widehat{f_r}(\mathbf A_{\zeta_I},  \mathbf B) 
= \int_{\mathbb{R}^{|I|+c}} f_r(2 \widehat{\mathbf m}_{\zeta_I}, \mathbf k) e^{\sum_{i \in I} 2\pi \sqrt{-1}A_{i,\zeta_i} \widehat{m}_{i,\zeta_i} + \sum_{s=1}^c 2\pi \sqrt{-1} B_l k_l} d{\mathbf{\widehat m}_{\zeta_I}}d{\textbf{k}}
\end{align*}
where $d{\mathbf{\widehat m}_{\zeta_I}} d \textbf{k} = \prod_{i\in I} d{\widehat m}_{i,\zeta_i} \prod_{s=1}^c dk_l$. 

By change of variables $ \widehat{m}_{i,\zeta_i}=\frac{r}{4\pi}\alpha_{i,\zeta_i}$ and $k_l = \frac{r}{2\pi}\xi_s$, the Fourier coefficients can be computed as follows.

\begin{proposition}\label{formulaFC}
\begin{equation*}
    \widehat{f_r}(\mathbf A_{\zeta_I}, \mathbf  B) = \sum_{\mathbf E_I, \mathbf s_I}  \widehat{f_r^{\mathbf E_I}}(\mathbf s_I, \mathbf A_{\zeta_I},  \boldsymbol  B)
\end{equation*}
with
\begin{align*}
   & \widehat{f_r^{\mathbf E_I}}(\mathbf s_I, \mathbf A_{\zeta_I},  \boldsymbol  B)  =
    \frac{r^{|I| + c}\big(\prod_{i \in I} E_i\big) }{2^{2|I| + c} \pi ^{|I| + c}}\\
&  \times  \int_{D_H} 
   (-1)^{\sum_{i\in I} A_{i,\zeta_i}} \phi_r\lt(\mathbf s_I, \boldsymbol{\alpha}_{\zeta_I}, \boldsymbol\xi \rt) 
    e^{\frac{r}{4\pi \sqrt{-1}}\lt( 
   W_r^{\mathbf E_I}\lt(\mathbf s_I, \boldsymbol \alpha_{\zeta_I}, \boldsymbol\xi\rt) 
    - \sum_{i \in I} 2\pi A_{i,\zeta_i} \alpha_{i,\zeta_i} - \sum_{s=1}^c 4\pi  B_s \xi_s \rt)} d\boldsymbol{\alpha}_{\zeta_I} d\boldsymbol\xi,
\end{align*}
where $d\boldsymbol{\alpha}_{\zeta_I} d\boldsymbol \xi = \prod_{i \in I}d\alpha_{i,\zeta_i} \prod_{s=1}^c d\xi_s$, 
$$
\phi_r(\mathbf s_I, \boldsymbol{\alpha}_{\zeta_I}, \boldsymbol\xi) 
= 
\psi(\boldsymbol{\alpha}_{\zeta_I}, \boldsymbol\xi) 
e^{ \sqrt{-1}P_r^{\mathbf E_I}(\mathbf s_I, \boldsymbol{\alpha_{\xi_I}})},
$$
\begin{align*}
P_r^{\mathbf E_I}(\mathbf s_I, \boldsymbol{\alpha}_{\zeta_I}) 
=& \sum_{i\in I}\Big(\frac{p_i'}{q_i}(\beta_i - \pi) + \frac{p_i}{q_i}(\alpha_{i,\zeta_i} - \pi) + \frac{E_i(\alpha_{i,\zeta_i}+\beta_i-2\pi)}{q_i}\Big)\\
&+ \pi\sum_{i\in I}\Big( \frac{I_i(s_i)+ E_i}{q_i} + \frac{p_i'}{q_i} + E_i J_i(s_i)\Big)\\
&+ \sum_{i\in I}a_{i,0}\beta_i + \sum_{i\in I} \Big(\frac{\iota_i}{2}\Big)\alpha_{i,\zeta_i} + \sum_{j\in J}\Big(a_{j,0}+\frac{\iota_j}{2}\Big)\alpha_j
\end{align*}
and 
\begin{align*}
W_r^{\mathbf E_I}(\mathbf s_I, \boldsymbol{\alpha}_{\zeta_I}, \boldsymbol \xi)  
&=  - \sum_{i \in I}  \Big(a_{i,0} + \frac{p_i'}{q_i}\Big)(\beta_i - \pi)^2 - \sum_{j \in J} \Big(a_{j,0}+\frac{\iota_j}{2}\Big)(\alpha_j - \pi)^2  \\
&\qquad -\sum_{i\in I} \Big(\frac{p_i}{q_i}+\frac{\iota_i}{2}\Big)(\alpha_{i,\zeta_i}-\pi)^2 - \sum_{i\in I}\frac{2\pi(I_i(s_i) + E_i)}{q_i}(\alpha_{\xi_i}-\pi)\\
&\\ 
&\qquad - \sum_{i\in I}\frac{2E_i (\alpha_{i,\zeta_i}-\pi)(\beta_i-\pi)}{q_i}
+ \sum_{i\in I}2\pi\beta_i\Big(-E_iJ_i(s_i) - \frac{p_i}{q_i} \Big)\\
&\qquad +\sum_{s=1}^c U_r(\alpha_{s_1},\dots,\alpha_{s_6},\xi_s)
+\sum_{i\in I}\pi^2\Bigg(K_i(s_i)+\frac{p_i'}{q_i}\Bigg)
+\Big(\sum_{i=1}^n\frac{\iota_i}{2}\Big)\pi^2 \\
&\qquad +\frac{4\pi^2}{r^2} h_I
\end{align*}
with $\beta_i = \frac{2\pi n_i}{r}$ for $i\in I$, $\alpha_j = \frac{2\pi m_j}{r}$ for $j\in J$ and $h_I = \sum_{i \in I} \frac{C_{i,\xi_i-1} -2E_i - p_i' }{q_i}$.
\end{proposition}

Together with Equation (\ref{PSerr1}), we have
\begin{proposition}\label{Fperror}
\begin{align*}
    RT_r(M,L,(\textbf{n}_I, \textbf{m}_J)) = Z_r \sum_{(\mathbf A_{\zeta_I}, \mathbf B)\in \ZZ^{|I|+c}}
    \widehat{f_r}(\mathbf A_{\zeta_I}, \boldsymbol  B) + \text{error term}.
\end{align*}
\end{proposition}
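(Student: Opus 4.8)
\emph{Proof proposal.} The identity is obtained by assembling three ingredients already in place in this section: the expression for $\mathrm{RT}_r$ from Proposition~\ref{computation}, the passage to the cut-off summand $f_r^{\boldsymbol E_I}=\psi\cdot g_r^{\boldsymbol E_I}$, and the Poisson summation formula, the Fourier coefficients of the latter being the object computed in Proposition~\ref{formulaFC}. The plan is to make the first of these precise, since the rest is formal.

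I would start from Proposition~\ref{computation}, which writes $\mathrm{RT}_r(M,L,(\mathbf n_I,\mathbf m_J))=Z_r\sum g_r^{\boldsymbol E_I}(\mathbf m_I,\mathbf m_{\zeta_I},\mathbf k)$ with the sum taken over the truncated index set (each $m_l^i\in\{0,\dots,r-2\}$, $\mathbf m_{\zeta_I}$ even with the four $r$-admissibility constraints on every block, and $\max\{T_{s_i}\}\le k_s\le\min\{Q_{s_j},r-2\}$). Substituting $m^i_{\zeta_i}=2\widehat m^i_{\zeta_i}$ turns the even sum over the last colours into an ordinary lattice sum, and I would then compare $g_r^{\boldsymbol E_I}$ with $f_r^{\boldsymbol E_I}=\psi\,g_r^{\boldsymbol E_I}$: on the bulk, where $\psi\equiv1$ and the index already lies in the admissible range, the two summands coincide; and off the support of $\psi$ the function $f_r^{\boldsymbol E_I}$ vanishes identically (by the defining property $\psi=0$ outside $(0,2\pi)^{\sum_{i\in I}\zeta_i-|I|}\times D_H$), which is exactly what allows the summation to be extended to all of $\ZZ^{\sum_{i\in I}\zeta_i+c}$. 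The remaining discrepancy --- lattice points in the transition layer $D_H\setminus\overline{D_H^\delta}$ where $0<\psi<1$, points in $D_A\setminus D_H$ lying over non-hyperideal blocks (where the formula for $U_r$ changes and the clean expression of $g_r^{\boldsymbol E_I}$ fails), together with the boundary terms of the original truncated $k$- and colour-ranges --- is precisely what is collected in the quantity labelled ``error term''; this is Equation~(\ref{PSerr1}). One should also record here that $f_r^{\boldsymbol E_I}$, hence $f_r=\sum_{\boldsymbol E_I}f_r^{\boldsymbol E_I}$, is genuinely $C^\infty$ on all of $\RR^{\sum_{i\in I}\zeta_i+c}$: the cut-off keeps its argument away from the poles of $\varphi_r$ and out of the non-hyperideal chambers, so that $g_r^{\boldsymbol E_I}$ is holomorphic wherever $\psi\neq0$, which is what makes the Poisson summation formula legitimately applicable.

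Next I would apply Poisson summation: since $f_r$ is a finite sum of smooth compactly supported functions it lies in the Schwartz space on $\RR^{\sum_{i\in I}\zeta_i+c}$, so \cite[Theorem~3.1]{SS} gives $\sum_{(\mathbf m_I,\widehat{\mathbf m}_{\zeta_I},\mathbf k)\in\ZZ^{\sum_{i\in I}\zeta_i+c}}f_r(\mathbf m_I,2\widehat{\mathbf m}_{\zeta_I},\mathbf k)=\sum_{(\mathbf A_I,\mathbf A_{\zeta_I},\mathbf B)}\widehat{f_r}(\mathbf A_I,\mathbf A_{\zeta_I},\mathbf B)$. Feeding in Proposition~\ref{formulaFC} --- obtained by the change of variables $\alpha^i_l=\tfrac{2\pi(m^i_l+1)}{r}$, $\xi_s=\tfrac{2\pi k_s}{r}$, where the $m^i_{\zeta_i}=2\widehat m^i_{\zeta_i}$ substitution both contributes the extra $2^{|I|}$ in the Jacobian $\tfrac{r^{\sum_{i\in I}\zeta_i+c}}{2^{\sum_{i\in I}\zeta_i+|I|+c}\pi^{\sum_{i\in I}\zeta_i+c}}$ and, via the shift by $1$, produces the sign $(-1)^{\sum_{i\in I}A^i_{\zeta_i}}$ and the shifted arguments $\phi_r(\boldsymbol\alpha_I-\tfrac{2\pi}{r},\boldsymbol\alpha_{\zeta_I}-\tfrac{2\pi}{r},\boldsymbol\xi)$ and $W_r(\boldsymbol\alpha_{\zeta_I}-\tfrac{2\pi}{r},\boldsymbol\xi)$ --- and combining the three displays with Equation~(\ref{PSerr1}) yields the stated identity. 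The only point requiring genuine care is the first step, the exact accounting of every term gained or lost in passing from the truncated sum of $g_r^{\boldsymbol E_I}$ to the full lattice sum of $f_r^{\boldsymbol E_I}$; the real analytic work, namely showing that this error term is negligible, is not needed here and is deferred to Proposition~\ref{errorest}.
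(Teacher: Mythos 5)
Your proposal is correct and follows the paper's route exactly: introduce the cut-off $f_r^{\boldsymbol E_I}=\psi\,g_r^{\boldsymbol E_I}$, extend the truncated sum of Proposition~\ref{computation} to the full lattice (possible since $\psi$ vanishes outside the admissible region) while collecting the discrepancy from the transition layer and non-hyperideal chambers into the error term (Equation~\ref{PSerr1}), apply Poisson summation to the compactly supported smooth function $f_r=\sum_{\boldsymbol E_I}f_r^{\boldsymbol E_I}$, and insert the change-of-variables computation of Proposition~\ref{formulaFC}. The paper leaves most of this implicit, treating the proposition as an immediate combination of the preceding displays; you have simply spelled out that bookkeeping, including the smoothness check that legitimises Poisson summation.
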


The error term in Proposition \ref{Fperror} will be estismated in Proposition \ref{errorest} of Section \ref{Sasymp}.

For each $i\in I$, with respect to the continued fraction 
$$\frac{p_i}{q_i}
=[a_{i,1},\dots,a_{i,\zeta_i}]
=a_{i,\zeta_i} - \frac{1}{a_{i,\zeta_i-1}- \frac{1}{\dots - \frac{1}{a_{i,1}}}},$$
let $s_i^\pm$ and $m_i^\pm$ be the integers $s^\pm$ and $m^\pm$ defined in Lemma \ref{arith} (1). For each multi-sign $\mathbf E_I=(E_i)_{i\in I}\in \{-1,1\}^{|I|}$, define $\mathbf s^{\mathbf E_I}=(s^{E_I}_i)_{i\in I}\in \ZZ^{|I|}$ and $\mathbf m^{\mathbf E_I}=(m^{E_I}_i)_{i\in I}\in \ZZ^{|I|}$ by
$$
s^{E_I}_i =
\begin{cases}
s_i^+ & \text{ if $E_i = -1$}\\
s_i^- & \text{ if $E_i = 1$}
\end{cases}
$$
and
$$
m^{E_I}_i =
\begin{cases}
m_i^+ & \text{ if $E_i = -1$}\\
m_i^- & \text{ if $E_i = 1$}
\end{cases}.
$$
In particular, by Lemma \ref{arith} (1), (2) and definitions of $s_i^{E_I}$ and $m_i^{E_I}$, we have 
\begin{align}\label{IJsi}
I_i(s_i^{E_I}) = -E_i - q_i + 2m_i^{E_I} q_i\quad\text{ and }\quad E_i J_i(s_i^{E_I}) \equiv -\frac{p_i}{q_i} \pmod{\ZZ}.
\end{align}

Let $\mathbf{1 - 2m^{E_I}} = (1-2m^{E_i}_i)_{i\in I} \in \ZZ^{|I|}$. In Section \ref{Sasymp}, we will show that $\widehat{f_r^{\mathbf E_I}}(\mathbf{s}^{\mathbf E_I},\mathbf{1-2m^{E_I}}, \mathbf{0})$ are the leading Fourier coefficients. The following proposition gives a simplified expression for the Fourier coefficients that will be used later.

\begin{proposition}\label{lfcexpress} We have
\begin{align*}
\widehat{f_r^{\mathbf E_I}}(\mathbf{s}^{\mathbf E_I}, \mathbf{1-2m^{E_I}}, \mathbf{0})&=\frac{Y(\mathbf E_I)r^{|I| + c} }{2^{|I| + c} \pi ^{|I| + c}} \int_{D_H} 
    \phi_r\lt(\mathbf{s}^{\mathbf E_I}, \boldsymbol{\alpha}_{\zeta_I}, \boldsymbol\xi \rt)     
   e^{\frac{r}{4\pi \sqrt{-1}}G_r^{E_I}(\boldsymbol\alpha_{\xi_I},\boldsymbol\zeta)} d\boldsymbol{\alpha}_{\zeta_I} d\boldsymbol\xi,
\end{align*}
where
\begin{align}\label{defYEI}
Y(\mathbf E_I) =
- (-1)^{\sum_{i\in I} \Big(\frac{p_i'}{q_i} + E_i J_i(s_i^{E_I})\Big) + |I|}\Bigg(\prod_{i \in I} E_i\Bigg)e^{\frac{r\pi}{4\sqrt{-1}}\sum_{i\in I}\Big(4m_i^{E_I}-2+ K_i(s_i^{E_I})+\frac{p_i'}{q_i} \Big)},
\end{align}
\begin{align*}
&\phi_r(\mathbf s_I, \boldsymbol{\alpha}_{\zeta_I}, \boldsymbol\xi)=
\psi(\boldsymbol{\alpha}_{\zeta_I}, \boldsymbol\xi)  \\
&\times
e^{ \sqrt{-1}\Big(\sum_{i\in I}\Big(\frac{p_i'}{q_i}(\beta_i - \pi) + \frac{p_i}{q_i}(\alpha_{i,\zeta_i} - \pi) + \frac{E_i(\alpha_{i,\zeta_i}+\beta_i-2\pi)}{q_i}\Big)+ \sum_{i\in I}a_{i,0}\beta_i + \sum_{i\in I} \Big(\frac{\iota_i}{2}\Big)\alpha_{i,\zeta_i} + \sum_{j\in J}\Big(a_{j,0}+\frac{\iota_j}{2}\Big)\alpha_j \Big)}
\end{align*}
and
\begin{align}\label{defG_r}
G_r^{\boldsymbol  E_I}(\boldsymbol \alpha_{\zeta_I}, \boldsymbol \xi) 
&=  \sum_{i\in I} 
\lt[- \lt(a_{i,0} + \frac{p_i'}{q_i}\rt) (\beta_i - \pi)^2 - \frac{p_i (\alpha_{i,\zeta_i} - \pi)^2 + 2 E_i(\beta_i - \pi)(\alpha_{i,\zeta_i} - \pi)}{q_i}
\rt] \notag
\\
&\qquad - \sum_{j \in J}\Big( a_{j,0} + \frac{\iota_j}{2}\Big)(\alpha_j - \pi)^2  -\sum_{i\in I}\frac{\iota_i}{2}(\alpha_{i,\zeta_i}-\pi)^2\notag\\
&\qquad +\sum_{s=1}^c U_r(\alpha_{s_1},\dots,\alpha_{s_6},\xi_s)+\Big(\sum_{i=1}^n\frac{\iota_i}{2}\Big)\pi^2 +\frac{4\pi^2}{r^2} h_I.
\end{align}
\end{proposition}
\begin{proof}
By definition of the Fourier coefficient and the bump function $\phi$, we can write
\begin{align*}
&\widehat{f_r^{\mathbf E_I}}(\mathbf{s}^{\mathbf E_I}, \mathbf{1-2m^{E_I}}, \mathbf{0}) = \frac{r^{|I| + c}}{2^{|I| + c} \pi ^{|I| + c}} \\
& \times \int_{D_H} 
   (-1)^{\sum_{i\in I} (1-2m_i^{E_i})} \phi_r\lt(\mathbf{s}^{\mathbf E_I}, \boldsymbol{\alpha}_{\zeta_I},\boldsymbol\xi \rt)     
   e^{\frac{r}{4\pi \sqrt{-1}}\Big(W_r^{\mathbf E_I}\lt(\mathbf{s}^{\mathbf E_I}, \boldsymbol \alpha_{\zeta_I}, \boldsymbol\xi\rt) 
    - \sum_{i \in I} 2\pi (1-2m^{E_I}_i)\alpha_{i,\zeta_i}\Big)} d\boldsymbol{\alpha}_{\zeta_I} d\boldsymbol\xi.
\end{align*}
By (\ref{IJsi}), 
\begin{align*}
e^{\sqrt{-1} \pi\sum_{i\in I}\Big( \frac{I_i(s_i)+ E_i}{q_i} + \frac{p_i'}{q_i} + E_i J_i(s_i)\Big)}
= - (-1)^{\sum_{i\in I} \Big(\frac{p_i'}{q_i} + E_i J_i(s_i)\Big) }.
\end{align*}
Besides, by (\ref{IJsi}), we can write
\begin{align*}
&W_r^{\mathbf E_I}\lt(\mathbf{s}^{\mathbf E_I}, \boldsymbol \alpha_{\zeta_I}, \boldsymbol\xi\rt) 
    - \sum_{i \in I} 2\pi (1-2m^{E_I}_i)\alpha_{i,\zeta_i}\\
=&  G_r^{\boldsymbol  E_I}(\boldsymbol \alpha_{\zeta_I}, \boldsymbol \xi) - 2\pi^2\sum_{i\in I}(1-2m_i^{E_i})
 + \sum_{i\in I}2\pi\beta_i\Big(-E_iJ_i(s_i^{E_I}) - \frac{p_i}{q_i} \Big) \\
&\qquad  +\sum_{i\in I}\pi^2\Bigg(K_i(s_i^{E_I})+\frac{p_i'}{q_i}\Bigg) +\frac{4\pi^2}{r^2} h_I .
\end{align*}
Furthermore, since $\beta_i = \frac{2\pi n_i}{r}$ and $n_i$ is even for all $i\in I$, by (\ref{IJsi}),
\begin{align*}
\frac{r}{4\pi \sqrt{-1}} \Big(2\pi\beta_i\Big(-E_iJ_i(s_i^{E_I}) - \frac{p_i}{q_i} \Big)\Big)
\in 2\pi\sqrt{-1}\ZZ \quad\text{and}\quad
e^{\frac{r}{4\pi \sqrt{-1}} \sum_{i\in I}2\pi\beta_i\Big(-E_iJ_i(s_i^{E_I}) - \frac{p_i}{q_i} \Big)}=1.
\end{align*}
The result follows from a direct computation.
\end{proof}

Define 
\begin{align}\label{defY}
Y=
- (-1)^{\sum_{i\in I} \Big(\frac{p_i'}{q_i} - J_i(s_i^+)\Big)}e^{\frac{r\pi}{4\sqrt{-1}}\sum_{i\in I}\Big(4m_i^{+}-2 + K_i(s_i^{+})+\frac{p_i'}{q_i} \Big)}.
\end{align}
The following lemma ensures that the leading Fourier coefficients in Proposition \ref{lfcexpress} do not cancel out with each other.
\begin{lemma}\label{YEIY} For any $\mathbf E_I\in\{1,-1\}^{|I|}$, we have
$Y(\mathbf E_I) = Y$.
\end{lemma}
\begin{proof}
Note that $Y$ is equal to $Y(\mathbf E_I)$ with $E_i = -1$ for all $i\in I$. We claim that
$$
(-1)^{\sum_{i\in I} \Big(\frac{p_i'}{q_i} - J_i(s_i^+)\Big)}e^{\frac{r\pi}{4\sqrt{-1}}\Big(4m_i^+ - 2 + K_i(s^+) + \frac{p_i'}{q_i}\Big)}
= - (-1)^{\sum_{i\in I} \Big(\frac{p_i'}{q_i} + J_i(s_i^-)\Big)}e^{\frac{r\pi}{4\sqrt{-1}}\Big(4m_i^- - 2 + K_i(s^-) + \frac{p_i'}{q_i}\Big)}
$$
for any $i\in I$. This shows that $Y(\mathbf E_I)$ is invariant when we change $E_i$ to $-E_i$ for any $i\in I$. By changing all $E_i$ to $-1$, we get the desired result.

To prove the claim, first, from Lemma \ref{arith} (2), since 
$$J_i(s_i^+)\equiv  -J_i(s_i^-)\quad(\text{mod }2\mathbb Z),$$
we have
$$
(-1)^{\sum_{i\in I} \Big(\frac{p_i'}{q_i} - J_i(s_i^+)\Big)} 
= 
(-1)^{\sum_{i\in I} \Big(\frac{p_i'}{q_i} + J_i(s_i^-)\Big)}.
$$
Moreover, from the definition of $K$ in Lemma \ref{arith} (3), we get
\begin{align}\label{K-K}
K_i(s_i^+) - K_i(s_i^-) + 4(m_i^+ - m_i^-)
= \frac{4C_{i,\zeta_i-1}}{q_i}(s_i^+ + s_i^-+1+K_{i,\zeta_i-1})(s_i^+-s_i^-) + 4(m_i^+ - m_i^-).
\end{align}
Besides, from the definition of $I$ and Lemma \ref{arith} (1),
\begin{align}\label{I+I}
I_i(s_i^+) + I_i(s_i^-)
= -2 C_{i,\zeta_i-1}(s_i^+ + s_i^- + 1 + K_{i,\zeta_i-1})
= 2q_i(m_i^+ + m_i^- -1 ).
\end{align}
From (\ref{K-K}) and (\ref{I+I}), we have
$$
K_i(s_i^+) - K_i(s_i^-) + 4(m_i^+ - m_i^-)
= 4((1 - m_i^+ - m_i^-)(s_i^+-s_i^-)+m_i^+ + m_i^-).
$$
In particular,
\begin{align*}
\frac{e^{\frac{r\pi}{4\sqrt{-1}}\Big(4m_i^+ - 2 + K_i(s^+) + \frac{p_i'}{q_i}\Big)}}{- e^{\frac{r\pi}{4\sqrt{-1}}\Big(4m_i^- - 2 + K_i(s^-) + \frac{p_i'}{q_i}\Big)}}
&= - e^{\frac{r\pi}{4\sqrt{-1}}(K_i(s_i^+) - K_i(s_i^-) + 4(m_i^+ - m_i^-))} \\
&= -e^{-\pi\sqrt{-1}((1 - m_i^+ - m_i^-)(s_i^++s_i^-)+m_i^+ + m_i^-)}\\
&= -(-1)^{(m_i^+ - m_i^-)(s_i^++s_i^-+1)+(s_i^++s_i^-)}.
\end{align*}
We claim that the integer $(m_i^+ - m_i^-)(s_i^++s_i^-+1)+(s_i^++s_i^-)$ is always odd. Note that by Lemma \ref{arith} (1) and the definition of $I$, we have
$$
-2C_{i,\zeta_i-1}(s_i^+ - s_i^-) = I(s^+) - I(s^-) = 2(m^+ - m^-)q_i + 2,
$$
which implies that
$$
(m^+ - m^-)q_i + C_{i,\zeta_i-1}(s_i^+ - s_i^-) = -1.
$$
In particular, at least one of $(m^+ - m^-)$ and $(s_i^+ - s_i^-)$ must be odd. Note that if $(s_i^+ - s_i^-)$ is even, then $(m^+ - m^-)$ is odd. In particular, $(m_i^+ - m_i^-)(s_i^++s_i^-+1)+(s_i^++s_i^-)$ is odd. If $(s_i^+ - s_i^-)$ is odd, then $(s_i^++s_i^-)$ is odd and $(s_i^++s_i^-+1)$ is even. In particular, $(m_i^+ - m_i^-)(s_i^++s_i^-+1)+(s_i^++s_i^-)$ is odd.

Altogether, 
\begin{align*}
\frac{e^{\frac{r\pi}{4\sqrt{-1}}\Big(4m_i^+ - 2 + K_i(s^+) + \frac{p_i'}{q_i}\Big)}}{- e^{\frac{r\pi}{4\sqrt{-1}}\Big(4m_i^- - 2 + K_i(s^-) + \frac{p_i'}{q_i}\Big)}}
&= -(-1)^{(m_i^+ - m_i^-)(s_i^++s_i^-+1)+(s_i^++s_i^-)} = 1.
\end{align*}
This completes the proof.
\end{proof}

For $\mathbf z = (z_1,\dots,z_n) \in \CC^n$, we write $\Re (\mathbf z) = (\Re (z_1),\dots, \Re (z_n))$, where $\Re z_i$ is the real part of $z_i$ for $i=1,\dots,n$. Let
$$D_{H,\CC} = \{ (\boldsymbol{\alpha}_{\zeta_I}, \boldsymbol\xi) \in \CC^{|I|+c} \mid (\Re(\boldsymbol{\alpha}_{\zeta_I}), \Re(\boldsymbol\xi)) \in D_H\}.$$
To end this section, we consider a closely related function $G^{\boldsymbol  E_I}(\boldsymbol \alpha_{\zeta_I}, \boldsymbol \xi): D_{H,\CC}\to \CC$ given by
\begin{align}\label{defG}
G^{\boldsymbol  E_I}(\boldsymbol \alpha_{\zeta_I}, \boldsymbol \xi) 
&=  \sum_{i\in I} 
\lt[- \lt(\frac{p_i'}{q_i} + a_{i,0}\rt) (\beta_i - \pi)^2 - \frac{p_i (\alpha_{i,\zeta_i} - \pi)^2 +   2 E_i(\beta_i - \pi)(\alpha_{i,\zeta_i} - \pi)}{q_i}
\rt] \notag
\\
&\qquad - \sum_{j \in J} a_{j,0}(\alpha_j - \pi)^2  -\sum_{i=1}^n\frac{\iota_i}{2}(\alpha_i-\pi)^2+\sum_{s=1}^c U(\alpha_{s_1},\dots,\alpha_{s_6},\xi_s)+\Big(\sum_{i=1}^n\frac{\iota_i}{2}\Big)\pi^2,
\end{align}
where $U$ is defined by
\begin{equation}\label{defU}
\begin{split}
U(\alpha_1,\dots,\alpha_6,\xi)=&\pi^2+\frac{1}{2}\sum_{i=1}^4\sum_{j=1}^3(\eta_j-\tau_i)^2-\frac{1}{2}\sum_{i=1}^4(\tau_i-\pi)^2\\
&+(\xi-\pi)^2-\sum_{i=1}^4(\xi-\tau_i)^2-\sum_{j=1}^3(\eta_j-\xi)^2\\
&-2\Li(1)-\frac{1}{2}\sum_{i=1}^4\sum_{j=1}^3\Li\big(e^{2i(\eta_j-\tau_i)}\big)+\frac{1}{2}\sum_{i=1}^4\Li\big(e^{2i(\tau_i-\pi)}\big)\\
&-\Li\big(e^{2i(\xi-\pi)}\big)+\sum_{i=1}^4\Li\big(e^{2i(\xi-\tau_i)}\big)+\sum_{j=1}^3\Li\big(e^{2i(\eta_j-\xi)}\big).\\
\end{split}
\end{equation}

Note that when both $G^{\boldsymbol  E_I}(\boldsymbol \alpha_{\zeta_I}, \boldsymbol \xi) $ and $G_r^{\boldsymbol  E_I}(\boldsymbol \alpha_{\zeta_I}, \boldsymbol \xi) $ are defined, they are related by
$$ \lim_{r\to \infty} G_r^{\boldsymbol  E_I}(\boldsymbol \alpha_{\zeta_I}, \boldsymbol \xi) 
= G^{\boldsymbol  E_I}(\boldsymbol \alpha_{\zeta_I}, \boldsymbol \xi)  .$$
More preciesly, by Lemma \ref{converge}, the differenece between $G_r^{\boldsymbol  E_I}(\boldsymbol \alpha_{\zeta_I}, \boldsymbol \xi)  $ and $G^{\boldsymbol  E_I}(\boldsymbol \alpha_{\zeta_I}, \boldsymbol \xi) $ is given by the following lemma.
\begin{lemma}\label{qdtod}
On any compact subset of $D_{H,\CC}$, we have
\begin{align}\label{kappadef}
G_r^{\boldsymbol  E_I}(\boldsymbol \alpha_{\zeta_I}, \boldsymbol \xi) 
&= G^{\boldsymbol  E_I}(\boldsymbol \alpha_{\zeta_I}, \boldsymbol \xi) 
- \frac{4c\pi\sqrt{-1}}{r} \log\lt(\frac{r}{2}\rt) + \frac{4\pi \sqrt{-1} \kappa(\boldsymbol \alpha_{\zeta_I}, \boldsymbol \xi) }{r} + \frac{\upsilon_r(\boldsymbol \alpha_{\zeta_I}, \boldsymbol \xi) }{r^2},
\end{align}
where
\begin{align*}
&\kappa(\boldsymbol \alpha_{\zeta_I}, \boldsymbol \xi) \\
=&\sum_{s=1}^c\Big(\frac{1}{2}\sum_{i=1}^4 \sqrt{-1}\tau_{s_i}- \sqrt{-1}\xi_s-\sqrt{-1}\pi-\frac{\sqrt{-1}\pi}{2}\\
&+\frac{1}{4}\sum_{i=1}^4\sum_{j=1}^3\log\big(1-e^{2\sqrt{-1}(\eta_{s_j}-\tau_{s_i})}\big)-\frac{3}{4}\sum_{i=1}^4\log\big(1-e^{2\sqrt{-1}(\tau_{s_i}-\pi)}\big)\\
&+\frac{3}{2}\log\big(1-e^{2\sqrt{-1}(\xi_s-\pi)}\big)-\frac{1}{2}\sum_{i=1}^4\log\big(1-e^{2\sqrt{-1}(\xi_s-\tau_{s_i})}\big)-\frac{1}{2}\sum_{j=1}^3\log\big(1-e^{2\sqrt{-1}(\eta_{s_j}-\xi_s)}\big)\Big)
\end{align*}
 and  $|\nu_r(\boldsymbol \alpha_{\zeta_I}, \boldsymbol \xi) |$ is bounded from above by a constant independent of $r.$

\end{lemma}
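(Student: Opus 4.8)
The plan is to establish the formula by comparing $G_r^{\boldsymbol E_I}$ and $G^{\boldsymbol E_I}$ term by term on a fixed compact set $K\subset D_{H,\CC}$ and isolating all contributions of order $\tfrac1r$. The two functions agree in their $a_0^i(\beta_i-\pi)^2$, $a_0^j(\alpha_j-\pi)^2$, $\tfrac{\iota_i}{2}(\alpha_i-\pi)^2$ and $\big(\sum_{i=1}^n\tfrac{\iota_i}{2}\big)\pi^2$ terms, and differ in two places: (i) the genuinely quadratic terms, where $G_r^{\boldsymbol E_I}$ carries the shifted arguments $\beta_i+\tfrac{2\pi}{r}-\pi$ and $\alpha_{\zeta_i}^i+\tfrac{2\pi}{r}-\pi$ in place of $\beta_i-\pi$ and $\alpha_{\zeta_i}^i-\pi$; and (ii) the building-block terms, where $G_r^{\boldsymbol E_I}$ has $\sum_{s=1}^c U_r$ — with $U_r$ the hyperideal-type expression (\ref{term}), carrying the correction $-\big(\tfrac{2\pi}{r}\big)^2$, the shifted quadratic terms $-\tfrac12\sum(\tau_i+\tfrac{2\pi}{r}-\pi)^2$ and $(\xi+\tfrac{2\pi}{r}-\pi)^2$, and the quantum dilogarithm $\varphi_r$ at $\tfrac{\pi}{r}$, $\eta_j-\tau_i+\tfrac{\pi}{r}$, $\tau_i-\pi+\tfrac{3\pi}{r}$, $\xi-\pi+\tfrac{3\pi}{r}$, $\xi-\tau_i+\tfrac{\pi}{r}$, $\eta_j-\xi+\tfrac{\pi}{r}$ — whereas $G^{\boldsymbol E_I}$ has $\sum_{s=1}^c U$ given by (\ref{defU}), with $\Li$ in place of $\varphi_r$ and no shifts. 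First I would dispose of (i): since $(\beta_i+\tfrac{2\pi}{r}-\pi)^2-(\beta_i-\pi)^2=\tfrac{4\pi}{r}(\beta_i-\pi)+\big(\tfrac{2\pi}{r}\big)^2$, and similarly for the $\alpha_{\zeta_i}^i$-square and for the mixed term, one reads off exactly the first $\tfrac1r$-sum in the statement, the $\big(\tfrac{2\pi}{r}\big)^2$ leftovers being absorbed into $\upsilon_r/r^2$.

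For (ii), the main input is Lemma \ref{converge}. On $K$, all the arguments at which $\varphi_r$ and $\Li$ get evaluated stay, uniformly in $r$, in a compact subset of the respective domains of holomorphy, by the hyperideality hypothesis defining $D_{H,\CC}$. For each argument of the form $z+\tfrac{a\pi}{r}$ with $a\in\{1,3\}$, I would combine the Taylor expansion $\varphi_r\big(z+\tfrac{a\pi}{r}\big)=\varphi_r(z)+\tfrac{a\pi}{r}\varphi_r'(z)+R_r(z)$, where $|R_r(z)|\le\tfrac{a^2\pi^2}{2r^2}\sup|\varphi_r''|$ on the segment from $z$ to $z+\tfrac{a\pi}{r}$, with $\varphi_r(z)=\Li(e^{2\sqrt{-1}z})+O(1/r^2)$ and $\varphi_r'(z)=-2\sqrt{-1}\log(1-e^{2\sqrt{-1}z})+O(1/r^2)$ from Lemma \ref{converge}(1)--(2), to get $\varphi_r\big(z+\tfrac{a\pi}{r}\big)=\Li(e^{2\sqrt{-1}z})-\tfrac{2a\pi\sqrt{-1}}{r}\log(1-e^{2\sqrt{-1}z})+\tfrac1{r^2}(\text{bounded on }K)$; the remainder is uniformly $O(1/r^2)$ because $\varphi_r''$ is bounded on a slightly larger compact set, e.g. by a Cauchy estimate using that $\varphi_r$ converges to $\Li(e^{2\sqrt{-1}\,\cdot})$ uniformly on compacts. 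The exceptional term $-2\varphi_r(\tfrac\pi r)$ is handled by Lemma \ref{converge}(3): $-2\varphi_r(\tfrac\pi r)=-2\Li(1)-\tfrac{4\pi\sqrt{-1}}{r}\log\tfrac r2+\tfrac{2\pi^2}{r}+O(1/r^2)$, and summing over the $c$ building blocks gives exactly the $-\tfrac{4c\pi\sqrt{-1}}{r}\log\tfrac r2$ term. Finally, expanding the shifted quadratic terms of $U_r$ as in (i) contributes, at order $\tfrac1r$, the pieces $-\tfrac{2\pi}{r}\sum_i(\tau_{s_i}-\pi)$ and $\tfrac{4\pi}{r}(\xi_s-\pi)$, again with $\big(\tfrac{2\pi}{r}\big)^2$ leftovers into $\upsilon_r/r^2$.

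The last step is pure bookkeeping. I would collect every piece of order $\tfrac1r$ coming from $\sum_{s=1}^c(U_r-U)$ — the $\log(1-e^{2\sqrt{-1}\,\cdot})$ terms with coefficients $\tfrac14$, $-\tfrac34$, $\tfrac32$, $-\tfrac12$, $-\tfrac12$ (one family of $\varphi_r$'s in $U_r$ producing each), the linear pieces $-\tfrac{2\pi}{r}\sum_i(\tau_{s_i}-\pi)$ and $\tfrac{4\pi}{r}(\xi_s-\pi)$, and the $\tfrac{2\pi^2}{r}$ from $-2\varphi_r(\tfrac\pi r)$ — factor out $\tfrac{4\pi\sqrt{-1}}{r}$ using $\tfrac1{\sqrt{-1}}=-\sqrt{-1}$, and verify term by term that the resulting coefficient equals the summand of $\kappa(\boldsymbol\alpha_{\zeta_I},\boldsymbol\xi)$ in the statement; in particular the non-logarithmic part $\tfrac12\sum_i\sqrt{-1}\tau_{s_i}-\sqrt{-1}\xi_s-\sqrt{-1}\pi-\tfrac{\sqrt{-1}\pi}{2}$ appears once the constants produced by $-\tfrac{2\pi}{r}\sum_i(\tau_{s_i}-\pi)$, by the $\xi$-shift and by $\tfrac{2\pi^2}{r}$ are combined. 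Then $\upsilon_r$ is defined as $r^2$ times the sum of all leftovers — the $O(1/r^2)$ and $O(1/r^4)$ remainders of Lemma \ref{converge}, the various $\big(\tfrac{2\pi}{r}\big)^2$ terms, and the rescaled Taylor remainders $r^2 R_r$ — each of which is bounded on $K$ by a constant independent of $r$, giving the claimed uniform bound on $|\upsilon_r|$.

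I expect the one genuinely technical point to be the uniform control of the $\tfrac{a\pi}{r}$-shifts inside $\varphi_r$: Lemma \ref{converge}(1)--(2) must be applied in a locally uniform form and supplemented by a bound on $\varphi_r''$ over compact subsets of the strip, so that $\varphi_r(z+\tfrac{a\pi}{r})=\Li(e^{2\sqrt{-1}z})-\tfrac{2a\pi\sqrt{-1}}{r}\log(1-e^{2\sqrt{-1}z})+O(1/r^2)$ holds with a genuinely $O(1/r^2)$ error uniformly on $K$. Everything else is careful but routine matching of rational coefficients and constants against the explicit formula for $\kappa$.
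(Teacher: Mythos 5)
Your proposal follows the same route as the paper: first isolate the $1/r$ contributions from the shifted quadratic terms (giving exactly the $\frac{4\pi}{r}\sum_{i\in I}(\cdots)$ piece), then reduce to $\sum_{s=1}^c(U_r-U)$ and apply Lemma~\ref{converge}, with the $\varphi_r(\pi/r)$ term producing $-\frac{4c\pi\sqrt{-1}}{r}\log\frac r2$ and the remaining $\varphi_r$ shifts producing the $\kappa$-term. The one small stylistic difference is at the shifted arguments: you Taylor expand $\varphi_r$ itself and then require a uniform bound on $\varphi_r''$, whereas the paper applies Lemma~\ref{converge}(1) directly at the shifted point $z+\frac{a\pi}{r}$ and then Taylor expands the $r$-independent function $\Li(e^{2\sqrt{-1}\,\cdot})$; this sidesteps the ``one genuinely technical point'' you flag, since no second-derivative control of $\varphi_r$ is needed.
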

\begin{proof} Note that
\begin{align*}
G_r^{\boldsymbol  E_I}(\boldsymbol \alpha_{\zeta_I}, \boldsymbol \xi) 
- G^{\boldsymbol  E_I}(\boldsymbol \alpha_{\zeta_I}, \boldsymbol \xi) 
&=  \sum_{s=1}^c (U_r(\alpha_{s_1} ,\dots,\alpha_{s_6}, \xi_s) - 
U(\alpha_{s_1} ,\dots,\alpha_{s_6}, \xi_s)) + O\lt(\frac{1}{r^2}\rt).
\end{align*}
Thus, it suffices to study the difference between $U_r(\alpha_1,\dots, \alpha_6, \xi)$ and $U(\alpha_1,\dots, \alpha_6, \xi)$. 

By Lemma \ref{converge}(3), 
$$\varphi_r\Big(\frac{\pi}{r}\Big)=\mathrm{Li}_2(1)+\frac{2\pi\sqrt{-1}}{r}\log\Big(\frac{r}{2}\Big)-\frac{\pi^2}{r}+O\Big(\frac{1}{r^2}\Big).$$
Besides, by using Lemma \ref{converge}(1), we have
\begin{align*}
\varphi_r\Big(\eta_j-\tau_i+\frac{\pi}{r}\Big)
= \Li\lt( e^{2\sqrt{-1}(\eta_j - \tau_i +\frac{\pi}{r})}\rt) +
\frac{2\pi^2 e^{2\sqrt{-1}(\eta_j - \tau_i +\frac{\pi}{r})}}{3\lt(1-e^{2\sqrt{-1}(\eta_j - \tau_i +\frac{\pi}{r})}\rt)} \frac{1}{r^2} +  O\lt(\frac{1}{r^4}\rt) .
\end{align*}
In particular, on a given compact subset of $D_{H,\CC}$, by continuity, we have
\begin{align*}
\varphi_r\Big(\eta_j-\tau_i+\frac{\pi}{r}\Big)
= \Li\lt( e^{2\sqrt{-1}(\eta_j - \tau_i +\frac{\pi}{r})}\rt) + O\lt(\frac{1}{r^2}\rt).
\end{align*}
Next, by considering the Talyor series expansion of $\Li\lt( e^{2\sqrt{-1}(\eta_j - \tau_i + w)}\rt)$ at $w=0$, we have
\begin{align*}
\varphi_r\Big(\eta_j-\tau_i+\frac{\pi}{r}\Big)
= \Li\lt( e^{2\sqrt{-1}(\eta_j - \tau_i)}\rt) - 2\sqrt{-1} \log(1 - e^{2\sqrt{-1}(\eta_j - \tau_i)}) \lt(\frac{\pi}{r}\rt) + O\lt(\frac{1}{r^2}\rt).
\end{align*}
Similar computations show that
\begin{align*}
\varphi_r\Big(\tau_i-\pi+\frac{3\pi}{r}\Big)
&= \Li\Big(e^{2\sqrt{-1}(\tau_i-\pi)}\Big) - 2\sqrt{-1}\log(1-e^{2\sqrt{-1}(\tau_i-\pi)})\lt(\frac{3\pi}{r}\rt) + O\lt(\frac{1}{r^2}\rt),\\
\varphi_r\Big(\xi-\pi+\frac{3\pi}{r}\Big)
&= \Li\Big(e^{2\sqrt{-1}(\xi-\pi)}\Big) - 2\sqrt{-1}\log(1-e^{2\sqrt{-1}(\xi-\pi)})\lt(\frac{3\pi}{r}\rt) + O\lt(\frac{1}{r^2}\rt),\\
\varphi_r\Big(\xi-\tau_i+\frac{\pi}{r}\Big)
&= \Li\Big(e^{2\sqrt{-1}(\xi-\tau_i)}\Big) - 2\sqrt{-1}\log(1-e^{2\sqrt{-1}(\xi-\tau_i)})\lt(\frac{\pi}{r}\rt) + O\lt(\frac{1}{r^2}\rt),\\
\varphi_r\Big(\eta_j-\xi+\frac{\pi}{r}\Big)
&= \Li\Big(e^{2\sqrt{-1}(\eta_j-\xi)}\Big) - 2\sqrt{-1}\log(1-e^{2\sqrt{-1}(\eta_j-\xi)})\lt(\frac{\pi}{r}\rt) + O\lt(\frac{1}{r^2}\rt).
\end{align*}
Equation (\ref{kappadef}) then follows from a direct computation. 
\end{proof}

\section{Asymptotics of the invariants}\label{Sasymp}
In this section, we will find the asymptotics of the leading Fourier coefficients and estimate the other. 

\subsection{Preliminary}
\subsubsection{Saddle point approximation}
First, to obtain the asymptotic of the invariants, we recall the following proposition from \cite{WY1}.

\begin{proposition}\cite{WY1}\label{saddle}
Let $D_{\mathbf z}$ be a region in $\mathbb C^n$ and let $D_{\mathbf a}$ be a region in $\mathbb R^k.$ Let $f(\mathbf z,\mathbf a)$ and $g(\mathbf z,\mathbf a)$ be complex valued functions on $D_{\mathbf z}\times D_{\mathbf a}$  which are holomorphic in $\mathbf z$ and smooth in $\mathbf a.$ For each positive integer $r,$ let $f_r(\mathbf z,\mathbf a)$ be a complex valued function on $D_{\mathbf z}\times D_{\mathbf a}$ holomorphic in $\mathbf z$ and smooth in $\mathbf a.$
For a fixed $\mathbf a\in D_{\mathbf a},$ let $f^{\mathbf a},$ $g^{\mathbf a}$ and $f_r^{\mathbf a}$ be the holomorphic functions  on $D_{\mathbf z}$ defined by
$f^{\mathbf a}(\mathbf z)=f(\mathbf z,\mathbf a),$ $g^{\mathbf a}(\mathbf z)=g(\mathbf z,\mathbf a)$ and $f_r^{\mathbf a}(\mathbf z)=f_r(\mathbf z,\mathbf a).$ Suppose $\{\mathbf a_r\}$ is a convergent sequence in $D_{\mathbf a}$ with $\lim_r\mathbf a_r=\mathbf a_0,$ $f_r^{\mathbf a_r}$ is of the form
$$ f_r^{\mathbf a_r}(\mathbf z) = f^{\mathbf a_r}(\mathbf z) + \frac{\upsilon_r(\mathbf z,\mathbf a_r)}{r^2},$$
$\{S_r\}$ is a sequence of embedded real $n$-dimensional closed disks in $D_{\mathbf z}$ sharing the same boundary and converging to an embedded $n$-dimensional disk $S_0$, and $\mathbf c_r$ is a point on $S_r$ such that $\{\mathbf c_r\}$ is convergent  in $D_{\mathbf z}$ with $\lim_r\mathbf c_r=\mathbf c_0.$ If for each $r$
\begin{enumerate}[(1)]
\item $\mathbf c_r$ is a critical point of $f^{\mathbf a_r}$ in $D_{\mathbf z},$
\item $\mathrm{Re}f^{\mathbf a_r}(\mathbf c_r) > \mathrm{Re}f^{\mathbf a_r}(\mathbf z)$ for all $\mathbf z \in S_r\setminus \{\mathbf c_r\},$
\item the domain $\{\mathbf z\in D_{\mathbf z}\ |\ \mathrm{Re} f^{\mathbf a_r}(\mathbf z) < \mathrm{Re} f^{\mathbf a_r}(\mathbf c_r)\}$ deformation retracts to $S_r\setminus\{\mathbf c_r\},$
\item $|g^{\mathbf a_r}(\mathbf c_r)|$ is bounded from below by a positive constant independent of $r,$
\item $|\upsilon_r(\mathbf z, \mathbf a_r)|$ is bounded from above by a constant independent of $r$ on $D_{\mathbf z},$ and
\item  the Hessian matrix $\mathrm{Hess}(f^{\mathbf a_0})$ of $f^{\mathbf a_0}$ at $\mathbf c_0$ is non-singular,
\end{enumerate}
then
\begin{equation*}
\begin{split}
 \int_{S_r} g^{\mathbf a_r}(\mathbf z) e^{rf_r^{\mathbf a_r}(\mathbf z)} d\mathbf z= \Big(\frac{2\pi}{r}\Big)^{\frac{n}{2}}\frac{g^{\mathbf a_r}(\mathbf c_r)}{\sqrt{-\det\mathrm{Hess}(f^{\mathbf a_r})(\mathbf c_r)}} e^{rf^{\mathbf a_r}(\mathbf c_r)} \Big( 1 + O \Big( \frac{1}{r} \Big) \Big).
 \end{split}
 \end{equation*}
\end{proposition}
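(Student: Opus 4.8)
The plan is to reduce Proposition~\ref{saddle} to the standard multidimensional steepest‑descent (Laplace) estimate, the only genuine work being the bookkeeping needed to make every error term uniform in $r$. \textbf{First} I would dispose of the perturbation: since $|\upsilon_r(\mathbf z,\mathbf a_r)|$ is bounded on $D_{\mathbf z}$ by a constant independent of $r$ (hypothesis (5)), one has $e^{rf_r^{\mathbf a_r}(\mathbf z)}=e^{rf^{\mathbf a_r}(\mathbf z)}\bigl(1+O(1/r)\bigr)$ uniformly on $S_r$, so it suffices to estimate $\int_{S_r}g^{\mathbf a_r}(\mathbf z)\,e^{rf^{\mathbf a_r}(\mathbf z)}\,d\mathbf z$ with $f^{\mathbf a_r}$ in place of $f_r^{\mathbf a_r}$, the perturbation then being absorbed into the final $1+O(1/r)$.

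\textbf{Second} I would localize the integral near $\mathbf c_r$. Fix a small ball $B=B(\mathbf c_0,\rho)\subset D_{\mathbf z}$; for $r$ large $\mathbf c_r\in B$ since $\mathbf c_r\to\mathbf c_0$. Hypothesis (2) says $\mathrm{Re}f^{\mathbf a_r}$ has a strict maximum on the disk $S_r$ at $\mathbf c_r$; combining this with the convergences $\mathbf a_r\to\mathbf a_0$, $\mathbf c_r\to\mathbf c_0$, the fact that the $S_r$ share the same boundary, and the non-degeneracy of $\mathrm{Hess}(f^{\mathbf a_0})$ at $\mathbf c_0$ (hypothesis (6)), a compactness argument produces a constant $\epsilon>0$ independent of $r$ with $\mathrm{Re}f^{\mathbf a_r}(\mathbf c_r)-\mathrm{Re}f^{\mathbf a_r}(\mathbf z)\ge\epsilon$ for all $\mathbf z\in S_r\setminus B$. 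Then $\bigl|\int_{S_r\setminus B}g^{\mathbf a_r}e^{rf^{\mathbf a_r}}\,d\mathbf z\bigr|\le C\,e^{r(\mathrm{Re}f^{\mathbf a_r}(\mathbf c_r)-\epsilon)}$, which is exponentially smaller than the claimed leading term, so I may replace $S_r$ by $S_r\cap B$.

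\textbf{Third} I would put $f^{\mathbf a_r}$ in quadratic normal form near $\mathbf c_r$. Since $\mathbf c_r$ is a critical point of the holomorphic function $f^{\mathbf a_r}$ with non-degenerate Hessian (hypothesis (3)), the holomorphic Morse lemma gives a biholomorphism $\mathbf w=\Phi_r(\mathbf z)$ sending $\mathbf c_r\mapsto 0$ with $f^{\mathbf a_r}(\mathbf z)=f^{\mathbf a_r}(\mathbf c_r)+\frac{1}{2}\mathbf w^{T}\mathrm{Hess}(f^{\mathbf a_r})(\mathbf c_r)\mathbf w$; the chart and its derivative are uniformly controlled for large $r$ because the Hessian is non-degenerate also at the limit $\mathbf a_0$ and $f$ is smooth in $\mathbf a$. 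Because $\mathbf c_r$ is a maximum of $\mathrm{Re}f^{\mathbf a_r}$ along the real $n$-disk $S_r\cap B$, the image $\Phi_r(S_r\cap B)$ is a real $n$-disk through the origin along which the real part of the quadratic form is negative definite; a homotopy fixing the (already negligible) boundary then deforms it to a genuine steepest-descent $n$-plane without changing the integral, by Cauchy's theorem for the holomorphic integrand. Expanding $g^{\mathbf a_r}$ and the Jacobian of $\Phi_r^{-1}$ about $\mathbf c_r$, the Gaussian integral yields $\bigl(\tfrac{2\pi}{r}\bigr)^{n/2}\bigl(-\det\mathrm{Hess}(f^{\mathbf a_r})(\mathbf c_r)\bigr)^{-1/2}g^{\mathbf a_r}(\mathbf c_r)\,e^{rf^{\mathbf a_r}(\mathbf c_r)}$ as leading term, the odd-order terms dropping out and the cubic-and-higher remainder (bounded uniformly using hypotheses (4) and (6)) contributing the $1+O(1/r)$. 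Reinstating Step~1 finishes the proof.

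\textbf{The main obstacle} is uniformity in $r$ throughout: the localization gap $\epsilon$, the radius $\rho$ of the Morse chart, the lower bound on $|\det\mathrm{Hess}(f^{\mathbf a_r})(\mathbf c_r)|$, and the lower bound on $|g^{\mathbf a_r}(\mathbf c_r)|$ must all be controlled by constants independent of $r$. This is precisely the role of hypotheses (4), (5) and (6): since all the data converge as $r\to\infty$ and the limiting data at $\mathbf a_0$ is non-degenerate, every one of these quantities is bounded away from its degenerate value for all sufficiently large $r$ by a continuity/compactness argument, while the finitely many remaining small $r$ are handled individually. A secondary technical point is justifying the deformation of $S_r$ onto a steepest-descent plane inside the Morse chart; here one arranges the deformation to keep its boundary in the region $S_r\setminus B$, where the integrand is exponentially negligible, so that only holomorphicity of $f^{\mathbf a_r}$ on $B$ is used.
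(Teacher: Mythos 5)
This paper does not actually prove Proposition~\ref{saddle}: it is quoted verbatim from \cite[Proposition 5.1]{WY1} and applied as a black box (most notably in Proposition~\ref{lfc}), so there is no in-paper proof to compare your attempt against. Your outline is the standard steepest-descent argument — absorb the $O(1/r^{2})$ perturbation, localize, pass to a holomorphic Morse chart, deform to the steepest-descent plane, evaluate the Gaussian — and is the expected route for a statement of this kind.

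Two points would need tightening before this becomes a complete proof. The localization gap $\epsilon$ is the genuine issue, and you are right to call it the main obstacle, but the sketch does not actually close it: the hypotheses give a \emph{strict} inequality in (2) for each $r$ separately and say nothing about how the surfaces $S_{r}$ vary in their interiors (only that they share a fixed boundary), so a bare compactness/continuity appeal does not produce an $r$-independent $\epsilon$ on $S_{r}\setminus B$ without an extra argument. The way this is usually repaired is to split $S_{r}\setminus B$ into the piece inside a slightly larger fixed ball $B'$ around $\mathbf c_{0}$, where the quadratic form coming from the nondegenerate limiting Hessian (hypothesis~(6)) gives the decay, and the piece outside $B'$, which touches the fixed common boundary $\partial S_{r}$ and where equicontinuity in $\mathbf a$ plus (2) gives the gap; without spelling this out the step is incomplete. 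Separately, in Step~1 it is cleaner to keep $e^{\upsilon_{r}(\mathbf z,\mathbf a_{r})/r}$ as part of the holomorphic prefactor (replacing $g^{\mathbf a_{r}}$ by $g^{\mathbf a_{r}}e^{\upsilon_{r}/r}$, which equals $g^{\mathbf a_{r}}(1+O(1/r))$ uniformly) rather than treating $e^{rf_{r}^{\mathbf a_{r}}}$ and $e^{rf^{\mathbf a_{r}}}$ as interchangeable, since the correction factor depends on $\mathbf z$; your computation is ultimately fine but the phrasing invites a slip. Finally, the branch of $\sqrt{-\det\mathrm{Hess}}$ is fixed by the orientation of the steepest-descent plane relative to $S_{r}$, and that choice should be made explicit when the Gaussian is evaluated.
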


In Section 6.4, We will apply Proposition \ref{saddle} to obtain the asymptotic expansion formula for the leading Fourier coefficient (see Proposition \ref{lfc} for more details).

\subsubsection{Convexity and preliminary estimate}
Next, to show that conditions in Proposition \ref{saddle} are satisfied, we need the following result about the function $U$ defined in (\ref{defU}). Recall that the function $U(\alpha_1,\dots,\alpha_6,\xi)$ in (\ref{defU}) is given by
\begin{equation*}
\begin{split}
U(\alpha_1,\dots,\alpha_6,\xi)=&\pi^2+\frac{1}{2}\sum_{i=1}^4\sum_{j=1}^3(\eta_j-\tau_i)^2-\frac{1}{2}\sum_{i=1}^4(\tau_i-\pi)^2\\
&+(\xi-\pi)^2-\sum_{i=1}^4(\xi-\tau_i)^2-\sum_{j=1}^3(\eta_j-\xi)^2\\
&-2\Li(1)-\frac{1}{2}\sum_{i=1}^4\sum_{j=1}^3\Li\big(e^{2i(\eta_j-\tau_i)}\big)+\frac{1}{2}\sum_{i=1}^4\Li\big(e^{2i(\tau_i-\pi)}\big)\\
&-\Li\big(e^{2i(\xi-\pi)}\big)+\sum_{i=1}^4\Li\big(e^{2i(\xi-\tau_i)}\big)+\sum_{j=1}^3\Li\big(e^{2i(\eta_j-\xi)}\big).\\
\end{split}
\end{equation*}

Let $\boldsymbol\alpha=(\alpha_1,\dots,\alpha_6)$ and $\mathrm{Re}(\boldsymbol\alpha) = (\mathrm{Re}(\alpha_1),\dots,\mathrm{Re}(\alpha_6))$, where $\mathrm{Re}(\alpha_i)$ is the real part of $\alpha_i$ for $i=1,\dots,6$. Let
\begin{equation*}
B_{H,\mathbb C} = 
\left\{
 (\boldsymbol\alpha,\xi)\in\mathbb C^7 \;\middle|\;
  \begin{aligned}
  & \mathrm{Re}(\boldsymbol\alpha)\text{ is of the hyperideal type},\\
  &  \max\{\mathrm{Re}(\tau_i)\}\leqslant \mathrm{Re}(\xi)\leqslant \min\{\mathrm{Re}(\eta_j), 2\pi\}
  \end{aligned}
\right\}
\end{equation*}
and
$$ B_H = B_{H,\mathbb C}\cap \RR^7. $$
By (\ref{QDtoL}), on $B_{H}$, we have
\begin{align}\label{Ureal}
U(\boldsymbol\alpha, \xi)
=2\pi^2 + 2\sqrt{-1} V(\boldsymbol\alpha, \xi),
\end{align}
where $V: B_H \to \RR$ is defined by
\begin{align}\label{defV}
V(\boldsymbol\alpha,\xi)
&= \delta(\alpha_1,\alpha_2,\alpha_3)+\delta(\alpha_1,\alpha_5,\alpha_6)+
\delta(\alpha_2,\alpha_4,\alpha_6)+\delta(\alpha_3,\alpha_4,\alpha_5) \notag\\
&\quad - \Lambda(\xi) + \sum_{i=1}^4 \Lambda(\xi-\tau_i) + \sum_{j=1}^3 \Lambda(\eta_j - \xi)
\end{align}
with 
\begin{align*}
\delta(x,y,z)
= -\frac{1}{2}\Lambda\lt( \frac{x+y-z}{2} \rt) -\frac{1}{2}\Lambda\lt( \frac{y+z-x}{2} \rt) 
-\frac{1}{2}\Lambda\lt( \frac{z+x-y}{2} \rt) +\frac{1}{2}\Lambda\lt( \frac{x+y+z}{2} \rt).
\end{align*}

The function $V$ has been studied by Costantino in \cite{C1}. In particular, in the proof of \cite[Theorem 3.9]{C1}, he proved that for each $\alpha$ of the hyperideal type,
\begin{enumerate}
\item $V(\boldsymbol\alpha,\xi)$ is strictly concave down in $\xi$,
\item there exists a unique $\xi(\boldsymbol\alpha)$ so that 
$$ (\boldsymbol\alpha, \xi(\boldsymbol\alpha)) \in B_H \quad\text{and}\quad \lt.\frac{\partial V(\boldsymbol\alpha, \xi)}{\partial \xi} \rt|_{\xi = \xi(\boldsymbol\alpha)} = 0, \text{ and}$$
\item $V(\boldsymbol\alpha,\xi)$ attains its maximum at $\xi(\boldsymbol\alpha)$ with the critical value $V(\boldsymbol\alpha,\xi(\boldsymbol\alpha))$ given by
$$ V(\boldsymbol\alpha, \xi(\boldsymbol\alpha)) = \Vol(\Delta_{|\alpha - \pi|}), $$
where $\Vol(\Delta_{|\alpha-\pi|})$ is the volume of the ideal or the truncated hyperideal tetrahedron with dihedral angles $|\alpha_1-\pi|, \dots, |\alpha_6-\pi|$.
\end{enumerate}

As a special case, when $\alpha_1=\dots=\alpha_6 = \pi$, by direct computation we have 
\begin{align}\label{xipi}
\xi(\pi,\dots,\pi) = \frac{7\pi}{4}.
\end{align}
Furthermore, for $i,j\in\{1,\dots,6\}$ with $i\neq j$, at $\lt(\pi,\dots,\pi,\frac{7\pi}{4}\rt)$ we have
$$ \frac{\partial^2 V}{\partial \alpha_i^2} = -2, \quad
\frac{\partial^2 V}{\partial \alpha_i \alpha_j} = -1,\quad
\frac{\partial^2 V}{\partial \alpha_i \partial \xi}= 2     \quad\text{and}\quad
\frac{\partial^2 V}{\partial \xi^2}= -8.
$$

From this, we have the following lemma, which will be used later to prove the convexity result in Proposition \ref{prop53}.
\begin{lemma}\label{convexity}
The Hessian matrix of $V(\boldsymbol\alpha, \xi)$ is negative definite at $\lt(\pi,\dots,\pi,\frac{7\pi}{4}\rt)$.
\end{lemma}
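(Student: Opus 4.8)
The plan is to verify directly that the $7\times 7$ symmetric matrix $\mathrm{Hess}(V)$ at the point $\left(\pi,\dots,\pi,\tfrac{7\pi}{4}\right)$, whose entries are recorded just above the statement, is negative definite. Writing the variables in the order $(\alpha_1,\dots,\alpha_6,\xi)$, the Hessian has the block form
\begin{equation*}
\mathrm{Hess}(V) = \begin{pmatrix} -(J_6 + I_6) & 2\,\mathbf 1_6 \\ 2\,\mathbf 1_6^{T} & -8 \end{pmatrix},
\end{equation*}
where $J_6$ is the $6\times 6$ all-ones matrix, $I_6$ the identity, and $\mathbf 1_6$ the all-ones column vector. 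First I would show $H := -(J_6+I_6)$ is negative definite as a $6\times 6$ matrix: its eigenvalues are $-(6+1)=-7$ on the span of $\mathbf 1_6$ and $-1$ on the orthogonal complement, so all eigenvalues are negative.

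Next I would use the Schur complement criterion: a symmetric block matrix $\begin{pmatrix} A & B \\ B^T & d\end{pmatrix}$ with $A$ negative definite is negative definite if and only if the Schur complement $d - B^T A^{-1} B$ is negative. Here $A = H$, $B = 2\,\mathbf 1_6$, $d=-8$. Since $H\mathbf 1_6 = -7\,\mathbf 1_6$, we get $H^{-1}\mathbf 1_6 = -\tfrac{1}{7}\mathbf 1_6$, hence $B^T H^{-1} B = 4\,\mathbf 1_6^T H^{-1}\mathbf 1_6 = 4\cdot 6 \cdot(-\tfrac17) = -\tfrac{24}{7}$. Therefore the Schur complement equals $-8 - (-\tfrac{24}{7}) = -8 + \tfrac{24}{7} = -\tfrac{32}{7} < 0$. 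Combined with negative definiteness of $H$, this shows $\mathrm{Hess}(V)$ is negative definite at $\left(\pi,\dots,\pi,\tfrac{7\pi}{4}\right)$.

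Alternatively, and perhaps cleaner for exposition, one can diagonalize explicitly by exploiting the $S_6$-symmetry permuting $\alpha_1,\dots,\alpha_6$: decompose $\mathbb R^7 = \mathrm{span}(\mathbf 1_6\oplus 0)\oplus\mathrm{span}(0\oplus 1)\oplus W$, where $W$ is the $5$-dimensional space of vectors $(w,0)$ with $\sum w_i = 0$. On $W$ the form acts as $-I_5$ (negative), and on the $2$-dimensional complementary space spanned by $(\mathbf 1_6,0)$ and $(0,1)$ it is represented by $\begin{pmatrix} -7 & 2\sqrt 6 \\ 2\sqrt 6 & -8\end{pmatrix}$ (using the norm $\|\mathbf 1_6\| = \sqrt 6$), which has trace $-15 < 0$ and determinant $56 - 24 = 32 > 0$, hence is negative definite. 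This partitioned check is entirely elementary; the only mild obstacle is bookkeeping the normalization of the off-diagonal $\xi$-block correctly (the factor $\sqrt 6$ versus $6$), but since the Schur-complement computation above sidesteps normalization issues, I would present that one as the main argument and mention the eigenvalue decomposition as a remark.
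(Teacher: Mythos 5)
Your proof is correct, and it supplies exactly the linear-algebra verification that the paper leaves implicit: the paper records the second derivatives ($\partial^2 V/\partial\alpha_i^2=-2$, $\partial^2 V/\partial\alpha_i\partial\alpha_j=-1$, $\partial^2 V/\partial\alpha_i\partial\xi=2$, $\partial^2 V/\partial\xi^2=-8$) and then simply asserts the lemma ``From this, we have\ldots'' with no further argument. Your identification of the $\alpha$-block as $-(J_6+I_6)$ with eigenvalues $-7$ (on $\mathbf 1_6$) and $-1$ (on $\mathbf 1_6^\perp$), followed by the Schur-complement computation $-8 - 4\,\mathbf 1_6^T H^{-1}\mathbf 1_6 = -8 + \tfrac{24}{7} = -\tfrac{32}{7} < 0$, is clean and closes the gap. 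The alternative $S_6$-equivariant decomposition is also sound (on $W$ the form is $-I_5$, and on the complementary $2$-plane the matrix $\begin{pmatrix} -7 & 2\sqrt 6 \\ 2\sqrt 6 & -8\end{pmatrix}$ has trace $-15$ and determinant $32$), and your remark about keeping track of the $\sqrt 6$ normalization is apt since that is the one spot where a sign-or-factor slip could creep in. Either argument would serve; the Schur-complement route is the more self-contained of the two.
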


We also need to following estimation of $V$ from \cite{BDKY}.

\begin{lemma}\label{Vmax} For each $(\alpha_1,\dots, \alpha_6, \xi)\in B_H$, we have $ V(\alpha_1,\dots,\alpha_6,\xi) \leq v_8$,
where $v_8$ is the volume of the regular ideal octahedron. Moreover, the equality holds if and only if $(\alpha_1,\dots,\alpha_6,\xi) = \lt(\pi,\dots,\pi,\frac{7\pi}{4}\rt)$.
\end{lemma}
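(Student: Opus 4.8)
The plan is to reduce this seven-variable estimate to the sharp volume bound for truncated hyperideal tetrahedra. First I would use the three properties of $V(\alpha,\xi)$ recalled from \cite{C1} just above: for each fixed $\alpha$ of hyperideal type, $V(\alpha,\cdot)$ is strictly concave down, attains its unique maximum at $\xi(\alpha)$, and $V(\alpha,\xi(\alpha))=\Vol(\Delta_{|\alpha-\pi|})$. Hence for every $(\alpha,\xi)\in B_H$ one has $V(\alpha,\xi)\le \Vol(\Delta_{|\alpha-\pi|})$, with equality only if $\xi=\xi(\alpha)$. So it suffices to prove $\Vol(\Delta_\theta)\le v_8$ for every admissible dihedral angle vector $\theta$, with equality exactly at $\theta=0$; combined with $\xi(\pi,\dots,\pi)=\tfrac{7\pi}{4}$ from (\ref{xipi}) this gives the lemma, since $\theta=|\alpha-\pi|=0$ forces $\alpha=(\pi,\dots,\pi)$.

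For the inequality $\Vol(\Delta_\theta)\le v_8$ I would argue by monotonicity. By the Schläfli differential formula for truncated hyperideal tetrahedra, $d\Vol(\Delta_\theta)=-\tfrac12\sum_{i=1}^6\ell_i\,d\theta_i$, where $\ell_i>0$ is the length of the edge of $\Delta_\theta$ between the two hexagonal faces corresponding to $\theta_i$. Thus $\Vol(\Delta_\theta)$ is strictly decreasing in each $\theta_i$ on the locus of genuine truncated hyperideal tetrahedra, so on the closure of that locus the maximum is attained only at the vertex $\theta=0$. At $\theta=0$ the truncated hyperideal tetrahedron degenerates to the regular ideal octahedron, so $\Vol(\Delta_0)=v_8$; this is the same geometric fact underlying (\ref{VolFSL}), and can also be read off from (\ref{Ureal}) specialized at $\alpha=(\pi,\dots,\pi)$, $\xi=\tfrac{7\pi}{4}$ using the standard values of the dilogarithm. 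Since $V((\pi,\dots,\pi),\cdot)$ is strictly concave with maximum at $\tfrac{7\pi}{4}$, equality $V=v_8$ on $B_H$ forces $(\alpha,\xi)=(\pi,\dots,\pi,\tfrac{7\pi}{4})$, completing the characterization.

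The main obstacle is controlling $\Vol(\Delta_\theta)$ along the strata where one or more vertices become ideal: the Schläfli argument above lives on the open hyperideal locus, and to conclude that $v_8$ is achieved only at $\theta=0$ one must either check directly that the edge lengths $\ell_i$ stay positive (so the strict monotonicity persists) up to the relevant boundary faces, or, more cleanly, use that $\Vol$ extends to a continuous concave function of $\theta$ on the whole closed admissible region, so that the strict negativity of its one-sided directional derivatives at $\theta=0$ along the edges of that region pins down the unique global maximum. An alternative that avoids truncated-tetrahedron geometry altogether is to work directly with $V$ on $B_H$: confirm that $(\pi,\dots,\pi,\tfrac{7\pi}{4})$ is a critical point, apply Lemma~\ref{convexity} for a strict local maximum there, and then rule out larger values on $\partial B_H$ by estimating the relevant sums of Lobachevsky functions; in either approach the boundary bookkeeping is the crux, while the reduction via $\xi(\alpha)$ and the Schläfli monotonicity are routine.
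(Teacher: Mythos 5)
Your argument is correct in outline but takes a genuinely different route from the paper. The paper's proof of this lemma is essentially a citation: it invokes \cite[Lemma 3.5]{BDKY}, where the inequality $V\leq v_8$ on $B_H$ is established by a direct case analysis of $V$ on the boundary of $B_H$, at non-smooth points, and at interior critical points. You instead reduce via $\xi(\alpha)$ to the sharp volume bound $\Vol(\Delta_\theta)\leq v_8$ for truncated hyperideal tetrahedra and establish that bound by Schl\"afli monotonicity (volume strictly decreasing in each dihedral angle, with $v_8$ realized only in the ideal-octahedron limit $\theta=0$), then pull back uniqueness through the strict concavity of $V(\alpha,\cdot)$. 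This is a legitimate and arguably more conceptual argument — it explains \emph{why} $v_8$ is the bound by identifying the maximizer geometrically — whereas the BDKY route is more elementary but bookkeeping-heavy. The issue you flag, extending the Schl\"afli monotonicity to the closed admissible region (where edge lengths can vanish as vertices become ideal), is real but standard: the volume extends continuously and concavely to the closure, and since the closed admissible region is star-shaped at $\theta=0$ with $\frac{d}{dt}\Vol((1-t)\theta)=\tfrac12\sum_i\ell_i\theta_i\geq 0$ along the segment to $0$, strictly positive on a subinterval whenever $\theta\neq 0$, the unique maximizer is $\theta=0$. So your proposal is sound; it simply re-derives what the paper imports from \cite{BDKY}.
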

\begin{proof}
This result is proved in \cite[Lemma 3.5]{BDKY}. To be precise, the authors of \cite{BDKY} studied the maximum of $V$ on boundary points of $B_H$, the non-smooth points and the critical points of the interior smooth points. From this, they proved that $V$ attains its maximum at the unique maximum point $(\alpha_1,\dots,\alpha_6,\xi) = \lt(\pi,\dots,\pi,\frac{7\pi}{4}\rt)$ with value $v_8$. See \cite{BDKY} for more details.
\end{proof}

For $(x_1, \dots, x_n), (y_1, \dots, y_n )\in \CC^n$, let $d_{\infty}$ be the real maximum norm on $\mathbb{C}^n$ defined by
\begin{equation*}
    d_{\infty} ((x_1, \dots, x_n), (y_1, \dots, y_n )) = \max_{i \in \{1, \dots, n\}} \{|\Re(x_i) - \Re(y_i)|, |\im(x_i) - \im(y_i)|\}.
\end{equation*}

\begin{lemma}\label{less2pi}
There exists $\delta_1>0$ such that if $d_{\infty}\big( (\alpha_1,\dots,\alpha_6,\xi), (\pi,\dots,\pi, \frac{7\pi}{4}) \big) < \delta_1$, then
$$
\Big|\frac{\partial \im U}{\partial \im \xi} \Big| < 2\pi.
$$
\end{lemma}
\begin{proof}
The result follows from the facts that $\im U$ is smooth and $\frac{\partial U }{\partial \xi} (\pi,\dots,\pi, \frac{7\pi}{4}) = 0$.
\end{proof}

\subsubsection{Geometry of 6j-symbol}
For $\boldsymbol\alpha=(\alpha_1,\dots,\alpha_6)\in\mathbb C^6$ such that $(\mathrm{Re}(\alpha_1),\dots,\mathrm{Re}(\alpha_6))$ is of the hyperideal type, let $U_{\boldsymbol\alpha}(\xi) = U(\boldsymbol\alpha, \xi)$ and let $\xi(\boldsymbol\alpha)$ be such that
\begin{equation}\label{xia}
\frac{d U_{\boldsymbol\alpha}(\xi)}{d\xi} \Big|_{\xi=\xi(\boldsymbol\alpha)} = \frac{\partial U(\boldsymbol\alpha,\xi)}{\partial \xi}\Big|_{\xi=\xi(\boldsymbol\alpha)}=0.
\end{equation}
It is proved in \cite{BY} that such $\xi(\boldsymbol\alpha)$ exists. In particular, for $\boldsymbol\alpha\in\mathbb C^6$ so that $(\boldsymbol\alpha,\xi(\boldsymbol\alpha))\in B_{H,\mathbb C},$ we define
\begin{equation}\label{W}
W(\boldsymbol\alpha)=U(\boldsymbol\alpha,\xi(\boldsymbol\alpha)).
\end{equation}

\begin{theorem}\label{co-vol}(\cite[Theorem 3.5]{BY}) For a partition $(I,J)$ of $\{1,\dots,6\}$ and a deeply truncated tetrahedron $\Delta$ of type $(I,J),$ we let $\{l_i\}_{i\in I}$ and $\{\theta_i\}_{i\in I}$ respectively be the lengths of and dihedral angles at the edges of deep truncation,  and let $\{\theta_j\}_{j\in J}$ and  $\{l_j\}_{j\in J}$ respectively be the dihedral angles  at and the lengths of the regular edges. Then 
$$W\big((\pi\pm \sqrt{-1} l_i)_{i\in I},(\pi\pm \theta_j)_{j\in J}\big)=2\pi^2+2\sqrt{-1} \mathrm{Cov}\big((l_i)_{i\in I},(\theta_j)_{j\in J}\big)$$
 where $\mathrm{Cov}$ is the co-volume function defined by
 $$\mathrm{Cov}\big((l_i)_{i\in I},(\theta_j)_{j\in J}\big)=\mathrm{Vol}(\Delta)+\frac{1}{2}\sum_{i\in I}\theta_il_i,$$
which for $i\in I$ satisfies
$$\frac{\partial \mathrm{Cov}}{\partial l_i}=\frac{\theta_i}{2}$$
and  for $j\in J$ satisfies
$$\frac{\partial \mathrm{Cov}}{\partial \theta_j}=-\frac{l_j}{2}.$$
\end{theorem}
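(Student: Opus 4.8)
The plan is to prove the identity by showing that $W$ and $2\pi^2+2\sqrt{-1}\,\mathrm{Cov}$ have the same differential on the (connected) space of deeply truncated tetrahedra of type $(I,J)$, and then to fix the resulting additive constant at one explicit configuration. I would parametrize such tetrahedra by $\big((l_i)_{i\in I},(\theta_j)_{j\in J}\big)$, so that the argument of $W$ is $\alpha_i=\pi\pm\sqrt{-1}l_i$ for $i\in I$ and $\alpha_j=\pi\pm\theta_j$ for $j\in J$, writing $\theta_i$ ($i\in I$), $l_j$ ($j\in J$) for the induced dihedral angles and edge lengths. The two derivative formulas $\partial\mathrm{Cov}/\partial l_i=\theta_i/2$ and $\partial\mathrm{Cov}/\partial\theta_j=-l_j/2$ asserted in the theorem are just the statement that $\mathrm{Cov}=\Vol+\tfrac12\sum_{i\in I}\theta_il_i$ is the partial Legendre transform of the volume in the $I$-variables, combined with the Schläfli formula $d\Vol=-\tfrac12\sum_e l_e\,d\theta_e$ for hyperbolic deeply truncated tetrahedra, which gives $d\mathrm{Cov}=\tfrac12\sum_{i\in I}\theta_i\,dl_i-\tfrac12\sum_{j\in J}l_j\,d\theta_j$. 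So it remains to verify that $dW=2\sqrt{-1}\,d\mathrm{Cov}$ in these coordinates, plus one value of $W$.

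Since $\xi(\alpha)$ is characterized by $\partial_\xi U(\alpha,\xi(\alpha))=0$, the envelope theorem gives $\partial_{\alpha_k}W=\partial_{\alpha_k}U\big|_{\xi=\xi(\alpha)}$ for $k=1,\dots,6$, so I only need to differentiate $U$ in its $\alpha$-variables and restrict to the critical section. Using $\tfrac{d}{dz}\mathrm{Li}_2(e^{2\sqrt{-1}z})=-2\sqrt{-1}\log(1-e^{2\sqrt{-1}z})$ and the fact that each $\tau_i,\eta_j$ is a half-sum of the $\alpha_k$'s, I would differentiate the formula (\ref{defU}) for $U$ term by term; the linear terms coming from the quadratic part of $U$ partially cancel against the logarithmic derivatives, and $\partial_{\alpha_k}U\big|_{\xi=\xi(\alpha)}$ reduces to an explicit $\mathbb Z[\tfrac12]$-linear combination of the quantities $\log(1-e^{2\sqrt{-1}(\eta_j-\tau_i)})$, $\log(1-e^{2\sqrt{-1}(\tau_i-\pi)})$, $\log(1-e^{2\sqrt{-1}(\xi-\tau_i)})$, $\log(1-e^{2\sqrt{-1}(\eta_j-\xi)})$ and $\log(1-e^{2\sqrt{-1}(\xi-\pi)})$ evaluated at $\xi=\xi(\alpha)$, plus an affine term in the $\alpha_k$.

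The main step is to identify this expression geometrically. Rewriting each factor via $1-e^{2\sqrt{-1}z}=-2\sqrt{-1}\,e^{\sqrt{-1}z}\sin z$, the combination becomes a sum of logarithms of sines of dihedral angles and of $\sinh$ of edge lengths of the deeply truncated tetrahedron $\Delta$ attached to $\alpha$. I would then invoke the geometric meaning of the critical value $\xi(\alpha)$ established in \cite{BY}, together with the cosine and dual cosine laws for $\Delta$ — equivalently, the algebraic relations among the entries of the Gram matrix $\mathbb G$ of (\ref{gram}) and of its cofactor matrix — to force almost all the terms to cancel in pairs, leaving
$$\partial_{\alpha_i}U\big|_{\xi=\xi(\alpha)}=\pm\,\theta_i\ \ (i\in I),\qquad \partial_{\alpha_j}U\big|_{\xi=\xi(\alpha)}=\mp\sqrt{-1}\,l_j\ \ (j\in J),$$
with the signs dictated by the choices $\alpha_i=\pi\pm\sqrt{-1}l_i$, $\alpha_j=\pi\pm\theta_j$. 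Combined with $\partial/\partial l_i=\pm\sqrt{-1}\,\partial/\partial\alpha_i$ and $\partial/\partial\theta_j=\pm\,\partial/\partial\alpha_j$, this is exactly $dW=2\sqrt{-1}\,d\mathrm{Cov}$.

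To pin down the additive constant I would evaluate at $\alpha=(\pi,\dots,\pi)$, which corresponds to the regular ideal octahedron (all dihedral angles and truncation edge lengths equal to $0$): by (\ref{xipi}) one has $\xi=\tfrac{7\pi}{4}$, and then (\ref{Ureal}) together with Lemma \ref{Vmax} give $W(\pi,\dots,\pi)=U(\pi,\dots,\pi,\tfrac{7\pi}{4})=2\pi^2+2\sqrt{-1}\,v_8$, which matches $2\pi^2+2\sqrt{-1}\,\mathrm{Cov}$ since $\mathrm{Cov}\to\Vol=v_8$ in that degenerate limit; connectedness of the parameter space then propagates the identity everywhere. (If $(\pi,\dots,\pi)$ is only a boundary point of the relevant domain, I would instead conclude by continuity, or carry out the evaluation directly at any interior reference tetrahedron.) The hard part is the geometric identification in the third paragraph: one must pin down the exact dictionary between the analytic critical point $\xi(\alpha)$ and the trigonometry of the deeply truncated tetrahedron and then control the cancellations, which is where the main content of \cite{BY} lies; a secondary technical point is making the Schläfli formula and the distinction between "edges of deep truncation" and "regular edges" precise in this setting.
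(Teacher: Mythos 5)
Theorem~\ref{co-vol} is quoted verbatim from \cite[Theorem~3.5]{BY}; the paper under review does not prove it, so there is no internal proof to compare you against. Your outline is nevertheless the natural one, and as far as the dependencies in this paper go it is consistent with the source: obtain $d\,\mathrm{Cov}=\frac{1}{2}\sum_{i\in I}\theta_i\,dl_i-\frac{1}{2}\sum_{j\in J}l_j\,d\theta_j$ from the Schl\"afli formula, reduce $dW$ to $\partial_{\alpha_k}U$ along the critical section via the envelope theorem (using (\ref{xia}) and the definition (\ref{W})), identify $\partial_{\alpha_i}U|_{\xi(\alpha)}=\pm\theta_i$ for $i\in I$ and $\partial_{\alpha_j}U|_{\xi(\alpha)}=\mp\sqrt{-1}\,l_j$ for $j\in J$, and fix the additive constant at $\alpha=(\pi,\dots,\pi)$ using (\ref{xipi}), (\ref{Ureal}) and Lemma~\ref{Vmax}. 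Your sign bookkeeping between $d\alpha_k$ and $dl_i,\,d\theta_j$, the differential of $\mathrm{Cov}$, and the value $W(\pi,\dots,\pi)=2\pi^2+2\sqrt{-1}\,v_8$ are all correct.

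The gap you flag yourself is real and is not minor. The identifications $\partial_{\alpha_i}U|_{\xi=\xi(\alpha)}=\pm\theta_i$ and $\partial_{\alpha_j}U|_{\xi=\xi(\alpha)}=\mp\sqrt{-1}\,l_j$ \emph{are} the content of \cite[Theorem~3.5]{BY}, and as written you derive them by ``invoking the geometric meaning of the critical value $\xi(\alpha)$ established in \cite{BY}'', which is circular if the goal is to reprove that theorem. Closing the gap requires actually differentiating (\ref{defU}), rewriting each $1-e^{2\sqrt{-1}z}$ as $-2\sqrt{-1}\,e^{\sqrt{-1}z}\sin z$, and then using the hyperbolic cosine and dual cosine laws (equivalently, the relations among entries and cofactors of the Gram matrix (\ref{gram}) of $\Delta$) to collapse the resulting combination of $\log\sin$ and $\log\sinh$ terms to a single $\theta_i$ or $\sqrt{-1}\,l_j$; that computation is also what pins down the $\pm$ in your parametrization $\alpha_i=\pi\pm\sqrt{-1}\,l_i$, $\alpha_j=\pi\pm\theta_j$, which your sketch leaves undetermined. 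A secondary point: $(\pi,\dots,\pi)$ is a boundary (ideal) point of the hyperideal domain, so the normalization should be phrased as the limit you mention, or else carried out at an interior reference configuration where the $\pm$ signs have already been fixed.
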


\subsubsection{Neumann-Zagier potential functions of fundamental shadow link complements}
Finally, to understand the geometry of the critical points of the function $G^{\boldsymbol { E}_I}(\boldsymbol {\alpha}_{\zeta_I}, \boldsymbol \xi)$ defined in (\ref{defG}), we need the following result from \cite{WY1}.

For $s\in\{1,\dots,c\},$ let $\boldsymbol\alpha_s=(\alpha_{s_1},\dots,\alpha_{s_6}).$ Consider the following function
$$\mathcal U(\boldsymbol \alpha_{\zeta_I}, \boldsymbol \alpha_{J})= -\sum_{i=1}^n\frac{\iota_i}{2}(\alpha_i-\pi)^2+\sum_{s=1}^c U(\boldsymbol\alpha_s,\xi(\boldsymbol\alpha_s))+\Big(\sum_{i=1}^n\frac{\iota_i}{2}\Big)\pi^2.$$
for all $(\boldsymbol \alpha_{\zeta_I}, \boldsymbol \alpha_J)$ such that $(\boldsymbol\alpha_s,\xi(\boldsymbol\alpha_s))\in B_{H,\mathbb C}$ for all $s\in\{1,\dots,c\}.$ Then we have

\begin{proposition}(\cite[Proposition 4.1]{WY1})\label{NeuZ} For each component $T_i$ of the boundary of $M_c\setminus L_{\text{FSL}},$ choose the basis $(u_i,v_i)$ of $\pi_1(T_i)$ as in (\ref{m}) and (\ref{l}), and let $\Phi$ be the Neumann-Zagier potential function characterized by
\begin{equation}\label{char2}
\left \{\begin{array}{l}
\frac{\partial \Phi(\mathrm{H}(u_1),\dots,\mathrm{H}(u_n))}{\partial \mathrm{H}(u_i)}=\frac{\mathrm H(v_i)}{2},\\
\\
\Phi(0,\dots,0)=\sqrt{-1}\bigg(\mathrm{Vol}(M_c\setminus L_{\text{FSL}})+\sqrt{-1}\mathrm{CS}(M_c\setminus L_{\text{FSL}})\bigg)\quad\quad \mod \pi^2\mathbb Z,
\end{array}\right.
\end{equation} 
where $ M_c\setminus L_{\text{FSL}}$ is with the complete hyperbolic metric. If $\mathrm H(u_i)=\pm 2\sqrt{-1}(\alpha_{i,\zeta_i}-\pi)$ for each $i\in I$ and $\mathrm H(u_j)=\pm 2\sqrt{-1}(\alpha_{j}-\pi)$ for each $j\in J$, then
$$\mathcal U(\boldsymbol \alpha_{\zeta_I}, \boldsymbol \alpha_J)=2c\pi^2+\Phi(\mathrm H(u_1),\dots,\mathrm H(u_n)).$$
\end{proposition}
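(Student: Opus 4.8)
The plan is to verify that $\mathcal U-2c\pi^2$, read through the substitution $\mathrm H(u_i)=\pm2\sqrt{-1}(\alpha_i-\pi)$ as a function of the meridian holonomies, satisfies the two conditions in $(\ref{char2})$ that characterize the Neumann--Zagier potential $\Phi$ of $M_c\setminus L_{\text{FSL}}$ up to $\pi^2\mathbb Z$; the uniqueness built into $(\ref{char2})$ then gives the identity. Two preliminary observations make this meaningful. First, every edge $e_{s_j}$ of a building block $\Delta_s$ lies on exactly one component $L_i$, so $\alpha_{s_j}=\alpha_i$ and $\mathcal U$ is genuinely a function of $(\alpha_1,\dots,\alpha_n)$. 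Second, $U(\alpha_s,\xi(\alpha_s))=W(\alpha_s)$ is even in each $\alpha_{s_j}-\pi$ (this is precisely what the ``$\pm$'' in Theorem~\ref{co-vol} records, and follows from the evenness of $W$ established in \cite{BY}), and the quadratic terms of $\mathcal U$ are manifestly even in each $\alpha_i-\pi$; hence the sign ambiguity in $\mathrm H(u_i)=\pm2\sqrt{-1}(\alpha_i-\pi)$ is immaterial and $\mathcal U$, like $\Phi$, is even in each $\mathrm H(u_i)$, so it is well defined on the relevant (contractible) neighbourhood of the complete structure.

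For the value at the complete structure I would set $\alpha_i=\pi$ for all $i$, i.e.\ $\mathrm H(u_i)=0$. By $(\ref{xipi})$ the inner critical point is $\xi(\pi,\dots,\pi)=\tfrac{7\pi}{4}$, and by Lemma~\ref{Vmax} (or direct evaluation) $V(\pi,\dots,\pi,\tfrac{7\pi}{4})=v_8$, so by $(\ref{Ureal})$ each block contributes $U(\pi,\dots,\pi,\tfrac{7\pi}{4})=2\pi^2+2\sqrt{-1}\,v_8$. Summing over the $c$ blocks and using $(\ref{VolFSL})$ and $(\ref{CSFSL})$,
$$\mathcal U(0,\dots,0)=2c\pi^2+2\sqrt{-1}\,c\,v_8+\Big(\sum_{i}\tfrac{\iota_i}{2}\Big)\pi^2\equiv 2c\pi^2+\sqrt{-1}\,\Vol(M_c\setminus L_{\text{FSL}})-\CS(M_c\setminus L_{\text{FSL}})\pmod{\pi^2\mathbb Z},$$
where the last step uses $\big(\sum_i\tfrac{\iota_i}{2}\big)\pi^2=\CS(M_c\setminus L_{\text{FSL}})\equiv-\CS(M_c\setminus L_{\text{FSL}})\pmod{\pi^2\mathbb Z}$, valid because $2\big(\sum_i\tfrac{\iota_i}{2}\big)\pi^2\in\pi^2\mathbb Z$. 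Thus $\mathcal U(0,\dots,0)-2c\pi^2$ agrees with $\Phi(0,\dots,0)=\sqrt{-1}\big(\Vol+\sqrt{-1}\,\CS\big)$ modulo $\pi^2\mathbb Z$, which is the second condition of $(\ref{char2})$.

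The substantial step is the first condition, $\partial\mathcal U/\partial\mathrm H(u_i)=\mathrm H(v_i)/2$. Differentiating $\mathcal U$ in $\alpha_i$ and using that $\xi(\alpha_s)$ is a critical point of $\xi\mapsto U(\alpha_s,\xi)$, so the inner dependence drops out by the envelope principle (legitimate by $(\ref{xia})$ and the existence and smoothness of $\xi(\alpha)$ from \cite{BY}), gives $\partial\mathcal U/\partial\alpha_i=-\iota_i(\alpha_i-\pi)+\sum_{(s,j):\,e_{s_j}\subset L_i}\partial W/\partial\alpha_{s_j}(\alpha_s)$. For a fundamental shadow link each block $\Delta_s$ is an ordinary truncated hyperideal tetrahedron, so in Theorem~\ref{co-vol} one has $I=\emptyset$: all six edges are regular edges with dihedral angle $|\alpha_{s_j}-\pi|$ and length $l_{s_j}$, $W(\alpha_s)=2\pi^2+2\sqrt{-1}\,\Vol(\Delta_s)$, and the Schläfli formula $\partial\Vol(\Delta_s)/\partial\theta_{s_j}=-l_{s_j}/2$ with $\alpha_{s_j}=\pi\pm\theta_{s_j}$ gives $\partial W/\partial\alpha_{s_j}=\mp\sqrt{-1}\,l_{s_j}$. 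The edges of the blocks lying on $L_i$ concatenate into the closed geodesic $L_i$, so $\sum_{(s,j)}l_{s_j}=l_i$; choosing the signs of the substitution coherently, $\partial\mathcal U/\partial\alpha_i=-\iota_i(\alpha_i-\pi)-\sqrt{-1}\,l_i$ along the cone structures. Converting to $\mathrm H(u_i)=\pm2\sqrt{-1}(\alpha_i-\pi)$ and using $(\ref{m})$, so that $\alpha_i-\pi$ equals $\pm\theta_i/2$ with the matching sign, a one-line computation gives $\partial\mathcal U/\partial\mathrm H(u_i)=-\tfrac{l_i}{2}+\tfrac{\iota_i\sqrt{-1}\,\theta_i}{4}=\mathrm H(v_i)/2$ by $(\ref{l})$. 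Since both sides are holomorphic on the distinguished component $\mathrm X_0$ and agree on the totally real, full-dimensional family of small-angle cone structures, they agree identically, establishing the first condition.

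Combining the three verifications, $\mathcal U-2c\pi^2$ satisfies $(\ref{char2})$, so it equals $\Phi(\mathrm H(u_1),\dots,\mathrm H(u_n))$ modulo $\pi^2\mathbb Z$, which is the assertion. I expect the main obstacle to be the third step: correctly matching the quantum--combinatorial data $(\alpha_{s_j},\xi_s,\iota_i)$ with the geometric data $(\theta_{s_j},l_{s_j},v_i)$ of the cone structure, in particular controlling the signs arising from the two choices in $\mathrm H(u_i)=\pm2\sqrt{-1}(\alpha_i-\pi)$ and from the orientations of the gluing homeomorphisms of the thrice-punctured spheres, and checking that the mutation integers $\iota_i$ occurring in $\mathcal U$ are exactly the ones built into the preferred longitude $(\ref{l})$. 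The remaining ingredients --- existence, smoothness and evenness of $\xi(\alpha)$ and $W(\alpha)$, the co-volume/Schläfli identity, and the special values $\xi(\pi,\dots,\pi)=\tfrac{7\pi}{4}$ and $V(\pi,\dots,\pi,\tfrac{7\pi}{4})=v_8$ --- are all available from \cite{BY}, \cite{C1}, \cite{BDKY} and the earlier sections.
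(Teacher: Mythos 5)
The paper does not prove this proposition — it quotes it directly as \cite[Proposition~4.1]{WY1} — so there is no in-paper argument to compare against. Taken on its own merits, your reconstruction is correct and is, as far as can be judged from the surrounding material in \cite{WY1,WY3,WY4}, the intended argument: you verify that $\mathcal U-2c\pi^2$ satisfies the two conditions in (\ref{char2}) that characterize $\Phi$, using the envelope principle at the inner critical point $\xi(\alpha_s)$ to drop the $\xi$-dependence, the Schl\"afli-type identity from Theorem~\ref{co-vol} with $I=\emptyset$ to get $\partial W/\partial\alpha_{s_j}=\mp\sqrt{-1}\,l_{s_j}$, the concatenation $\sum_{(s,j)}l_{s_j}=l_i$ (compare (\ref{3.14}) in the proof of Lemma~\ref{L3.1}, which uses the same fact), and (\ref{m}), (\ref{l}) to convert to $\partial\mathcal U/\partial\mathrm H(u_i)=\mathrm H(v_i)/2$. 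The evaluation at the complete structure via $\xi(\pi,\dots,\pi)=7\pi/4$, $V=v_8$, (\ref{VolFSL}) and (\ref{CSFSL}) is also correct, and your remark that $\CS\equiv-\CS\pmod{\pi^2\mathbb Z}$ because $2\CS=(\sum_i\iota_i)\pi^2\in\pi^2\mathbb Z$ is exactly the point needed to match the sign. One caveat worth stating more explicitly: since the two conditions in (\ref{char2}) only determine $\Phi$ up to an additive constant in $\pi^2\mathbb Z$ (the first pins down $\Phi$ up to a constant, the second only fixes that constant mod $\pi^2\mathbb Z$), the conclusion should be read modulo $\pi^2\mathbb Z$, or one must specify the lift of $\Phi$ by declaring $\Phi=\mathcal U-2c\pi^2$; you acknowledge this implicitly but it deserves a sentence. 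The evenness argument you give for well-definedness of the $\pm$ substitution is correct and is precisely what makes the statement invariant under the choice of sign in $\mathrm H(u_i)=\pm2\sqrt{-1}(\alpha_i-\pi)$.
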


\subsection{Convexity}
In this subsection we study the convexity of the function $G^{\mathbf E_I}$. Recall from (\ref{defG}) that 
\begin{align}\label{defG}
G^{\boldsymbol  E_I}(\boldsymbol \alpha_{\zeta_I}, \boldsymbol \xi) 
&=  \sum_{i\in I} 
\lt[- \lt(\frac{p_i'}{q_i} + a_{i,0}\rt) (\beta_i - \pi)^2 - \frac{p_i (\alpha_{i,\zeta_i} - \pi)^2 +   2 E_i(\beta_i - \pi)(\alpha_{i,\zeta_i} - \pi)}{q_i}
\rt] \notag
\\
&\qquad - \sum_{j \in J} a_{j,0}(\alpha_j - \pi)^2  -\sum_{i=1}^n\frac{\iota_i}{2}(\alpha_i-\pi)^2+\sum_{s=1}^c U(\alpha_{s_1},\dots,\alpha_{s_6},\xi_s)+\Big(\sum_{i=1}^n\frac{\iota_i}{2}\Big)\pi^2.
\end{align}For $\delta >0$, we denote by $D_{\delta, \mathbb{C}}$ the $\delta$-neighborhood of $(\pi, \dots, \pi, \frac{7 \pi}{4}, \dots , \frac{7 \pi}{4})$ in $\mathbb{C}^{|I|+ c}$ with respect to the maximum norm, that is 
\begin{equation*}
    D_{\delta, \mathbb{C}} = \bigg \{ (\boldsymbol\alpha_{\zeta_I}, \boldsymbol\xi) \in \mathbb{C}^{|I| + c} \bigg | d_{\infty} \bigg( (\boldsymbol\alpha_{\zeta_I}, \boldsymbol\xi ), \lt(\pi , \dots, \pi , \frac{7\pi}{4}, \dots , \frac{7\pi}{4}\rt)\bigg) < \delta \bigg \},
\end{equation*}
where $d_{\infty}$ is the real maximum norm on $\mathbb{C}^n$ defined by
\begin{equation*}
    d_{\infty} ((x_1, \dots, x_n), (y_1, \dots, y_n )) = \max_{i \in \{1, \dots, n\}} \{|\Re(x_i) - \Re(y_i)|, |\im(x_i) - \im(y_i)|\}.
\end{equation*}
We will also consider the region 
\begin{equation*}
    D_{\delta} = D_{\delta, \mathbb{C}} \cap \mathbb{R}^{|I|+ c}.
\end{equation*}

Let 
\begin{align}\label{deftU}
\tilde U(\boldsymbol \alpha_{\zeta_I}, \boldsymbol \xi) 
=&
- \sum_{j \in J} a_{j,0}(\alpha_j - \pi)^2  -\sum_{i=1}^n\frac{\iota_i}{2}(\alpha_i-\pi)^2+\sum_{s=1}^c U(\alpha_{s_1},\dots,\alpha_{s_6},\xi_s)+\Big(\sum_{i=1}^n\frac{\iota_i}{2}\Big)\pi^2.
\end{align}
Let $\delta_1>0$ be the constant in Lemma \ref{less2pi}.
\begin{proposition}\label{convextU} There exists a $ \delta_0 \in (0,\delta_1)$ such that if all $\{ \alpha_j \}_{ j \in J}$ are in $(\pi - \delta_0, \pi + \delta_0)$, then $\im \tilde U(\boldsymbol \alpha_{\zeta_I}, \boldsymbol \xi) $ is strictly concave down in $\{\Re (\alpha_{i,\zeta_i})\}_{i \in I}$  and $\{\Re (\xi_s)\}_{s=1}^c$ and is strictly concave up in $\{\im (\alpha_{i,\zeta_i})\}_{i \in I}$  and $\{\im (\xi_s)\}_{s=1}^c$ on $D_{\delta_0, \mathbb{C}}$.
\end{proposition}
\begin{proof}
Note that when all $\{\alpha_{i,\zeta_i}\}_{i\in I}$ and $\{\xi_s\}_{s=1}^c$ are real, by (\ref{Ureal}) we have
$$ \im \tilde U(\boldsymbol \alpha_{\zeta_I}, \boldsymbol \xi) = \sum_{s=1}^c 2V(\alpha_{s_1},\dots, \alpha_{s_6}, \xi_s). $$
Therefore, when $\alpha_{i,\zeta_i} = \pi$ for all $i\in I$ and $\xi_s=\frac{7\pi}{4}$ for $s=1,\dots, c$, by Lemma \ref{convexity}, the Hessian matrix of $\im  \tilde U(\boldsymbol \alpha_{\zeta_I}, \boldsymbol \xi) $ is negative definite in $\{ \Re (\alpha_{i,\zeta_i})\}_{i\in I}$ and $\{\Re(\xi_s)\}_{s=1}^c$. 

By continuity, we can find a sufficiently small $\delta_0\in(0,\delta_1)$ such that for all $\{\alpha_j\}_{j \in J}$ in $(\pi - \delta_0, \pi+ \delta_0)$ and $(\boldsymbol \alpha_{\zeta_I}, \boldsymbol\xi) \in D_{\delta_0, \CC}$, the Hessian matrix of $\im \tilde U(\boldsymbol \alpha_{\zeta_I}, \boldsymbol \xi) 
$ is negative definite in $\{ \Re (\alpha_{i,\zeta_i})\}_{i\in I}$ and $\{\Re(\xi_s)\}_{s=1}^c$. As a result, $\im \tilde U(\boldsymbol \alpha_{\zeta_I}, \boldsymbol \xi) 
$ is strictly concave down in $\{ \Re (\alpha_{i,\zeta_i})\}_{i\in I}$ and $\{\Re(\xi_s)\}_{s=1}^c$. Finally, by the holomorphicity of the function $ \tilde U(\boldsymbol \alpha_{\zeta_I}, \boldsymbol \xi) $, $\im \tilde U(\boldsymbol \alpha_{\zeta_I}, \boldsymbol \xi) 
$ is strictly concave up in $\{\im (\alpha_{i,\zeta_i})\}_{i \in I}$  and $\{\im (\xi_s)\}_{s=1}^c$.
\end{proof}

Proposition \ref{prop53} and \ref{prop54} are analogue of Proposition 5.3 and 5.4 in \cite{WY1}. 
\begin{proposition}\label{prop53} For any $\mathbf E_I \in \{1,-1\}^{|I|}$, $\im G^{\boldsymbol { E_I}}(\boldsymbol \alpha_{\zeta_I}, \boldsymbol \xi)$ is strictly concave down in $\{\Re (\alpha_{i,\zeta_i})\}_{i \in I}$  and $\{\Re (\xi_s)\}_{s=1}^c$ and is strictly concave up in $\{\im (\alpha_{i,\zeta_i})\}_{i \in I}$  and $\{\im (\xi_s)\}_{s=1}^c$ on $D_{\delta_0, \mathbb{C}}$.
\end{proposition}
\begin{proof}
Note that
\begin{align*}
&G^{\boldsymbol  E_I}(\boldsymbol \alpha_{\zeta_I}, \boldsymbol \xi) \\
=&  \sum_{i\in I} 
\lt[- \lt(a_{i,0} + \frac{p_i'}{q_i}\rt) (\beta_i - \pi)^2 - \frac{p_i (\alpha_{i,\zeta_i} - \pi)^2 + 2 E_i(\beta_i - \pi)(\alpha_{i,\zeta_i} - \pi)}{q_i}
\rt] + \tilde U(\boldsymbol \alpha_{\zeta_I}, \boldsymbol \xi) .
\end{align*}
In particular, the $\im(G^{\boldsymbol  E_I}(\boldsymbol \alpha_{\zeta_I}, \boldsymbol \xi) - 
\tilde U(\boldsymbol \alpha_{\zeta_I}, \boldsymbol \xi))$ is a linear function in $\{\Re (\alpha_{i,\zeta_i})\}_{i \in I}$ and $\{\im (\alpha_{i,\zeta_i})\}_{i \in I}$. Since the convexity of a function does not change under addition of linear functions, the result follows from Proposition \ref{convextU}.
\end{proof}

\begin{proposition}\label{prop54} If all $\{ \alpha_j\}_{j \in J}$ are in $(\pi - \delta_0, \pi + \delta_0)$, then the Hessian matrix $\Hess({G}^{\boldsymbol{ E}_I} )$ with respect to $\{\alpha_{i,\zeta_i}\}_{i \in I}$  and $\{\xi_s\}_{s=1}^c$ is non singular on $D_{\delta_0, \mathbb{C}}$.
\end{proposition}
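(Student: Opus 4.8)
The plan is to derive the non-singularity of the complex Hessian directly from the convexity statement of Proposition \ref{prop53} via the Cauchy--Riemann relations; this is the natural generalization of Proposition 5.4 in \cite{WY1}. Write $\mathbf z = \big((\alpha_{\zeta_i}^i)_{i\in I},(\xi_s)_{s=1}^c\big)\in\mathbb{C}^{N}$ with $N=|I|+c$, and $z_k=\Re z_k+\sqrt{-1}\,\im z_k$. Since $G^{\boldsymbol E_I}$ is holomorphic on $D_{H,\mathbb{C}}$ and differentiation in $z_k$ agrees with differentiation in $\Re z_k$ for holomorphic functions, the complex Hessian satisfies
$$\Hess\big(G^{\boldsymbol E_I}\big)=\Big(\tfrac{\partial^2 G^{\boldsymbol E_I}}{\partial z_j\,\partial z_k}\Big)_{jk}=\Big(\tfrac{\partial^2 G^{\boldsymbol E_I}}{\partial \Re z_j\,\partial \Re z_k}\Big)_{jk}.$$
Decomposing $\Hess\big(G^{\boldsymbol E_I}\big)=P+\sqrt{-1}\,Q$ into real and imaginary parts, $P$ and $Q$ are real symmetric $N\times N$ matrices, and $Q$ is precisely the Hessian of $\im G^{\boldsymbol E_I}$ in the real coordinates $\{\Re\alpha_{\zeta_i}^i\}_{i\in I}$, $\{\Re\xi_s\}_{s=1}^c$. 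By Proposition \ref{prop53}, whenever all $\{\alpha_j\}_{j\in J}$ lie in $(\pi-\delta_0,\pi+\delta_0)$, this matrix $Q$ is negative definite at every point of $D_{\delta_0,\mathbb{C}}$.

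Next I would argue by contradiction. Suppose $\Hess\big(G^{\boldsymbol E_I}\big)$ is singular at some point of $D_{\delta_0,\mathbb{C}}$, and choose a nonzero $\mathbf w\in\mathbb{C}^{N}$ in its kernel. Then $\bar{\mathbf w}^{\,T}\big(P+\sqrt{-1}\,Q\big)\mathbf w=0$. Because $P$ and $Q$ are real symmetric, both $\bar{\mathbf w}^{\,T}P\,\mathbf w$ and $\bar{\mathbf w}^{\,T}Q\,\mathbf w$ are real numbers, so taking imaginary parts of the identity above forces $\bar{\mathbf w}^{\,T}Q\,\mathbf w=0$. This contradicts the negative definiteness of $Q$, since $\mathbf w\neq 0$. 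Hence $\Hess\big(G^{\boldsymbol E_I}\big)$ is non-singular on $D_{\delta_0,\mathbb{C}}$.

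I do not expect a serious obstacle here: the core fact is the elementary linear-algebra observation that a complex symmetric matrix whose imaginary part is (negative) definite must be invertible, and the only bookkeeping needed is the Cauchy--Riemann identification of that imaginary part with the real Hessian of $\im G^{\boldsymbol E_I}$ in the real-part coordinates. The one point that must be matched carefully is that Proposition \ref{prop53} is stated exactly in those coordinates --- strict concavity of $\im G^{\boldsymbol E_I}$ in $\{\Re\alpha_{\zeta_i}^i\}$ and $\{\Re\xi_s\}$ --- which is precisely what its proof delivers through Lemma \ref{convexity} and a continuity argument, so the ingredients line up directly.
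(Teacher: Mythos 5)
Your proof is correct and takes essentially the same route as the paper: deduce definiteness of one part of the complex Hessian from Proposition \ref{prop53}, and conclude invertibility from the elementary fact that a complex symmetric matrix whose real or imaginary part is a definite real symmetric matrix has trivial kernel. The paper obtains this last step by citing the lemma of London (\cite{L}), whereas you prove it directly in three lines; both are perfectly acceptable. One small point worth flagging: you correctly identify that Proposition \ref{prop53} (concavity of $\im G^{\boldsymbol E_I}$ in the real-part coordinates) makes the \emph{imaginary} part $Q$ of $\Hess(G^{\boldsymbol E_I})$ negative definite, whereas the paper's one-line proof writes ``real part''; this is a harmless slip in the paper (equivalently, it is the real part of $\Hess\bigl(G^{\boldsymbol E_I}/(4\pi\sqrt{-1})\bigr)$, the function actually appearing in the saddle-point exponent, that is negative definite), and either reading gives the same conclusion via the same argument.
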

\begin{proof}
From Proposition \ref{prop53}, we see that the real part of $\Hess({G}^{\mathbf E_I}) $ is negative definite. By [\cite{L}, Lemma], the matrix $\Hess({G}^{\mathbf E_I}) $ is non-singular.
\end{proof}

\begin{remark}
The constant $\delta_0>0$ in Proposition \ref{prop53} and \ref{prop54} depends only on the fundamental shadow link but not on $(p_i,q_i), \mathbf E_I$ and $\beta_i$.
\end{remark}

\subsection{Critical Points and critical values}
In Proposition~\ref{prop52} we will prove that certain critical value of the function $G^{\boldsymbol { E_I}}(\boldsymbol \alpha_{\zeta_I}, \boldsymbol \xi)$ gives the hyperbolic volume and the Chern-Simons invariant of the cone manifold $M_{L_{\boldsymbol\theta}}$. 

For $i \in I$, let $\theta_i = 2|\beta_i-\pi|$ and let $\mu_i =1 \text{ if } \beta_i-\pi \geq 0, \mu_i = -1 \text{ if } \beta_i-\pi \leq 0 $. By definition, we have $\theta_i = 2\mu_i (\beta_i-\pi)$. Consider the $(p_i, q_i)$ Dehn-filling equation with cone angle $\theta_i$
\begin{align}\label{coneqn}
p_i \mathrm{H}(u_i) + q_i \mathrm{H}(v_i) = \sqrt{-1} \theta_i,
\end{align}
where $\mathrm{H}(u_i)$ and $\mathrm{H}(v_i)$ are the logarithmic holonomies of the meridian and the longitude respectively.

Then we have the following analogue of Proposition 5.2 in \cite{WY1}.
\begin{proposition}\label{prop52}For each $i\in I,$ let $\mathrm H(u_i)$ be the logarithmic holonomy of $u_i$ of the hyperbolic cone manifold $M_{L_{\boldsymbol\theta}}$ and let
\begin{equation}\label{alpha}
\alpha^*_i=\pi+\frac{E_i\mu_i\sqrt{-1}}{2}\mathrm H(u_i).
\end{equation}
For $ s \in \{1,2,\dots,c\}$, let $\xi^* = \xi(\alpha_{s_1}^*,\dots, \alpha_{s_6}^*)$ be as defined in (\ref{xia}). Assume that 
$$ \mathbf z^{\mathbf E_I} = \lt( (\alpha_i^*)_{i\in I}, (\xi_s^*)_{s=1}^c \rt) \in D_{\delta_0, \mathbb{C}}$$ 
for $\delta_0$ defined in Proposition \ref{prop53}.  
Then $G^{\boldsymbol { E_I}}(\boldsymbol \alpha_{\zeta_I}, \boldsymbol \xi)$ has a critical point 
\begin{equation*}
    \mathbf z^{\mathbf E_I} = \lt( (\alpha_i^*)_{i\in I}, (\xi_s^*)_{s=1}^c \rt)
\end{equation*}
in $D_{\delta_0, \mathbb{C}}$ with critical value
\begin{equation*}
    2c\pi^2 + \sqrt{-1}( \Vol(M_{L_{\boldsymbol\theta}}) + \sqrt{-1}\CS(M_{L_{\boldsymbol\theta}})  )
\end{equation*}
\end{proposition}

\begin{proof}
For $s\in \{1,\dots, c\}$, we let $\boldsymbol\alpha_s=(\alpha_{s_1},\dots, \alpha_{s_6})$ and 
$\boldsymbol\alpha_s^* = (\alpha_{s_1}^*,\dots, \alpha_{s_6}^*)$. 
For each $s \in \{1, \dots, c\},$ by Equation (\ref{xia}),
\begin{equation}\label{6jeq}
    \left.\frac{\partial G^{\boldsymbol  E_I}}{\partial \xi_s}\right\vert_{\mathbf z^{\mathbf E_I}} = \left. \frac{\partial U(\boldsymbol\alpha_s, \xi_s)}{\partial \xi_s}\right\vert_{\xi_s^*} = 0
\end{equation}

Besides, by the chain rule, for each $s \in \{ 1, \dots , c\}$ and $i \in I$,
\begin{equation*}
    \left.\frac{\partial U(\boldsymbol\alpha_s, \xi(\boldsymbol\alpha_s))}{\partial \alpha_{i,\zeta_i}}\right\vert_{\boldsymbol\alpha_s^*}
    = \left. \frac{\partial U(\boldsymbol\alpha_s, \xi_s)}{\partial \alpha_{i,\zeta_i}}\right\vert_{(\boldsymbol\alpha_s^*, \xi_s^*)} + \left.\frac{\partial U(\boldsymbol\alpha_s, \xi_s)}{\partial \xi_s}\right\vert_{(\boldsymbol\alpha_s^*, \xi_s^*)} . \left. \frac{\partial \xi(\boldsymbol\alpha_s)}{\partial \alpha_{i,\zeta_i}} \right\vert_{\alpha_s} 
    =  \left. \frac{\partial U(\boldsymbol\alpha_s, \xi_s)}{\partial \alpha_{i,\zeta_i}}\right\vert_{(\boldsymbol\alpha_s^*, \xi^*)}.
\end{equation*}
Hence, by (\ref{char2}),
\begin{align} \label{df2}
    \left. \frac{\sum_{s=1}^c \partial U(\boldsymbol\alpha_s, \xi(\boldsymbol\alpha_s))}{\partial \alpha_{i,\zeta_i}} \right \vert_{\mathbf z^{\mathbf E_I}} = \left. \frac{\partial \mathcal{U}}{\partial \alpha_{i,\zeta_i}} \right \vert_{(\alpha_i^*)_{i\in I}} = -  E_i \mu_i \sqrt{-1} \mathrm{H}(v_i).
\end{align}
As a result,
\begin{align}\label{dfeq}
    \lt.\frac{\partial G^{\boldsymbol  E_I}}{\partial \alpha_{i,\zeta_i}}\rt\vert_{\mathbf z^{\mathbf E_I}}
   &= \frac{-2p_i (\alpha_{i}^*-\pi) -  2 E_i (\beta_i - \pi)}{q_i} + \lt.\frac{\partial \mathcal{U}}{\partial \alpha_{i,\zeta_i}}\rt \vert_{(\alpha_i^*)_{i\in I}}
   \notag \\
    &= \frac{-2p_i (\alpha_{i}^*-\pi) -  2 E_i (\beta_i - \pi)}{q_i}  - E_i \mu_i \sqrt{-1}\mathrm{H}(v_i) \notag \\
    &= -\frac{ E_i\mu_i\sqrt{-1}}{q_i}
    (p_i \mathrm{H}(u_i) + q_i \mathrm{H}(v_i) - \sqrt{-1} \theta_i) \notag\\
    &=0,
\end{align}
where the last equality comes from the $(p_i, q_i)$ Dehn-filling equation with cone angle $\theta_i$. Thus, from Equations (\ref{6jeq}) and (\ref{dfeq}), we see that $\mathbf z^{\mathbf E_I}$ is a critical point of $G^{\boldsymbol  E_I}$.

To compute the critical value, by Proposition \ref{NeuZ}, we have 
\begin{align}
\mathcal{U}(\boldsymbol \alpha_{\zeta_I}, \boldsymbol \alpha_J) = 2c\pi^2 + \Phi(\mathrm{H}(u_1), \dots, \mathrm{H}(u_n)). \label{neuZ1}
\end{align}
For each $i \in I$, let 
$ \gamma_i  = (-q_i'u_i + p_i'v_i)+ a_{i,0}(p_i u_i + q_i v_i) $ so that it is the curve on the boundary of a tubular neighborhood of $L_i$ that is isotopic to $L_i$ given by the framing $a_{i,0}$ of $L_i$ and with the orientation so that $ (p_i u_i + q_i v_i).\gamma_i  = 1$.
By definition, we have $\theta_i = 2\mu_i (\beta_i - \pi)$ and $\mathrm{H}(u_i) = -2\sqrt{-1} E_i \mu_i (\alpha_i^* - \pi)$. Besides, by the $(p_i,q_i)$ Dehn-filling equation $p_i \mathrm{H}(u_i) + q_i \mathrm{H}(v_i) = \sqrt{-1} \theta_i$, we have
\begin{align}
\mathrm{H}(v_i) &= \frac{\sqrt{-1}\theta_i -p_i \mathrm{H}(u_i) }{q_i}
= \frac{2\mu_i \sqrt{-1}}{q_i} [(\beta_i - \pi) + p_i  E_i (\alpha_i^* - \pi)].
\end{align}
As a result,
\begin{align}\label{cvuv}
-\frac{\mathrm{H}(u_i)\mathrm{H}(v_i)}{4}
= -\frac{ E_i (\alpha_i^*-\pi)(\beta- \pi)}{q_i} - \frac{p_i(\alpha_i^* - \pi)^2}{q_i}.
\end{align}
Besides, by (\ref{corecurveHol}), we have
\begin{align} \label{hgammacompu}
\mathrm{H}(\gamma_i) 
&= -q_i'\mathrm{H}(u_i) + p_i'\mathrm{H}(v_i) + a_{i,0}\theta_i \sqrt{-1} \notag\\
&= -\lt(q_i' + \frac{p_i p_i'}{q_i}\rt)\mathrm{H}(u_i) + \lt(\frac{p_i'}{q_i}+a_{i,0}\rt)\theta_i \sqrt{-1} \notag\\ 
&= -\frac{\mathrm{H}(u_i)}{q_i}+ \lt(\frac{p_i'}{q_i}+a_{i,0}\rt)\theta_i \sqrt{-1}.
\end{align}
This implies that
\begin{align}\label{cvgamma}
\frac{\sqrt{-1}\theta_i\mathrm{H}(\gamma_i)}{4}
= -\frac{ E_i (\alpha_i^* - \pi)(\beta_i - \pi)}{q_i} - \lt(\frac{p_i'}{q_i}+a_{i,0}\rt) ( \beta_i - \pi)^2.
\end{align}
By Equations (\ref{cvuv}) and (\ref{cvgamma}), we have
\begin{align} 
&-\sum_{i \in I} \frac{ \mathrm{H}(u_i)\mathrm{H}(v_i)}{4} +  \sum_{i \in I} \frac{\sqrt{-1} \theta_i \mathrm{H}(\gamma_i)}{4}\notag\\ 
=& 
-\sum_{i \in I} \frac{2 E_i (\alpha_i^* - \pi)(\beta_i - \pi)}{q_i} 
- \sum_{i \in I} \frac{p_i}{q_i}(\alpha_i^* - \pi)^2 
- \sum_{i \in I }\lt(\frac{p_i'}{q_i}+a_{i,0}\rt) ( \beta_i - \pi)^2. \label{uvI}
\end{align}

For each $j \in J$, let $ \gamma_j = a_{j,0} u_j +  v_j$ so that the curve on the boundary of a tubular neighborhood of $L_j$ that is isotopic to $L_j$ given by the framing $a_{j,0}$ of $L_j$ and with the orientation such that $u_j . \gamma_j = 1$.
\\
Then we have $\theta_j = 2|\alpha_j - \pi | = 2\mu_j(\alpha_j - \pi)$ for some $\mu_j \in\{-1,1\}$, $\mathrm{H}(u_j) = 2\sqrt{-1} |\alpha_j - \pi|$ and 
$ \mathrm{H}(\gamma_j) = a_{j,0}  \mathrm{H}(u_j) + \mathrm{H}(v_j)$. As a consequence, we have
\begin{align} 
    -\sum_{j \in J}\frac{\mathrm{H}(u_j)\mathrm{H}(v_j)}{4} + \sum_{j \in J} \frac{\sqrt{-1}\theta_j \mathrm{H}(\gamma_j)}{4} 
    &=     -\sum_{j \in J}\frac{\mathrm{H}(u_j)\mathrm{H}(v_j)}{4} + \sum_{j \in J} \frac{\mathrm{H}(u_j) (a_{j,0}  \mathrm{H}(u_j) + \mathrm{H}(v_j))}{4}  \notag\\
    &= -\sum_{j \in J} a_{j,0} (\alpha_j - \pi)^2. \label{uvJ}
\end{align}
From (\ref{neuZ1}), (\ref{uvI}), (\ref{uvJ}) and (\ref{VCS}), we have
\begin{align*}
G^{\mathbf E_I}(\mathbf z^{\mathbf E_I})  
&= \sum_{i\in I} 
\lt[- \lt(\frac{p_i'}{q_i} + a_{i,0}\rt) (\beta_i - \pi)^2 - \frac{p_i (\alpha_i^* - \pi)^2 +  2E_i (\beta_i - \pi)(\alpha_i^* - \pi)}{q_i}
\rt]
\\
&\qquad - \sum_{j \in J} a_{j,0}(\alpha_j - \pi)^2  + \mathcal{U}(\boldsymbol \alpha_{\zeta_I}, \boldsymbol \alpha_J) \\
&= 2c\pi^2 + \Phi(\mathrm{H}(u_1),\dots, \mathrm{H}(u_n)) - \sum_{i=1}^n \frac{\mathrm{H}(u_i)\mathrm{H}(v_i)}{4} + 
\sum_{i=1}^n \frac{\sqrt{-1}\theta_i \mathrm{H}(\gamma_i)}{4} \\
&= 2c\pi^2 + \sqrt{-1}(\Vol(M_{L_{\boldsymbol\theta}}) + \sqrt{-1}\CS(M_{L_{\boldsymbol\theta}}))
\end{align*}
\end{proof}

\subsection{Asymptotics of the leading Fourier coefficients }
\begin{proposition}\label{lfc}
Let $\boldsymbol E_{I}\in \{1,-1\}^{|I|}$ and let $\mathbf {z}^{ \boldsymbol E_{I}}$ be the critical point described in Proposition \ref{prop52}. Assume that 
\begin{enumerate}
\item 
$\mathbf {z}^{ \boldsymbol E_{I}} \in D_{\delta_0,\CC}$ and 
\item 
$ \Vol(M_{L_{\boldsymbol\theta}})> \max_{(\boldsymbol \alpha_{\zeta_I}, \boldsymbol \xi) \in \overline{D_H\setminus D_{\delta_0}}} \im \tilde U(\boldsymbol \alpha_{\zeta_I}, \boldsymbol \xi) $, where $ \tilde U(\boldsymbol \alpha_{\zeta_I}, \boldsymbol \xi) $ is defined in (\ref{deftU}) and $\overline{D_H\setminus D_{\delta_0}}$ is the closure of $D_H\setminus D_{\delta_0}$.
\end{enumerate}
Then the asymptotics of the integral on the right hand side of Proposition \ref{lfcexpress} 
\begin{align*}
&\int_{D_H} 
    \phi_r\lt(\mathbf{s}^{\mathbf E_I}, \boldsymbol{\alpha}_{\zeta_I}, \boldsymbol\xi \rt)     
   e^{\frac{r}{4\pi \sqrt{-1}}G_r^{\mathbf E_I}(\boldsymbol\alpha_{\xi_I},\boldsymbol\zeta)} d\boldsymbol{\alpha}_{\zeta_I} d\boldsymbol\xi \notag \\
=& \lt(\frac{2}{r}\rt)^c \lt(\frac{2\pi}{r}\rt)^{\frac{|I|+c}{2}} (4\pi\sqrt{-1})^{\frac{|I|+c}{2}}
\frac{(-1)^{-\frac{rc}{2}}C^{\mathbf E_I}(\mathbf{z}^{\boldsymbol E_{I}})}{\sqrt{-\det \Hess({G}^{\mathbf E_I} )(\mathbf{z^{\mathbf E_I}})}} e^{\frac{r}{4 \pi} (\Vol (M_{L_{\boldsymbol\theta}})  + \sqrt{-1}\CS(M_{L_{\boldsymbol\theta}}))} \Big( 1 + O\Big(\frac{1}{r}\Big ) \Big),
\end{align*}
where each $C^{\mathbf E_I}(\mathbf {z}^{\mathbf E_I})$ depends continuously on $\{ \beta_i\}_{i \in I}$ and $\{ \alpha_j\}_{j \in J}$ and is given by
\begin{align}\label{Cformula}
&C^{\mathbf E_I}(\mathbf {z}^{\mathbf E_I}) 
\notag\\
=&  
 e^{ \sqrt{-1}\Big(\sum_{i\in I}\Big(\frac{p_i'}{q_i}(\beta_i - \pi) + \frac{p_i}{q_i}(\alpha_{i}^* - \pi) + \frac{E_i(\alpha_{i}^*+\beta_i-2\pi)}{q_i}\Big)+ \sum_{i\in I}a_{i,0}\beta_i + \sum_{i\in I} \Big(\frac{\iota_i}{2}\Big)\alpha_{i}^* + \sum_{j\in J}\Big(a_{j,0}+\frac{\iota_j}{2}\Big)\alpha_j \Big) + \kappa(\mathbf {z}^{\mathbf E_I})},
\end{align}
where $\kappa$ is defined in Lemma \ref{qdtod}.
\end{proposition}

\begin{proof} Let $\delta_0>0$ be as in Proposition \ref{prop53}. We write 
\begin{align*}
&\int_{D_H} 
    \phi_r\lt(\mathbf{s}^{\mathbf E_I}, \boldsymbol{\alpha}_{\zeta_I}, \boldsymbol\xi \rt)     
   e^{\frac{r}{4\pi \sqrt{-1}}G_r^{E_I}(\boldsymbol\alpha_{\xi_I},\boldsymbol\zeta)} d\boldsymbol{\alpha}_{\zeta_I} d\boldsymbol\xi  \notag \\
=&
\int_{D_{\delta_0}} 
 \phi_r\lt(\mathbf{s}^{\mathbf E_I}, \boldsymbol{\alpha}_{\zeta_I}, \boldsymbol\xi \rt)     
   e^{\frac{r}{4\pi \sqrt{-1}}G_r^{E_I}(\boldsymbol\alpha_{\xi_I},\boldsymbol\zeta)} d\boldsymbol{\alpha}_{\zeta_I} d\boldsymbol\xi 
   +
\int_{D_{H} \setminus D_{\delta_0}} 
 \phi_r\lt(\mathbf{s}^{\mathbf E_I}, \boldsymbol{\alpha}_{\zeta_I}, \boldsymbol\xi \rt)     
   e^{\frac{r}{4\pi \sqrt{-1}}G_r^{E_I}(\boldsymbol\alpha_{\xi_I},\boldsymbol\zeta)} d\boldsymbol{\alpha}_{\zeta_I} d\boldsymbol\xi .
\end{align*}

\noindent{\bf {Step 1: Estimation of the integral over $D_H\setminus D_{\delta_0}$.}}\\
From (\ref{defG}), on $D_H\setminus D_{\delta_0}$ we have
\begin{align*}
\im G^{\boldsymbol  E_I}(\boldsymbol \alpha_{\zeta_I}, \boldsymbol \xi) 
&=  \im \tilde U(\boldsymbol \alpha_{\zeta_I}, \boldsymbol \xi) ,
\end{align*}
where $\tilde U(\boldsymbol \alpha_{\zeta_I}, \boldsymbol \xi) $ is defined in (\ref{deftU}).
By assumption (2), we can find $\epsilon>0$ such that 
\begin{align*}
\lt|\int_{D_{H} \setminus D_{\delta_0}} 
 \phi_r\lt(\mathbf{s}^{\mathbf E_I}, \boldsymbol{\alpha}_{\zeta_I}, \boldsymbol\xi \rt)     
   e^{\frac{r}{4\pi \sqrt{-1}}G_r^{E_I}(\boldsymbol\alpha_{\xi_I},\boldsymbol\zeta)} d\boldsymbol{\alpha}_{\zeta_I} d\boldsymbol\xi\rt| = O\lt(e^{\frac{r}{4\pi}\Vol(M_{L_{\boldsymbol\theta}})-\epsilon}\rt).
\end{align*}

\noindent{\bf{Step 2: Deforming the integral over $D_{\delta_0}$.}}\\

Consider the surface $S^{\mathbf E_I} = S^{\mathbf E_I}_{\text{top}} \cup S^{\mathbf E_I}_{\text{bottom}}$ defined by
$$ S^{\mathbf E_I}_{\text{top}} = \{ (\boldsymbol \alpha_{\zeta_I}, \xi) \in D_{\delta_0, \CC} \mid \im  (\boldsymbol \alpha_{\zeta_I} , \boldsymbol \xi) = \im(\mathbf z^{\mathbf E_I})\}$$
and
$$ S^{\mathbf E_I}_{\text{side}} = \{ (\boldsymbol \alpha_{\zeta_I} , \boldsymbol \xi) + t\sqrt{-1} \im(\mathbf z^{ \mathbf E_I}) \mid (\boldsymbol \alpha_{\zeta_I}, \boldsymbol \xi) \in \partial D_{\delta}, t\in [0,1] )\}.$$
By the definition of the bump function $\psi$, on $D_{\delta_0}$ we have 
\begin{align}\label{IS1}
&\int_{D_{\delta_0}} 
 \phi_r\lt(\mathbf{s}^{\mathbf E_I}, \boldsymbol{\alpha}_{\zeta_I}, \boldsymbol\xi \rt)     
   e^{\frac{r}{4\pi \sqrt{-1}}G_r^{E_I}(\boldsymbol\alpha_{\xi_I},\boldsymbol\zeta)} d\boldsymbol{\alpha}_{\zeta_I} d\boldsymbol\xi \notag\\
= &
\int_{D_{\delta_0}} 
e^{ \sqrt{-1}\Big(\sum_{i\in I}\Big(\frac{p_i'}{q_i}(\beta_i - \pi) + \frac{p_i}{q_i}(\alpha_{i,\zeta_i} - \pi) + \frac{E_i(\alpha_{i,\zeta_i}+\beta_i-2\pi)}{q_i}\Big)+ \sum_{i\in I}a_{i,0}\beta_i + \sum_{i\in I} \Big(\frac{\iota_i}{2}\Big)\alpha_{i,\zeta_i} + \sum_{j\in J}\Big(a_{j,0}+\frac{\iota_j}{2}\Big)\alpha_j \Big)} \notag\\
&\qquad\qquad\times e^{\frac{r}{4\pi \sqrt{-1}}G_r^{E_I}(\boldsymbol\alpha_{\xi_I},\boldsymbol\zeta)} d\boldsymbol{\alpha}_{\zeta_I} d\boldsymbol\xi \notag\\
=&
\int_{S^{\mathbf E_I}} 
e^{ \sqrt{-1}\Big(\sum_{i\in I}\Big(\frac{p_i'}{q_i}(\beta_i - \pi) + \frac{p_i}{q_i}(\alpha_{i,\zeta_i} - \pi) + \frac{E_i(\alpha_{i,\zeta_i}+\beta_i-2\pi)}{q_i}\Big)+ \sum_{i\in I}a_{i,0}\beta_i + \sum_{i\in I} \Big(\frac{\iota_i}{2}\Big)\alpha_{i,\zeta_i} + \sum_{j\in J}\Big(a_{j,0}+\frac{\iota_j}{2}\Big)\alpha_j \Big)} \notag\\
&\qquad\qquad\times e^{\frac{r}{4\pi \sqrt{-1}}G_r^{E_I}(\boldsymbol\alpha_{\xi_I},\boldsymbol\zeta)} d\boldsymbol{\alpha}_{\zeta_I} d\boldsymbol\xi ,
\end{align}
where the last equality follows from the analyticity of the integrand and $\partial D_{\delta_0} = \partial S^{\mathbf E_I}$.

By Lemma \ref{qdtod}, we have
\begin{align}
&\int_{S^{\mathbf E_I}} 
e^{ \sqrt{-1}\Big(\sum_{i\in I}\Big(\frac{p_i'}{q_i}(\beta_i - \pi) + \frac{p_i}{q_i}(\alpha_{i,\zeta_i} - \pi) + \frac{E_i(\alpha_{i,\zeta_i}+\beta_i-2\pi)}{q_i}\Big)+ \sum_{i\in I}a_{i,0}\beta_i + \sum_{i\in I} \Big(\frac{\iota_i}{2}\Big)\alpha_{i,\zeta_i} + \sum_{j\in J}\Big(a_{j,0}+\frac{\iota_j}{2}\Big)\alpha_j \Big)} \notag\\
&\qquad\qquad\times e^{\frac{r}{4\pi \sqrt{-1}}G_r^{E_I}(\boldsymbol\alpha_{\xi_I},\boldsymbol\zeta)} d\boldsymbol{\alpha}_{\zeta_I} d\boldsymbol\xi \notag \\
=& \lt(\frac{r}{2}\rt)^{-c}\int_{S^{\mathbf E_I}} 
e^{ \sqrt{-1}\Big(\sum_{i\in I}\Big(\frac{p_i'}{q_i}(\beta_i - \pi) + \frac{p_i}{q_i}(\alpha_{i,\zeta_i} - \pi) + \frac{E_i(\alpha_{i,\zeta_i}+\beta_i-2\pi)}{q_i}\Big)+ \sum_{i\in I}a_{i,0}\beta_i + \sum_{i\in I} \Big(\frac{\iota_i}{2}\Big)\alpha_{i,\zeta_i} + \sum_{j\in J}\Big(a_{j,0}+\frac{\iota_j}{2}\Big)\alpha_j \Big)} \notag\\
&\qquad\qquad\qquad\qquad\times e^{\kappa(\boldsymbol \alpha_{\zeta_I}, \boldsymbol \xi) + \frac{r}{4\pi \sqrt{-1}}\Big(G^{E_I}(\boldsymbol\alpha_{\xi_I},\boldsymbol\zeta) + \frac{\upsilon_r(\boldsymbol \alpha_{\zeta_I}, \boldsymbol \xi) }{r^2}\Big)} d\boldsymbol{\alpha}_{\zeta_I} d\boldsymbol\xi  .\label{IS3}
\end{align}

\noindent{\bf{Step 3: Verification of the conditions in Proposition \ref{saddle}.}}\\

Now, we apply Proposition~\ref{saddle} to the integral in (\ref{IS3}). We check the conditions (1)-(6) below:
\begin{enumerate}
\item By the definition of $S^{\mathbf E_I}_{\text{top}}$ and Proposition \ref{prop52}, we have $\mathbf z^{\mathbf E_I} \in S^{\mathbf E_I}_{\text{top}}$.
\item On $S^{\mathbf E_I}_{\text{top}}$, by Proposition \ref{prop53}, since $\im G^{\mathbf E_I}(\boldsymbol \alpha_{\zeta_I}, \boldsymbol \xi)$ is strictly concave down in $\{\Re (\alpha_{i,\zeta_i})\}_{i \in I}$  and $\{\Re (\xi_s)\}_{s=1}^c$, $\im G^{\mathbf E_I}(\boldsymbol \alpha_{\zeta_I}, \boldsymbol \xi)$ attains its unique maximum at $\mathbf z^{\mathbf E_I}$.

On $S^{\mathbf E_I}_{\text{sides}}$, by Proposition \ref{prop53}, since $\im G^{\mathbf E_I}$ is strictly concave up in $\{\im (\alpha_{i,\zeta_i})\}_{i \in I}$  and $\{\im (\xi_s)\}_{s=1}^c$, for each $(\boldsymbol \alpha_{\zeta_I}, \boldsymbol \xi)\in \partial D_{\delta_0}$ and $t\in [0,1]$ we have
$$ \im G^{\mathbf E_I}( (\boldsymbol \alpha_{\zeta_I}, \boldsymbol \xi) + t\sqrt{-1} \im(\mathbf z^{\mathbf E_I}) )
< \max\{ \im G^{\mathbf E_I} (\boldsymbol \alpha_{\zeta_I}, \boldsymbol \xi),
\im G^{\mathbf E_I}( (\boldsymbol \alpha_{\zeta_I}, \boldsymbol \xi) + \sqrt{-1} \im(\mathbf z^{\mathbf E_I}) )
\}.
$$ 
For $(\boldsymbol \alpha_{\zeta_I}, \boldsymbol\xi) \in \partial D_{\delta_0}$, by assumption (2) we have 
$$  \im G^{\mathbf E_I} (\boldsymbol \alpha_{\zeta_I}, \boldsymbol \xi) < \im G^{\mathbf E_I}(\mathbf z^{\mathbf E_I}) .$$
For $(\boldsymbol \alpha_{\zeta_I}, \boldsymbol \xi) + \sqrt{-1} \im(\mathbf z^{\mathbf E_I})  \in S^{\mathbf E_I}_{\text{top}}$, since on $S^{\mathbf E_I}_{\text{top}}$ the function $\im G^{\mathbf E_I}$ attains its maximum at $\mathbf z^{\mathbf E_I}$, we have
$$  \im G^{\mathbf E_I}( (\boldsymbol \alpha_{\zeta_I}, \boldsymbol \xi) + \sqrt{-1} \im(\mathbf z^{\mathbf E_I}) ) < \im G^{\mathbf E_I}(\mathbf z^{\mathbf E_I}) .$$
Altogether, on $S^{\mathbf E_I}$, $\im G^{\mathbf E_I}$ has a unique maximum at $\mathbf z^{\mathbf E_I}$.
\item For any $k\in \NN$ and any $k$-tuple of complex number $(z_1,\dots,z_k)\in \CC^k$, we let 
$$\Re(z_1,\dots,z_k) = (\Re z_1,\dots, \Re z_k) \in \RR^k,$$ 
where $\Re z_i$ is the real part of $z_i$ for $i=1,\dots, k$. For any $(\boldsymbol\alpha_{\xi_I}, \boldsymbol \xi) \in D_{\delta_0}$, we consider the set
\begin{equation*}
P_{(\boldsymbol\alpha_{\xi_I}, \boldsymbol \xi)} = 
\left\{
  (\boldsymbol{\tilde\alpha_{\xi_I}}, \boldsymbol{\tilde\xi}) \in D_{\delta_0,\CC} \;\middle|\;
  \begin{aligned}
  & \Re(\boldsymbol{\tilde\alpha}_{\xi_I}, \boldsymbol{\tilde\xi})=\Re(\boldsymbol\alpha_{\xi_I}, \boldsymbol \xi),\\
  &  \im G^{\mathbf E_I} (\boldsymbol{\tilde\alpha}_{\xi_I}, \boldsymbol{\tilde\xi}) < \im G^{\mathbf E_I}(\mathbf z^{\mathbf E_I}) 
  \end{aligned}
\right\}.
\end{equation*}
Note that for $(\boldsymbol\alpha_{\xi_I}, \boldsymbol \xi) = \mathbf z^{\mathbf E_I}$, since it is a critical point of $G^{\mathbf E_I} (\boldsymbol \alpha_{\zeta_I}, \boldsymbol \xi)$, by the Cauchy-Riemann equation we know that
$$\frac{\partial}{\partial \im \alpha_{i,\zeta}}\im G^{\mathbf E_I} (\boldsymbol \alpha_{\zeta_I}, \boldsymbol \xi) 
= \frac{\partial}{\partial \im \xi_k}\im G^{\mathbf E_I} (\boldsymbol \alpha_{\zeta_I}, \boldsymbol \xi) = 0$$
for $i\in I$ and $k=1,\dots, c$.  
By Proposition \ref{prop53}, since $\im G^{\mathbf E_I} (\boldsymbol \alpha_{\zeta_I}, \boldsymbol \xi)$ is strictly concave up in $\{\im (\alpha_{i,\zeta_i})\}_{i \in I}$  and $\{\im (\xi_s)\}_{s=1}^c$ on $D_{\delta_0, \mathbb{C}}$, we know that $P_{\mathbf z^{\mathbf E_I}}$ is an empty set.

Next, for any $(\boldsymbol \alpha_{\zeta_I}, \boldsymbol \xi) \in S^{\mathbf E_I}_{\text{top}}$, by Proposition \ref{prop53}, we know that $(\boldsymbol \alpha_{\zeta_I}, \boldsymbol \xi)  \in P_{(\boldsymbol \alpha_{\zeta_I}, \boldsymbol \xi)}$. Moreover, by Proposition \ref{prop53}, since $\im G^{\mathbf E_I} (\boldsymbol \alpha_{\zeta_I}, \boldsymbol \xi)$ is strictly concave up in $\{\im (\alpha_{i,\zeta_i})\}_{i \in I}$  and $\{\im (\xi_s)\}_{s=1}^c$ on $D_{\delta_0, \mathbb{C}}$, $P_{(\boldsymbol \alpha_{\zeta_I}, \boldsymbol \xi)}$ is a convex set. This implies that each $P_{(\boldsymbol \alpha_{\zeta_I}, \boldsymbol \xi)}$ with $(\boldsymbol \alpha_{\zeta_I}, \boldsymbol \xi)\in S^{\mathbf E_I}_{\text{top}}$ is a topological $(|I|+c)$-dimensional disk which admits a deformation retract to the point $(\boldsymbol \alpha_{\zeta_I}, \boldsymbol \xi)$. This verifies condition (3).
\item By continuity and compactness of $S^{\mathbf E_I}$,
\begin{align*}
\lt| e^{ \sqrt{-1} \lt(\sum_{i \in I} a_{i,0} \beta_i 
    + \sum_{i \in I} \lt(a_{i,\zeta_i} + \frac{\iota_i}{2}\rt) \alpha_{i,\zeta_i} 
    + \sum_{j \in J} \lt(a_{j,0} + \frac{\iota_j}{2}\rt) \alpha_j 
    \sum_{i\in I}  \lt(\lt(\frac{p_i'}{q_i} \rt) (\beta_i - \pi) + \frac{E_i(\alpha_{i,\zeta_i} + \beta_i - 2\pi)}{q_i} \rt)
    \rt) + \kappa(\boldsymbol \alpha_{\zeta_I}, \boldsymbol\xi)} \rt| 
\end{align*}
is non-zero and bounded below by a positive constant independent of $r$.

\item By Lemma \ref{qdtod}, $|\upsilon_r(\boldsymbol \alpha_{\zeta_I}, \boldsymbol \xi)|$ is bounded from above by a constant independent of $r$ on any compact subset of $D_{H,\CC}$. 
\item Note that 
$$ \lim_{r\to \infty} \Hess G_r^{\mathbf E_I}(\mathbf z^{\mathbf E_I}) = 
\Hess G^{\mathbf E_I}(\mathbf z^{\mathbf E_I}).
$$
By Proposition \ref{prop54}, the Hessian matrix $\Hess G^{\mathbf E_I}(\mathbf z^{\mathbf E_I})$ is non-singular. By continuity, the Hessian matrix $\Hess G_r^{\mathbf E_I}(\mathbf z^{\mathbf E_I})$ is non-singular

\end{enumerate}
The result then follows from Proposition \ref{saddle}.
\end{proof}

\subsection{Reidemeister torsion}
The goal of this subsection is to prove Proposition \ref{Rtorsion} and \ref{0FCasym}, which relates the asymptotics of the leading Fourier coefficients obtained in Proposition \ref{lfc} with the adjoint twisted Reideimester torsion of the cone manifold $M\setminus L$.

\begin{proposition}\label{Rtorsion} Consider the system of meridians $\mathrm{\bf \Upsilon}=(\Upsilon_1,\dots,\Upsilon_n)$ with $\mathrm{ \Upsilon}_i = p_iu_i + q_iv_i$ and $\mathrm{ \Upsilon}_j = u_j$. Let $\mathbb{T}_{(M\setminus L,\mathbf \Upsilon)}([\rho_{M_{L_{\boldsymbol\theta}}}])$ be the Reideimester torsion of $M\setminus L$  twisted by the adjoint action of  $\rho_{M^{(r)}}$ with respect to the system of meridians $\mathrm{\bf \Upsilon}$. Then we have
$$\frac{C^{\mathbf E_I}(\mathbf{z}^{\boldsymbol E_{I}})}{\sqrt{- \lt(\prod_{i\in I}q_i \rt)\det \Hess({G}^{\mathbf E_I} )(\mathbf{z^{\mathbf E_I}})}}
= \frac{e^{\bigg(\sum_{i\in I} \Big(a_{i,0}+\frac{\iota_i}{2}\Big)+\sum_{j\in J}\Big(a_{j,0}+\frac{\iota_j}{2}\Big)\bigg)\sqrt{-1}\pi + \frac{1}{2}\sum_{k=1}^n\mu_k \mathrm{H}(\gamma_k)}}{2^{\frac{|I|+c}{2}}\sqrt{\pm\mathbb{T}_{(M\setminus L,\mathrm{\bf \Upsilon})}([\rho_{M_{L_{\boldsymbol\theta}}}])}}.$$
\end{proposition}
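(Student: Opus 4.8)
The plan is to express the left-hand side entirely in terms of the Gram-matrix description of the torsion provided by Theorem~\ref{Tor1}, using the change-of-curves formula of Theorem~\ref{funT}~(ii) to convert between the system of meridians $\mathbf\Upsilon$ and the system of meridians $\mathbf u$ of the fundamental shadow link complement. First I would compute $C^{\boldsymbol E_I}(\mathbf z^{\boldsymbol E_I})$ from its explicit formula \eqref{Cformula}: the term $\kappa(\mathbf z^{\boldsymbol E_I})$ is a sum over the building blocks of logarithms of the form $\log(1-e^{2\sqrt{-1}(\cdot)})$ which, by the quantum-dilogarithm-to-dilogarithm comparison, are exactly the quantities that assemble into $\sqrt{\det\mathbb G_s}$ after evaluating at the critical point (where $\xi_s=\xi(\alpha_s^*)$), together with some elementary exponential factors of $\alpha_i^*,\beta_i,\alpha_j$. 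The logarithmic holonomy relations $\mathrm H(u_i)=-2\sqrt{-1}E_i\mu_i(\alpha_i^*-\pi)$ for $i\in I$ and $\mathrm H(u_j)=2\sqrt{-1}|\alpha_j-\pi|$ for $j\in J$ let me rewrite all the $\alpha$-dependent exponentials in terms of the holonomies, and the computation \eqref{hgammacompu} of $\mathrm H(\gamma_i)$ (and the analogous one for $j\in J$) is what produces the factor $e^{\frac12\sum_k\mu_k\mathrm H(\gamma_k)}$ on the right-hand side; the purely topological exponent $e^{(\sum_{i\in I}(a_0^i+\iota_i/2)+\sum_{j\in J}(a_0^j+\iota_j/2))\sqrt{-1}\pi}$ should fall out of collecting the $a_0$- and $\iota$-dependent phase factors and the $(-1)^{-rc/2}$.

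Next I would analyze $\det\Hess(G^{\boldsymbol E_I})(\mathbf z^{\boldsymbol E_I})$. The function $G^{\boldsymbol E_I}$ in \eqref{defG} is, up to the quadratic ``surgery'' terms $-\sum_{i\in I}[(p_i'/q_i+a_0^i)(\beta_i-\pi)^2+\cdots]$ and the framing terms in $J$, the function $\mathcal U(\boldsymbol\alpha_{\zeta_I},\boldsymbol\alpha_J)=2c\pi^2+\Phi(\mathrm H(u_1),\dots,\mathrm H(u_n))$ of Proposition~\ref{NeuZ}. Since the $\xi_s$ appear only through $U(\alpha_s,\xi_s)$ and $\partial_\xi U=0$ at the critical point, a Schur-complement/block argument reduces $\det\Hess$ to the product of the Hessians $\partial^2_{\xi_s} U$ (whose product I want to identify with $\prod_s\det\mathbb G_s$ up to an explicit constant, via the $6j$-symbol geometry of Theorem~\ref{co-vol} and the Gram-matrix formula in \cite{WY3}) times the determinant of the Hessian of the reduced function in the variables $(\alpha_{\zeta_i}^i)_{i\in I}$. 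That reduced Hessian, after the change of variables $\alpha_{\zeta_i}^i\mapsto\mathrm H(u_i)$ and differentiating the $(p_i,q_i)$-Dehn-filling relations, becomes $\det(\partial\mathrm H(\Upsilon_i)/\partial\mathrm H(u_j))$ times the second-derivative matrix of $\Phi$, i.e.\ the Jacobian factor appearing in Theorem~\ref{Tor1}~(2) for $\mathbf\Upsilon=(p_iu_i+q_iv_i,u_j)$. Combining: $-\bigl(\prod_i q_i\bigr)\det\Hess(G^{\boldsymbol E_I})(\mathbf z^{\boldsymbol E_I})$ should equal (a power of $2$ and $\sqrt{-1}$) times $\det(\partial\mathrm H(\Upsilon_i)/\partial\mathrm H(u_j))\prod_s\det\mathbb G_s$, which by Theorem~\ref{Tor1}~(2) is $\pm 2^{-3c}\prod_s\det\mathbb G_s\cdot 2^{3c}(\cdots)=\pm\,\mathbb T_{(M\setminus L,\mathbf\Upsilon)}([\rho_{M_{L_\theta}}])$ up to the controllable constant; taking square roots gives the $2^{(|I|+c)/2}$ in the denominator.

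The main obstacle I anticipate is bookkeeping the constants and branch choices: matching the $\kappa$-sum against $\prod_s\sqrt{\det\mathbb G_s}$ requires knowing precisely which square root of $\det\mathbb G_s$ is selected by the saddle-point contour (this is where the $\pm$ and the factor-of-$2$ ambiguities live), and the Schur-complement reduction of $\det\Hess(G^{\boldsymbol E_I})$ must be organized so that the off-diagonal $\xi$-$\alpha$ blocks are correctly absorbed — the identity $\partial^2_{\alpha_{\zeta_i}}[U(\alpha_s,\xi(\alpha_s))]=\partial^2_{\alpha_{\zeta_i}}U$ at the critical point (which follows from $\partial_\xi U=0$, as already used in \eqref{df2}) is the crucial simplification that makes this tractable. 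Once the constant is pinned down on one reference case (e.g.\ all $a_0=\iota=0$, comparing with \cite[Theorem~1.1]{WY4} or Theorem~\ref{Tor1}~(3)), multilinearity of the torsion in the boundary curves (Theorem~\ref{funT}) propagates it to the general case. I would therefore carry out the steps in the order: (a) simplify $C^{\boldsymbol E_I}(\mathbf z^{\boldsymbol E_I})$ into holonomy/torsion-Gram form; (b) block-reduce $\det\Hess(G^{\boldsymbol E_I})(\mathbf z^{\boldsymbol E_I})$; (c) apply Theorem~\ref{Tor1}~(2) and the change-of-curves formula; (d) reconcile the overall constant and sign.
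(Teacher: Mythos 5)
Your proposal matches the paper's own proof in both structure and all key ideas: block/Schur-complement reduction of $\det\Hess(G^{\boldsymbol E_I})$ to $\det(\partial\mathrm H(\Upsilon_{i_1})/\partial\mathrm H(u_{i_2}))\prod_s\partial^2_{\xi_s}U$ (the paper's Lemma~\ref{L3.1}, using exactly the $\partial_\xi U=0$ simplification you flag), conversion of $\kappa(\mathbf z^{\boldsymbol E_I})$ and the phase exponents into holonomies and $\prod_s\sqrt{\det\mathbb G_s}$ (Lemmas~\ref{L3.2}, \ref{L3.4}, \ref{L3.5}), and then invoking Theorem~\ref{Rtorthm}. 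The only cosmetic difference is that the paper pins down the constants by direct computation in those lemmas rather than by normalizing on a reference case and appealing to multilinearity, but that is a detail of bookkeeping, not a different route.
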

For  each $s\in\{1,\dots,c\},$ we let $I_s=\{s_1,\dots,s_6\}\cap I$ and $J_s=\{s_1,\dots,s_6\}\cap J.$ We also let $\alpha_{I_s}^*=(\alpha^*_{s_i})_{s_i\in I_s},$ $\alpha_{J_s}=(\alpha_{s_j})_{j\in J},$ $\xi_s^*=\xi(\alpha_{I_s}^*,\alpha_{J_s})$ and $z_s^*=(\alpha_{I_s}^*,\alpha_{J_s},\xi_s^*).$ 

To prove Proposition \ref{Rtorsion}, we need Lemmas \ref{L3.1}, \ref{L3.2}, \ref{L3.4} and \ref{L3.5}. 

\begin{lemma}\label{L3.1}  For each $i\in I,$ consider the system of meridian $\Upsilon_i=p_iu_i+ q_iv_i$. Then 
$$- \lt(\prod_{i\in I} q_i \rt)\det\mathrm{Hess}\text{ } G^{\mathbf E_I} (\mathbf z^{\mathbf E_I})=-(-2)^{|I|}\det\bigg(\frac{\partial \mathrm H(\Upsilon_{i_1})}{\partial \mathrm H(u_{i_2})}\bigg)_{i_1,i_2\in I} \prod _{s=1}^{c} \frac{\partial ^2U}{\partial \xi_s^2}\bigg|_{z^*_s}.$$
\end{lemma}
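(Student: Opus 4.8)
The plan is to compute the Hessian of $G^{\boldsymbol E_I}$ at the critical point $\mathbf z^{\boldsymbol E_I}$ explicitly, organizing the variables into the blocks $\{\alpha^i_{\zeta_i}\}_{i\in I}$ and $\{\xi_s\}_{s=1}^c$, and to recognize the resulting block structure as a product that can be evaluated by Schur complement. Recall from (\ref{defG}) that
$$
G^{\boldsymbol E_I}(\boldsymbol\alpha_{\zeta_I},\boldsymbol\xi)
= \sum_{i\in I}\Big[-\big(\tfrac{p_i'}{q_i}+a_0^i\big)(\beta_i-\pi)^2
-\tfrac{p_i(\alpha^i_{\zeta_i}-\pi)^2 + 2E_i(\beta_i-\pi)(\alpha^i_{\zeta_i}-\pi)}{q_i}\Big]
+ \sum_{j\in J}(\cdots) + \mathcal U\text{-type terms},
$$
where the only $\boldsymbol\xi$-dependence and the only coupling among the $\alpha$'s enters through $\sum_{s=1}^c U(\alpha_{s_1},\dots,\alpha_{s_6},\xi_s)$. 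First I would observe that, because $\xi_s^*=\xi(\alpha_s^*)$ solves $\partial U/\partial\xi_s=0$, the mixed second derivatives behave well: differentiating $\partial U/\partial\xi_s=0$ implicitly gives $\partial\xi(\alpha_s)/\partial\alpha^i_{\zeta_i} = -(\partial^2U/\partial\alpha^i_{\zeta_i}\partial\xi_s)/(\partial^2U/\partial\xi_s^2)$. Thus the Hessian in the ordering (all $\alpha$'s, then all $\xi$'s) is a block matrix $\begin{pmatrix} A & B\\ B^T & D\end{pmatrix}$ with $D=\mathrm{diag}(\partial^2U/\partial\xi_s^2)|_{z_s^*}$, and its determinant equals $\det(D)\cdot\det(A - BD^{-1}B^T)$. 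The Schur complement $A-BD^{-1}B^T$ is exactly the Hessian of the \emph{reduced} function $\boldsymbol\alpha_{\zeta_I}\mapsto G^{\boldsymbol E_I}(\boldsymbol\alpha_{\zeta_I},\xi(\alpha))$, i.e. of $\sum_i[\cdots] + \mathcal U(\boldsymbol\alpha_{\zeta_I},\boldsymbol\alpha_J)$, at $(\alpha_i^*)_{i\in I}$.

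Next I would identify this reduced Hessian with a derivative of logarithmic holonomies. By Proposition \ref{NeuZ}, $\mathcal U(\boldsymbol\alpha_{\zeta_I},\boldsymbol\alpha_J) = 2c\pi^2 + \Phi(\mathrm H(u_1),\dots,\mathrm H(u_n))$ with $\mathrm H(u_i)=E_i\mu_i\cdot 2\sqrt{-1}(\alpha^i_{\zeta_i}-\pi)$ (matching the sign convention in (\ref{alphai*})) for $i\in I$, and the $\beta_j$ held fixed for $j\in J$. Using the characterizing property $\partial\Phi/\partial\mathrm H(u_i) = \mathrm H(v_i)/2$, one differentiates once more to get $\partial^2\mathcal U/\partial\alpha^{i_1}_{\zeta_{i_1}}\partial\alpha^{i_2}_{\zeta_{i_2}}$ in terms of $\partial\mathrm H(v_{i_1})/\partial\mathrm H(u_{i_2})$, up to the constant Jacobian factors $(E_i\mu_i\cdot 2\sqrt{-1})$. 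Combining with the explicit quadratic terms $-p_i(\alpha^i_{\zeta_i}-\pi)^2/q_i - 2E_i(\beta_i-\pi)(\alpha^i_{\zeta_i}-\pi)/q_i$, the $(i_1,i_2)$ entry of the reduced Hessian becomes (a universal nonzero scalar) times $\partial\big(p_{i_1}\mathrm H(u_{i_1}) + q_{i_1}\mathrm H(v_{i_1})\big)/\partial\mathrm H(u_{i_2}) = \partial\mathrm H(\Upsilon_{i_1})/\partial\mathrm H(u_{i_2})$, divided by $q_{i_1}$. Pulling out the $\prod_{i\in I} q_i$ from the denominators — which is precisely the factor appearing on the left side of the lemma — and tracking the remaining constants ($\sqrt{-1}$'s, $2$'s, $\mu_i$'s, $E_i$'s, and an overall sign from $-\det$) should yield the factor $-(-2)^{|I|}$.

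The main obstacle I anticipate is bookkeeping the signs and the powers of $2$ and $\sqrt{-1}$ precisely: the substitution $\mathrm H(u_i)=\pm 2\sqrt{-1}(\alpha^i_{\zeta_i}-\pi)$ introduces a Jacobian $(\pm 2\sqrt{-1})$ per variable, squaring to $-4$ per diagonal contribution, and one must check these combine with the $1/q_i$ factors and the leading $-1$ in $-\prod q_i\cdot\det\mathrm{Hess}$ to give exactly $-(-2)^{|I|}$ and not, say, $-(2\sqrt{-1})^{2|I|}$ with a stray phase; the $\mu_i^2=E_i^2=1$ identities will be used here. A secondary subtlety is making sure that the cross terms between distinct building blocks $\Delta_s$ and distinct components of $\Upsilon$ are correctly absorbed — this is fine because each $U(\alpha_{s_1},\dots,\alpha_{s_6},\xi_s)$ couples only the six $\alpha$'s around block $s$ and its own $\xi_s$, so the $\boldsymbol\xi$-block $D$ is diagonal and the Schur complement is genuinely the Hessian of the reduced $\alpha$-function, exactly as Proposition \ref{NeuZ} requires. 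Once the constant is pinned down the identity follows.
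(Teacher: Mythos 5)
Your proposal is correct and runs on the same chassis as the paper's own proof: eliminating the $\boldsymbol\xi$-block first and identifying the resulting $\boldsymbol\alpha$-block with the matrix of holonomy derivatives. The paper phrases this as a congruence $\mathrm{Hess}\,G^{\boldsymbol E_I}=ADA^T$ with $A$ block upper-triangular with unit diagonal blocks and $D$ block-diagonal; your Schur complement $\det\binom{A\ B}{B^T\ D}=\det D\cdot\det(A-BD^{-1}B^T)$ is exactly the same factorization, and in both cases the top-left block of $D$ (resp.\ the Schur complement) is the Hessian of the reduced function $\boldsymbol\alpha\mapsto G^{\boldsymbol E_I}(\boldsymbol\alpha,\xi(\boldsymbol\alpha))$.

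Where you depart from the paper — mildly but genuinely — is in how you turn the reduced $\alpha$-Hessian into $\big(\partial\mathrm H(\Upsilon_{i_1})/\partial\mathrm H(u_{i_2})\big)$. The paper's proof goes through the co-volume function $W$ of Theorem~\ref{co-vol}, differentiates once more to get $\partial^2 W/\partial\alpha_{s_j}\partial\alpha_{s_k}=-\sqrt{-1}\,\tfrac{E_{s_j}\mu_{s_j}}{E_{s_k}\mu_{s_k}}\,\partial l_{s_j}/\partial\theta_{s_k}$ for each tetrahedron $\Delta_s$, sums the edge lengths over the building blocks via $l_i=\sum_{s\sim i}\sum_{s_k}l_{s_k}$, and only then assembles $p_i\mathrm H(u_i)+q_i\mathrm H(v_i)$. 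You instead apply Proposition~\ref{NeuZ} to write $\mathcal U=2c\pi^2+\Phi$ and use $\partial\Phi/\partial\mathrm H(u_i)=\mathrm H(v_i)/2$ directly, differentiating once more through the chain rule. This is a modest shortcut: Proposition~\ref{NeuZ} has already done the summation over tetrahedra, so you skip (\ref{3.14})--(\ref{3.19}). The chain-rule Jacobian $\partial\mathrm H(u_i)/\partial\alpha_i=-2\sqrt{-1}E_i\mu_i$ (your sign guess is off, but it is squared so it does not matter) combined with the $1/2$ from $\partial\Phi/\partial\mathrm H(u_i)$ indeed gives the entry $-\tfrac{2}{q_{i_1}}\tfrac{E_{i_1}\mu_{i_1}}{E_{i_2}\mu_{i_2}}\partial\mathrm H(\Upsilon_{i_1})/\partial\mathrm H(u_{i_2})$, whose determinant produces $(-2)^{|I|}/\prod q_i$ after the $E\mu$-factors cancel row-against-column; this matches the paper's (\ref{detD0}). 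The one caveat you should state explicitly (and the paper does) is that the holonomy identification via Proposition~\ref{NeuZ} holds a priori at actual hyperbolic cone metrics, i.e.\ at real $(\alpha_i^*)$, and is extended to a neighborhood of the complete structure by analyticity of both sides; without that remark the argument is not literally complete at the complex critical point $\mathbf z^{\boldsymbol E_I}$.
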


\begin{proof} The proof is similar to the proof of Lemma 3.3 in \cite{WY4}. 
For $s\in\{1,\dots,c\}$ and $i\in I,$ we denote by $s\sim i$ if the tetrahedron $\Delta_s$ intersects the component $L_{\text{FSL},i}$ of $L_{\text{FSL}},$  and for $\{i_1, i_2\}\subset I$ we denote by $s\sim i_1,i_2$ if $\Delta_s$ intersects both $L_{\text{FSL},i_1}$ and $L_{\text{FSL},i_2}.$ For $s\in\{1,\dots,c\},$ let $\alpha_s=(\alpha_{s_1},\dots,\alpha_{s_6})$ and let $\alpha_s^*=(\alpha_{I_s}^*,\alpha_{J_s}).$ The following claims (1)-(3) are from \cite[Lemma 3.3]{WY4} and we include the proof below for reader's convenience. Claims (4)-(5) can be proved by suitably modifying the proof of (4)-(5) in \cite[Lemma 3.3]{WY4}.
\begin{enumerate}[(1)]
\item For $s\in\{1,\dots,c\},$
$$\frac{\partial^2 G^{\mathbf E_I}}{\partial \xi_s^2}\bigg|_{\mathbf z^{\mathbf E_I}}=\frac{\partial^2 U}{\partial \xi_s^2}\bigg|_{z_s^*}.$$

\item For $\{s_1,s_2\}\subset\{1,\dots,c\},$
$$\frac{\partial^2 G^{\mathbf E_I}}{\partial \xi_{s_1}\partial \xi_{s_2}}\bigg|_{\mathbf z^{\mathbf E_I}}=0.$$

\item For $i\in I$ and $s\in\{1,\dots,c\},$
$$\frac{\partial^2 G^{\mathbf E_I}}{\partial \alpha_i\partial \xi_s}\bigg|_{\mathbf z^{ \mathbf E_I}}=-\frac{\partial^2 U}{\partial \xi_s^2}\bigg|_{z^*_s} \frac{\xi_s(\alpha_s)}{\partial \alpha_i}\bigg|_{\alpha_s^*}.$$

\item For $i\in I,$ 
$$\frac{\partial^2 G^{\mathbf E_I}}{\partial \alpha_i^2}\bigg|_{\mathbf z^{\mathbf E_I}}=-\frac{2}{q_i}\frac{\partial \mathrm H(\Upsilon_i)}{\partial \mathrm H(u_i)}+\sum_{s\sim i}\frac{\partial^2 U}{\partial \xi_s^2}\bigg|_{z^*_s} \bigg(\frac{\xi_s(\alpha_s)}{\partial \alpha_i}\bigg|_{\alpha_s^*}\bigg)^2.$$

\item For $\{i_1,i_2\}\subset I,$ 
$$\frac{\partial^2 G^{\mathbf E_I}}{\partial \alpha_{i_1}\partial \alpha_{i_2}}\bigg|_{\mathbf z^{\mathbf E_I}}=-\frac{2}{q_{i_1}}\frac{E_{i_1}\mu_{i_1}}{E_{i_2}\mu_{i_2}} \frac{\partial \mathrm H(\Upsilon_{i_1})}{\partial \mathrm H(u_{i_2})}+\sum_{s\sim i_1,i_2}\frac{\partial^2 U}{\partial \xi_s^2}\bigg|_{z^*_s} \frac{\xi_s(\alpha_s)}{\partial \alpha_{i_1}}\bigg|_{\alpha_s^*} \frac{\xi_s(\alpha_s)}{\partial \alpha_{i_2}}\bigg|_{\alpha_s^*}.$$
\end{enumerate}
Assuming these claims, then 
\begin{equation}\label{congurent0}
\mathrm{Hess}G^{\mathbf E_I}(\mathbf z^{{\mathbf E_I}})=A\cdot D\cdot A^T,
\end{equation}
with $D$ and $A$ defined as follows. 
The matrix $D$ is a block matrix with the left-top block the $|I|\times|I|$ matrix 
$$\bigg(-\frac{2}{q_{i_1}}\frac{E_{i_1}\mu_{i_1}}{E_{i_2}\mu_{i_2}} \frac{\partial \mathrm H(\Upsilon_{i_1})}{\partial \mathrm H(u_{i_2})}\bigg)_{i_1,i_2\in I},$$
the right-top and the left-bottom blocks consisting of $0$'s, and the right-bottom block the $c \times c$ diagonal matrix with the diagonal entries
$\frac{\partial^2 U}{\partial \xi_1^2}\Big|_{z_1^*},\dots, \frac{\partial^2 U}{\partial \xi_c^2}\Big|_{z_c^*}.$
Then 
\begin{equation}\label{detD0}
\begin{split}
\det D=&\frac{(-2)^{|I|}}{\prod_{i\in I} q_i}\det\bigg(\frac{E_{i_1}\mu_{i_1}}{E_{i_2}\mu_{i_2}} \frac{\partial \mathrm H(\Upsilon_{i_1})}{\partial \mathrm H(u_{i_2})}\bigg)_{i_1,i_2\in I} \prod _{s=1}^{c} \frac{\partial ^2U}{\partial \xi_s^2}\bigg|_{z^*_s}\\
=&\frac{(-2)^{|I|}}{\prod_{i\in I} q_i}\det\bigg(\frac{\partial \mathrm H(\Upsilon_{i_1})}{\partial \mathrm H(u_{i_2})}\bigg)_{i_1,i_2\in I}  \prod _{s=1}^c \frac{\partial ^2U}{\partial \xi_s^2}\bigg|_{z^*_s}.
\end{split}
\end{equation}
The matrix $A$ is a block matrix with the left-top and the right-bottom blocks respectively the $|I|\times|I|$ and $c\times c$ identity matrices, the left-bottom block consisting of $0$'s and the right-top block the $|I|\times c$ matrix with entries $a_{is},$ $i\in I$ and $s\in\{1,\dots,c\},$ given by
$$a_{is}=-\frac{\xi_s(\alpha_s)}{\partial \alpha_i}\bigg|_{\alpha_s^*}$$
if $s\sim i$ and $a_{is}=0$
if otherwise. Since $A$ is upper triangular with all diagonal entries equal to $1,$ 
\begin{equation}\label{detA0}
\det A=1.
\end{equation}
The result then follows from (\ref{congurent0}), (\ref{detD0}) and (\ref{detA0}), and we are left to prove the claims (1) -- (5). 

Claims (1) and (2) are straightforward  from the definition of $G^{\mathbf E_I}.$ For (3), we have
\begin{equation}\label{WU0}
\frac{\partial G^{\mathbf E_I}}{\partial \xi_s}\bigg|_{\big((\alpha_i)_{i\in I},\xi_1,\dots,\xi_c\big)}=\frac{\partial U}{\partial \xi_s}\bigg|_{(\alpha_s,\xi_s)}.
\end{equation}
Let 
$$f(\alpha_s,\xi_s)\doteq \frac{\partial U}{\partial \xi_s}\bigg|_{(\alpha_s,\xi_s)}$$
and 
$$g(\alpha_s)\doteq f(\alpha_s,\xi_s(\alpha_s)).$$
Then 
$$g(\alpha_s)=\frac{\partial U}{\partial \xi_s}\bigg|_{(\alpha_s,\xi_s(\alpha_s))}=\frac{dU_{\alpha_s}}{d \xi_s}\bigg|_{\xi_s(\alpha_s)}\equiv 0,$$
and hence
\begin{equation}\label{g=00}
\frac{\partial g}{\partial \alpha_{s_i}}\bigg|_{\alpha_s}=0.
\end{equation}
On the other hand, we have
\begin{equation}\label{+0}
\begin{split}
\frac{\partial g}{\partial \alpha_{s_i}}\bigg|_{\alpha_s}=&\frac{\partial f}{\partial \alpha_{s_i}}\bigg|_{(\alpha_s,\xi_s(\alpha_s))}+\frac{\partial f}{\partial \xi_s}\bigg|_{(\alpha_s,\xi_s(\alpha_s))}  \frac{\partial \xi_s(\alpha_s)}{\partial \alpha_{s_i}}\bigg|_{\alpha_s}\\
=&\frac{\partial^2 U}{\partial \alpha_{s_i}\partial \xi_s}\bigg|_{(\alpha_s,\xi_s(\alpha_s))}+\frac{\partial^2 U}{\partial \xi_s^2}\bigg|_{(\alpha_s,\xi_s(\alpha_s))} \frac{\partial \xi_s(\alpha_s)}{\partial \alpha_{s_i}}\bigg|_{\alpha_s}.
\end{split}
\end{equation}
Putting (\ref{g=00}) and (\ref{+0}) together, we have
\begin{equation}\label{alphaxi0}
\frac{\partial^2 U}{\partial \alpha_{s_i}\partial \xi_s}\bigg|_{(\alpha_s,\xi_s(\alpha_s))}=-\frac{\partial^2 U}{\partial \xi_s^2}\bigg|_{(\alpha_s,\xi_s(\alpha_s))} \frac{\partial \xi_s(\alpha_s)}{\partial \alpha_{s_i}}\bigg|_{\alpha_s},
\end{equation}
and (3) follows from (\ref{WU0}) and (\ref{alphaxi0}).

For (4) and (5), we have 
\begin{equation}\label{3.10}
\frac{\partial ^2G^{\mathbf E_I}}{\partial \alpha_i^2}\bigg|_{\mathbf z^{\mathbf E_I}}= -\frac{2p_i}{q_i}-\iota_i+\sum_{s\sim i}\frac{\partial ^2U}{\partial \alpha_i^2}\bigg|_{z^*_s},
\end{equation}
and
\begin{equation}\label{3.11}
\frac{\partial ^2G^{\mathbf E_I}}{\partial \alpha_{i_1}\partial \alpha_{i_2}}\bigg|_{\mathbf z^{\mathbf E_I}}=\sum_{s\sim i_1,i_2}\frac{\partial ^2U}{\partial \alpha_{i_1}\partial \alpha_{i_2}}\bigg|_{z^*_s}.
\end{equation}
Let $W$ be the function defined in (\ref{W}). By the Chain Rule and (\ref{xia}), we have
$$\frac{\partial U}{\partial \xi_s}\bigg|_{(\alpha_s,\xi_s(\alpha_s))}=\frac{d U_{\alpha_s}}{d \xi_s}\bigg|_{\xi_s(\alpha_s)} =0,$$
and hence for $j\in\{1,\dots,6\},$
$$\frac{\partial W}{\partial \alpha_{s_j}}\bigg|_{\alpha_s}=\frac{\partial U}{\partial \alpha_{s_j}}\bigg|_{\alpha_s}+\frac{\partial U}{\partial \xi_s}\bigg|_{(\alpha_s,\xi_s(\alpha_s))}\frac{\partial \xi_s(\alpha_s)}{\partial \alpha_{s_j}}\bigg|_{\alpha_s}=\frac{\partial U}{\partial \alpha_{s_j}}\bigg|_{\alpha_s}.$$
Then using the Chain Rule again, for  $j,k\in\{1,\dots,6\}$ we have
\begin{equation*}\label{3.12}
\begin{split}
\frac{\partial^2 W}{\partial \alpha_{s_j}\partial \alpha_{s_k}}\bigg|_{\alpha_s}
=\frac{\partial ^2U}{\partial \alpha_{s_j}\partial \alpha_{s_k}}\bigg|_{(\alpha_s,\xi_s(\alpha_s))}+\frac{\partial^2 U}{\partial \alpha_{s_k}\partial \xi_s}\bigg|_{(\alpha_s,\xi_s(\alpha_s))}  \frac{\partial \xi_s(\alpha_s)}{\partial \alpha_{s_j}}\bigg|_{\alpha_s}.
\end{split}
\end{equation*}
Together with (\ref{alphaxi0}), for $j,k\in\{1,\dots, 6\}$ we have
\begin{equation}\label{3.13}
\begin{split}
\frac{\partial^2 U}{\partial \alpha_{s_j}\partial \alpha_{s_k}}\bigg|_{(\alpha_s,\xi_s(\alpha_s))}=&\frac{\partial^2 W}{\partial \alpha_{s_j}\partial \alpha_{s_k}}\bigg|_{\alpha_s}-\frac{\partial^2 U}{\partial \alpha_{s_k}\partial \xi_s}\bigg|_{(\alpha_s,\xi_s(\alpha_s))}  \frac{\partial \xi_s(\alpha_s)}{\partial \alpha_{s_j}}\bigg|_{\alpha_s}\\
=&\frac{\partial^2 W}{\partial \alpha_{s_j}\partial \alpha_{s_k}}\bigg|_{\alpha_s}+\frac{\partial^2 U}{\partial \xi_s^2}\bigg|_{(\alpha_s,\xi_s(\alpha_s))} \frac{\partial \xi_s(\alpha_s)}{\partial \alpha_{s_j}}\bigg|_{\alpha_s}  \frac{\partial \xi_s(\alpha_s)}{\partial \alpha_{s_k}}\bigg|_{\alpha_s}.
\end{split}
\end{equation}
By (\ref{3.10}), (\ref{3.11}) and (\ref{3.13}) and we have
\begin{equation}\label{3.15}
\begin{split}
\frac{\partial ^2G^{\mathbf E_I}}{\partial \alpha_i^2}\bigg|_{\mathbf z^{\mathbf E_I}}
=&-\frac{2p_i}{q_i}-\iota_i+\sum_{s\sim i}\sum_{s_k}\frac{\partial^2 W}{\partial \alpha_{s_k}^2}\bigg|_{\alpha^*_s}+\sum_{s\sim i}\frac{\partial^2 U}{\partial \xi_s^2}\bigg|_{z^*_s} \bigg(\frac{\xi_s(\alpha_s)}{\partial \alpha_i}\bigg|_{\alpha_s^*}\bigg)^2,
\end{split}
\end{equation}
where the second sum in the third term of the right hand side is over $s_k$ such that the edge $e_{s_k}$ in $\Delta_s$ intersects the component  $L_{\text{FSL},i};$ and
\begin{equation}\label{3.17}
\begin{split}
\frac{\partial ^2G^{\mathbf E_I}}{\partial \alpha_{i_1}\partial \alpha_{i_2}}\bigg|_{\mathbf z^{\mathbf E_I}}=&\sum_{s\sim i_1,i_2}\sum_{s_j,s_k}\frac{\partial^2 W}{\partial \alpha_{s_j}\partial \alpha_{s_k}}\bigg|_{\alpha^*_s}+\sum_{s\sim i_1,i_2}\frac{\partial^2 U}{\partial \xi_s^2}\bigg|_{z^*_s} \frac{\xi_s(\alpha_s)}{\partial \alpha_{i_1}}\bigg|_{\alpha_s^*} \frac{\xi_s(\alpha_s)}{\partial \alpha_{i_2}}\bigg|_{\alpha_s^*},
\end{split}
\end{equation}
where the second sum in the first term of the right hand side is over $s_j,s_k$ such that the edge $e_{s_j}$ in $\Delta_s$ intersects the component  $L_{\text{FSL},i_1}$ and the edge $e_{s_k}$ in $\Delta_s$ intersects the component  $L_{\text{FSL},i_2}.$

At a hyperbolic cone metric on $M_c$ with singular locus $L_{\text{FSL}},$ by Theorem \ref{co-vol},  for $i,j\in\{1,\dots, 6\}$ we have
\begin{equation}\label{3.18}
\frac{\partial^2 W}{\partial \alpha_{s_i}\partial \alpha_{s_j}}\bigg|_{\alpha^*_s}=-\sqrt{-1}\frac{E_{s_i}\mu_{s_i}}{E_{s_j}\mu_{s_j}} \frac{\partial l_{s_i}}{\partial \theta_{s_j}},
\end{equation}
where $l_{s_k}$ is the length of $e_{s_k}$ of $\Delta_s,$ and  if  $e_{s_k}$ intersects $L_{\text{FSL},i}$ then $E_{s_k}=E_i,$ $\mu_{s_k}=\mu_i$ and $\theta_{s_k}=\frac{\theta_i}{2}$ is the half of the cone angle at $L_{\text{FSL},i}.$ We also observe that that
\begin{equation}\label{3.14}
l_i=\sum_{s\sim i}\sum_{s_k}l_{s_k},
\end{equation}
where the second sum is over $s_k$ such that the edge $e_{s_k}$ in $\Delta_s$ intersects the component  $L_{\text{FSL},i}.$  

Then by (\ref{3.18}), (\ref{3.14}) (\ref{m}) and (\ref{l}) we have
\begin{equation}\label{3.16}
\begin{split}
-\frac{2p_i}{q_i}-\iota_i+\sum_{s\sim i}\sum_{s_k}\frac{\partial^2 W}{\partial \alpha_{s_k}^2}\bigg|_{\alpha^*_s}
=&-\frac{2p_i}{q_i}-\iota_i-\sqrt{-1}\sum_{s\sim i}\sum_{s_k}\frac{\partial l_{s_k}}{\partial \theta_{s_k}}\\
=&-\frac{2p_i}{q_i}-\iota_i-2\sqrt{-1}\frac{\partial l_i}{\partial \theta_i}\\
=&-\frac{2}{q_i}\frac{\partial \big(p_i\sqrt{-1}\theta_i - q_i l_i+\frac{q_i\iota_i}{2}\sqrt{-1}\theta_i \big)}{\partial (\sqrt{-1}\theta_i)}\\
=&-\frac{2}{q_i}\frac{\partial \big(p_i\mathrm H(u_i) + q_i\mathrm H(v_i)\big)}{\partial (\sqrt{-1}\theta_i)}
=-\frac{2}{q_i}\frac{\partial \mathrm H(\Upsilon_i)}{\partial \mathrm H(u_i)}.
\end{split}
\end{equation}
From (\ref{3.15}) and (\ref{3.16}), (4) holds at  hyperbolic cone metrics on $M_c$ with singular locus $L_{\text{FSL}}.$ By the analyticity of the involved functions (see for e.g. \cite[Lemma 4.2]{WY2}),  (\ref{3.16}) still holds in a neighborhood of the complete hyperbolic structure on $M_c\setminus L_{\text{FSL}},$ from which (4) follows. 

By (\ref{3.18}), (\ref{3.14}), (\ref{m}) and (\ref{l}) we have
\begin{equation}\label{3.19}
\begin{split}
\sum_{s\sim i_1,i_2}\sum_{s_j,s_k}\frac{\partial^2 W}{\partial \alpha_{s_j}\partial \alpha_{s_k}}\bigg|_{\alpha^*_s}=&-2\sqrt{-1}\sum_{s\sim i_1}\sum_{s_j}\frac{E_{s_{j}}\mu_{s_{j}}}{E_{{i_2}}\mu_{{i_2}}}\frac{\partial l_{s_j}}{\partial \theta_{i_2}}\\
=&-2\sqrt{-1}\frac{E_{i_1}\mu_{i_1}}{E_{{i_2}}\mu_{{i_2}}}\frac{\partial l_{i_1}}{\partial \theta_{i_2}}\\
=&-2\frac{E_{i_1}\mu_{i_1}}{E_{{i_2}}\mu_{{i_2}}}\frac{\partial \big( \frac{p_{i_1}}{q_{i_1}}\sqrt{-1}\theta_{i_1}-l_{i_1}+\frac{\iota_{i_1}}{2}\sqrt{-1}\theta_{i_1}\big)}{\partial (\sqrt{-1}\theta_{i_2})}\\
=&-\frac{2}{q_{i_1}}\frac{E_{i_1}\mu_{i_1}}{E_{{i_2}}\mu_{{i_2}}}\frac{\partial \big(p_{i_1}\mathrm H(u_{i_1}) + q_{i_1}\mathrm H(v_{i_1})\big)}{\partial (\sqrt{-1}\theta_{i_2})}\\
=&-\frac{2}{q_{i_1}}\frac{E_{i_1}\mu_{i_1}}{E_{{i_2}}\mu_{{i_2}}}\frac{\partial \mathrm H(\Upsilon_{i_1})}{\partial \mathrm H(u_{i_2})},
\end{split}
\end{equation}
where the second sum on the left hand side is over $s_j,s_k$ such that the edge $e_{s_j}$ in $\Delta_s$ intersects the component  $L_{i_1}$ and the edge $e_{s_k}$ in $\Delta_s$ intersects the component $L_{i_2},$ the second sum on the right hand side of the first equation is over $s_j$ such that the edge $e_{s_j}$ in $\Delta_s$ intersects the component  $L_{i_1},$ and the third equality comes from the fact that
$$\frac{\partial (\sqrt{-1}\theta_{i_1})}{\partial (\sqrt{-1}\theta_{i_2})}=\frac{\partial \mathrm H(u_{i_1})}{\partial \mathrm H(u_{i_2})}=0.$$
 From (\ref{3.17}) and (\ref{3.19}), (5) holds at  hyperbolic cone metrics on $M_c\setminus L_{\text{FSL}}.$ By the analyticity of the involved functions, (\ref{3.19}) still holds in a neighborhood of the complete hyperbolic structure on $M_c\setminus L_{\text{FSL}},$ from which (5) follows. 
\end{proof}

The following lemma is from \cite{WY4}.
\begin{lemma}\label{L3.2} (\cite[Lemma 3.4]{WY4})
\begin{equation*}
\begin{split}
\kappa(\mathbf z^{\mathbf E_I})=&-\frac{\sqrt{-1}}{2}\sum_{k=1}^6\frac{\partial U}{\partial \alpha_{s_k}}\bigg|_{z^*_s}\\
&-\frac{\sqrt{-1}}{2}\sum_{s_i\in I_s}\alpha^*_{s_i}-\frac{\sqrt{-1}}{2}\sum_{s_j\in J_s}\alpha_{s_j}+2\sqrt{-1}\xi_s^*-\frac{1}{2}\sum_{i=1}^4\log\big(1-e^{2\sqrt{-1}(\xi_s^*-\tau^*_{s_i})}\big),
\end{split}
\end{equation*}
where with the notation that $\alpha^*_{s_j}=\alpha_{s_j}$ for $s_j\in J_s,$
$\tau^*_{s_1}=\frac{\alpha^*_{s_1}+\alpha^*_{s_2}+\alpha^*_{s_3}}{2},$ $\tau^*_{s_2}=\frac{\alpha^*_{s_1}+\alpha^*_{s_5}+\alpha^*_{s_6}}{2},$ $\tau^*_{s_3}=\frac{\alpha^*_{s_2}+\alpha^*_{s_4}+\alpha^*_{s_6}}{2}$ and  $\tau^*_{s_4}=\frac{\alpha^*_{s_3}+\alpha^*_{s_4}+\alpha^*_{s_5}}{2}.$ 
\end{lemma}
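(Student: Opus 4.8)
The plan is to verify the identity building block by building block. Since $\kappa$ is defined in Lemma~\ref{qdtod} as a sum $\kappa=\sum_{s=1}^c\kappa_s$ over the truncated tetrahedra, with $\kappa_s$ the $s$-th summand there, and the right-hand side of Lemma~\ref{L3.2} is accordingly to be read as the corresponding sum over $s$ (here $\tau^*_{s_i},\eta^*_{s_j},\xi^*_s$ denote the values of $\tau_{s_i},\eta_{s_j},\xi_s$ at the critical point $\mathbf z^{\boldsymbol E_I}$, so in particular $\xi^*_s=\xi(\alpha^*_s)$), it is enough to fix $s$ and prove equality of the two $s$-th summands. The one analytic input used repeatedly is the elementary formula $\frac{d}{dw}\Li(e^{2\sqrt{-1}w})=-2\sqrt{-1}\log(1-e^{2\sqrt{-1}w})$, immediate from $\Li(z)=-\int_0^z\frac{\log(1-u)}{u}\,du$ and the chain rule; all logarithms that appear have argument staying in the domain of the standard branch since $(\alpha^*_s,\xi^*_s)\in\mathrm{B_{H,\mathbb C}}$, exactly as in \cite{BY,WY4}.

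The first step is to compute $\sum_{k=1}^6\frac{\partial U}{\partial\alpha_{s_k}}\big|_{z^*_s}$ for $U$ as in (\ref{defU}). The right viewpoint is that $U$ is a function of the auxiliary variables $\tau_1,\dots,\tau_4,\eta_1,\eta_2,\eta_3,\xi$, each $\tau_i,\eta_j$ being affine in $\alpha_1,\dots,\alpha_6$ while $\xi$ is independent of the $\alpha$'s. Because $\tau_i$ is a half-sum of three of the $\alpha_k$ and $\eta_j$ a half-sum of four of them, one has $\sum_{k=1}^6\frac{\partial\tau_i}{\partial\alpha_k}=\frac32$ and $\sum_{k=1}^6\frac{\partial\eta_j}{\partial\alpha_k}=2$, so by the chain rule $\sum_{k=1}^6\frac{\partial U}{\partial\alpha_{s_k}}=\frac32\sum_{i=1}^4\frac{\partial U}{\partial\tau_i}+2\sum_{j=1}^3\frac{\partial U}{\partial\eta_j}$. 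I would read $\frac{\partial U}{\partial\tau_i}$ and $\frac{\partial U}{\partial\eta_j}$ off from (\ref{defU}) using the $\Li'$ formula; upon summing, the polynomial (non-logarithmic) part collapses with the linear relation $\sum_{i=1}^4\tau_i=\sum_{j=1}^3\eta_j=\sum_{k=1}^6\alpha_k$, and multiplying by $-\frac{\sqrt{-1}}{2}$ and evaluating at $z^*_s$ produces an expression of the form: a linear combination of $\sum_k\alpha^*_{s_k}$, $\xi^*_s$ and $\pi$, together with the four logarithm sums over $\eta^*_{s_j}-\tau^*_{s_i}$, over $\tau^*_{s_i}-\pi$, over $\xi^*_s-\tau^*_{s_i}$, and over $\eta^*_{s_j}-\xi^*_s$, each with an explicit rational coefficient.

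Next I would subtract the right-hand side of the lemma, using $\sum_{s_i\in I_s}\alpha^*_{s_i}+\sum_{s_j\in J_s}\alpha_{s_j}=\sum_{k=1}^6\alpha^*_{s_k}$, and compare with $\kappa_s$. The contributions $\sum_{i,j}\log(1-e^{2\sqrt{-1}(\eta^*_{s_j}-\tau^*_{s_i})})$ and $\sum_i\log(1-e^{2\sqrt{-1}(\tau^*_{s_i}-\pi)})$ agree identically, and the entire remaining discrepancy is the combination $\log(1-e^{2\sqrt{-1}(\xi^*_s-\pi)})-\sum_{i=1}^4\log(1-e^{2\sqrt{-1}(\xi^*_s-\tau^*_{s_i})})+\sum_{j=1}^3\log(1-e^{2\sqrt{-1}(\eta^*_{s_j}-\xi^*_s)})$ set against a purely linear expression in $\xi^*_s$, $\sum_k\alpha^*_{s_k}$ and $\pi$. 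The decisive step is to invoke the defining equation (\ref{xia}) of $\xi(\alpha_s)$: differentiating $U$ in $\xi$ via (\ref{defU}) and the $\Li'$ formula and imposing $\frac{\partial U}{\partial\xi}\big|_{(\alpha^*_s,\xi^*_s)}=0$ yields precisely $\sqrt{-1}(\log(1-e^{2\sqrt{-1}(\xi^*_s-\pi)})-\sum_i\log(1-e^{2\sqrt{-1}(\xi^*_s-\tau^*_{s_i})})+\sum_j\log(1-e^{2\sqrt{-1}(\eta^*_{s_j}-\xi^*_s)}))=6\xi^*_s+\pi-2\sum_{i=1}^4\tau^*_{s_i}$, which is exactly the linear combination that makes the two sides agree. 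Substituting this and summing over $s$ completes the proof.

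Everything here is mechanical, so I expect the real obstacle to be organizational: correctly tracking the signs and the many rational coefficients through the chain-rule computation without error. The single conceptual point---and the reason the statement is phrased at the critical values $\tau^*_{s_i},\xi^*_s$ rather than at arbitrary arguments---is the recognition that the leftover family of logarithms is nothing but the critical-point equation (\ref{xia}) rewritten via the derivative of the dilogarithm; away from the critical locus this cancellation simply does not occur.
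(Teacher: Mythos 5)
The paper states this lemma as a quotation from \cite[Lemma~3.4]{WY4} and does not include a proof, so there is no internal argument to compare against. Your computation supplies a valid proof of the cited identity, and it is almost certainly the same route the WY4 proof takes. I traced it through: with $A^*=\sum_{k}\alpha^*_{s_k}=\sum_i\tau^*_{s_i}=\sum_j\eta^*_{s_j}$, the chain rule
$\sum_{k}\partial_{\alpha_{s_k}}U=\tfrac32\sum_i\partial_{\tau_i}U+2\sum_j\partial_{\eta_j}U$
together with $\Li'$ gives the polynomial part of $-\tfrac{\sqrt{-1}}{2}\sum_k\partial_{\alpha_{s_k}}U$ as $4\sqrt{-1}A^*-3\sqrt{-1}\pi-12\sqrt{-1}\xi^*$ and leaves logarithm families with coefficients $\tfrac14$, $-\tfrac34$, $\tfrac32$, $-2$. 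After adding the remaining terms of the right side and subtracting the $s$-th summand of $\kappa$, the discrepancy is
$3\sqrt{-1}A^*-\tfrac{3}{2}\sqrt{-1}\pi-9\sqrt{-1}\xi^*-\tfrac32\big[\log(1-e^{2\sqrt{-1}(\xi^*-\pi)})-\sum_i\log(1-e^{2\sqrt{-1}(\xi^*-\tau^*_i)})+\sum_j\log(1-e^{2\sqrt{-1}(\eta^*_j-\xi^*)})\big]$,
and the bracket equals $-\sqrt{-1}(6\xi^*+\pi-2A^*)$ by the critical equation $\partial_\xi U|_{z^*_s}=0$, which kills the discrepancy exactly. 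Your stated linear identity for the log-combination is correct (after unwinding the $\sqrt{-1}$ factor), and the decomposition into per-tetrahedron summands is legitimate since every occurrence of a given $\tau_{s_i},\eta_{s_j},\xi_s$ in $\kappa$ and in $U$ is indexed by the same $s$. The one place to be a touch more precise is the branch claim: you invoke $(\alpha^*_s,\xi^*_s)\in B_{H,\mathbb C}$ to justify the standard $\log$, which is fine for sufficiently small cone angles since $z^*_s$ then lies near $(\pi,\dots,\pi,7\pi/4)$ where all the relevant arguments $1-e^{2\sqrt{-1}(\cdot)}$ stay off $(-\infty,0]$; it is worth a parenthetical rather than an appeal to the references alone.
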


The following lemma is an analogue of Lemma 3.5 in \cite{WY4}.
\begin{lemma}\label{L3.4} For $i\in I$ recall that $\gamma_i=(-q_i'u_i + p_i'v_i)+a_{i,0}(p_iu_i+q_iv_i)$ is the parallel of copy of $L_i$ given by the framing $a_{i,0},$ and for each $j\in J$ recall that $\gamma_j=a_{j,0}u_j+v_j$ is the parallel copy of $L_j$ given by the framing $a_{j,0}.$  Then
\begin{equation*}
\begin{split}
&\sqrt{-1} \lt[\sum_{i \in I} a_{i,0} \beta_i 
    + \sum_{i \in I} \lt( \frac{\iota_i}{2}\rt) \alpha_i^* 
    + \sum_{j \in J} \lt(a_{j,0} + \frac{\iota_j}{2}\rt) \alpha_j \rt.\\
&\qquad\qquad   \lt. + \sum_{i\in I}  \lt(\lt(\frac{p_i'}{q_i} \rt) (\beta_i - \pi) + \frac{p_i}{q_i}(\alpha_i^* - \pi) + \frac{E_i(\alpha_i^* + \beta_i - 2\pi)}{q_i} \rt)
    \rt]\\
& \qquad\qquad\qquad -\frac{\sqrt{-1}}{2}\sum_{s=1}^c\bigg(\sum_{k=1}^6\frac{\partial U}{\partial \alpha_{s_k}}\bigg|_{z_s^*}\bigg)\\
=&\bigg(\sum_{i\in I} \Big(a_{i,0}+\frac{\iota_i}{2}\Big)+\sum_{j\in J}\Big(a_{j,0}+\frac{\iota_j}{2}\Big)\bigg)\sqrt{-1}\pi+\frac{1}{2}\sum_{k=1}^n\mu_k\mathrm H(\gamma_k).
\end{split}
\end{equation*}
\end{lemma}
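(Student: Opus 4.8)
The plan is to rewrite the left-hand side purely in terms of the boundary holonomies of the hyperbolic cone manifold $M_{L_\theta}$, using the Neumann--Zagier potential of $M_c\setminus L_{\text{FSL}}$ from Proposition \ref{NeuZ}, and then to match it with the right-hand side by a finite algebraic computation driven by the Dehn-filling equations. The overall scheme mirrors \cite[Lemma 3.5]{WY4}; the only genuinely new feature is the bookkeeping forced by $q_i\neq 1$.

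First I would replace each $\partial U/\partial\alpha_{s_k}\big|_{z_s^*}$ by $\partial W/\partial\alpha_{s_k}\big|_{\alpha_s^*}$, where $W$ is the function in (\ref{W}); this is justified exactly as in the proof of Lemma \ref{L3.1}, since $\partial U/\partial\xi_s\big|_{\xi_s(\alpha_s)}=0$. Summing over $s$ and regrouping the edges of all the building blocks according to which component $L_k$ of $L_{\text{FSL}}$ they lie on --- each edge of each $\Delta_s$ lies on a unique component, and each $L_k$ meets a given block in at most two edges --- the chain rule gives
\[
\sum_{s=1}^c\sum_{k=1}^6\frac{\partial U}{\partial\alpha_{s_k}}\Big|_{z_s^*}
=\sum_{k=1}^n\frac{\partial}{\partial\alpha_k}\Big(\sum_{s=1}^c W(\alpha_s)\Big)\Big|_{*}
=\sum_{k=1}^n\Big(\frac{\partial\mathcal U}{\partial\alpha_k}\Big|_{*}+\iota_k(\alpha_k^*-\pi)\Big),
\]
where $\alpha_k^*$ is the critical value from Proposition \ref{prop52} and the last equality uses the definition of $\mathcal U$. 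Then I would invoke Proposition \ref{NeuZ}, $\mathcal U=2c\pi^2+\Phi(\mathrm H(u_1),\dots,\mathrm H(u_n))$ together with $\partial\Phi/\partial\mathrm H(u_k)=\mathrm H(v_k)/2$, and the relations $\mathrm H(u_i)=-2\sqrt{-1}E_i\mu_i(\alpha_i^*-\pi)$ for $i\in I$ and $\mathrm H(u_j)=2\sqrt{-1}\mu_j(\alpha_j-\pi)$ for $j\in J$ (the latter from (\ref{m})), to conclude $\partial\mathcal U/\partial\alpha_i\big|_{*}=-\sqrt{-1}E_i\mu_i\mathrm H(v_i)$ for $i\in I$ (this is (\ref{df2})) and $\partial\mathcal U/\partial\alpha_j\big|_{*}=\sqrt{-1}\mu_j\mathrm H(v_j)$ for $j\in J$.

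With these in hand the claimed identity becomes a finite linear identity, which I would verify by separating the $I$-sum and the $J$-sum. For $i\in I$ I would substitute $\sqrt{-1}\theta_i=2\sqrt{-1}\mu_i(\beta_i-\pi)$, solve the Dehn-filling equation $p_i\mathrm H(u_i)+q_i\mathrm H(v_i)=\sqrt{-1}\theta_i$ for $\mathrm H(v_i)$, and use $\mathrm H(\gamma_i)=-\mathrm H(u_i)/q_i+(p_i'/q_i+a_0^i)\sqrt{-1}\theta_i$ from (\ref{hgammacompu}); after substitution the terms proportional to $p_i/q_i$ cancel, the $E_i/q_i$-coefficient reduces $(\alpha_i^*+\beta_i-2\pi)-(\beta_i-\pi)$ to $(\alpha_i^*-\pi)$, the surviving coefficients of $(\alpha_i^*-\pi)$ and $(\beta_i-\pi)$ reassemble into $\tfrac12\mu_i\mathrm H(\gamma_i)$, and the combination $\sqrt{-1}a_0^i\pi+\sqrt{-1}\tfrac{\iota_i}{2}\alpha_i^*-\sqrt{-1}\tfrac{\iota_i}{2}(\alpha_i^*-\pi)$ collapses to $\sqrt{-1}(a_0^i+\tfrac{\iota_i}{2})\pi$. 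For $j\in J$ the same bookkeeping with $\mathrm H(u_j)=2\sqrt{-1}\mu_j(\alpha_j-\pi)$ and $\mathrm H(\gamma_j)=a_0^j\mathrm H(u_j)+\mathrm H(v_j)$ produces $\sqrt{-1}(a_0^j+\tfrac{\iota_j}{2})\pi+\tfrac12\mu_j\mathrm H(\gamma_j)$. Adding everything gives the right-hand side.

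The main obstacle is entirely organizational rather than conceptual: one has to keep straight the sign factors $E_i,\mu_i$ (using $E_i^2=\mu_i^2=1$), the orientation conventions for $u_i,v_i,\gamma_i$, and the half-angle normalizations relating $\alpha_i^*$, $\theta_i$ and the holonomies, and one must make sure the regrouping of $\sum_{s,k}$ by components is precisely the chain rule for $\mathcal U$. Once the reduction to Proposition \ref{NeuZ} and (\ref{hgammacompu}) is set up, the remainder is a routine computation, and I expect to present it by referring to the proof of \cite[Lemma 3.5]{WY4} and indicating the changes caused by $q_i\neq 1$.
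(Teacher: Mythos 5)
Your proposal is correct, and the overall architecture coincides with the paper's: first reduce $\sum_{s}\sum_{k}\partial U/\partial\alpha_{s_k}\big|_{z_s^*}$ to an expression in the boundary holonomies, then finish by a finite algebraic computation using the Dehn-filling equation $p_i\mathrm H(u_i)+q_i\mathrm H(v_i)=\sqrt{-1}\theta_i$, the identity $\mathrm H(\gamma_i)=-\mathrm H(u_i)/q_i+(p_i'/q_i+a_0^i)\sqrt{-1}\theta_i$ from (\ref{hgammacompu}), and the conventions (\ref{m}), (\ref{l}).

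The one genuine variation is how the intermediate identity is obtained. You regroup $\sum_{s,k}\partial U/\partial\alpha_{s_k}$ by the chain rule into $\sum_k\big(\partial\mathcal U/\partial\alpha_k+\iota_k(\alpha_k^*-\pi)\big)$ and then differentiate the Neumann--Zagier identification $\mathcal U=2c\pi^2+\Phi$ from Proposition \ref{NeuZ} to get $\partial\mathcal U/\partial\alpha_i=-\sqrt{-1}E_i\mu_i\mathrm H(v_i)$ for $i\in I$ and $\partial\mathcal U/\partial\alpha_j=\sqrt{-1}\mu_j\mathrm H(v_j)$ for $j\in J$. The paper instead applies Theorem \ref{co-vol} (the Schl\"afli-type derivative of the covolume) to obtain $\partial W/\partial\alpha_{s_k}\big|_{\alpha_s^*}=\pm\sqrt{-1}E_{s_k}\mu_{s_k}l_{s_k}$ (respectively $\mp\sqrt{-1}\mu_{s_k}l_{s_k}$) at a hyperbolic cone metric, sums the edge lengths component by component via $l_k=\sum_{s\sim k}\sum_{s_j\sim k}l_{s_j}$, converts to holonomies via (\ref{m})--(\ref{l}), and extends to the general case by analyticity. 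Your route is slightly more uniform, since Proposition \ref{NeuZ} is stated on the whole distinguished component of the character variety and so the analytic-continuation sentence at the end of the paper's proof is not needed; but note that your $J$-sum still implicitly uses $\mathrm H(v_j)=-l_j+\tfrac{\iota_j}{2}\sqrt{-1}\theta_j$, so the geometric length information from (\ref{l}) remains in play. One small remark: the parenthetical claim that ``each $L_k$ meets a given block in at most two edges'' is unnecessary for the chain-rule regrouping (and need not hold in general); the regrouping only requires that each edge of each $\Delta_s$ lies on a unique component of $L_{\text{FSL}}$.
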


\begin{proof} 
We first prove the result for the case that $M_c$ is with a hyperbolic cone metric with singular locus $L_{\text{FSL}},$ 
\begin{equation}\label{L3.3}
-\frac{\sqrt{-1}}{2}\sum_{s=1}^c\bigg(\sum_{k=1}^6\frac{\partial U}{\partial \alpha_{s_k}}\bigg|_{z_s^*}\bigg)=-\frac{1}{2}\sum_{i\in I}E_i\mu_i\Big(\mathrm H(v_i)-\frac{\iota_i}{2}\mathrm H(u_i)\Big)-\frac{1}{2}\sum_{j\in J}\mu_jl_j.
\end{equation}
In this case, the hyperbolic cone manifold $M_c\setminus L_{\text{FSL}}$ is obtained by gluing hyperideal tetrahedra $\Delta_1,\dots,\Delta_s$ together along the hexagonal faces then taking the orientable double. For each $s\in \{1,\dots, c\}$ let $e_{s_1},\dots,e_{s_6}$ be the edges of $\Delta_s$ and for each $k\in\{1,\dots,6\}$ let $l_{s_k}$ and $\theta_{s_k}$ respectively be the length of and the dihedral angle at $e_{s_k}.$ If $e_{s_k}$ intersects the component $L_{\text{FSL},i}$ of $L_{\text{FSL}}$ for some $i\in I,$ then $\mathrm H(u_i)=\sqrt{-1} \theta_i=2\sqrt{-1}\theta_{s_k}$ and let $\alpha_{s_k}=\alpha^*_i=\pi+\frac{E_i\mu_i\sqrt{-1}\mathrm H(u_i)}{2}=\pi-E_{s_k}\mu_{s_k}\theta_{s_k},$ where $E_{s_k}=E_i$ and $\mu_{s_k}=\mu_i;$ and if $e_{s_k}$ intersects the component $L_{\text{FSL},j}$ of $L_{\text{FSL}}$ for some $j\in J,$ then $\theta_j=2\theta_{s_k}$ and let $\alpha_{s_k}=\alpha_j=\pi+\frac{\mu_j\theta_j}{2}=\pi+\mu_{s_k}\theta_{s_k},$ where $\mu_{s_k}=\mu_j.$ 
We claim that for $s_k\in I_s$ 
$$\frac{\partial U}{\partial \alpha_{s_k}}\bigg|_{z_s^*}=\sqrt{-1}E_{s_k}\mu_{s_k}l_{s_k},$$
and for $s_k\in J_s.$
$$\frac{\partial U}{\partial \alpha_{s_k}}\bigg|_{z_s^*}=-\sqrt{-1}\mu_{s_k}l_{s_k}.$$
Indeed, let $W$ again be the function defined in (\ref{W}). Then by Theorem \ref{co-vol}, we have for $s_k\in I_s$
\begin{equation}\label{3.3}
\frac{\partial W}{\partial \alpha_{s_k}}\bigg|_{\big(a_{I_s}^*, \alpha_{J_s}\big)}=\sqrt{-1}E_{s_k}\mu_{s_k}l_{s_k}
\end{equation}
and for $s_k\in J_s$ 
\begin{equation}\label{3.4}
\frac{\partial W}{\partial \alpha_{s_k}}\bigg|_{\big(a_{I_s}^*, \alpha_{J_s}\big)}=-\sqrt{-1}\mu_{s_k}l_{s_k}.
\end{equation}
On the other hand, by the Chain Rule and (\ref{xia}), we have for $k\in\{1,\dots, 6\},$ 
\begin{equation}\label{3.5}
\begin{split}
\frac{\partial W}{\partial \alpha_{s_k}}\bigg|_{\big(a_{I_s}^*, \alpha_{J_s}\big)}=&\frac{\partial U}{\partial \alpha_{s_k}}\bigg|_{z^*_s}+\frac{\partial U}{\partial \xi_s}\bigg|_{z^*_s}  \frac{\partial \xi_s(\alpha_s)}{\partial \alpha_{s_k}}\bigg|_{\big(a_{I_s}^*, \alpha_{J_s}\big)}=\frac{\partial U}{\partial \alpha_{s_k}}\bigg|_{z^*_s}.
\end{split}
\end{equation}
Putting (\ref{3.3}), (\ref{3.4}) and (\ref{3.5}) together, we have 
\begin{equation}\label{3.26}
\begin{split}
\sum_{s=1}^c\bigg(\sum_{k=1}^6\frac{\partial U}{\partial \alpha_{s_k}}\bigg|_{z_s^*}\bigg)=&\sqrt{-1}\sum_{i\in I}\sum_{s_k\sim i} E_{s_k}\mu_{s_k}l_{s_k}-\sqrt{-1}\sum_{j\in J}\sum_{s_k\sim j} \mu_{s_k}l_{s_k}\\
=&\sqrt{-1} \sum_{i\in I}E_i\mu_i\Big(\sum_{s_k\sim i}l_{s_k}\Big)-\sqrt{-1}\sum_{j\in J}\mu_j\Big(\sum_{s_k\sim j} l_{s_k}\Big)\\
=&\sqrt{-1} \sum_{i\in I}E_i\mu_il_i-\sqrt{-1}\sum_{j\in J}\mu_jl_j\\
=&-\sqrt{-1}\sum_{i\in I}E_i\mu_i\Big(\mathrm H(v_i)-\frac{\iota_i}{2}\mathrm H(u_i)\Big)-\sqrt{-1}\sum_{j\in J}\mu_jl_j,
\end{split}
\end{equation}
where $s_k\sim i$ if $e_{s_k}$ intersects $L_{\text{FSL},i}$ for $i\in I$ and $s_k\sim j$ if $e_{s_k}$ intersects $L_{\text{FSL},j}$ for $j\in J,$ and the last equality come from that $\mathrm H(u_i)=\sqrt{-1}\theta_i$ and 
$\mathrm H(v_i)=-l_i+\frac{\iota_i}{2}\sqrt{-1}\theta_i.$

Next, recall that for each $i\in I,$ $\alpha^*_i=\pi+\frac{E_i\mu_i\sqrt{-1}\mathrm H(u_i)}{2}$ and $\beta_i=\pi+\frac{\mu_i\theta_i}{2},$ and for each $j\in J,$ $\alpha_j=\pi+\frac{\mu_j\theta_j}{2}.$ \
For  $i\in I,$ we have
\begin{equation}\label{3.27}
\begin{split}
&\sqrt{-1} \bigg(\Big(\frac{\iota_i}{2}\Big)\alpha^*_i +\frac{E_i}{q_i}(\beta_i-\pi) + \frac{p_i}{q_i}(\alpha^*_i - \pi)\bigg)-\frac{E_i\mu_i}{2}\Big(\mathrm H(v_i)-\frac{\iota_i}{2}\mathrm H(u_i)\Big)\\
=&\sqrt{-1} \bigg(\Big(\frac{\iota_i}{2}\Big)\Big(\pi+\frac{E_i\mu_i\sqrt{-1}\mathrm H(u_i)}{2}\Big)+\frac{E_i}{q_i}\Big(\frac{\mu_i\theta_i}{2}\Big) + \frac{p_i}{q_i}\lt(\frac{E_i\mu_i \sqrt{-1} \mathrm{H}(u_i)}{2}\rt)\bigg) \\
&-\frac{E_i\mu_i}{2}\Big(\mathrm H(v_i)-\frac{\iota_i}{2}\mathrm H(u_i)\Big)\\
=&\lt(\frac{\iota_i}{2}\rt)\sqrt{-1}\pi+\frac{E_i\mu_i}{2}\Big( 
-\frac{\iota_i}{2}\mathrm H(u_i)+\frac{\sqrt{-1}\theta_i}{q_i} - \frac{p_i}{q_i} \mathrm H(u_i) -\mathrm H(v_i)+\frac{\iota_i}{2}\mathrm H(u_i)\Big)\\
=&\lt(\frac{\iota_i}{2}\rt)\sqrt{-1}\pi,
\end{split}
\end{equation}
where the last equality comes from $p_i\mathrm H(u_i)+ q_i\mathrm H(v_i)=\sqrt{-1}\theta_i.$
For $i\in I,$  we also have
\begin{equation}\label{3.28}
\begin{split}
&\sqrt{-1}\lt(a_{i,0}\beta_i+\frac{E_i}{q_i}(\alpha_i^*-\pi) + \frac{p_i'}{q_i}(\beta_i-\pi)\rt)\\
=&\sqrt{-1}\bigg(a_{i,0}\Big(\pi+\frac{\mu_i\theta_i}{2}\Big)+\frac{E_i}{q_i}\Big(\frac{E_i\mu_i\sqrt{-1}\mathrm H(u_i)}{2}\Big) + \frac{p_i'}{q_i}\lt(\frac{\mu_i\theta_i}{2}\rt)\bigg)\\
=&a_{i,0}\sqrt{-1}\pi+\frac{\mu_i}{2}\Big(a_{i,0}\sqrt{-1}\theta_i-\frac{\mathrm H(u_i)}{q_i} + \frac{pi'}{q_i}\sqrt{-1}\theta_i\Big)\\
=&a_{i,0}\sqrt{-1}\pi+\frac{\mu_i}{2}\mathrm H(\gamma_i),
\end{split}
\end{equation}
where the last equality comes from Equation (\ref{hgammacompu}).

For each $j\in J,$ we have
\begin{equation}\label{3.29}
\begin{split}
\sqrt{-1}\Big(a_{j,0}+\frac{\iota_j}{2}\Big)\alpha_j-\frac{\mu_j}{2}l_j=&\sqrt{-1}\Big(a_{j,0}+\frac{\iota_j}{2}\Big)\Big(\pi+\frac{\mu_j\theta_j}{2}\Big)-\frac{\mu_j}{2}l_j\\
=&\Big(a_{j,0}+\frac{\iota_j}{2}\Big)\sqrt{-1}\pi+\frac{\mu_j}{2}\Big(a_{j,0}\sqrt{-1}\theta_j+\frac{\iota_j}{2}\sqrt{-1}\theta_j-l_j\Big)\\
=&\Big(a_{j,0}+\frac{\iota_j}{2}\Big)\sqrt{-1}\pi+\frac{\mu_j}{2}\Big(a_{j,0}\mathrm H(u_j)+\mathrm H(v_j)\Big)\\
=&\Big(a_{j,0}+\frac{\iota_j}{2}\Big)\sqrt{-1}\pi+\frac{\mu_j}{2}\mathrm H(\gamma_j).
\end{split}
\end{equation}
Then the result follows from (\ref{3.27}), (\ref{3.28}), (\ref{3.29}) and Lemma \ref{L3.3}. For the general case, the result follows the analyticity of the involved functions.
\end{proof}

Finally, we need the following lemma from \cite{WY4}.
\begin{lemma}\label{L3.5} (\cite[Lemma 3.6]{WY4}) For $s\in \{1,\dots, c\},$ let $u_{s_1},\dots, u_{s_6}$ be the meridians of a tubular neighborhood of the components of $L_{\text{FSL}}$ intersecting the six edges of $\Delta_s.$ Then
\begin{equation}\label{CM0} 
\begin{split}
&\frac{e^{-{\sqrt{-1}}\sum_{s_i\in I_s}\alpha^*_{s_i}-{\sqrt{-1}}\sum_{s_j\in J_s}\alpha_{s_j}+4\sqrt{-1}\xi_s^*-\sum_{i=1}^4\log\big(1-e^{2\sqrt{-1}(\xi^*_s-\tau^*_{s_i})}\big)}}{{\frac{\partial ^2U}{\partial \xi_s^2}\Big|_{z^*_s}}}\\&\quad\quad\quad\quad\quad\quad\quad\quad\quad\quad\quad\quad\quad\quad\quad\quad\quad\quad\quad\quad=\frac{-1}{16\sqrt{\det\mathbb G\bigg(\frac{\mathrm H(u_{s_1})}{2},\dots,\frac{\mathrm H(u_{s_6})}{2}\bigg)}}.
\end{split}
\end{equation}
\end{lemma}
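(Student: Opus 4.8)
The plan is to turn \eqref{CM0} into a polynomial identity relating the Gram determinant $\det\mathbb G_s$ to the discriminant of an auxiliary quadratic polynomial, and then to verify that identity by a finite algebraic computation together with a sign check at the complete structure.

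\emph{Simplifying the exponential.} Since $\tau_1+\tau_2+\tau_3+\tau_4=\alpha_1+\dots+\alpha_6$ identically, $\sum_{s_i\in I_s}\alpha^*_{s_i}+\sum_{s_j\in J_s}\alpha_{s_j}=\sum_{k=1}^4\tau^*_{s_k}$, so the numerator on the left of \eqref{CM0} equals
\[
e^{-\sqrt{-1}\sum_k\tau^*_{s_k}+4\sqrt{-1}\xi_s^*}\prod_{k=1}^4\big(1-e^{2\sqrt{-1}(\xi^*_s-\tau^*_{s_k})}\big)^{-1}=\prod_{k=1}^4\frac{-1}{2\sqrt{-1}\,\sin(\xi^*_s-\tau^*_{s_k})}=\frac{1}{16\prod_{k=1}^4\sin(\xi^*_s-\tau^*_{s_k})},
\]
using $(-2\sqrt{-1})^4=16$. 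Hence \eqref{CM0} is equivalent to $\big(\prod_{k=1}^4\sin(\xi^*_s-\tau^*_{s_k})\big)\frac{\partial^2U}{\partial\xi_s^2}\big|_{z_s^*}=-\sqrt{\det\mathbb G_s}$.

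\emph{The two factors in exponential coordinates.} Differentiating $U$ of \eqref{defU} twice in $\xi$ and using $\tfrac{d}{d\xi}\Li(e^{2\sqrt{-1}\phi})=-2\sqrt{-1}\log(1-e^{2\sqrt{-1}\phi})$ and $\tfrac{e^{2\sqrt{-1}\phi}}{1-e^{2\sqrt{-1}\phi}}=-\tfrac12+\tfrac{\sqrt{-1}}{2}\cot\phi$, the quadratic and constant contributions cancel and $\frac{\partial^2U}{\partial\xi^2}=2\sqrt{-1}\big(\cot(\xi-\pi)-\sum_{i=1}^4\cot(\xi-\tau_i)-\sum_{j=1}^3\cot(\eta_j-\xi)\big)$. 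Put $w=e^{2\sqrt{-1}\xi}$, $t_k=e^{2\sqrt{-1}\tau_k}$, $h_j=e^{2\sqrt{-1}\eta_j}$; then $\frac{\partial^2U}{\partial\xi^2}=-4w\,R'(w)/R(w)$ with $R(w)=\frac{(w-1)\prod_j(w-h_j)}{\prod_k(w-t_k)}$, while the equation \eqref{xia} defining $\xi_s^*$, after exponentiation and simplification, becomes $R(w^*)=1$ with $w^*=e^{2\sqrt{-1}\xi_s^*}$; equivalently $w^*$ is a root of $Q(w):=(w-1)\prod_{j=1}^3(w-h_j)-\prod_{k=1}^4(w-t_k)$. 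The leading quartic terms of $Q$ cancel and its constant term vanishes because $\prod_kt_k=\prod_jh_j$, so $w=0$ is a root and $\widetilde Q:=Q/w$ is quadratic; moreover $\frac{\partial^2U}{\partial\xi^2}\big|_{\xi_s^*}=-4w^*Q'(w^*)/\prod_k(w^*-t_k)$ with $Q'(w^*)=w^*\widetilde Q'(w^*)$, and $\sin(\xi^*_s-\tau^*_{s_k})=\frac{w^*-t_k}{2\sqrt{-1}\sqrt{w^*t_k}}$ gives $\prod_k\sin(\xi^*_s-\tau^*_{s_k})=\frac{\prod_k(w^*-t_k)}{16(w^*)^2\prod_m x_m}$, where $x_m=e^{\sqrt{-1}\alpha^*_{s_m}}$ and $\prod_m x_m$ is the square root of $\prod_kt_k$ fixed by the branches. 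Multiplying, the $\prod_k(w^*-t_k)$'s and the powers of $w^*$ cancel and the left side of the rewritten \eqref{CM0} becomes $-\widetilde Q'(w^*)/(4\prod_m x_m)$; since $\widetilde Q$ is quadratic, $\widetilde Q'(w^*)^2=\operatorname{disc}(\widetilde Q)$, so \eqref{CM0} reduces to the polynomial identity $\operatorname{disc}(\widetilde Q)=16\big(\prod_m x_m\big)^2\det\mathbb G_s$.

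\emph{The main obstacle} is to prove this last identity. By \eqref{alphai*} and its analogue for $j\in J$, each $t_k=e^{2\sqrt{-1}\tau_k}$ is a cubic monomial $x_ax_bx_c$ and each $h_j=e^{2\sqrt{-1}\eta_j}$ a quartic monomial in the $x_m$, while $x_m+x_m^{-1}=-2\cosh\tfrac{\mathrm H(u_{s_m})}{2}$ is twice the corresponding off-diagonal entry of $\mathbb G_s$ and $\big(\prod_m x_m\big)^2=\prod_kt_k$; hence $\operatorname{disc}(\widetilde Q)=16(\prod_m x_m)^2\det\mathbb G_s$ is a Laurent-polynomial identity in $x_1,\dots,x_6$, to be verified by expanding the $4\times4$ determinant $\det\mathbb G_s$ against the discriminant of $\widetilde Q$ (whose coefficients are elementary symmetric functions of the $t_k$ and $h_j$). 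This is a finite but not entirely routine computation — essentially the Gram-matrix parametrization of the deeply truncated tetrahedron $\Delta_s$ from \cite{BY} — and the branches of the square roots in the previous step must be tracked; the overall sign is pinned down at the complete hyperbolic structure on $M_c$, where $\alpha^*_{s_m}=\pi$, $\xi_s^*=\tfrac{7\pi}{4}$ by \eqref{xipi}, $\det\mathbb G_s=-16$ (the $4\times4$ matrix with $1$ on the diagonal and $-1$ off it), and $\big(\prod_k\sin\tfrac\pi4\big)\frac{\partial^2U}{\partial\xi^2}\big|_{z^*_s}=\tfrac14\cdot2\sqrt{-1}\,\tfrac{\partial^2V}{\partial\xi^2}=\tfrac14(-16\sqrt{-1})=-4\sqrt{-1}=-\sqrt{-16}$ by Lemma \ref{convexity} and \eqref{Ureal}. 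Having established \eqref{CM0} on the locus of hyperbolic cone metrics on $M_c$ with singular locus $L_{\text{FSL}}$, it extends to a neighbourhood of the complete structure by the analyticity of the functions involved. Alternatively, one can bypass the explicit expansion by combining the Schur-complement relation $\frac{\partial^2U}{\partial\xi^2}\big|_{\xi_s^*}\cdot\det\Hess_{\boldsymbol\alpha}W=\det\Hess_{(\boldsymbol\alpha,\xi)}U$ with the formulas of \cite{BY,WY3} expressing the Hessian of the co-volume function $W$ in terms of $\det\mathbb G_s$.
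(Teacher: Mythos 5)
The paper imports this lemma verbatim from \cite[Lemma 3.6]{WY4} without reproducing the proof, so there is no in-paper argument to compare you against; what follows is an assessment of your self-contained attempt. Your reductions of \eqref{CM0} all check out: using $\sum_k\tau_k=\sum_m\alpha_m$ the numerator collapses to $1/\big(16\prod_{k=1}^4\sin(\xi_s^*-\tau^*_{s_k})\big)$; the identity $\partial^2_\xi U=2\sqrt{-1}\big(\cot(\xi-\pi)-\sum_i\cot(\xi-\tau_i)-\sum_j\cot(\eta_j-\xi)\big)=-4wR'(w)/R(w)$ is correct; exponentiating the critical equation (\ref{xia}) does give $R(w^*)=1$, so that $w^*$ is a root of $\widetilde Q(w)=\big((w-1)\prod_j(w-h_j)-\prod_k(w-t_k)\big)/w$, which is a quadratic because both the leading coefficients and the constant term vanish (the latter since $\prod_k t_k=\prod_j h_j$); and your sign check at the complete structure ($\alpha^*_m=\pi$, $\xi^*_s=7\pi/4$, $\det\mathbb G_s=-16$, $\prod_k\sin\frac{\pi}{4}\cdot(-16\sqrt{-1})=-4\sqrt{-1}=-\sqrt{-16}$) is right.

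However, you never actually prove the statement: the decisive step is the Laurent-polynomial identity $\operatorname{disc}(\widetilde Q)=16\big(\prod_m x_m\big)^2\det\mathbb G_s$, and you stop at asserting that this can ``be verified by expanding the $4\times4$ determinant against the discriminant'' and calling it ``a finite but not entirely routine computation.'' Checking both sides at the single point $x_m=-1$ pins down which square root is meant but establishes the identity nowhere else; a polynomial identity in six variables has to be expanded and matched, or deduced from an already-proved Gram-matrix formula in \cite{BY,WY3}. The Schur-complement route you offer as an alternative, $\det\Hess_{(\boldsymbol\alpha,\xi)}U=\partial^2_\xi U\cdot\det\Hess_{\boldsymbol\alpha}W$ combined with a formula for $\det\Hess_{\boldsymbol\alpha}W$ in terms of $\det\mathbb G_s$, is likewise only gestured at. Either of these computations is exactly the substance of the cited lemma in \cite{WY4}; without carrying one of them out, your proposal reduces the lemma to an identity but does not prove it. A smaller point that deserves to be made explicit: the claim $R(w^*)=+1$ rather than $-1$ relies on the branch normalizations $\sqrt{t_k}=e^{\sqrt{-1}\tau_k}$, $\sqrt{h_j}=e^{\sqrt{-1}\eta_j}$ together with $\sum_k\tau_k=\sum_j\eta_j$, and the subsequent argument then threads a sign through $\prod_m x_m=\sqrt{\prod_k t_k}$, $\widetilde Q'(w^*)=\pm\sqrt{\operatorname{disc}\widetilde Q}$, and $\sqrt{\det\mathbb G_s}$, so the bookkeeping should be written down rather than left implicit.
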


\begin{proof}[Proof of Proposition \ref{Rtorsion}]
From (\ref{Cformula}), Lemmas \ref{L3.1}, \ref{L3.2}, \ref{L3.3}, \ref{L3.4} and \ref{L3.5}, we have
\begin{align*}
&\frac{C^{\mathbf E_I}(\mathbf{z}^{\boldsymbol E_{I}})}{\sqrt{-\det \Hess({G}^{\mathbf E_I} )(\mathbf{z^{\mathbf E_I}})}}\\
= &
\frac{e^{\bigg(\sum_{i\in I} \Big(a_{i,0}+\frac{\iota_i}{2}\Big)+\sum_{j\in J}\Big(a_{j,0}+\frac{\iota_j}{2}\Big)\bigg)\sqrt{-1}\pi + \frac{1}{2}\sum_{k=1}^n\mu_k\mathrm H(\gamma_k)}}{\sqrt{-(-16)^{c} (-2)^{|I|}\det\bigg(\frac{\partial \mathrm H(\Upsilon_{i_1})}{\partial \mathrm H(u_{i_2})}\bigg)_{i_1,i_2\in I} \prod_{s=1}^{c}\sqrt{\det\mathbb G\bigg(\frac{\mathrm H(u_{s_1})}{2},\dots,\frac{\mathrm H(u_{s_6})}{2}\bigg)}}}\\
= & \frac{e^{\bigg(\sum_{i\in I} \Big(a_{i,0}+\frac{\iota_i}{2}\Big)+\sum_{j\in J}\Big(a_{j,0}+\frac{\iota_j}{2}\Big)\bigg)\sqrt{-1}\pi + \frac{1}{2}\sum_{k=1}^n\mu_k\mathrm H(\gamma_k)}}{2^{\frac{|I|+c}{2}}\sqrt{\pm\mathbb{T}_{(M\setminus L,\mathrm{\mathbf\Upsilon})}([\rho_{M_{L_{\boldsymbol\theta}}}])}},
\end{align*}
where the last equality follows from Theorem \ref{Rtorthm} (2).
\end{proof}

\begin{proposition}\label{0FCasym} Under the assumptions in Proposition \ref{lfc}, we have
$$\sum_{\mathbf E_I}\widehat{f_r}(\mathbf{s}^{\mathbf E_I}, \mathbf{1-2m^{E_I}}, \mathbf{0})
= C_1\frac{e^{\frac{1}{2}\sum_{k=1}^{n}\mu_k\mathrm H^{(r)}(\gamma_k)}}{\sqrt{\pm\mathbb{T}_{(M\setminus L, \mathrm{\mathbf\Upsilon})}([\rho_{M^{(r)}}])}}e^{\frac{r}{4\pi}\big(\mathrm{Vol}(M^{(r)})+\sqrt{-1}\mathrm{CS}(M^{(r)})\big)}\bigg(1+O\Big(\frac{1}{r}\Big)\bigg), 
$$
where $C_1  = Y 2^{c}r^{\sum_{i\in I} \frac{\zeta_i}{2} - \frac{c}{2}}
(-1)^{ - \frac{rc}{2} +\sum_{i\in I} \Big(a_{i,0}+\frac{\iota_i}{2}\Big)+\sum_{j\in J}\Big(a_{j,0}+\frac{\iota_j}{2}\Big)} $ and $Y$ is defined in (\ref{defY}).
\end{proposition}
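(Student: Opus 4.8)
\textbf{Proof proposal for Proposition \ref{0FCasym}.}

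The plan is to combine the three main outputs of this section: the simplification of the $(0,\dots,0)$-th Fourier coefficient into an $(|I|+c)$-dimensional integral (Proposition \ref{leadingFC}), the saddle point evaluation of that integral (Proposition \ref{lfc}), and the identification of the resulting amplitude with the adjoint twisted Reidemeister torsion (Proposition \ref{Rtorsion}). First I would recall that, by definition, $\widehat{f}_r(0,\dots,0) = \sum_{\boldsymbol E_I} \widehat{f_r^{\boldsymbol E_I}}(0,\dots,0)$, so it suffices to understand each term indexed by $\boldsymbol E_I \in \{1,-1\}^{|I|}$ and then sum. For a fixed $\boldsymbol E_I$, substituting the conclusion of Proposition \ref{lfc} into the formula of Proposition \ref{leadingFC} gives
\begin{align*}
\widehat{f_r^{\boldsymbol E_I}}(0,\dots,0)
&= \frac{r^{\sum_{i\in I}\frac{\zeta_i}{2} + \frac{|I|}{2} + c}}{2^{2|I|+c}\pi^{|I|+c}\sqrt{\prod_{i\in I} q_i}}
\lt(\frac{2}{r}\rt)^c \lt(\frac{2\pi}{r}\rt)^{\frac{|I|+c}{2}} (4\pi\sqrt{-1})^{\frac{|I|+c}{2}} \\
&\qquad \times \frac{C^{\boldsymbol E_I}(\mathbf z^{\boldsymbol E_I})}{\sqrt{-\det\Hess(G^{\boldsymbol E_I})(\mathbf z^{\boldsymbol E_I})}}
e^{\frac{r}{4\pi}(\Vol(M_{L_\theta}) + \sqrt{-1}\CS(M_{L_\theta}))}\lt(1 + O\lt(\tfrac{1}{\sqrt{r}}\rt)\rt).
\end{align*}
The next step is bookkeeping of the powers of $r$, $2$ and $\pi$: the powers of $\pi$ cancel, the powers of $2$ combine to a single factor, and the powers of $r$ collect to $r^{\sum_{i\in I}\frac{\zeta_i}{2} - \frac{c}{2}}$, so that the $r$-dependence of the prefactor matches the $C_1$ claimed in the statement. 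Absorbing $\sqrt{\prod_{i\in I} q_i}$ into the determinant term turns $\sqrt{-\det\Hess(G^{\boldsymbol E_I})(\mathbf z^{\boldsymbol E_I})}$ into $\sqrt{-(\prod_{i\in I} q_i)\det\Hess(G^{\boldsymbol E_I})(\mathbf z^{\boldsymbol E_I})}$, which is precisely the denominator appearing in Proposition \ref{Rtorsion}.

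Then I would invoke Proposition \ref{Rtorsion} to rewrite
$$\frac{C^{\boldsymbol E_I}(\mathbf z^{\boldsymbol E_I})}{\sqrt{-(\prod_{i\in I} q_i)\det\Hess(G^{\boldsymbol E_I})(\mathbf z^{\boldsymbol E_I})}}
= \frac{e^{(\sum_{i\in I}(a_0^i+\frac{\iota_i}{2}) + \sum_{j\in J}(a_0^j+\frac{\iota_j}{2}))\sqrt{-1}\pi + \frac{1}{2}\sum_{k=1}^n \mu_k \mathrm H(\gamma_k)}}{2^{\frac{|I|+c}{2}}\sqrt{\pm\mathbb T_{(M\setminus L,\mathbf\Upsilon)}([\rho_{M_{L_\theta}}])}}.$$
The crucial observation here is that the right-hand side does \emph{not} depend on $\boldsymbol E_I$: the Reidemeister torsion, the holonomies $\mathrm H(\gamma_k)$, the volume and the Chern-Simons invariant are all intrinsic to the cone manifold $M_{L_\theta}$ and its framing, while the sign factors $e^{(\cdots)\sqrt{-1}\pi}$ are likewise independent of $\boldsymbol E_I$. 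Since every term $\widehat{f_r^{\boldsymbol E_I}}(0,\dots,0)$ is therefore equal (up to the $O(1/\sqrt r)$ error), summing over the $2^{|I|}$ choices of $\boldsymbol E_I$ contributes a factor $2^{|I|}$. I would then carefully collect: the $2^{|I|}$ from the sum, the $2^{-(2|I|+c)}$ and $2^c$ from the prefactor, the $2^{\frac{|I|+c}{2}}$ from $(4\pi\sqrt{-1})^{\frac{|I|+c}{2}}$ paired against $\pi^{\frac{|I|+c}{2}}$ and $2^{\frac{|I|+c}{2}}$ from $(2\pi/r)^{\frac{|I|+c}{2}}$, and the $2^{-\frac{|I|+c}{2}}$ from the torsion denominator in Proposition \ref{Rtorsion}; these should combine to leave exactly the factor $2^c$ in $C_1$. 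The phase $(-1)^{-rc/2}$ hidden inside $C^{\boldsymbol E_I}(\mathbf z^{\boldsymbol E_I})$ (see \eqref{Cformula}) together with the sign $e^{(\cdots)\sqrt{-1}\pi}$ from Proposition \ref{Rtorsion} accounts for the remaining phase $(-1)^{-\frac{rc}{2}+\sum_{i\in I}(a_0^i+\frac{\iota_i}{2})+\sum_{j\in J}(a_0^j+\frac{\iota_j}{2})}$ in $C_1$. Writing $\mathrm H^{(r)}(\gamma_k)$ and $M^{(r)}$ for the holonomies and cone manifold at level $r$ (recalling that $\theta^{(r)}_k$ replaces the limiting cone angle), this yields the stated asymptotic expansion.

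The main obstacle I anticipate is purely one of careful accounting rather than of new ideas: tracking every power of $2$, $r$, $\sqrt{-1}$ and $(-1)$ through the three propositions without an error, and in particular making sure the sign ambiguities ($\pm$ in the torsion, the square roots, the $(-1)^{-rc/2}$) are handled consistently so that they can all be absorbed into the single norm-one constant $C$ of Theorem \ref{mainthm} when this proposition is later combined with the negligibility of the other Fourier coefficients. A secondary point requiring a line of justification is that the $O(1/\sqrt r)$ errors from Proposition \ref{leadingFC} and the $O(1/r)$ errors from Proposition \ref{lfc} combine into a single $O(1/\sqrt r)$ after summing the finitely many $\boldsymbol E_I$ terms, which is immediate since the dominant exponential growth rate $e^{\frac{r}{4\pi}\Vol(M_{L_\theta})}$ is common to all terms.
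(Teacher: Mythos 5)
Your proof proposal matches the paper's argument essentially exactly: it combines Propositions \ref{leadingFC}, \ref{lfc}, and \ref{Rtorsion} for each fixed $\boldsymbol E_I$, observes from Proposition \ref{Rtorsion} that the resulting expression is independent of $\boldsymbol E_I$, and then sums over the $2^{|I|}$ choices of sign to pick up the factor $2^{|I|}$, which combines with the other powers of $2$ to yield the stated $2^c$ in $C_1$. Your remarks about the $O(1/\sqrt r)$ error dominating the $O(1/r)$ from the saddle point estimate, and about absorbing the residual phase $(-1)^{-rc/2}$ and $e^{(\cdots)\sqrt{-1}\pi}$ into $C_1$, are exactly the bookkeeping the paper performs.
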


\begin{proof}
By Proposition \ref{lfcexpress}, Lemma \ref{YEIY}, Proposition \ref{lfc} and \ref{Rtorsion},
\begin{align*}
&\widehat{f_r^{\mathbf E_I}}(\mathbf{s}^{\mathbf E_I}, \mathbf{1-2m^{E_I}}, \mathbf{0})
 \\
=&
    \frac{Y r^{|I| + c} }
    { 2^{|I| + c}\pi^{|I|+c} } \lt(\frac{2}{r}\rt)^c \lt(\frac{2\pi}{r}\rt)^{\frac{|I|+c}{2}} (4\pi\sqrt{-1})^{\frac{|I|+c}{2}} 
\\
&\qquad \frac{(-1)^{-\frac{rc}{2}}C^{\mathbf E_I}(\mathbf{z}^{\boldsymbol E_{I}})}{\sqrt{- \lt(\prod_{i\in I}q_i\rt)\det \Hess({G}^{\mathbf E_I} )(\mathbf{z^{\mathbf E_I}})}} e^{\frac{r}{4 \pi} (\Vol (M_{L_{\boldsymbol\theta}})  + \sqrt{-1}\CS(M_{L_{\boldsymbol\theta}}))} \Big( 1 + O\Big(\frac{1}{r}\Big ) \Big)\\
=& 
C_0\frac{e^{\frac{1}{2}\sum_{k=1}^{n}\mu_k\mathrm H^{(r)}(\gamma_k)}}{\sqrt{\pm\mathbb{T}_{(M\setminus L, \mathrm{\mathbf\Upsilon})}([\rho_{M^{(r)}}])}}e^{\frac{r}{4\pi}\big(\mathrm{Vol}(M^{(r)})+\sqrt{-1}\mathrm{CS}(M^{(r)})\big)}\bigg(1+O\Big(\frac{1}{r}\Big)\bigg), 
\end{align*}
where $C_0 = Y 2^{-|I|+c}r^{ \frac{|I|-c}{2}}
(-1)^{- \frac{rc}{2} + \frac{|I|+c}{4} +\sum_{i\in I} \Big(a_{i,0}+\frac{\iota_i}{2}\Big)+\sum_{j\in J}\Big(a_{j,0}+\frac{\iota_j}{2}\Big)}$.
Thus, we have
\begin{align*}
&\sum_{\mathbf E_I}\widehat{f_r}(\mathbf{s}^{\mathbf E_I}, \mathbf{1-2m^{E_I}}, \mathbf{0}) \\
=& C_1\frac{e^{\frac{1}{2}\sum_{k=1}^{n}\mu_k\mathrm H^{(r)}(\gamma_k)}}{\sqrt{\pm\mathbb{T}_{(M\setminus L, \mathrm{\mathbf\Upsilon})}([\rho_{M^{(r)}}])}}e^{\frac{r}{4\pi}\big(\mathrm{Vol}(M^{(r)})+\sqrt{-1}\mathrm{CS}(M^{(r)})\big)}\bigg(1+O\Big(\frac{1}{r}\Big)\bigg), 
\end{align*}
where $C_1 = Y 2^{c}r^{\sum_{i\in I} \frac{|I|-c}{2}}
(-1)^{- \frac{rc}{2}  +\frac{|I|+c}{4}+ \sum_{i\in I} \Big(a_{i,0}+\frac{\iota_i}{2}\Big)+\sum_{j\in J}\Big(a_{j,0}+\frac{\iota_j}{2}\Big)} $.
\end{proof}

\subsection{Estimate of other Fourier coefficients}

\begin{proposition}\label{fail0}
Assume that 
$$ \Vol(M_{L_{\boldsymbol\theta}})> \max\Bigg\{\max_{(\boldsymbol \alpha_{\zeta_I}, \boldsymbol \xi) \in \overline{D_H\setminus D_{\delta_0}}} \im \tilde U(\boldsymbol \alpha_{\zeta_I}, \boldsymbol \xi), 2cv_8 - 4\pi \delta_0
\Bigg\} ,$$
where $ \tilde U(\boldsymbol \alpha_{\zeta_I}, \boldsymbol \xi) $ is defined in (\ref{deftU}) and $\overline{D_H\setminus D_{\delta_0}}$ is the closure of $D_H\setminus D_{\delta_0}$.
Then for any $\mathbf E^I$ and $\mathbf s_I$, there exists $\epsilon'>0$ such that if $B_{k_0}\neq 0$ for some $k_0 \in\{1,2,\dots,c\}$, then  
\begin{equation*}
   \lt| \widehat{f_r^{\mathbf E_I}} (\mathbf s_I, \mathbf A_{\zeta_I}, \mathbf B) \rt|  < O\Big(e^{\frac{r}{4 \pi}(\Vol(M_{L_{\boldsymbol\theta}}) - \epsilon')}\Big).
\end{equation*}
\end{proposition}

\begin{proof} Let 
$$ G_r^{\mathbf E_I, \mathbf A_{\zeta_I}, \mathbf B}(\mathbf s_I, \boldsymbol{\alpha}_{\zeta_I}, \boldsymbol \xi)
=
W_r^{\mathbf E_I}(\mathbf s_I, \boldsymbol{\alpha}_{\zeta_I}, \boldsymbol \xi)  - 2\pi \sum_{i\in I}  A_{i,\zeta_i}\alpha_{i,\zeta_i} 
- 4\pi\sum_{s=1}^c  B_s \xi_s.$$
Recall from Proposition \ref{formulaFC} that
\begin{align*}
   & \widehat{f_r^{\mathbf E_I}}(\mathbf s_I, \mathbf A_{\zeta_I},  \boldsymbol  B)  =
    \frac{r^{|I| + c}\big(\prod_{i \in I} E_i\big) }{2^{|I| + c} \pi ^{|I| + c}}\\
&  \times  \int_{D_H} 
   (-1)^{\sum_{i\in I} A_{i,\zeta_i}} \phi_r\lt(\mathbf s_I, \boldsymbol{\alpha}_{\zeta_I}, \boldsymbol\xi \rt) 
    e^{\frac{r}{4\pi \sqrt{-1}} G_r^{\mathbf E_I, \mathbf A_{\zeta_I}, \mathbf B}(\mathbf s_I, \boldsymbol{\alpha}_{\zeta_I}, \boldsymbol \xi)} d\boldsymbol{\alpha}_{\zeta_I} d\boldsymbol\xi.
\end{align*}
When $\alpha_{i,\zeta_i}\in \RR$ for all $i\in I$, by Lemma \ref{kappadef}, on any compact subset of $D_{H,\CC}$, $ \im G_r^{\mathbf E_I, \mathbf A_{\zeta_I}, \mathbf B}(\mathbf s_I, \boldsymbol{\alpha}_{\zeta_I}, \boldsymbol \xi)$ converges uniformly to 
\begin{align*}
 \im G^{\mathbf E_I, \mathbf A_{\zeta_I}, \mathbf B}(\mathbf s_I, \boldsymbol{\alpha}_{\zeta_I}, \boldsymbol \xi)
&=  \im \tilde U(\boldsymbol \alpha_{\zeta_I}, \boldsymbol \xi).
\end{align*}

We first estimate the integral on $D_H\setminus D_{\delta_0}$. By assumption, we have
\begin{align}\label{bdylessVol}
\Vol(M_{L_{\boldsymbol\theta}}) &
> \max_{\overline{D_H \setminus D_{\delta_1}}} 
\im \Big(G^{\mathbf E_I, \mathbf A_{\zeta_I}, \mathbf B}(\mathbf s_I, \boldsymbol{\alpha}_{\zeta_I}, \boldsymbol \xi)
\Big)  + \epsilon'.
\end{align} 
for some $\epsilon'>0$. Thus, we have
\begin{align}
&\lt| \int_{D_H\setminus D_{\delta_0}} 
   (-1)^{\sum_{i\in I} A_{i,\zeta_i}} \phi_r\lt(\mathbf s_I, \boldsymbol{\alpha}_{\zeta_I}, \boldsymbol\xi \rt) 
    e^{\frac{r}{4\pi \sqrt{-1}} G_r^{\mathbf E_I, \mathbf A_{\zeta_I}, \mathbf B}(\mathbf s_I, \boldsymbol{\alpha}_{\zeta_I}, \boldsymbol \xi)}d\boldsymbol{\alpha}_{\zeta_I} d\boldsymbol\xi
\rt| \notag\\
=& o\lt( e^{\frac{r}{4\pi} (\Vol(M_{L_{\boldsymbol\theta}})-\epsilon')} \rt) .
\end{align}

Next, we estimate the integral on $D_{\delta_0}$. For simplicity, we assume that $B_c \neq 0$. The following arguments also work for other possibilities.

First, we consider the case where $ B_c > 0$. Consider the surface $S^+ = S^+_{\text{top}} \cup S^+_{\text{sides}}$ in the closure of $D_{\delta_0,\CC}$, where
$$ S^{+}_{\text{top}} = \{ (\boldsymbol \alpha_{\zeta_I}, \boldsymbol \xi) + (0, \dots, 0, \sqrt{-1}\delta_0) \mid (\boldsymbol \alpha_{\zeta_I}, \boldsymbol \xi)  \in D_{\delta_0, \CC} \} $$
and
$$ S^+_{\text{sides}} = \{ (\boldsymbol \alpha_{\zeta_I}, \boldsymbol \xi) + (0, \dots, 0, t\sqrt{-1}\delta_0)
\mid (\boldsymbol \alpha_{\zeta_I}, \boldsymbol \xi) \in \partial D_{\delta_0}, t\in [0,1] \}. $$
On $S^+_{\text{top}}$, by the Mean Value Theorem,
\begin{align}
&\im G^{\mathbf E_I} \big((\boldsymbol \alpha_{\zeta_I}, \boldsymbol \xi) + (0, \dots, 0, \sqrt{-1}\delta_0) \big)
- \im G^{\mathbf E_I} (\boldsymbol \alpha_{\zeta_I}, \boldsymbol \xi) \notag\\
= &
\frac{\partial \im G^{\mathbf E_I}}{\partial \im\xi_c} \big( (\boldsymbol \alpha_{\zeta_I}, \boldsymbol \xi) + (0, \dots, 0, \sqrt{-1}\delta_0') \big) \cdot \delta_0 
\end{align}
for some $\delta_0' \in (0,\delta_0)$. Note that
\begin{align*}
\frac{\partial \im G^{\mathbf E_I}}{\partial \im\xi_c} \big( (\boldsymbol \alpha_{\zeta_I}, \boldsymbol \xi) + (0, \dots, 0, \sqrt{-1}\delta_0') \big) 
=  \frac{\partial \im U}{\partial \im \xi}\Bigg\vert_{(\boldsymbol \alpha_s,\xi_s+\sqrt{-1}\delta_0')} - 4\pi B_c  
< 2\pi - 4\pi = -2\pi,
\end{align*}
where the last inequality follows from Lemma \ref{less2pi} and $B_c \geq 1$.
This implies that
\begin{align*}
&\im G^{\mathbf E_I} \big((\boldsymbol \alpha_{\zeta_I}, \boldsymbol \xi) + (0, \dots, 0, \sqrt{-1}\delta_0) \big) \notag \\
<& \im G^{\mathbf E_I} (\boldsymbol \alpha_{\zeta_I}, \boldsymbol \xi)
-2\pi\delta_0 
< 2cv_8 - 2\pi\delta_0
< \Vol(M_{L_{\boldsymbol\theta}}),
\end{align*}
where the second last inequality follows from Lemma \ref{Vmax} and the last inequality follows from the assumption (1). By making $\epsilon'>0$ smaller if necessary, we have
\begin{align}\label{xitop}
&\im G^{\mathbf E_I} \big((\boldsymbol \alpha_{\zeta_I}, \boldsymbol \xi) + (0, \dots, 0, \sqrt{-1}\delta_0) \big)  
< \Vol(M_{L_{\boldsymbol\theta}}) - \epsilon'.
\end{align}

Next, on $S^+_{\text{sides}}$, by Proposition \ref{prop53}, for $t\in [0,1]$ we have
\begin{align}\label{xiside}
&\im G^{\mathbf E_I} \big((\boldsymbol \alpha_{\zeta_I}, \boldsymbol \xi) + t(0, \dots, 0, \sqrt{-1}\delta_0) \big) \notag \\
\leq& \max\{\im G^{\mathbf E_I}(\boldsymbol \alpha_{\zeta_I}, \boldsymbol \xi) , \im G^{\mathbf E_I} \big((\boldsymbol \alpha_{\zeta_I}, \boldsymbol \xi) + (0, \dots, 0, \sqrt{-1}\delta_0) \big) \} 
< \Vol(M_{L_{\boldsymbol\theta}}) - \epsilon',
\end{align}
where the last inequality follows from (\ref{xitop}) and the assumption. From (\ref{xitop}) and (\ref{xiside}), we have
\begin{align*}
&\lt| \int_{D_{\delta_0}} 
   (-1)^{\sum_{i\in I} A_{i,\zeta_i}} \phi_r\lt(\mathbf s_I, \boldsymbol{\alpha}_{\zeta_I}, \boldsymbol\xi \rt) 
    e^{\frac{r}{4\pi \sqrt{-1}} G_r^{\mathbf E_I, \mathbf A_{\zeta_I}, \mathbf B}(\mathbf s_I, \boldsymbol{\alpha}_{\zeta_I}, \boldsymbol \xi)}d\boldsymbol{\alpha}_{\zeta_I} d\boldsymbol\xi
\rt| \notag\\
=&\lt| \int_{S^+} 
   (-1)^{\sum_{i\in I} A_{i,\zeta_i}} \phi_r\lt(\mathbf s_I, \boldsymbol{\alpha}_{\zeta_I}, \boldsymbol\xi \rt) 
    e^{\frac{r}{4\pi \sqrt{-1}} G_r^{\mathbf E_I, \mathbf A_{\zeta_I}, \mathbf B}(\mathbf s_I, \boldsymbol{\alpha}_{\zeta_I}, \boldsymbol \xi)} d\boldsymbol{\alpha}_{\zeta_I} d\boldsymbol\xi
\rt| \notag\\
=& o\lt( e^{\frac{r}{4\pi} (\Vol(M_{L_{\boldsymbol\theta}})-\epsilon')} \rt) .
\end{align*}

If $B_c < 0$, then we consider the surface $S^- = S^-_{\text{top}} \cup S^-_{\text{sides}}$ in the closure of $D_{\delta_0,\CC}$, where
$$ S^{-}_{\text{top}} = \{ (\boldsymbol \alpha_{\zeta_I}, \boldsymbol \xi) - (0, 0, \dots, 0, \sqrt{-1}\delta_0) \mid (\boldsymbol \alpha_{\zeta_I}, \boldsymbol \xi)   \in D_{\delta_0, \CC} \} $$
and
$$ S^-_{\text{sides}} = \{ (\boldsymbol \alpha_{\zeta_I}, \boldsymbol \xi)  - (0, \dots, 0, t\sqrt{-1}\delta_0)
\mid (\boldsymbol \alpha_{\zeta_I}, \boldsymbol \xi) \in \partial D_{\delta_0}, t\in [0,1] \}. $$
Using the same arguments as in the previous case, on $S^-$ we have
\begin{align}
\im G^{(\boldsymbol A_{0,I}, \boldsymbol A_{\zeta_I}, \boldsymbol  B_I)} (\boldsymbol \alpha_{\zeta_I}, \boldsymbol \xi) 
 < \Vol(M_{L_{\boldsymbol\theta}})  - \epsilon'
\end{align}
This completes the proof.
\end{proof}

From Proposition \ref{fail0}, it remains to consider the Fourier coefficients with $\mathbf B = \mathbf 0 = (0,\dots,0)$. To do this, for $i \in I$, consider the functions $k_i^\pm : \{0,1,\dots, |q_i|-1\} \times \ZZ \to \RR$ defined by
$$ k_i^\pm(s_i, A_{\zeta_i}) = \frac{I_i(s_i) \mp 1}{q_i} + A_{\zeta_i}.$$
\begin{lemma}\label{fcneq00}
When $|q_i|$ is odd, 
\begin{itemize}
\item $k_i^+(s_i, A_{\zeta_i}) = 0 $ if and only if 
$(s_i,A_{\zeta_i}) = (s_i^+, 1-2m_i^+)$; 
\item $k_i^-(s_i, A_{\zeta_i}) = 0 $ if and only if 
$(s_i,A_{\zeta_i}) = (s_i^-, 1-2m_i^- )$.
\end{itemize}
Moreover, 
\begin{itemize}
\item if $(s_i, A_{\zeta_i}) \neq (s_i^+, 1-2m_i^+)$, then $ \big|k^+(s_i, A_{\zeta_i})\big| \geq \frac{1}{|q_i|};$
\item if $(s_i, A_{\zeta_i}) \neq (s_i^-, 1-2m_i^-)$, then $ \big|k^-(s_i, A_{\zeta_i})\big| \geq \frac{1}{|q_i|}$.
\end{itemize}
\end{lemma}
\begin{proof}
Suppose $k_i^\pm(s_i, A_{\zeta_i}) = 0$ for some $(s_i, A_{\zeta_i})\in\{0,1,\dots, |q_i|-1\} \times \ZZ$. Then we have
$$ I_i(s_i, A_{\zeta_i}) = \pm 1 - q_i A_{\zeta_i} = \pm 1 - q_i - q_i (A_{\zeta_i}-1).$$
Suppose $A_{\zeta_i}$ is even. Then $I_i(s_i, A_{\zeta_i}) = \pm 1\pmod{2|q_i|}$ is an odd number. 
However, by Lemma \ref{arith}, the image of $I_i^+$ has the same parity of $1-q_i$, which is even when $|q_i|$ is odd. This leads to a contradiction. Thus, $A_{\zeta_i}$ is odd. Then we have $I_i(s_i, A_{\zeta_i})= \pm 1 - q_i - q_i (A_{\zeta_i}-1) \equiv \pm 1 - q_i \pmod{2|q_i|}$. Since $I_i: \{0,1,\dots,|q_i|-1\} \to \{0,1,\dots,2|q_i|-1\} $ is injective, we have $s_i = s_i^\pm$. This proves the first claim.

For the second claim, if $(s_i, A_{\zeta_i}) \neq (s_i^+, 1-2m_i^+)$, then
$$ |q_i k_i^+(s_i, A_{\zeta_i})| = | I_i(s_i) - 1 - q A_{\zeta_i} |$$
is a non-zero integer. The other part can be proved similarly.
\end{proof}

\begin{lemma}\label{fcneq01}
When $|q_i|$ is even, there exist $\tilde s_i^+, \tilde s_i^- \in \{0,1,\dots, |q_i| - 1\}$ and $\tilde m_i^+, \tilde m_i^- \in \ZZ$ such that 
\begin{itemize}
\item $k_i^+(s_i, A_{\zeta_i}) = 0 $ if and only if 
$(s_i,A_{\zeta_i}) = (s_i^+, 1-2m_i^+)$ or $(\tilde s_i^+, -2\tilde m_i^+)$; 
and 
\item $k_i^-(s_i, A_{\zeta_i}) = 0 $ if and only if 
$(s_i,A_{\zeta_i}) = (s_i^-, 1-2m_i^-)$ or $(\tilde s_i^-, -2\tilde m_i^-).$
\end{itemize}
Furthermore, $\tilde s_i^\pm = s_i^\pm + \frac{q_i}{2} \pmod{|q_i|}$. Moreover, 
\begin{itemize}
\item if $(s_i, A_{\zeta_i}) \not\in \big\{(s_i^+, 1-2m_i^+), (\tilde s_i^+, -2\tilde m_i^+)\big\}$, then
$ \big|k^+(s_i, A_{\zeta_i})\big| \geq \frac{1}{|q_i|}$;
\item if $(s_i, A_{\zeta_i}) \not\in \big\{(s_i^-, 1-2m_i^-), (\tilde s_i^-, -2\tilde m_i^-)\big\}$, then
$ \big|k^-(s_i, A_{\zeta_i})\big| \geq \frac{1}{|q_i|}.$
\end{itemize}
\end{lemma}
\begin{proof} 
Note that when $k^\pm(s_i, A_{\zeta_i}) = 0$, we have $\frac{I_i(s_i) \mp 1}{q_i} \in \ZZ$. By Lemma \ref{arith}, we have $I(s_i) = \pm 1$ or $\pm 1 + |q_i|$. Recall that $I_i(s_i^\pm) = \pm 1 - q_i + 2m_i^\pm q_i$. In particular, we have $k^\pm(s_i^\pm, 1-2m_i^\pm) = 0$. 

Besides, let $\tilde s_i^\pm \in \{0,1,\dots, |q_i|-1\}$ such that
$$ \tilde s_i^\pm \equiv  s_i^\pm + \frac{|q_i|}{2} \pmod{|q_i|}.$$ 
By the definition of $I_i$, we have 
$$I_i(\tilde s_i^\pm) \equiv I(s_i^\pm) - C_{k-1}|q_i| \pmod{2|q_i|}.$$ 
Since $q_i = A_{i, \zeta_i-1}$ is even and $(A_{i,\zeta_i-1}, C_{i,\zeta_i -1})$ is a pair of coprime integers, $C_{i,\zeta_i - 1}$ must be odd. Thus, 
$$I(\tilde s_i^\pm) \equiv I(s_i^\pm) + q_i \pmod{2|q_i|} \equiv \pm 1 \pmod{2|q_i|} .$$
Define $ \tilde m_i^\pm \in \ZZ$ such that
\begin{align}\label{deftm}
I(\tilde s_i^\pm) = \pm 1 + 2\tilde m_i^\pm q_i.
\end{align}
Then $k(\tilde s_i^\pm, - 2\tilde m_i^\pm) = 2\tilde m_i^\pm - 2\tilde m_i^\pm = 0$. Since $I$ is injective, $s_i^\pm$ is the unique integer in $\{0,\dots, |q_i|-1\}$ such that $I(\tilde s_i^\pm)  \equiv 1 \pmod{2|q_i|}$.

Finally, if $(s_i, A_{\zeta_i}) \not\in \big\{(s_i^+, 1-2m_i^+), (\tilde s_i^+, -2\tilde m_i^+)\big\}$,
$$ |q_i k_i^+(s_i, A_{\zeta_i})| = | I_i(s_i) - 1 - q_i A_{\zeta_i} |$$
is a non-zero integer. The other part can be proved similarly.
\end{proof}

For $i \in I$, let 
\begin{align}\label{defS_i}
S_i
=
\begin{cases}
\big\{(s_i^+, 1-2m_i^+), (s_i^-, 1-2m_i^-)\big\} & \text{if $|q_i|$ is odd,}\\
\big\{(s_i^+, 1-2m_i^+), (s_i^-, 1-2m_i^-), (\tilde s_i^+, -2\tilde m_i^+), (\tilde s_i^-, -2\tilde m_i^-)\big\} & \text{if $|q_i|$ is even.}
\end{cases}
\end{align}

\begin{proposition}\label{fail1} Assume that $\theta_i=2|\beta_i -\pi| < \pi$ for all $i\in I$ and 
$$ \Vol(M_{L_{\boldsymbol\theta}})> \max_{(\boldsymbol \alpha_{\zeta_I}, \boldsymbol \xi) \in \overline{D_H\setminus D_{\delta_0}}} \im \tilde U(\boldsymbol \alpha_{\zeta_I}, \boldsymbol \xi).$$
Then there exists $\epsilon'>0$ such that if
$(s_{i_0}, A_{\zeta_{i_0}}) \not\in S_{i_0}$ for some $i_0 \in I$, then
\begin{equation*}
   \lt| \widehat{f_r^{\mathbf E_I}} (\mathbf s_I, \mathbf A_{\zeta_I}, \mathbf 0) \rt|  < O\Big(e^{\frac{r}{4 \pi}(\Vol(M_{L_{\boldsymbol\theta}}) - \epsilon')}\Big).
\end{equation*}
\end{proposition}

\begin{proof}
Recall that
$$ G_r^{\mathbf E_I, \mathbf A_{\zeta_I}, \mathbf 0}(\mathbf s_I, \boldsymbol{\alpha}_{\zeta_I}, \boldsymbol \xi)
=
W_r^{\mathbf E_I}(\mathbf s_I, \boldsymbol{\alpha}_{\zeta_I}, \boldsymbol \xi)  - 2\pi \sum_{i\in I}  A_{i,\zeta_i}\alpha_{i,\zeta_i} .$$
By Proposition \ref{formulaFC},
\begin{align*}
   & \widehat{f_r^{\mathbf E_I}}(\mathbf s_I, \mathbf A_{\zeta_I},  \boldsymbol  0)  =
    \frac{r^{|I| + c}\big(\prod_{i \in I} E_i\big) }{2^{|I| + c} \pi ^{|I| + c}}\\
&  \times  \int_{D_H} 
   (-1)^{\sum_{i\in I} A_{i,\zeta_i}} \phi_r\lt(\mathbf s_I, \boldsymbol{\alpha}_{\zeta_I}, \boldsymbol\xi \rt) 
    e^{\frac{r}{4\pi \sqrt{-1}} G_r^{\mathbf E_I, \mathbf A_{\zeta_I}, \mathbf 0}(\mathbf s_I, \boldsymbol{\alpha}_{\zeta_I}, \boldsymbol \xi)} d\boldsymbol{\alpha}_{\zeta_I} d\boldsymbol\xi.
\end{align*}

Let $I_0 = \{i_0 \in I \mid (s_{i_0}, A_{\zeta_{i_0}}) \not\in S_{i_0}\}$. By a direct computation, we have
\begin{align*}
 G_r^{\mathbf E_{I}, \mathbf A_{\zeta_I}, \mathbf B}(\mathbf s_I, \boldsymbol{\alpha}_{\zeta_I}, \boldsymbol \xi)
=& G_r^{\mathbf E_{I}} (\boldsymbol \alpha_{\zeta_I}, \boldsymbol \xi)    - 2\pi \sum_{i \in I} k_i^{E_i}(s_i,A_{\zeta_i})(\boldsymbol \alpha_{i,\zeta_i} - \pi)
+ C^{\mathbf E_I}(\mathbf s_I)
\end{align*}
where  
\begin{align*}
k_i^{E_i}(s_i, A_{\zeta_i}) =
\begin{cases}
k_i^+(s_i, A_{\zeta_i}) & \text{ if $E_i = -1$,}\\ 
k_i^-(s_i, A_{\zeta_i}) & \text{ if $E_i = 1$}
\end{cases}
\end{align*}
and 
$C^{\mathbf E_I}(\mathbf s_I)$ is a real numbers independent of $\boldsymbol{\alpha}_{\zeta_I}$ and $\boldsymbol \xi$. By Lemma \ref{fcneq00} and \ref{fcneq01}, 
\begin{align*}
 G_r^{\mathbf E_{I}, \mathbf A_{\zeta_I}, \mathbf B}(\mathbf s_I, \boldsymbol{\alpha}_{\zeta_I}, \boldsymbol \xi)
=& G_r^{\mathbf E_{I}} (\boldsymbol \alpha_{\zeta_I}, \boldsymbol \xi)    - 2\pi \sum_{i \in I_0} k_i^{E_i}(s_i,A_{\zeta_i})(\boldsymbol \alpha_{i,\zeta_i} - \pi)
+ C^{\mathbf E_I}(\mathbf s_I).
\end{align*}

Let $i_0 \in I_0$, $\mathbf{E}_I\in \{-1,1\}^{|I|}$ and let $\mathbf{E}_{I}'\in \{-1,1\}^{|I|}$ be obtained by changing $E_{i_0}$ in $\mathbf E_I$ into $-E_{i_0}$. Since
$$
\frac{-2E_{i_0}(\alpha_{i_0,\zeta_{i_0}}-\pi)(\beta_{i_0}-\pi)}{q_{i_0}}
= \frac{-2(-E_{i_0})(\alpha_{i_0,\zeta_{i_0}}-\pi)(\beta_{i_0}-\pi)}{q_{i_0}}
- \frac{4E_{i_0}(\alpha_{i_0,\zeta_{i_0}}-\pi)(\beta_{i_0}-\pi)}{q_{i_0}},
$$
by a direct computation, we have
\begin{align*}
 G_r^{\mathbf E_{I}, \mathbf A_{\zeta_I}, \mathbf B}(\mathbf s_I, \boldsymbol{\alpha}_{\zeta_I}, \boldsymbol \xi)
=& G_r^{\mathbf E_{I}'} (\boldsymbol \alpha_{\zeta_I}, \boldsymbol \xi) - 2\pi \sum_{i \in I_0\setminus\{i_0\}} k_i^{E_i}(s_i,A_{\zeta_i})(\boldsymbol \alpha_{i,\zeta_i} - \pi)   \\
& - 2\pi \Big(k_{i_0}^{E_{i_0}}(s_{i_0},A_{\zeta_{i_0}}) + \frac{2E_{i_0}(\beta_{i_0}-\pi)}{\pi q_{i_0}} \Big)(\boldsymbol \alpha_{i_0,\zeta_{i_0}} - \pi)  + C^{\mathbf E_I}(\mathbf s_I),
\end{align*}
For all $i\in I$, under the assumption that $\theta_i=2|\beta_i -\pi| < \pi$, we have 
$$
\Bigg| \frac{2E_{i}(\beta_{i}-\pi)}{\pi q_{i}} \Bigg| < \frac{1}{q_{i}}.
$$
By Lemma \ref{fcneq00} and \ref{fcneq01}, since $|k_{i_0}^{E_{i_0}}(s_{i_0},A_{\zeta_{i_0}})|\geq  \frac{1}{q_{i_0}}$, $k_{i_0}^{E_{i_0}}(s_{i_0},A_{\zeta_{i_0}})$ and $k_{i_0}^{E_{i_0}}(s_{i_0},A_{\zeta_{i_0}}) + \frac{2E_{i_0}(\beta_{i_0}-\pi)}{\pi q_{i_0}} $ are either both positive or both negative.
Besides, by Proposition \ref{prop52}, we know that the $\alpha_i$ component of $\mathbf{z}^{\mathbf E_I}$ and that of $\mathbf{z}^{\mathbf E_I'}$ have opposite sign. Altogether, for each $\mathbf E_I\in \{-1,1\}^{|I|}$, by changing some $E_i$ in $\mathbf E_I$ into $-E_i$ if necessary, we can always find $\mathbf E_I''\in \{-1,1\}^{|I|}$ such that 
\begin{align}\label{Gexpress}
G_r^{\mathbf E_{I}, \mathbf A_{\zeta_I}, \mathbf B}(\mathbf s_I, \boldsymbol{\alpha}_{\zeta_I}, \boldsymbol \xi)
=& G_r^{\mathbf E_{I}''} (\boldsymbol \alpha_{\zeta_I}, \boldsymbol \xi)    - 2\pi \sum_{i \in I_0} k_i(\boldsymbol \alpha_{i,\zeta_i} - \pi)
 + C^{\mathbf E_I}(\mathbf s_I),
\end{align}
where $k_i \in \mathbb{R}\setminus\{0\}$ is some nonzero constant such that the product of $k_i$ and the imaginary part of the $\alpha_i$ component of $\mathbf{z}^{\mathbf E_I''}$ is less than or equal to $0$ for all $i\in I_0$.

By Lemma \ref{kappadef}, on any compact subset of $D_{H,\CC}$, $ G_r^{\mathbf E_I, \mathbf A_{\zeta_I}, \mathbf B}(\mathbf s_I, \boldsymbol{\alpha}_{\zeta_I}, \boldsymbol \xi)$ converges uniformly to 
\begin{align*}
G^{\mathbf E_{I}, \mathbf A_{\zeta_I}, \mathbf B}(\mathbf s_I, \boldsymbol{\alpha}_{\zeta_I}, \boldsymbol \xi)
=& G^{\mathbf E_{I}''} (\boldsymbol \alpha_{\zeta_I}, \boldsymbol \xi)    - 2\pi \sum_{i \in I_0} k_i(\boldsymbol \alpha_{i,\zeta_i} - \pi)
+ C^{\mathbf E_I}(\mathbf s_I),
\end{align*}
where $G^{\mathbf E_I''} (\boldsymbol \alpha_{\zeta_I}, \boldsymbol \xi)$ is defined in (\ref{defG}).

In particular, 
\begin{align}\label{onDHClarge}
\im \Big(G^{\mathbf E_I, \mathbf A_{\zeta_I}, \mathbf B}(\mathbf s_I, \boldsymbol{\alpha}_{\zeta_I}, \boldsymbol \xi)\Big)
&= \im\Big( G^{\mathbf E_I''} (\boldsymbol \alpha_{\zeta_I}, \boldsymbol \xi)    - 2\pi \sum_{i \in I_0} k_i(\boldsymbol \alpha_{i,\zeta_i} - \pi)
 \Big)  .
\end{align}
on $D_{H,\CC}$ and 
\begin{align}\label{onDHlarge}
 \im \Big(G^{\mathbf E_I, \mathbf A_{\zeta_I}, \mathbf B}(\mathbf s_I, \boldsymbol{\alpha}_{\zeta_I}, \boldsymbol \xi)\Big)
&= \im \tilde U(\boldsymbol \alpha_{\zeta_I}, \boldsymbol \xi)  .
\end{align}
on $D_{H}$.

We first estimate the integral on $D_H\setminus D_{\delta_0}$. By assumption, we can find $\epsilon'>0$ such that
\begin{align}\label{small1}
\Vol(M_{L_{\boldsymbol\theta}}) &
> \max_{\overline{D_H \setminus D_{\delta_1}}} 
\im \Big(G^{\mathbf E_I, \mathbf A_{\zeta_I}, \mathbf B}(\mathbf s_I, \boldsymbol{\alpha}_{\zeta_I}, \boldsymbol \xi)
\Big)  + \epsilon'.
\end{align} 
Thus, we have
\begin{align}
&\lt| \int_{D_H\setminus D_{\delta_0}} 
   (-1)^{\sum_{i\in I} A_{i,\zeta_i}} \phi_r\lt(\mathbf s_I, \boldsymbol{\alpha}_{\zeta_I}, \boldsymbol\xi \rt) 
      e^{\frac{r}{4\pi \sqrt{-1}} G_r^{\mathbf E_I, \mathbf A_{\zeta_I}, \mathbf 0}(\mathbf s_I, \boldsymbol{\alpha}_{\zeta_I}, \boldsymbol \xi)}  d\boldsymbol{\alpha}_{\zeta_I} d\boldsymbol\xi
\rt| \notag\\
=& o\lt( e^{\frac{r}{4\pi} (\Vol(M_{L_{\boldsymbol\theta}})-\epsilon')} \rt) .
\end{align}

Next, we estimate the integral on $D_{\delta_0}$. Let $i_0\in I_0$ such that $(s_{i_0}, A_{\zeta_{i_0}}) \not\in S_{i_0}$. Consider the surface $S^{\mathbf E_I''} = S^{\mathbf E_I''}_{\text{top}} \cup S^{\mathbf E_I''}_{\text{bottom}}$ defined by
$$ S^{\mathbf E_I''}_{\text{top}} = \{ (\boldsymbol \alpha_{\zeta_I}, \xi) \in D_{\delta_0, \CC} \mid \im  (\boldsymbol \alpha_{\zeta_I} , \boldsymbol \xi) = \im(\mathbf z^{\mathbf E_I''})\}$$
and
$$ S^{\mathbf E_I''}_{\text{side}} = \{ (\boldsymbol \alpha_{\zeta_I} , \boldsymbol \xi) + t\sqrt{-1} \im(\mathbf z^{ \mathbf E_I''}) \mid (\boldsymbol \alpha_{\zeta_I}, \boldsymbol \xi) \in \partial D_{\delta}, t\in [0,1] )\}.$$

On $S^{\mathbf E_I''}_{\text{top}}$, in the proof of Proposition \ref{lfc}, we showed that $\im G^{\mathbf E_I''} (\boldsymbol \alpha_{\zeta_I}, \boldsymbol \xi)$ attains its unique  maximum at $\mathbf{z}^{\mathbf E_I''}= \lt( (\alpha_i^*)_{i\in I}, (\xi_s^*)_{s=1}^c \rt)$. By (\ref{onDHClarge}), for $(\boldsymbol \alpha_{\zeta_I}, \boldsymbol \xi)\in S^{\mathbf E_I''}_{\text{top}}$, 
\begin{align}\label{onDHClarge201}
\im \Big(G^{\mathbf E_I, \mathbf A_{\zeta_I}, \mathbf B}(\mathbf s_I, \boldsymbol{\alpha}_{\zeta_I}, \boldsymbol \xi)\Big) 
&= \im\Big( G^{\mathbf E_I''} (\boldsymbol \alpha_{\zeta_I}, \boldsymbol \xi)    - 2\pi \sum_{i \in I_0} k_i(\boldsymbol \alpha_{i,\zeta_i} - \pi)\Big)  \notag\\
&=  \im  G^{\mathbf E_I''} (\boldsymbol \alpha_{\zeta_I}, \boldsymbol \xi) - 2\pi \sum_{i \in I_0} k_i \mathrm{Im} (\alpha_{i}^* ) \notag\\
&\leq \im  G^{\mathbf E_I''} (\mathbf z^{\mathbf E_I''})
- 2\pi \sum_{i \in I_0} k_i \mathrm{Im} (\alpha_{i}^*).
\end{align}
From Proposition \ref{prop52}, we know that $\im G^{\mathbf E_I''} (\mathbf z^{\mathbf E_I''})= \mathrm{Vol}(M_{L_{\boldsymbol\theta}})$. Thus,
\begin{align}\label{onDHClarge21}
\im \Big(G^{\mathbf E_I, \mathbf A_{\zeta_I}, \mathbf B}(\mathbf s_I, \boldsymbol{\alpha}_{\zeta_I}, \boldsymbol \xi)\Big) 
&\leq \Vol(M_{L_{\boldsymbol\theta}})
- 2\pi \sum_{i \in I_0} k_i \mathrm{Im} (\alpha_{i}^*).
\end{align}
We have the following two cases:

{\bf{Case 1: $\mathrm{Im} (\alpha_{i_0}^*) \neq 0$ for some $i_0 \in I_0$} }

By Lemma \ref{fcneq00} and \ref{fcneq01}, since $|k_{i}^{E_{i}}(s_{i},A_{\zeta_{i}})|\geq  \frac{1}{q_{i}}$ for all $i\in I$, we have
\begin{align}\label{lbki}
 |k_i| \geq \frac{1}{q_i} - \frac{2|\beta_i-\pi|}{\pi q_{i}}>0.
\end{align}
Besides, recall that we choose $\mathbf E_I''$ in such a way that $k_{i_0} \im(\alpha_{i_0}^*) \leq 0$. As a result, from (\ref{onDHClarge21}), by making $\epsilon'>0$ smaller if necessary, \begin{align}\label{onDHClarge2}
&\im \Big(G^{\mathbf E_I, \mathbf A_{\zeta_I}, \mathbf B}(\mathbf s_I, \boldsymbol{\alpha}_{\zeta_I}, \boldsymbol \xi)\Big) \notag\\
\leq& \mathrm{Vol}(M_{L_{\boldsymbol\theta}}) - 2\pi \min\Bigg\{ \Bigg|\Big(\frac{1}{q_i} - \frac{2|\beta_i-\pi|}{\pi q_{i}}\Big)\mathrm{Im} (\alpha_{i}^*) \Bigg| \quad\Bigg\vert \quad i\in I, \mathrm{Im} (\alpha_{i}^*)\neq 0\Bigg\}\notag\\
\leq& \mathrm{Vol}(M_{L_{\boldsymbol\theta}}) - \epsilon'.
\end{align}
Next, on $S^{\mathbf E_I''}_{\text{sides}}$, by Proposition \ref{prop53}, $\im G^{\mathbf E_I''}$ is strictly concave up in $\{\im (\alpha_{i,\zeta_i})\}_{i \in I}$ and $\{\im (\xi_s)\}_{s=1}^c$.
Besides, 
$$
\im \Big(G^{\mathbf E_I, \mathbf A_{\zeta_I}, \mathbf B}(\mathbf s_I, \boldsymbol{\alpha}_{\zeta_I}, \boldsymbol \xi) - G^{\mathbf E_I''} (\boldsymbol \alpha_{\zeta_I}, \boldsymbol \xi)\Big)
= \im\Bigg(- 2\pi \sum_{i \in I_0} k_i(\boldsymbol \alpha_{i,\zeta_i} - \pi)\Bigg)  
$$
is a linear function in $\{\im (\alpha_{i,\zeta_i})\}_{i \in I}$. As a result, $\im \Big(G^{\mathbf E_I, \mathbf A_{\zeta_I}, \mathbf B}(\mathbf s_I, \boldsymbol{\alpha}_{\zeta_I}, \boldsymbol \xi)\Big)$ is also strictly concave up in $\{\im (\alpha_{i,\zeta_i})\}_{i \in I}$  and $\{\im (\xi_s)\}_{s=1}^c$. By convexity, for each $(\boldsymbol \alpha_{\zeta_I}, \boldsymbol \xi)\in \partial D_{\delta_0}$ and $t\in [0,1]$ we have
\begin{align*}
&\im \Big(G^{\mathbf E_I, \mathbf A_{\zeta_I}, \mathbf B}\Big(\mathbf s_I, (\boldsymbol \alpha_{\zeta_I}, \boldsymbol \xi) + t\sqrt{-1} \im(\mathbf z^{\mathbf E_I}) \Big)\Big)\\
<& \max\Big\{ \im \Big(G^{\mathbf E_I, \mathbf A_{\zeta_I}, \mathbf B}(\mathbf s_I,\boldsymbol \alpha_{\zeta_I}, \boldsymbol \xi)\Big),
\im \Big(G^{\mathbf E_I, \mathbf A_{\zeta_I}, \mathbf B}\Big(\mathbf s_I, (\boldsymbol \alpha_{\zeta_I}, \boldsymbol \xi) + \sqrt{-1} \im(\mathbf z^{\mathbf E_I}) \Big)\Big)\Big\}.
\end{align*}
For $(\boldsymbol \alpha_{\zeta_I}, \boldsymbol\xi) \in \partial D_{\delta_1}$, by (\ref{small1}) we have 
$$  \im G^{\mathbf E_I} (\boldsymbol \alpha_{\zeta_I}, \boldsymbol \xi) < \mathrm{Vol}(M_{L_{\boldsymbol\theta}}) - \epsilon' .$$
For $(\boldsymbol \alpha_{\zeta_I}, \boldsymbol \xi) + \sqrt{-1} \im(\mathbf z^{\mathbf E_I''})  \in S^{\mathbf E_I''}_{\text{top}}$, by (\ref{onDHClarge2}), we have
$$  \im G^{\mathbf E_I}( (\boldsymbol \alpha_{\zeta_I}, \boldsymbol \xi) + \sqrt{-1} \im(z^{\mathbf E_I}) ) < \mathrm{Vol}(M_{L_{\boldsymbol\theta}}) - \epsilon'.$$
Thus, we have
\begin{align}\label{small2}
&\lt| \int_{D_{\delta_0}} 
   (-1)^{\sum_{i\in I} A_{i,\zeta_i}} \phi_r\lt(\mathbf s_I, \boldsymbol{\alpha}_{\zeta_I}, \boldsymbol\xi \rt) 
    e^{\frac{r}{4\pi \sqrt{-1}}\lt( 
   W_r^{\mathbf E_I}\lt(\mathbf s_I, \boldsymbol \alpha_{\zeta_I}, \boldsymbol\xi\rt) 
    - \sum_{i \in I} 2\pi A_{i,\zeta_i} \alpha_{i,\zeta_i}  \rt)} d\boldsymbol{\alpha}_{\zeta_I} d\boldsymbol\xi
\rt| \notag\\
=&\lt| \int_{S^{\mathbf E_I''}} 
   (-1)^{\sum_{i\in I} A_{i,\zeta_i}} \phi_r\lt(\mathbf s_I, \boldsymbol{\alpha}_{\zeta_I}, \boldsymbol\xi \rt) 
    e^{\frac{r}{4\pi \sqrt{-1}}\lt( 
   W_r^{\mathbf E_I}\lt(\mathbf s_I, \boldsymbol \alpha_{\zeta_I}, \boldsymbol\xi\rt) 
    - \sum_{i \in I} 2\pi A_{i,\zeta_i} \alpha_{i,\zeta_i}  \rt)} d\boldsymbol{\alpha}_{\zeta_I} d\boldsymbol\xi
\rt| \notag\\
=& o\lt( e^{\frac{r}{4\pi} (\Vol(M_{L_{\boldsymbol\theta}})-\epsilon')} \rt) .
\end{align}
The result follows from (\ref{small1}) and (\ref{small2}).

{\bf{Case 2: $\mathrm{Im} (\alpha_{i}^*) = 0$ for all $i \in I_0$} }
From (\ref{onDHClarge201}) and (\ref{onDHClarge21}), on $S^{\mathbf E_I''}$ we have
\begin{align}\label{c2lessv}
\im \Big(G^{\mathbf E_I, \mathbf A_{\zeta_I}, \mathbf B}(\mathbf s_I, \boldsymbol{\alpha}_{\zeta_I}, \boldsymbol \xi)\Big) 
&\leq  \im  G^{\mathbf E_I''} (\mathbf z^{\mathbf E_I''})
- 2\pi \sum_{i \in I_0} k_i \mathrm{Im} (\alpha_{i}^*)
=  \Vol(M_{L_{\boldsymbol\theta}})
\end{align}
and equality holds if and only if $(\boldsymbol{\alpha}_{\zeta_I}, \boldsymbol \xi) = \mathbf z ^{\mathbf E_I''}$. Since $ \mathbf z ^{\mathbf E_I''}$ is a critical point of $G^{\mathbf E_I''} (\boldsymbol{\alpha}_{\zeta_I}, \boldsymbol \xi)$, there exists $\delta\in(0,\delta_0)$ depending of $\mathbf E_I''$ such that for any $i\in I$,
\begin{align}\label{dalpha1small}
\Bigg| \frac{\partial \im G^{\mathbf E_I''} (\boldsymbol{\alpha}_{\zeta_I}, \boldsymbol \xi)}{\partial \im \alpha_i} \Bigg|
< \pi \min_{i\in I} \Big\{ \frac{1}{q_i} - \frac{2|\beta_i-\pi|}{\pi q_{i}} \Big\}
\end{align}
whenever $d_{\infty}\big((\boldsymbol{\alpha}_{\zeta_I}, \boldsymbol \xi), \mathbf z ^{\mathbf E_I''} \big) < \delta$. Let 
$$
S^{\mathbf E_I'',\delta}
=\{(\boldsymbol \alpha_{\zeta_I}, \boldsymbol \xi) \in S^{\mathbf E_I''}_{\text{top}} \mid   d_{\infty}\big((\boldsymbol{\alpha}_{\zeta_I}, \boldsymbol \xi), \mathbf z ^{\mathbf E_I''} \big) \leq \delta\}
$$
By the compactness of the closure of $S^{\mathbf E_I''}\setminus S^{\mathbf E_I'',\delta}$, we can find $\epsilon'>0$ such that 
\begin{align}\label{Sdlv}
\im \Big(G^{\mathbf E_I, \mathbf A_{\zeta_I}, \mathbf B}(\mathbf s_I, \boldsymbol{\alpha}_{\zeta_I}, \boldsymbol \xi)\Big) 
< \Vol(M_{L_{\boldsymbol\theta}}) - \epsilon'
\end{align}
for any $(\boldsymbol{\alpha}_{\zeta_I}, \boldsymbol \xi) \in S^{\mathbf E_I''}\setminus S^{\mathbf E_I'',\delta}$. Thus, we have
\begin{align}\label{}
&\lt| \int_{S^{\mathbf E_I''}\setminus S^{\mathbf E_I'',\delta}} 
   (-1)^{\sum_{i\in I} A_{i,\zeta_i}} \phi_r\lt(\mathbf s_I, \boldsymbol{\alpha}_{\zeta_I}, \boldsymbol\xi \rt) 
    e^{\frac{r}{4\pi \sqrt{-1}}\lt( 
   W_r^{\mathbf E_I}\lt(\mathbf s_I, \boldsymbol \alpha_{\zeta_I}, \boldsymbol\xi\rt) 
    - \sum_{i \in I} 2\pi A_{i,\zeta_i} \alpha_{i,\zeta_i}  \rt)} d\boldsymbol{\alpha}_{\zeta_I} d\boldsymbol\xi
\rt| \notag\\
=& o\lt( e^{\frac{r}{4\pi} (\Vol(M_{L_{\boldsymbol\theta}})-\epsilon')} \rt) .
\end{align}

Assume that $k_i >0$ for some $i\in I_0$. For simplicity we assume that $i=1$. 
Consider the surface $S^{\mathbf E_I'',\delta,+} = S^{\mathbf E_I'',\delta,+}_{\text{top}} \cup S^{\mathbf E_I'',\delta,+}_{\text{bottom}}$ defined by
$$
S^{\mathbf E_I'',\delta,+}_{\text{top}} = 
\lt\{
  (\boldsymbol \alpha_{\zeta_I}, \boldsymbol \xi) +\sqrt{-1} (\delta,0,\dots,0) \mid
  (\boldsymbol \alpha_{\zeta_I} , \boldsymbol \xi)  \in S^{\mathbf E_I'',\delta}
  \rt\}
$$
and
$$
S^{\mathbf E_I'',\delta,+}_{\text{side}} = 
\left\{
  (\boldsymbol \alpha_{\zeta_I} , \boldsymbol \xi) + t\sqrt{-1} (\delta,0,\dots,0) \mid
 (\boldsymbol \alpha_{\zeta_I} , \boldsymbol \xi)  \in \partial S^{\mathbf E_I'',\delta}
\right\}.
$$
Note that on $S^{\mathbf E_I'',\delta,+}_{\text{top}}$, by the Mean Value Theorem, 
\begin{align}
&\im G_r^{\mathbf E_{I}, \mathbf A_{\zeta_I}, \mathbf B} \big(\mathbf s_I, (\boldsymbol \alpha_{\zeta_I}, \boldsymbol \xi) + \sqrt{-1}(\delta, 0, \dots, 0) \big)
- \im G_r^{\mathbf E_{I}, \mathbf A_{\zeta_I}, \mathbf B} (\mathbf s_I, \boldsymbol \alpha_{\zeta_I}, \boldsymbol \xi) \notag\\
= &
\frac{\partial \im G_r^{\mathbf E_{I}, \mathbf A_{\zeta_I}, \mathbf B}}{\partial \im \alpha_1} \big(\mathbf s_I, (\boldsymbol \alpha_{\zeta_I}, \boldsymbol \xi) + \sqrt{-1}(\delta',0, \dots, 0) \big) \cdot \delta'
\end{align}
for some $\delta' \in (0,\delta)$. Note that from (\ref{Gexpress}),
\begin{align*}
&\frac{\partial \im G_r^{\mathbf E_{I}, \mathbf A_{\zeta_I}, \mathbf B}}{\partial \im \alpha_1} \big(\mathbf s_I, (\boldsymbol \alpha_{\zeta_I}, \boldsymbol \xi) + \sqrt{-1}(\delta',0 \dots, 0) \big) \\
=&  \frac{\partial \im G_r^{\mathbf E_{I}''} }{\partial \im \alpha_1}\Bigg\vert_{(\boldsymbol \alpha_s,\xi_s+\sqrt{-1}\delta_0')} - 2\pi k_1
< -\pi \min_{i\in I} \Big\{ \frac{1}{q_i} - \frac{2|\beta_i-\pi|}{\pi q_{i}} \Big\},
\end{align*}
where the last inequality follows from (\ref{lbki}) and (\ref{dalpha1small}).
This implies that
\begin{align}\label{Sdtlv}
&\im G_r^{\mathbf E_{I}, \mathbf A_{\zeta_I}, \mathbf B} \big(\mathbf s_I, (\boldsymbol \alpha_{\zeta_I}, \boldsymbol \xi) + \sqrt{-1}(\delta, 0, \dots, 0) \big) \notag \\
<& \im G_r^{\mathbf E_{I}, \mathbf A_{\zeta_I}, \mathbf B} (\mathbf s_I, \boldsymbol \alpha_{\zeta_I}, \boldsymbol \xi)- \pi\min_{i\in I} \Big\{ \frac{1}{q_i} - \frac{2|\beta_i-\pi|}{\pi q_{i}} \Big\}\notag\\
\leq& \Vol(M_{L_{\boldsymbol\theta}})
- \pi \min_{i\in I} \Big\{ \frac{1}{q_i} - \frac{2|\beta_i-\pi|}{\pi q_{i}} \Big\},
\end{align}
where the last equality follows from (\ref{c2lessv}). 

Next, on $S^{\mathbf E_I'',\delta,+}_{\text{sides}}$, since
$\im \Big(G^{\mathbf E_I, \mathbf A_{\zeta_I}, \mathbf B}(\mathbf s_I, \boldsymbol{\alpha}_{\zeta_I}, \boldsymbol \xi)\Big)$ is strictly concave up in $\{\im (\alpha_{i,\zeta_i})\}_{i \in I}$  and $\{\im (\xi_s)\}_{s=1}^c$, for each $(\boldsymbol \alpha_{\zeta_I}, \boldsymbol \xi)\in \partial S^{\mathbf E_I'',\delta}$ and $t\in [0,1]$ we have
\begin{align*}
&\im G_r^{\mathbf E_{I}, \mathbf A_{\zeta_I}, \mathbf B} \big(\mathbf s_I, (\boldsymbol \alpha_{\zeta_I}, \boldsymbol \xi) + t\sqrt{-1}(\delta, 0, \dots, 0) \big)\\
<& \max\Big\{ \im \Big(G^{\mathbf E_I, \mathbf A_{\zeta_I}, \mathbf B}(\mathbf s_I,\boldsymbol \alpha_{\zeta_I}, \boldsymbol \xi)\Big),
\im \Big(G^{\mathbf E_I, \mathbf A_{\zeta_I}, \mathbf B}\Big(\mathbf s_I, (\boldsymbol \alpha_{\zeta_I}, \boldsymbol \xi) + \sqrt{-1}(\delta, 0, \dots, 0)  \Big)\Big)\Big\}.
\end{align*}
For $(\boldsymbol \alpha_{\zeta_I}, \boldsymbol\xi) \in \partial S^{\mathbf E_I'', \delta}$, by (\ref{Sdlv}) we have 
$$  \im G^{\mathbf E_I} (\boldsymbol \alpha_{\zeta_I}, \boldsymbol \xi) < \mathrm{Vol}(M_{L_{\boldsymbol\theta}}) - \epsilon' .$$
For $(\boldsymbol \alpha_{\zeta_I}, \boldsymbol \xi) + \sqrt{-1}(\delta, 0, \dots, 0) \in S^{\mathbf E_I'',\delta,+}_{\text{top}}$, by (\ref{Sdtlv}), by making $\epsilon'$ smaller if necessary, we have
$$  \im G^{\mathbf E_I}( (\boldsymbol \alpha_{\zeta_I}, \boldsymbol \xi) + \sqrt{-1} \im(z^{\mathbf E_I}) ) < \mathrm{Vol}(M_{L_{\boldsymbol\theta}}) - \epsilon'.$$
Thus, we have
\begin{align*}
&\lt| \int_{D_{\delta_0}} 
   (-1)^{\sum_{i\in I} A_{i,\zeta_i}} \phi_r\lt(\mathbf s_I, \boldsymbol{\alpha}_{\zeta_I}, \boldsymbol\xi \rt) 
    e^{\frac{r}{4\pi \sqrt{-1}}\lt( 
   W_r^{\mathbf E_I}\lt(\mathbf s_I, \boldsymbol \alpha_{\zeta_I}, \boldsymbol\xi\rt) 
    - \sum_{i \in I} 2\pi A_{i,\zeta_i} \alpha_{i,\zeta_i}  \rt)} d\boldsymbol{\alpha}_{\zeta_I} d\boldsymbol\xi
\rt| \notag\\
=&\lt| \int_{S^{\mathbf E_I''}} 
   (-1)^{\sum_{i\in I} A_{i,\zeta_i}} \phi_r\lt(\mathbf s_I, \boldsymbol{\alpha}_{\zeta_I}, \boldsymbol\xi \rt) 
    e^{\frac{r}{4\pi \sqrt{-1}}\lt( 
   W_r^{\mathbf E_I}\lt(\mathbf s_I, \boldsymbol \alpha_{\zeta_I}, \boldsymbol\xi\rt) 
    - \sum_{i \in I} 2\pi A_{i,\zeta_i} \alpha_{i,\zeta_i}  \rt)} d\boldsymbol{\alpha}_{\zeta_I} d\boldsymbol\xi
\rt| \notag\\
\leq&\lt| \int_{S^{\mathbf E_I''} \setminus S^{\mathbf E_I'',\delta}} 
   (-1)^{\sum_{i\in I} A_{i,\zeta_i}} \phi_r\lt(\mathbf s_I, \boldsymbol{\alpha}_{\zeta_I}, \boldsymbol\xi \rt) 
    e^{\frac{r}{4\pi \sqrt{-1}}\lt( 
   W_r^{\mathbf E_I}\lt(\mathbf s_I, \boldsymbol \alpha_{\zeta_I}, \boldsymbol\xi\rt) 
    - \sum_{i \in I} 2\pi A_{i,\zeta_i} \alpha_{i,\zeta_i}  \rt)} d\boldsymbol{\alpha}_{\zeta_I} d\boldsymbol\xi
\rt| \notag\\
&+\lt| \int_{S^{\mathbf E_I'',\delta,+}} 
   (-1)^{\sum_{i\in I} A_{i,\zeta_i}} \phi_r\lt(\mathbf s_I, \boldsymbol{\alpha}_{\zeta_I}, \boldsymbol\xi \rt) 
    e^{\frac{r}{4\pi \sqrt{-1}}\lt( 
   W_r^{\mathbf E_I}\lt(\mathbf s_I, \boldsymbol \alpha_{\zeta_I}, \boldsymbol\xi\rt) 
    - \sum_{i \in I} 2\pi A_{i,\zeta_i} \alpha_{i,\zeta_i}  \rt)} d\boldsymbol{\alpha}_{\zeta_I} d\boldsymbol\xi
\rt| \notag\\
=& o\lt( e^{\frac{r}{4\pi} (\Vol(M_{L_{\boldsymbol\theta}})-\epsilon')} \rt) .
\end{align*}
This finishes the proof under the assumption that $k_1>0$. For $k_1<0$, we consider the surface $S^{\mathbf E_I'',\delta,-} = S^{\mathbf E_I'',\delta,-}_{\text{top}} \cup S^{\mathbf E_I'',\delta,-}_{\text{bottom}}$ defined by
$$
S^{\mathbf E_I'',\delta,-}_{\text{top}} = 
\lt\{
  (\boldsymbol \alpha_{\zeta_I}, \boldsymbol \xi) -\sqrt{-1} (\delta,0,\dots,0) \mid
  (\boldsymbol \alpha_{\zeta_I} , \boldsymbol \xi)  \in S^{\mathbf E_I'',\delta}
  \rt\}
$$
and
$$
S^{\mathbf E_I'',\delta,-}_{\text{side}} = 
\left\{
  (\boldsymbol \alpha_{\zeta_I} , \boldsymbol \xi) - t\sqrt{-1} (\delta,0,\dots,0) \mid
 (\boldsymbol \alpha_{\zeta_I} , \boldsymbol \xi)  \in \partial S^{\mathbf E_I'',\delta}
\right\}.
$$
The result follows from a similar argument as the previous case.

\end{proof}

According to Proposition \ref{fail0} and \ref{fail1}, it remains to study the asymptotics of the $\widehat{f_r}(\mathbf{s}, \mathbf{A_{\zeta_I}}, \mathbf{0})$ with $(s_i, A_{\zeta_i}) \in S_i$ for all $i\in I$. where $S_i$ is defined in (\ref{defS_i}). If $|q_i|$'s are odd for all $i\in I$, the asymptotics of $\widehat{f_r}(\mathbf{s}^{\mathbf E_I}, \mathbf{1-2m^{E_I}}, \mathbf{0})$ are given in Proposition \ref{0FCasym}. When some $|q_i|$ is even, the following proposition shows that the leading terms in the asymptotics of the other Fourier coefficients cancel out with each other.

\begin{proposition}\label{fail2} Suppose the assumptions in Proposition \ref{lfc} hold. Further suppose there exists $i_0 \in I$ such that $|q_{i_0}|$ is even. Then for every pair $(\mathbf{E_I'}, \mathbf{s'}, \mathbf{A_{\zeta_I}'}, \mathbf{0})$ and $(\mathbf{E_I''},\mathbf{s''}, \mathbf{A_{\zeta_I}''}, \mathbf{0})$ with $E'_{i_0}=-1, E''_{i_0}=1$, $E'_i = E''_i$ for all $i \in I \setminus\{i_0\}$, $(s_{i_0}', A_{\zeta_{i_0}}') = (\tilde s_{i_0}^+, -2\tilde m_{i_0}^+)$, $(s_{i_0}'', A_{\zeta_{i_0}}'') = (\tilde s_{i_0}^-, -2\tilde m_{i_0}^-)$ and $(s_i', A_{\zeta_i}') = (s_i'', A_{\zeta_i}'')$ for all $i \in I \setminus\{i_0\}$, we have
\begin{align*}
\widehat{f_r}^{\mathbf E_I}(\mathbf{s'}, \mathbf{A_{\zeta_I}'}, \mathbf{0})
&= C'_r\frac{e^{\frac{1}{2}\sum_{k=1}^{n}\mu_k\mathrm H^{(r)}(\gamma_k)}}{\sqrt{\pm\mathbb{T}_{(M\setminus L, \mathrm{\mathbf\Upsilon})}([\rho_{M^{(r)}}])}}e^{\frac{r}{4\pi}\big(\mathrm{Vol}(M^{(r)})+\sqrt{-1}\mathrm{CS}(M^{(r)})\big)}\bigg(1+O\Big(\frac{1}{r}\Big)\bigg)
\end{align*}
and
\begin{align*}
\widehat{f_r}^{\mathbf E_I}(\mathbf{s''}, \mathbf{A_{\zeta_I}''}, \mathbf{0})
&= -C'_r\frac{e^{\frac{1}{2}\sum_{k=1}^{n}\mu_k\mathrm H^{(r)}(\gamma_k)}}{\sqrt{\pm\mathbb{T}_{(M\setminus L, \mathrm{\mathbf\Upsilon})}([\rho_{M^{(r)}}])}}e^{\frac{r}{4\pi}\big(\mathrm{Vol}(M^{(r)})+\sqrt{-1}\mathrm{CS}(M^{(r)})\big)}\bigg(1+O\Big(\frac{1}{r}\Big)\bigg)
\end{align*}
for some sequence of complex number $C'_r$ with norm $1$. 
\end{proposition}
\begin{proof} We first study the asymptotics of $\widehat{f_r}^{\mathbf E_I}(\mathbf{s'}, \mathbf{A_{\zeta_I}'}, \mathbf{0})$. Note that by Proposition \ref{formulaFC},
\begin{align*}
   & \widehat{f_r^{\mathbf E_I}}(\mathbf{s_I'}, \mathbf A'_{\zeta_I},  \boldsymbol  B) =
    \frac{r^{|I| + c}\big(\prod_{i \in I} E_i\big) }{2^{|I| + c} \pi ^{|I| + c}}  
    \\& \times\int_{D_H} 
   (-1)^{\sum_{i\in I} A_{i,\zeta_i}'} \phi_r\lt(\mathbf s_I, \boldsymbol{\alpha}_{\zeta_I}, \boldsymbol\xi \rt)   
    e^{\frac{r}{4\pi \sqrt{-1}}\lt( 
   W_r^{\mathbf E_I}\lt(\mathbf{s_I'}, \boldsymbol \alpha_{\zeta_I}, \boldsymbol\xi\rt) 
    - \sum_{i \in I} 2\pi A_{i,\zeta_i}' \alpha_{i,\zeta_i}  \rt)} d\boldsymbol{\alpha}_{\zeta_I} d\boldsymbol\xi,
\end{align*}
By Lemma \ref{fcneq00}, \ref{fcneq01} and a direct computation, we can write
\begin{align*}
&W_r^{\mathbf E_I}\lt(\mathbf{s'}, \boldsymbol \alpha_{\zeta_I}, \boldsymbol\xi\rt) 
    - \sum_{i \in I} 2\pi A_{i,\zeta_i}' \alpha_{i,\zeta_i}\\
=&  G_r^{\boldsymbol  E_I}(\boldsymbol \alpha_{\zeta_I}, \boldsymbol \xi) - 2\pi^2\sum_{i\in I}A_{\zeta_i}'
 + \sum_{i\in I}2\pi\beta_i\Big(-E_iJ_i(s_i') - \frac{p_i}{q_i} \Big) +\sum_{i\in I}\pi^2\Bigg(K_i(s_i')+\frac{p_i'}{q_i}\Bigg) +\frac{4\pi^2}{r^2} h_I .
\end{align*}
Moreover, by Lemma \ref{fcneq01} and Lemma \ref{arith} (2), since $\tilde s_i^\pm = s_i^\pm + \frac{q_i}{2} \pmod{q_i}$, we have
$$J_i(\tilde s_i^\pm) - J_i(\tilde s_i) = \mp \frac{p_i'}{q_i} \pmod{\ZZ}.$$
Thus, similar to the proof of Proposition \ref{lfcexpress}, we can write
\begin{align}
\widehat{f_r^{\mathbf E_I}}(\mathbf{s_I'}, \mathbf A'_{\zeta_I},  \boldsymbol  {0})&=\frac{Y'(\mathbf E_I)r^{|I| + c} }{2^{|I| + c} \pi ^{|I| + c}} \int_{D_H} 
    \phi_r\lt(\mathbf{s}^{\mathbf E_I}, \boldsymbol{\alpha}_{\zeta_I}, \boldsymbol\xi \rt)     
   e^{\frac{r}{4\pi \sqrt{-1}}G_r^{E_I}(\boldsymbol\alpha_{\xi_I},\boldsymbol\zeta)} d\boldsymbol{\alpha}_{\zeta_I} d\boldsymbol\xi, \label{lfcexpress2}
\end{align}
where
\begin{align}\label{defY'EI}
Y'(\mathbf E_I) =
- (-1)^{\sum_{i\in I} \Big(\frac{p_i'}{q_i} + E_i J_i(s_i')\Big) + |I|}\Bigg(\prod_{i \in I} E_i\Bigg)e^{\frac{r\pi}{4\sqrt{-1}}\sum_{i\in I}\Big(-2A'_{\zeta_i}+ K_i(s_i')+\frac{p_i'}{q_i} \Big)},
\end{align}
\begin{align*}
&\phi_r(\mathbf s_I, \boldsymbol{\alpha}_{\zeta_I}, \boldsymbol\xi)=
\psi(\boldsymbol{\alpha}_{\zeta_I}, \boldsymbol\xi)  \\
&\times
e^{ \sqrt{-1}\Big(\sum_{i\in I}\Big(\frac{p_i'}{q_i}(\beta_i - \pi) + \frac{p_i}{q_i}(\alpha_{i,\zeta_i} - \pi) + \frac{E_i(\alpha_{i,\zeta_i}+\beta_i-2\pi)}{q_i}\Big)+ \sum_{i\in I}a_{i,0}\beta_i + \sum_{i\in I} \Big(\frac{\iota_i}{2}\Big)\alpha_{i,\zeta_i} + \sum_{j\in J}\Big(a_{j,0}+\frac{\iota_j}{2}\Big)\alpha_j \Big)},
\end{align*}
and
\begin{align*}
G_r^{\boldsymbol  E_I}(\boldsymbol \alpha_{\zeta_I}, \boldsymbol \xi) 
&=  \sum_{i\in I} 
\lt[- \lt(\frac{p_i'}{q_i} + a_{i,0}\rt) (\beta_i - \pi)^2 - \frac{p_i (\alpha_{i,\zeta_i} - \pi)^2 + 2 E_i(\beta_i - \pi)(\alpha_{i,\zeta_i} - \pi)}{q_i}
\rt] \notag
\\
&\qquad - \sum_{j \in J}\Big( a_{j,0} + \frac{\iota_j}{2}\Big)(\alpha_j - \pi)^2  -\sum_{i\in I}\frac{\iota_i}{2}(\alpha_{i,\zeta_i}-\pi)^2\notag\\
&\qquad +\sum_{s=1}^c U_r(\alpha_{s_1},\dots,\alpha_{s_6},\xi_s)+\Big(\sum_{i=1}^n\frac{\iota_i}{2}\Big)\pi^2.
\end{align*}
By (\ref{lfcexpress2}), Proposition \ref{lfc} and \ref{Rtorsion},
\begin{align*}
&\widehat{f_r^{\mathbf E_I}}(\mathbf{s_I'}, \mathbf A'_{\zeta_I},  \boldsymbol  {0}) \\
&=
    \frac{Y'(\mathbf E_I) r^{|I| + c} }
    { 2^{|I| + c}\pi^{|I|+c} } \lt(\frac{2}{r}\rt)^c \lt(\frac{2\pi}{r}\rt)^{\frac{|I|+c}{2}} (4\pi\sqrt{-1})^{\frac{|I|+c}{2}} 
\\
&\qquad \frac{C^{\mathbf E_I}(\mathbf{z}^{\boldsymbol E_{I}})}{\sqrt{- \lt(\prod_{i\in I}q_i\rt)\det \Hess({G}^{\mathbf E_I} )(\mathbf{z^{\mathbf E_I}})}} e^{\frac{r}{4 \pi} (\Vol (M_{L_{\boldsymbol\theta}})  + \sqrt{-1}\CS(M_{L_{\boldsymbol\theta}}))} \Big( 1 + O\Big(\frac{1}{r}\Big ) \Big)\\
&= 
C'_r\frac{e^{\frac{1}{2}\sum_{k=1}^{n}\mu_k\mathrm H^{(r)}(\gamma_k)}}{\sqrt{\pm\mathbb{T}_{(M\setminus L, \mathrm{\mathbf\Upsilon})}([\rho_{M^{(r)}}])}}e^{\frac{r}{4\pi}\big(\mathrm{Vol}(M^{(r)})+\sqrt{-1}\mathrm{CS}(M^{(r)})\big)}\bigg(1+O\Big(\frac{1}{r}\Big)\bigg), 
\end{align*}
where $C'_r = Y'(\mathbf E_I) 2^{-|I|+c}r^{ \frac{|I|-c}{2}}
(-1)^{- \frac{rc}{2} + \frac{|I|+c}{4} +\sum_{i\in I} \Big(a_{i,0}+\frac{\iota_i}{2}\Big)+\sum_{j\in J}\Big(a_{j,0}+\frac{\iota_j}{2}\Big)}$

By the same argument, we have
\begin{align*}
&\widehat{f_r^{\mathbf E_I}}(\mathbf{s_I''}, \mathbf A''_{\zeta_I},  \boldsymbol  {0}) \\
&= 
C''_r\frac{e^{\frac{1}{2}\sum_{k=1}^{n}\mu_k\mathrm H^{(r)}(\gamma_k)}}{\sqrt{\pm\mathbb{T}_{(M\setminus L, \mathrm{\mathbf\Upsilon})}([\rho_{M^{(r)}}])}}e^{\frac{r}{4\pi}\big(\mathrm{Vol}(M^{(r)})+\sqrt{-1}\mathrm{CS}(M^{(r)})\big)}\bigg(1+O\Big(\frac{1}{r}\Big)\bigg), 
\end{align*}
where $C''_r = Y''(\mathbf E_I) 2^{-|I|+c}r^{ \frac{|I|-c}{2}}
(-1)^{- \frac{rc}{2} + \frac{|I|+c}{4} +\sum_{i\in I} \Big(a_{i,0}+\frac{\iota_i}{2}\Big)+\sum_{j\in J}\Big(a_{j,0}+\frac{\iota_j}{2}\Big)}$ with
\begin{align}\label{defY''EI}
Y''(\mathbf E_I) =
- (-1)^{\sum_{i\in I} \Big(\frac{p_i'}{q_i} + E_i J_i(s_i'')\Big) + |I|}\Bigg(\prod_{i \in I} E_i\Bigg)e^{\frac{r\pi}{4\sqrt{-1}}\sum_{i\in I}\Big(-2A''_{\zeta_i}+ K_i(s_i'')+\frac{p_i'}{q_i} \Big)}.
\end{align}
To prove the proposition, it suffices to study the ratio of $C'_r$ and $C''_r$. Note that from (\ref{defY'EI}) and (\ref{defY''EI}),
\begin{align}\label{pmpart0}
\frac{C'_r}{C''_r}
= \frac{Y'(\mathbf E_I)}{Y''(\mathbf E_I)}
= -(-1)^{-(J_{i_0}(\tilde{s}_{i_0}^+) + J_{i_0}(\tilde{s}_{i_0}^-))}e^{\frac{r\pi}{4\sqrt{-1}}(K_{i_0}(s'_{i_0}) - K_{i_0}(s''_{i_0})+ 4(\tilde{m}_{i_0}' - \tilde{m}_{i_0}''))}.
\end{align}
From Lemma \ref{arith} (2), we know that
$$J_{i_0}(s_{i_0}^+)\equiv  -J_{i_0}(s_{i_0}^-)\quad(\text{mod }2),$$
From Lemma \ref{fcneq01}, we know that $\tilde s_{i_0}^\pm = s_{i_0}^\pm + \frac{q_{i_0}}{2} \pmod{q_{i_0}}$. Thus, we have
$J_{i_0}(\tilde s_{i_0}^+) - J_{i_0}(s_{i_0}^+) \equiv 1 \pmod{2}$, $J_{i_0}(\tilde s_{i_0}^-) - J_{i_0}(s_{i_0}^+) \equiv 1 \pmod{2}$ and
$$
J_{i_0}(\tilde{s}_{i_0}^+) + J_{i_0}(\tilde{s}_{i_0}^-)
= \big( J_{i_0}(\tilde s_{i_0}^+) - J_{i_0}(s_{i_0}^+) \big)
+\big( J_{i_0}(\tilde s_{i_0}^-) - J_{i_0}(s_{i_0}^+) \big)
+\big(J_{i_0}(s_{i_0}^+) + J_{i_0}(s_{i_0}^-) \big)
\equiv 0 \pmod{2}.
$$
This implies that
\begin{align}\label{pmpart1} 
(-1)^{E_i (J_{i_0}(\tilde{s}_{i_0}^+) + J_{i_0}(\tilde{s}_{i_0}^-))} = 1. 
\end{align}
From the definition of $K$ in Lemma \ref{arith} (3), we get
\begin{align}\label{K-K2}
&K_{i_0}(\tilde s_{i_0}^+) - K_{i_0}(\tilde s_{i_0}^-) + 4(\tilde m_{i_0}^+ - \tilde m_{i_0}^-)\notag\\
=& \frac{4C_{i_0,\xi_{i_0}-1}}{q_{i_0}}(\tilde s_{i_0}^+ + \tilde s_{i_0}^- + 1 + K_{i_0,\xi_{i_0}-1})(\tilde s_{i_0}^+-\tilde s_{i_0}^-) + 4(\tilde m_{i_0}^+ - \tilde m_{i_0}^-).
\end{align}
Besides, from the definition of $I$ and (\ref{deftm}),
\begin{align}\label{I+I2}
I_{i_0}(\tilde s_{i_0}^+) + I_{i_0}(\tilde s_{i_0}^-)
= -2 C_{i_0,\xi_{i_0}-1}(\tilde s_{i_0}^+ +\tilde  s_{i_0}^- + 1 + K_{i_0,\xi_{i_0}-1})
= 2q_{i_0}(\tilde m_{i_0}^+ +\tilde  m_{i_0}^- ).
\end{align}
From (\ref{K-K2}) and (\ref{I+I2}), we have
$$
K_{i_0}(\tilde s_{i_0}^+) - K_{i_0}(\tilde s_{i_0}^-) + 4(\tilde m_{i_0}^+ - \tilde m_{i_0}^-)
= 4\big((-\tilde m_{i_0}^+ - \tilde m_{i_0}^-)(\tilde s_{i_0}^+-\tilde s_{i_0}^-)+\tilde m_{i_0}^+ + \tilde m_{i_0}^-\big).
$$
In particular,
\begin{align*}
e^{\frac{r\pi}{4\sqrt{-1}}(K_{i_0}(s'_{i_0}) - K_{i_0}(s''_{i_0})+ 4(\tilde{m}_{i_0}' - \tilde{m}_{i_0}''))}
&= (-1)^{(\tilde m_{i_0}^+ + \tilde m_{i_0}^- )(\tilde s_{i_0}^+-\tilde s_{i_0}^- - 1)}.
\end{align*}
From Lemma \ref{fcneq01}, we know that $\tilde s_{i_0}^\pm = s_{i_0}^\pm + \frac{q_{i_0}}{2} \pmod{q_{i_0}}$. Since $q_{i_0}$ is even, we have
$$
\tilde{s}_{i_0}^+ - \tilde{s}_{i_0}^-
= \big( \tilde s_{i_0}^+ - s_{i_0}^+ \big)
-\big( \tilde s_{i_0}^- - s_{i_0}^- \big)
+\big(s_{i_0}^+ - s_{i_0}^- \big)
\equiv s_{i_0}^+ - s_{i_0}^- \pmod{2}.
$$
From Lemma \ref{arith} (1), we know that $s_{i_0}^+ - s_{i_0}^- \equiv p_{i_0}' \pmod{q_{i_0}}$. Moreover, since $p_{i_0}p'_{i_0} + q_{i_0}q'_{i_0} = 1$ and $q_{i_0}$ is even, $p_{i_0}'$ must be odd. Altogether, we have
$$
\tilde{s}_{i_0}^+ - \tilde{s}_{i_0}^- - 1 
\equiv s_{i_0}^+ - s_{i_0}^- - 1
\equiv 0 \pmod{2}
$$
and
\begin{align}\label{pmpart2}
e^{\frac{r\pi}{4\sqrt{-1}}(K_{i_0}(s'_{i_0}) - K_{i_0}(s''_{i_0})+ 4(\tilde{m}_{i_0}' - \tilde{m}_{i_0}''))}
&= (-1)^{(\tilde m_{i_0}^+ + \tilde m_{i_0}^- )(\tilde s_{i_0}^+-\tilde s_{i_0}^- - 1)} = 1.
\end{align}
From (\ref{pmpart0}), (\ref{pmpart1}) and (\ref{pmpart2}), we get
$C''_r = -C'_r$. This completes the proof.
\end{proof}

\subsection{Estimate of error term and the proof of the main theorems}

The following proposition shows that the error term in Proposition \ref{Fperror} is negligible compared to the leading Fourier coefficient. 
\begin{proposition} \label{errorest} There exists $\delta_0>0$ such that if  
$$ \Vol(M_{L_{\boldsymbol\theta}})> \max_{(\boldsymbol \alpha_{\zeta_I}, \boldsymbol \xi) \in \overline{D_H\setminus D_{\delta_0}}} \im \tilde U(\boldsymbol \alpha_{\zeta_I}, \boldsymbol \xi) ,$$
where $ \tilde U(\boldsymbol \alpha_{\zeta_I}, \boldsymbol \xi) $ is defined in (\ref{deftU}) and $\overline{D_H\setminus D_{\delta_0}}$ is the closure of $D_H\setminus D_{\delta_0}$, then there exists $\epsilon'>0$ such that the error term in Proposition \ref{Fperror} is less than $O(e^{\frac{r}{4 \pi} (\Vol(M_{L_{\boldsymbol\theta}}) - \epsilon')})$.
\end{proposition}\begin{proof}For a fixed $\boldsymbol{\alpha}_J=(\alpha_j)_{j\in J},$ let 
\begin{equation*}
M_{\boldsymbol{\alpha}_J}=\max\Big\{\sum_{s=1}^c2V(\alpha_{s_1},\dots,\alpha_{s_6},\xi_s)\ \Big|\ (\boldsymbol{\alpha}_{\zeta_I},\boldsymbol{\xi})\in\partial \mathrm {D_H}\cup\big(\mathrm {D_A}\setminus \mathrm{D_H}\big)\Big\}
\end{equation*}
where $V$ is as defined in (\ref{defV}). Then by \cite[Sections 3 \& 4]{BDKY}, 
$$M_{\boldsymbol{\alpha}_J}<2cv_8.$$
Besides, we know that $\im \tilde U (\boldsymbol{\alpha}_{\zeta_I},\boldsymbol{\xi}) \leq 2cv_8$ and equality holds if and only if 
$$(\boldsymbol{\alpha}_{\zeta_I},\boldsymbol{\xi}) = ( \pi,\dots,\pi, \frac{7\pi}{4},\dots,\frac{7\pi}{4}).$$ 
As a result, we can choose $\delta_0>0$ sufficiently small so that 
$$
M_{\boldsymbol{\alpha}_J} < \max_{(\boldsymbol \alpha_{\zeta_I}, \boldsymbol \xi) \in \overline{D_H\setminus D_{\delta_0}}} \im \tilde U(\boldsymbol \alpha_{\zeta_I}, \boldsymbol \xi) .
$$
The result follows from the fact that the error terms in Proposition \ref{Fperror} contains those $g_r^{\mathbf E_I}(\mathbf s_I, \mathbf m_{\zeta_I}, \mathbf k)$ with $({\mathbf m_{\zeta_I}},{\mathbf k}) \in \mathrm {D_H}\cup\big(\mathrm {D_A}\setminus \mathrm{D_H}\big)$.
\end{proof}

\begin{lemma}\label{assat0}
There exists $\delta>0$ such that if $|\mathrm{H}(u_k)|<\delta$ for all $k=1,\dots,n$, then we have
$\mathbf {z}^{ \boldsymbol E_{I}} \in D_{\delta_0,\CC}$ and 
$$ \Vol(M_{L_{\boldsymbol\theta}})> \max\Bigg\{\max_{(\boldsymbol \alpha_{\zeta_I}, \boldsymbol \xi) \in \overline{D_H\setminus D_{\delta_0}}} \im \tilde U(\boldsymbol \alpha_{\zeta_I}, \boldsymbol \xi), 2cv_8 - 4\pi \delta_0
\Bigg\} ,$$
where $ \tilde U(\boldsymbol \alpha_{\zeta_I}, \boldsymbol \xi) $ is defined in (\ref{deftU}) and $\overline{D_H\setminus D_{\delta_0}}$ is the closure of $D_H\setminus D_{\delta_0}$.
\end{lemma}
\begin{proof}
Note that by Proposition \ref{prop52}, we have 
\begin{align*}
|\alpha^*_i - \pi|=\frac{|\mathrm H(u_i)|}{2} < \frac{\delta}{2}.
\end{align*}
Moreover, $\{\xi_s\}_{s=1}^c$ depends continuously on $\{\alpha_k\}_{k=1}^n$ with $\xi_s(\pi,\dots,\pi)=\frac{7\pi}{4}$ for all $s=1,\dots,c$. Altogether, by choosing $\delta>0$ sufficiently small, we have $\mathbf {z}^{ \boldsymbol E_{I}} \in D_{\delta_0,\CC}$. Besides, $\Vol(M_{L_{\boldsymbol\theta}})$ depends continuously on $\{\mathrm{H}(u_k)\}_{k=1}^n$ and is equal to $2cv_8$ when $\mathrm{H}(u_k)=0$ for $k=1,\dots,n$. Moreover, 
$$ 2cv_8 > \max\Bigg\{\max_{(\boldsymbol \alpha_{\zeta_I}, \boldsymbol \xi) \in \overline{D_H\setminus D_{\delta_0}}} \im \tilde U(\boldsymbol \alpha_{\zeta_I}, \boldsymbol \xi), 2cv_8 - 4\pi \delta_0
\Bigg\} .$$
By choosing $\delta>0$ sufficiently small, we have 
$$ \Vol(M_{L_{\boldsymbol\theta}})> \max\Bigg\{\max_{(\boldsymbol \alpha_{\zeta_I}, \boldsymbol \xi) \in \overline{D_H\setminus D_{\delta_0}}} \im \tilde U(\boldsymbol \alpha_{\zeta_I}, \boldsymbol \xi), 2cv_8 - 4\pi \delta_0
\Bigg\} .$$
\end{proof}

\begin{lemma}\label{assat1}
There exists $\epsilon>0$ such that whenever $\theta_i, \theta_j \in [0, \epsilon)$ for all $i\in I$ and $j\in J$, we have
$\mathbf {z}^{ \boldsymbol E_{I}} \in D_{\delta_0,\CC}$ and 
$$ \Vol(M_{L_{\boldsymbol\theta}})> \max\Bigg\{\max_{(\boldsymbol \alpha_{\zeta_I}, \boldsymbol \xi) \in \overline{D_H\setminus D_{\delta_0}}} \im \tilde U(\boldsymbol \alpha_{\zeta_I}, \boldsymbol \xi), 2cv_8 - 4\pi \delta_0
\Bigg\} ,$$
where $ \tilde U(\boldsymbol \alpha_{\zeta_I}, \boldsymbol \xi) $ is defined in (\ref{deftU}) and $\overline{D_H\setminus D_{\delta_0}}$ is the closure of $D_H\setminus D_{\delta_0}$.\end{lemma}
\begin{proof}
First, when $\beta_i = \alpha_j = \pi$ for all $i\in I, j \in J$, we have $(\theta_1,\dots,\theta_n) = \mathbf{0} = (0,\dots,0)$, $\mathbf {z}^{ \boldsymbol E_{I}} = (\pi,\dots,\pi, \frac{7\pi}{4}, \dots, \frac{7\pi}{4}) \in D_{\delta_0,\CC}$ and  
$$ \Vol(M_{L_{\mathbf{0}}}) = 2cv_8 > \max\Bigg\{\max_{(\boldsymbol \alpha_{\zeta_I}, \boldsymbol \xi) \in \overline{D_H\setminus D_{\delta_0}}} \im \tilde U(\boldsymbol \alpha_{\zeta_I}, \boldsymbol \xi), 2cv_8 - 4\pi \delta_0
\Bigg\}.$$

By continuity, there exists $\epsilon>0$ such that if $\{ \beta_i\}_{i \in I}$ and $\{ \alpha_j\}_{j \in J}$ are all in $(\pi - \epsilon, \pi + \epsilon)$, then the critical point $\mathbf z^{\mathbf E_I}$ of $G^{\mathbf E_I}$ in Proposition \ref{prop52} lies in $D_{\delta_0, \CC}$, and $\Vol(M_{L_{\boldsymbol\theta}})$ is sufficiently close to $ \Vol(M_{L_{\mathbf{0}}})=2cv_8$ so that
\begin{align*}
\Vol(M_{L_{\boldsymbol\theta}})
 >
 \max\Bigg\{\max_{(\boldsymbol \alpha_{\zeta_I}, \boldsymbol \xi) \in \overline{D_H\setminus D_{\delta_0}}} \im \tilde U(\boldsymbol \alpha_{\zeta_I}, \boldsymbol \xi), 2cv_8 - 4\pi \delta_0
\Bigg\}.
 \end{align*}
\end{proof}

\begin{lemma}\label{assat2}
There exists $\epsilon>0$ and $C>0$ such that whenever $\theta_j \in [0, \epsilon)$ for all $j\in J$, $|p_i|+|q_i|>C$ and $\theta_i \in [0,\pi)$  for all $i \in I$, we have
$\mathbf {z}^{ \boldsymbol E_{I}} \in D_{\delta_0,\CC}$ and 
$$ \Vol(M_{L_{\boldsymbol\theta}})> \max\Bigg\{\max_{(\boldsymbol \alpha_{\zeta_I}, \boldsymbol \xi) \in \overline{D_H\setminus D_{\delta_0}}} \im \tilde U(\boldsymbol \alpha_{\zeta_I}, \boldsymbol \xi), 2cv_8 - 4\pi \delta_0
\Bigg\} ,$$
where $ \tilde U(\boldsymbol \alpha_{\zeta_I}, \boldsymbol \xi) $ is defined in (\ref{deftU}) and $\overline{D_H\setminus D_{\delta_0}}$ is the closure of $D_H\setminus D_{\delta_0}$.\end{lemma}
\begin{proof}
Let $\delta>0$ be the constant in Lemma \ref{assat0}. 
For each $k\in \{1,2,\dots,n\}$, recall that the generalized Dehn filling invariant of the logarithmic holonomy $\mathrm{H}(u_k)$ around $0\in \CC$ is defined by sending $0$ to $\infty\in \RR^2 \cup \{\infty\} = \mathbb{S}^2$ and  sending $\mathrm{H}(u_k)\neq 0$ to the unique pair $(p_k, q_k) \in \mathbb{S}^2$ satisifying 
$$p_k\mathrm{H}(u_k)+q_k\mathrm{H}(v_k) = 2\pi \sqrt{-1}.$$  It is well-known that the generalized Dehn filling invariant gives a local homeomorphism from an open neighborhood of $(0,\dots,0) \in \CC^{n}$ to an open neighborhood around $(\infty,\dots, \infty)\in(\SS^2)^n$ by sending the logarithmic holonomies $(\mathrm{H}(u_1),\dots, \mathrm{H}(u_n))$ to the generalized Dehn filling invariants $( (p_1,q_1), \dots, (p_n, q_n))$ (see e.g. Corollary 15.2.17 and Proposition 15.3.1 in \cite{BM}). In particular, there exists $C>0$ such that whenever $|p_k|+|q_k|>C$ for all $k=1,\dots,n$, we have $|\mathrm{H}(u_k)| < \delta$ for all $k=1,\dots,n$. 

Note that for $i\in I$ with $|p_i|+|q_i|>C$ and $\theta_i \in (0,\pi)$, the equation $p_i \mathrm{H}(u_i) + q_i \mathrm{H}(v_i) = \theta \sqrt{-1}$ implies that
$$\Big(\frac{2\pi p_i}{\theta_i}\Big) \mathrm{H}(u_i) + \Big( \frac{2\pi q_i}{\theta_i} \Big) \mathrm{H}(v_i) = 2\pi \sqrt{-1}.$$
In particular, the generalized Dehn invariant of $\mathrm{H}(u_i) $ is given by $(\frac{2\pi p_i}{\theta_i} , \frac{2\pi q_i}{\theta_i} )$, which satisfies
$$
\Big|\frac{2\pi p_i}{\theta_i}\Big| + \Big| \frac{2\pi q_i}{\theta_i} \Big|
= (|p_i| + |q_i|) \Big(\frac{2\pi }{\theta_i}\Big)
> |p_i| + |q_i| > C.
$$
Besides, for $j\in J$, if the cone angle $\theta_j \in (0,2\pi/C)$, then the equation
$ \mathrm{H}(u_j) = \theta_j \sqrt{-1}$
implies that
$$ \Big( \frac{2\pi}{\theta_j}\Big) \mathrm{H}(u_j) = 2\pi \sqrt{-1}. $$
In particular, the generalized Dehn invariant of $\mathrm{H}(u_j) $ is given by $(\frac{2\pi }{\theta_j} , 0 )$, which satisfies
$$
\Big|\frac{2\pi }{\theta_j}\Big| > C.
$$
As a result, whenever $\theta_j \in [0, \frac{2\pi}{C})$ for all $j\in J$, $|p_i|+|q_i|>C$ and $\theta_i \in [0,\pi)$ for all $i \in I$, 
we have $|\mathrm{H}(u_k)| < \delta$ for all $k=1,\dots,n$. The results follow from Lemma \ref{assat0}.
\end{proof}

\begin{proof}[Proof of Theorem \ref{mainthm0}, \ref{mainthm} and \ref{mainthmlarge}] 
By Lemma \ref{assat0}, \ref{assat1} and \ref{assat2}, the assumptions in Proposition \ref{lfc}, \ref{fail0}, \ref{fail1}, \ref{fail2} and \ref{errorest} are satisfied. Thus, by Proposition \ref{computation}, Proposition \ref{Fperror}, Proposition \ref{0FCasym}, Proposition \ref{fail0}, Proposition \ref{fail1}, Proposition \ref{fail2} and Proposition \ref{errorest}, we have
\begin{align}\label{pfmainthmasym}
    &RT_r(M, L, (\textbf{n}_I, \textbf{m}_J)) \notag\\
    =& Z_r \Big(\sum_{\mathbf E_I}\widehat{f_r}(\mathbf{s}^{\mathbf E_I}, \mathbf{1-2m^{E_I}}, \mathbf{0}) \Big) \lt(1 + O\lt(\frac{1}{r}\rt)\rt)\notag\\
    =& C  \frac{e^{\frac{1}{2}\sum_{k=1}^n\mu_k\mathrm H(\gamma_k)}}{\sqrt{\pm\mathbb{T}_{(M\setminus L,\mathbf m)}([\rho_{M_{L_{\boldsymbol\theta}}}])}}
    e^{\frac{r}{4\pi}\Big( (\Vol(M_{L_{\boldsymbol\theta}}) + \sqrt{-1}\CS(M_{L_{\boldsymbol\theta}})) \Big)} \bigg( 1 + O\Big(\frac{1}{r}\Big) \bigg),
\end{align}
where
\begin{align*}
C&= \frac{ (-1)^{\sum_{i\in I}(\zeta_i + 1 + \sum_{l=1}^{\zeta_i}a_{i,l})} (\sqrt{-1})^{\sum_{i\in I}\frac{\zeta_i -1}{2}}(-1)^{ - \frac{rc}{2} +\sum_{i\in I} \Big(a_{i,0}+\frac{\iota_i}{2}\Big)+\sum_{j\in J}\Big(a_{j,0}+\frac{\iota_j}{2}\Big)} }{\sqrt{-1}^{\sum_{i \in I} \zeta_i - c} } \\
&\qquad \times e^{\frac{\pi\sqrt{-1}}{r} \sum_{i\in I} \sum_{l=1}^{\zeta_i-1} a_{i,l} -\frac{r\pi \sqrt{-1}}{4} ( \sum_{i\in I} (a_{i,0} +a_{i,\zeta_i})  + \sum_{j\in J} a_{j,0}) +\sigma(\tilde{L}_{\text{FSL},I} \cup L')(\frac{3}{r} + \frac{r+1}{4})\sqrt{-1}\pi}\\
&\qquad \times e^{\frac{r\pi}{4\sqrt{-1}}\sum_{i\in I}\Big(4m_i^{+}-2 + K_i(s_i^{+})+\frac{p_i'}{q_i} \Big)}
\end{align*}
is a quantity of norm $1$ independent of the geometric structure on $M.$
\end{proof}

\begin{proof}[Proof of Theorem \ref{Cor} and \ref{Cor2}] 
From (\ref{pfmainthmasym}), we have
\begin{align*}
&\lim_{r\to \infty} \frac{4\pi}{r} \log RT_r(M, L, (\textbf{n}_I, \textbf{m}_J)) \\
=& \Vol(M_{L_{\boldsymbol\theta}}) + \sqrt{-1}\CS(M_{L_{\boldsymbol\theta}}) - 2c\pi^2\sqrt{-1}
+ \pi^2\sqrt{-1}\Big( \sum_{i\in I} (a_{i,0} +a_{i,\zeta_i})  + \sum_{j\in J} a_{j,0}) +\sigma(\tilde{L}_{\text{FSL},I} \cup L') \Big)\\
&-\pi^2\sqrt{-1}\sum_{i\in I}\Big( 4m_i^{+}-2 + K_i(s_i^{+})+\frac{p_i'}{q_i} \Big)\\
=& \Vol(M_{L_{\boldsymbol\theta}}) + \sqrt{-1}\CS(M_{L_{\boldsymbol\theta}}) \pmod{\pi^2\sqrt{-1}\ZZ},
\end{align*}
where in the last equality we apply Lemma \ref{arith} (3).
\end{proof}

\noindent
Tushar Pandey\\
Department of Mathematics\\  Texas A\&M University\\
College Station, TX 77843, USA\\
(tusharp@tamu.edu)\\

\noindent
Ka Ho Wong\\
Department of Mathematics\\  Texas A\&M University\\
College Station, TX 77843, USA\\
(daydreamkaho@tamu.edu)
\\

\end{document}